\documentclass[11pt]{article}

\usepackage[hmargin=1.03in,vmargin=0.97in]{geometry}
\usepackage{amsmath,amssymb,amsthm,mathtools}
\usepackage{booktabs,array,enumitem,microtype}
\usepackage[hidelinks]{hyperref}
\usepackage{url}
\hypersetup{
 pdftitle={The Unfair 0--1 Polynomial Problem and High-Degree Trinomials},
 pdfauthor={Alexander Dvorsky},
 pdfsubject={Unfair factorizations of 0--1 polynomials by high-degree trinomials},
 pdfkeywords={0--1 polynomials, unfair factorization, negative-binomial packet, log-concavity, Jacobi theta function}
}

\newtheorem{theorem}{Theorem}[section]
\newtheorem{proposition}[theorem]{Proposition}
\newtheorem{lemma}[theorem]{Lemma}
\newtheorem{corollary}[theorem]{Corollary}
\theoremstyle{definition}

\theoremstyle{remark}

\newcommand{\R}{\mathbb R}
\newcommand{\E}{\mathbb E}
\newcommand{\1}{\mathbf 1}
\newcommand{\cE}{\mathcal E}

\title{The Unfair $0$--$1$ Polynomial Problem and High-Degree Trinomials}
\author{Alexander Dvorsky\thanks{\raggedright Department of Mathematics, University of Miami, Coral Gables, FL 33124, USA. Email: \texttt{dvorsky@math.miami.edu}.}}
\date{September 2026}

\begin{document}
\maketitle

\begin{abstract}
Let
\[
        P_{k,a}(x)=1+a x^2+x^k,\qquad 0<a<1.
\]
Together with a companion finite-degree paper, we prove that for every integer
$k\ge7$, $P_{k,a}$ cannot divide a $0$--$1$ polynomial with a nonzero
cofactor having nonnegative real coefficients.  The even-exponent case is
elementary, the companion paper treats the bounded odd degrees, and the
present paper supplies the uniform argument for odd $k\ge341$.  The proof
combines an exact negative-binomial representation of the forced recurrence
with strict log-concavity, spectral estimates, and a theta-function
approximation.  These estimates force a deterministic continuation and an
exact terminal contradiction.  The finite numerical input is certified by
outward-rounded interval arithmetic; no search whose depth grows with $k$ is
used.
\end{abstract}

\medskip
\noindent\textbf{2020 Mathematics Subject Classification.}
Primary 12D05, 39A22; Secondary 39A06, 33E05, 68V05.

\medskip
\noindent\textbf{Keywords.}
$0$--$1$ polynomials, unfair factorization, linear recurrence,
negative-binomial distribution, log-concavity, Jacobi theta function,
computer-assisted proof.

\medskip
\noindent\textbf{Coefficient convention.}
Throughout, a ``zero'' means a zero coefficient, not a root of a
polynomial; polynomial zeros are always called roots.

\section{Introduction}\label{sec:intro}

Let $C(x)$ be a monic polynomial whose coefficients belong to
$\{0,1\}$.  The unfair $0$--$1$ polynomial conjecture asks whether a
factorization
\[
             C(x)=A(x)B(x),
\]
with $A$ and $B$ monic and having nonnegative real coefficients must
already be a factorization into $0$--$1$ polynomials.  Equivalently, one
asks whether a uniform finitely supported distribution on the integers
can be represented as a convolution of two nonuniform finitely
supported probability distributions.  The problem appears as
Problem~28 in Green's collection of open problems \cite{Green100}; see
Ghidelli \cite{Ghidelli} and Hare \cite{Hare} for further background and
references.  For classical background on restricted-coefficient polynomials
and polynomial inequalities, see \cite{OdlyzkoPoonen,BorweinErdelyi}.

The family
\begin{equation}\label{eq:intro-family}
             P_{k,a}(x)=1+a x^2+x^k,\qquad 0<a<1,
\end{equation}
is a natural test case.  For even $k$ an elementary reduction separates the
even and odd coefficient classes, so the analytic problem concerns odd $k$.
The factor has only three
nonzero coefficients and only one continuous parameter, yet it has
proved unusually resistant to purely local combinatorial propagation.
Ghidelli settled $k=5$ by combining a forced linear recurrence with
resultants, root estimates, and numerical analysis \cite{Ghidelli}.
Hare later developed a systematic computational framework based on
trinary coefficient information, recursive case analysis, Groebner
bases, and quantifier elimination.  His computation eliminated all but
$975$ of $7{,}141{,}686$ candidate factor patterns of degree at most
$15$ \cite{Hare}.  The $x^2$ trinomial is conspicuous in that framework:
Hare notes that the degree-five factor $1+t x^3+x^5$ can be excluded
by following coefficients only to about degree $100$, whereas
Ghidelli's $1+t x^2+x^5$ argument required recurrence information out to
$10^4$ before the small-$t$ spectral analysis was applied \cite{Hare}.
Thus local coefficient conditions alone yield relatively few restrictions for
the family \eqref{eq:intro-family}.

At first sight, \eqref{eq:intro-family} may appear to be an especially easy
member of a wider class because its short part has only two monomials.  The
first-zero reduction suggests the opposite.  Consider, for a fixed polynomial
$G_{\rm sh}$, the family
\[
 A_{G_{\rm sh},k}(x)=G_{\rm sh}(x^2)+x^k,\qquad
 G_{\rm sh}(y)=1+\sum_{j\in S}a_jy^j,\quad a_j>0.
\]
At the first zero of a putative $0$--$1$ multiple, nonnegativity forces every
contributing shifted cofactor coefficient to vanish.  A larger support $S$
can therefore provide several exact zero conditions on the same forced
coefficient profile.  In a range governed by a single strictly log-concave
profile, three or more such conditions should be strongly overdetermined.
For $G_{\rm sh}(y)=1+ay$, however, only two useful zero conditions
remain.  Two nearly equal samples are compatible with a unimodal log-concave
profile, so they must be located quantitatively; every coefficient between
them must then be controlled before a separate terminal contradiction becomes
available.  Thus the trinomial is not a generic easy example: among fixed
short parts, it is the sparsest case and gives the least redundancy in the
first-zero equations.  This comparison is heuristic; no theorem for general
$G_{\rm sh}$ is asserted here.  The related family $1+a x^b+x^k$, with $b$ fixed,
would similarly require residue-class versions of the estimates below and is
not treated in this paper.

The precise external input is Theorem~1.1 of the companion finite-degree
paper \cite{DvorskyFinite}, version~4 (August 2026): it proves the conclusion
for every odd $k$ with $7\le k\le339$.  It uses the same first-zero reduction
and the same exact post-zero contradiction as the present paper, while the
intervening parameter ranges are closed by finite outward-rounded
certificates.  Those methods are well suited to a bounded set of degrees but
do not by themselves give a uniform argument as the spacings between the
characteristic roots decrease and the number of relevant modes increases.

The present paper instead regroups the characteristic-root expansion by using
an exact decomposition of the forced generating function into truncated
negative-binomial sums.  We call each negative-binomial remainder term in this
decomposition a \emph{packet}; its magnitude is strictly log-concave for every
value of the parameters.  When several cyclic translates contribute
appreciably, their alternating periodization converges, after centering and
scaling, to an antiperiodic Gaussian image sum, or Jacobi theta kernel.  This
kernel is uniformly strongly log-concave on the connected positive component
containing its maximum.  Its Gaussian image sum and odd-harmonic Fourier
series are respectively the method-of-images and Dirichlet spectral
representations of the same killed Brownian transition density; see
Salminen--Vignat \cite{SalminenVignat}.  In the closely spaced regime the
first Fourier harmonic dominates.  Thus the single-packet estimate and the
theta-kernel estimate apply in complementary ranges of the normalized
translate spacing, which is defined in Section~\ref{sec:firstzero}.

The full result for the family is the following combined theorem.  Its
finite odd-degree input is exactly the companion theorem just stated; the
high-degree Theorem~\ref{thm:main} is proved independently in this paper.

\begin{theorem}[Full family]\label{thm:family}
Let $k\ge7$ be an integer and let $0<a<1$.  If $Q\in\R[x]$ has
nonnegative coefficients and
\[
             (1+a x^2+x^k)Q(x)
\]
has all coefficients in $\{0,1\}$, then $Q=0$.
\end{theorem}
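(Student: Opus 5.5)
Theorem~\ref{thm:family} is an assembly of three cases, so I would prove it by splitting on the parity and size of $k$ and citing the appropriate input in each case. Throughout, suppose $Q\neq0$ has nonnegative coefficients and $C(x)=(1+ax^2+x^k)Q(x)$ has all coefficients in $\{0,1\}$. Dividing by the lowest power of $x$ dividing $Q$, we may assume $Q(0)>0$. Then $C(0)=Q(0)=1$.

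\emph{Even $k$.} Write $k=2m$ with $m\ge4$. Because $1+ax^2+x^{2m}$ involves only even powers, it commutes with the decomposition $Q(x)=Q_0(x^2)+xQ_1(x^2)$. This gives
\[
C(x)=\bigl(1+ay+y^m\bigr)Q_0(y)+x\bigl(1+ay+y^m\bigr)Q_1(y),\qquad y=x^2.
\]
Each of $(1+ay+y^m)Q_i(y)$ then has coefficients in $\{0,1\}$. Since $Q_0(0)=Q(0)\neq0$, the even-part problem is a nontrivial instance with the trinomial $1+ay+y^m$, whose middle exponent is $1$.

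The elementary argument I would run is by forced propagation. The constant term gives $q_0=1$. Coefficient $1$ equals $a+q_1$, which lies in $(0,\infty)$, and it must be $1$, so $q_1=1-a$. Continuing up to degree $m-1$, the recurrence $c_n=q_n+aq_{n-1}$ together with $c_n\in\{0,1\}$ and $q_n\ge0$ determines the coefficients. If $c_n=0$, then $q_{n-1}=0$, and hence $c_{n-1}=aq_{n-2}$ must equal $0$ or $1$, which forces further vanishing. One then checks that no choice survives, because a short part $1+ay$ with $0<a<1$ always produces a strictly fractional coefficient at or before the point where $y^m$ begins to contribute. The alternative is that the even case is reduced directly to a smaller known case, for instance the well-known result that $1+ay$ alone forces non-$0$--$1$ behaviour. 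I expect the details here to be routine, which is consistent with the paper calling this case elementary.

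\emph{Odd $k$ with $7\le k\le339$.} This is exactly Theorem~1.1 of the companion paper \cite{DvorskyFinite}, version~4, and I would cite it verbatim.

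\emph{Odd $k\ge341$.} This case is Theorem~\ref{thm:main} of the present paper. Its proof uses the first-zero reduction, the negative-binomial packet decomposition of the forced recurrence, strict log-concavity in the sparse-translate regime, and the theta-kernel estimate in the dense regime. Together these locate the two zero conditions and then reach the exact terminal contradiction.

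The main obstacle is not in this assembly but inside Theorem~\ref{thm:main}. Its hardest step is the uniform-in-$k$ control of every coefficient between the two nearly equal samples in the transition range of the normalized translate spacing, where neither the single-packet estimate nor the first-harmonic dominance is sharp. In that range I would first try to show that the alternating periodization is uniformly close to the Jacobi theta kernel, and then invoke the kernel's strong log-concavity on its positive component.

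For the assembly itself, the only point to check is that the three cases exhaust all $k\ge7$, which is immediate: every even $k\ge8$, every odd $k$ in $[7,339]$, and every odd $k\ge341$.
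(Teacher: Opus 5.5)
Your three-way split (even $k$; odd $7\le k\le339$ by the companion finite-degree theorem; odd $k\ge341$ by Theorem~\ref{thm:main}) is exactly how the paper assembles Theorem~\ref{thm:family}, and citing those results for the two odd cases is fine. The gap is in the even case, which is the only part that needs an actual argument. There your sketch never reaches a contradiction, and the mechanism you name is wrong.

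Write $q_n$ for the coefficients of $Q_0$ and $c_n$ for those of $(1+ay+y^m)Q_0(y)$, as you do. For $1\le n\le m-1$ one has $c_n=q_n+aq_{n-1}$, since the $y^m$ term does not yet contribute. Induction then gives $0<q_{n-1}\le1$, so $c_n>0$, hence $c_n=1$ and $q_n=1-aq_{n-1}\in(0,1]$. So through degree $m-1$ exactly one choice survives, and every product digit in that range equals $1$. No \emph{strictly fractional} product coefficient occurs there. The cofactor coefficients are fractional, but that is perfectly admissible. Your backward-vanishing remark, once you add $q_{n-2}\le c_{n-2}\le1$ and $a<1$, only reproves that these digits cannot be $0$; it does not rule out the all-ones continuation.

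The missing step is the computation at degree $m$, where $y^m$ first contributes:
\[
 c_m=q_m+aq_{m-1}+q_0\;\ge\;1+aq_{m-1}\;>\;1,
\]
using $q_0=1$ and $q_{m-1}>0$ from the induction. The contradiction is an overshoot of the digit $1$: the $y^m$ term adds $q_0=1$ on top of the positive $aq_{m-1}$. It is not produced by the short part $1+ay$ alone. This is precisely the proof of Lemma~\ref{lem:even-k}.

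Your fallback reduction to the fact that $1+ay$ alone cannot divide a $0$--$1$ polynomial with nonnegative cofactor does not work. The products $(1+ay+y^m)Q_0$ and $(1+ay)Q_0$ agree only in degrees below $m$. The obstruction for $1+ay$ sits at degree $\deg Q_0+1$, and since $q_{m-1}>0$ this degree is at least $m$, where the two products differ. With the single degree-$m$ line added, your proof coincides with the paper's.
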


The new theorem proved in this paper is the uniform high-degree part.

\begin{theorem}[High-degree theorem]\label{thm:main}
Let $k\ge341$ be an integer and let $0<a<1$.  If $Q\in\R[x]$ has
nonnegative coefficients and
\[
             (1+a x^2+x^k)Q(x)
\]
has all coefficients in $\{0,1\}$, then $Q=0$.
\end{theorem}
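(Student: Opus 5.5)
We argue by contradiction: suppose $Q\neq0$ has nonnegative coefficients and $C=(1+ax^2+x^k)Q$ is a $0$--$1$ polynomial. We may assume $Q(0)>0$ after dividing by a power of $x$, and then the constant term forces $Q(0)=1=c_0$. If $k$ is even, we split $C$ and $Q$ into even and odd coefficient classes; each class gives a factorization $(1+ay+y^{k/2})\tilde Q(y)$. Iterating on the $2$-adic valuation of $k$ reduces to odd exponents, and the small cases are removed by the elementary $x^2$-parity argument. So the substance is odd $k\ge341$.

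\textbf{First-zero reduction.} Let $Q=\sum q_nx^n$. While the coefficients $c_n$ of $C$ stay equal to $1$, the coefficients of $Q$ satisfy the forced recurrence
\[
q_n=1-aq_{n-2}-q_{n-k},\qquad q_0=1,
\]
with negative indices read as zero. This gives an explicit forced profile $f_n(a)$, whose generating function is $F(x)=1/\bigl((1-x)(1+ax^2+x^k)\bigr)$. The profile must stay nonnegative, and the product cannot be an infinite string of $1$'s, because $C$ is a polynomial while $Q\ne0$. Hence there is a first index $N$ with $c_N=0$. Nonnegativity then forces $q_N=q_{N-2}=q_{N-k}=0$ whenever these indices are $\ge0$, with the earlier $q$'s equal to the forced values $f$. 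This gives two exact zero conditions $f_{N-2}(a)=0$ and $f_{N-k}(a)=0$ on the forced profile. If $N<k$, the first condition alone contradicts $f_n>0$ for small $n$, since there $f$ is the geometric profile of $1/((1-x)(1+ax^2))$, which is positive. So $N\ge k$.

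\textbf{Packet decomposition and two regimes.} We expand
\[
1/(1+ax^2+x^k)=\sum_m (-1)^m x^{km}(1+ax^2)^{-m-1}.
\]
This writes $f_n$ as an alternating sum of negative-binomial packets, namely partial sums of the coefficients of $(1+ay)^{-m-1}$ in the variable $y=x^2$, indexed by $m\approx n/k$. Each packet magnitude is strictly log-concave in $n$. We then define the normalized translate spacing $\sigma$: the distance $k$ between consecutive packets divided by the packet standard deviation, which is about $\sqrt{a m}/(1-a)$ in units of $n$. In the regime where $\sigma$ is large, a single packet dominates on the relevant range. Log-concavity then shows that $f$ is positive and unimodal there, apart from a controlled sign change, so the two zero conditions at distance $k-2$ cannot both hold except in a quantitatively located window. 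In the regime where $\sigma$ is small, the alternating periodization is compared, after centering and scaling, with the antiperiodic theta kernel $\sum_j(-1)^j e^{-(t-j\sigma)^2/2}$. We would use two facts here: uniform strong log-concavity of this kernel on its positive component, and dominance of the first Fourier harmonic when $\sigma$ is small. Together these bound the errors (Edgeworth-type corrections of size $O(1/\sqrt m)$), which is where $k\ge341$ should enter. The conclusion in both regimes is that the first zero must occur at the predicted location, with $f$ strictly positive at every index before it.

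\textbf{Terminal contradiction and main obstacle.} Given that deterministic continuation, we write $C$ just past $N$. There the forced equations with $q_N=q_{N-2}=0$ propagate. We would then exhibit an index $N'$ slightly beyond $N$ where the coefficient is forced to be an explicit value $\notin\{0,1\}$, or a forced $q$ is negative. The contradiction should be exact, namely the one shared with the companion paper. I expect the main obstacle to be the crossover between the two regimes. There neither the single-packet bound nor the first-harmonic approximation alone controls all coefficients between the two near-equal zero samples, and the bounds must be made fully uniform in $a\in(0,1)$, including near $a\to0$ and $a\to1$. I would handle this crossover range by certified interval arithmetic on the theta-kernel comparison, using a bounded parameter grid independent of $k$.
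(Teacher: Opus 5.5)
Your overall architecture (negative-binomial packets, single-packet log-concavity, the antiperiodic theta kernel, a first-harmonic regime) matches the paper's. However, the reduction it rests on analyzes the wrong sequence. The product digits are not all $1$ before the first zero. The coefficient of $x^k$ is $q_k+aq_{k-2}+q_0$ with $q_0=1$, which forces $q_k=q_{k-2}=0$. Then $c_{k-2}=aq_{k-4}\le a<1$ forces $c_{k-2}=0$ and $q_{k-4}=0$, and descending through the odd indices gives $q_j=c_j=0$ for every odd $j<k$.

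In particular $c_1=0$ always, so your first index with $c_N=0$ is $N=1$. There $N-2$ and $N-k$ are negative, and no anchor equation arises. Your step ``if $N<k$, \dots\ so $N\ge k$'' fails exactly there, and the profile $1/((1-x)(1+ax^2+x^k))$ is never realized. The reduction must first prove this initial forcing and then take $N$ to be the first zero \emph{after} $k$. The forced sequence to study has inhomogeneity $1$ on even $n<k$, $0$ on odd $n<k$, and $1$ for $n\ge k$, i.e.\ generating function $(1+z^k)/((1-z^2)(1+az^2+z^k))$. It is the numerator $1+z^k$ that, via Pascal's identity, produces same-parity truncated sums $aA_{J,c}$. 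It also gives the adjacent-packet identity pairing the anchors as $N-2=kj+2d$, $N-k=k(j-1)+2(d+1)$.

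Your even-$k$ step is also misstated. The halves satisfy $(1+ay+y^{k/2})\widetilde Q(y)$, which is not in the family, so nothing iterates. This trinomial is instead killed directly: the forced $s_n$ stay in $(0,1]$, and the coefficient of $y^{k/2}$ then exceeds $1$.

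Second, your analytic target and endgame do not close the argument. ``$f$ strictly positive before $N$'' contradicts the anchors, which are zeros before $N$, and is too weak anyway. With $x_j=q_{N-j}$ and $p_t=q_{N+t}$, what is needed is $x_1>0$ and $0<x_j<1-a$ for $3\le j\le k-1$. Positivity of the delayed terms forbids zero digits, and $x_{k-t}<1-a$ keeps $p_t=1-ap_{t-2}-x_{k-t}$ positive. Together these force all digits to be $1$ through $N+2k-2$. Then $p_{k-2}=1-ap_{k-4}$ and $p_{2k-2}=1-ap_{2k-4}-p_{k-2}$ give $p_{2k-2}=a(p_{k-4}-p_{2k-4})$. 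The extra bound $x_4>(1+a)/2$ yields $p_{k-4}<(1-a)/2<p_{2k-4}$, hence $p_{2k-2}<0$.

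Deferring this to ``the contradiction shared with the companion paper'' leaves the proof unfinished. It also means your regimes must deliver two things: a relative deficit below the anchor level at every non-anchor sample, only of order $1/\theta$ yet larger than $a(1+a)$, and the large value at $N-4$. That requires ingredients missing from your plan:
\begin{itemize}
\item $a\le1/2$, from $u_{k+4}=a(1-2a)$;
\item then $a<10^{-2}$;
\item for $\theta\ge1600$, a separate spectral bootstrap $a<e^{-0.68\theta}$. This is what lets the translates with shape parameters $J,J-2,J-4,\dots$, which are not translates of one packet, be frozen into a single profile before any theta comparison;
\item a separate treatment of bounded $\theta$. There the stationary remainder cannot be dropped, and the paper uses the limiting block profile, the amplitude equation $\binom{J+c}{c}a^{c+1}=1$ with $J+c$ odd, and a finite certified search.
\end{itemize}
The crossover you single out is handled analytically by the overlapping ranges $\kappa>\sqrt\theta/4$, $2\le\kappa\le\sqrt\theta/4$, $\kappa<2$, with no grid computation.
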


We first record why even exponents require no asymptotic argument.

\begin{lemma}[Even exponents]\label{lem:even-k}
The conclusion of Theorem~\ref{thm:family} holds whenever $k$ is even
and $k\ge4$.
\end{lemma}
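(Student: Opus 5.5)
The plan is to separate $Q$ into its even and odd coefficient classes, so that the problem becomes one about the trinomial $1+ay+y^m$ in $y=x^2$. That trinomial is then excluded by following the forced coefficients up to degree $m$.

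\emph{Reduction.} Write $k=2m$ with $m\ge2$, and decompose
\[
Q(x)=Q_0(x^2)+xQ_1(x^2),
\]
where $Q_0,Q_1$ have nonnegative coefficients. Since $P_{k,a}(x)=1+ax^2+x^{2m}$ contains only even powers of $x$, we get
\[
P_{k,a}(x)Q(x)=T(x^2)Q_0(x^2)+x\,T(x^2)Q_1(x^2),\qquad T(y)=1+ay+y^m .
\]
The two summands involve disjoint sets of exponents. Hence $TQ_0$ and $TQ_1$ both have all coefficients in $\{0,1\}$. It therefore suffices to show the following claim: if $R\ne0$ has nonnegative coefficients, then $T R$ cannot have all coefficients in $\{0,1\}$.

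\emph{Normalization.} Suppose such an $R\neq 0$ exists. Divide out the largest power of $y$ dividing $R$; this does not affect the hypotheses. Then $R(y)=\sum_{j\ge0}r_jy^j$ with $r_0>0$. Write $c_j$ for the coefficients of $TR$, with $r_j=0$ for $j<0$. Then
\[
c_j=r_j+a\,r_{j-1}+r_{j-m}.
\]

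\emph{Forced coefficients below degree $m$.} First, $c_0=r_0>0$, so $c_0=1$ and $r_0=1$. For $1\le j\le m-1$ the term $r_{j-m}$ vanishes, so $c_j=r_j+ar_{j-1}$. I show by induction that $0<r_{j-1}\le1$ for these $j$:
\begin{itemize}
\item Assuming $0<r_{j-1}\le 1$, we have $c_j\ge ar_{j-1}>0$, so $c_j=1$.
\item This forces $r_j=1-ar_{j-1}$.
\item Since $0<a<1$ and $0<r_{j-1}\le1$, this gives $1-a\le r_j<1$.
\end{itemize}
Thus $r_j>0$ for all $0\le j\le m-1$, and in particular $r_{m-1}>0$.

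\emph{Contradiction at degree $m$.} At $j=m$ we obtain
\[
c_m=r_m+a\,r_{m-1}+r_0\ \ge\ a\,r_{m-1}+1\ >\ 1 .
\]
This contradicts $c_m\in\{0,1\}$. Therefore $R=0$, so $Q_0=Q_1=0$ and $Q=0$. The case $m=2$ is included: there $r_1=1-a>0$ and $c_2\ge 1+a(1-a)>1$.

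The only step needing care is the positivity bound $r_j\ge1-a$. It guarantees that every $c_j$ with $1\le j<m$ is strictly positive, hence equal to $1$, so the recurrence never jumps to $c_j=0$; it also makes the final inequality strict. The rest is bookkeeping, and no analytic input is needed for even $k$.
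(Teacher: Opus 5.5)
Your proposal is correct and follows the paper's proof essentially step for step. Both split $Q$ into even and odd parts to reduce to $1+ay+y^m$, normalize so the lowest coefficient is $1$, force every product coefficient below degree $m$ to equal $1$ by induction, and reach the contradiction $c_m>1$ at degree $m$. Your bound $1-a\le r_j<1$ is only a slightly sharper form of the paper's $r_j\in(0,1]$.
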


\begin{proof}
Write $k=2m$ and
\[
 Q(x)=Q_0(x^2)+xQ_1(x^2),\qquad R(y)=1+a y+y^m.
\]
The even and odd coefficient subsequences of
$(1+a x^2+x^{2m})Q(x)$ are respectively the coefficient sequences of
$R(y)Q_0(y)$ and $R(y)Q_1(y)$.  Thus, if $Q\ne0$, one obtains a nonzero
polynomial $S(y)=\sum_{n\ge0}s_ny^n$ with nonnegative coefficients such
that $R(y)S(y)$ has all coefficients in $\{0,1\}$.  After removing the
lowest power of $y$ from $S$, we may assume $s_0>0$.  The constant
coefficient of the product then gives $s_0=1$.

Let $c_n$ denote the coefficient of $y^n$ in $R(y)S(y)$.  For
$1\le n<m$,
\[
                         c_n=s_n+a s_{n-1}.
\]
Starting with $s_0=1$, induction shows that $0<s_{n-1}\le1$.  Hence
$c_n$ cannot be zero, so $c_n=1$ and
$s_n=1-a s_{n-1}$, which again lies in $(0,1]$.  At degree $m$ we obtain
\[
                  c_m=s_m+a s_{m-1}+s_0>1,
\]
contrary to $c_m\in\{0,1\}$.
\end{proof}

\begin{proof}[Derivation of Theorem~\ref{thm:family}]
Lemma~\ref{lem:even-k} gives the result for even $k$.  For odd
$7\le k\le339$ it is \cite[Theorem~1.1]{DvorskyFinite}, and for $k\ge341$ it follows from
Theorem~\ref{thm:main}.
\end{proof}

It remains to prove Theorem~\ref{thm:main} for odd $k$; throughout the
rest of the paper, $k\ge341$ is therefore odd.

\subsection*{Outline of the proof}
The proof has four steps, each quantified uniformly in $k$ in the body.
\begin{enumerate}[label=\textbf{\arabic*.},leftmargin=2.2em]
\item \textbf{First-zero reduction.}
Up to the first zero of the product, all later product digits are forced to be
$1$.  At the first zero, nonnegativity implies that three summands vanish
simultaneously and in particular gives two exact earlier zeros of the forced
sequence.  Thus a Boolean continuation problem is converted into two
anchor equations.

\item \textbf{Single-packet estimate.}
When one packet translate dominates, the two anchor equations and the exact
adjacent-packet identity determine its adjacent-coefficient ratio.  Since the
single-packet ratios are strictly decreasing, the two anchor indices are quantitatively
localized near the packet mode, and every neighboring non-anchor sample has
an explicit deficit from the anchor level.

\item \textbf{Multiple-translate estimate.}
When several packet translates contribute appreciably, a sparse spectral
projection first implies that $a$ is exponentially small.  The slowly varying
packet parameter can then be held fixed in the comparison, and the cyclic sum
is approximated in $C^2$ by an antiperiodic Gaussian periodization, or Jacobi
theta kernel.  The two anchors correspond to nearly equal high samples in the
connected positive component of this kernel.  Strong log-concavity then gives
an explicit deficit at every other sampled point.

\item \textbf{Post-zero contradiction.}
Once every non-anchor cofactor coefficient immediately preceding the first
product zero lies strictly between $0$ and $1-a$, there is no remaining
Boolean choice: the subsequent product coefficients are forced to stay $1$.
An additional lower bound for the coefficient four places before that zero is
then used in an exact terminal recurrence identity, which implies that a later
cofactor coefficient is negative.
\end{enumerate}

The number $341$ has no intrinsic significance.  It is a convenient
cutoff obtained by using deliberately nonoptimized constants in the
passage between the single-translate and multiple-translate ranges.  The point is the
uniform statement in $k$, not the numerical value of the threshold.

The large-$k$ argument is global and analytic; it does not classify all
possible $0$--$1$ continuations.  Once the pre-zero estimate is proved, a
short positivity induction implies that all subsequent product digits through
the critical degree equal $1$, and a two-line recurrence identity gives the
contradiction.  Thus the main estimates concern the forced recurrence before
the first zero rather than the Boolean continuation after it.

This distinction is relevant to the general conjecture.  The three-term
factor \eqref{eq:intro-family} requires exact recurrence identities,
negative-binomial structure, quantitative log-concavity, a spectral
projection, theta-function estimates, and a compact finite calculation because the
first-zero relation gives only two anchor conditions.  A fixed short part
with more positive terms can give additional exact zero conditions, which may
be more restrictive in a single-profile range.  A general theorem would still
have to control interactions among several delays and competing modes.  The
present argument isolates the minimally constrained case; it does not assert
that the broader problem is automatic.

The proof is organized by parameter range.  Sections~\ref{sec:firstzero}
and \ref{sec:packet} give the exact reduction and packet identities.
Section~\ref{sec:compact} contains the bounded-parameter argument.
Section~\ref{sec:single} contains the estimates for one dominant packet translate.
The sparse spectral bootstrap and the multiple-translate theta range are in
Sections~\ref{sec:spectral} and \ref{sec:theta}.  Section~\ref{sec:interior}
establishes the common pre-zero coefficient bounds.  Sections~\ref{sec:postzero}
and \ref{sec:assembly} then complete the proof.
All genuinely numerical material is collected in the appendices.  The five
parts of the parameter partition are a bounded packet range, a bounded
one-translate range, a closely spaced first-harmonic range, a multiple-translate
theta range, and a widely spaced large-parameter range.
Their exact definitions and closing results are displayed after the canonical
parameters are introduced in Section~\ref{sec:firstzero}.
Theorem~\ref{thm:interior} is the common analytic output of these five
ranges: it supplies the coefficient bounds immediately before the first
zero.  Sections~\ref{sec:postzero} and \ref{sec:assembly} then use only the
recurrence and nonnegativity.

\paragraph{Use of AI tools.}
During the preparation of this paper, the author used OpenAI ChatGPT
(GPT-5.6 Sol) as an interactive research assistant. Its roles included
exploratory numerical work, discovering and checking quantitative estimates,
generating verification code, checking the consistency of the exposition with
the numerical evidence, and locating relevant literature. All mathematical
arguments, computations, certificates, and references appearing in the final
manuscript were subsequently checked independently by the author.

\paragraph{Data and code availability.}
The ancillary source package contains the outward-rounded interval programs
and frozen candidate data used in Appendices~\ref{app:compact}--\ref{app:C2}.
From the package root, install \texttt{certificate/requirements.txt} and run
\texttt{python3 certificate/run\_all.py}; this regenerates the compact
candidate list and verifies the numerical inequalities cited below.

\section{The forced recurrence and the first zero}\label{sec:firstzero}

Suppose, toward a contradiction, that $Q\ne0$ and
\[
 Q(x)=\sum_{n\ge0}q_nx^n,\qquad
 R(x)=P_{k,a}(x)Q(x)=\sum_{n\ge0}r_nx^n,
\]
where $q_n\ge0$ and $r_n\in\{0,1\}$.  After dividing both $Q$ and $R$
by their lowest common power of $x$, we may assume $q_0=r_0=1$.  Put
$q_n=0$ for $n<0$.  Coefficient comparison gives
\begin{equation}\label{eq:rec}
             r_n=q_n+a q_{n-2}+q_{n-k}.
\end{equation}
Since all three terms on the right are nonnegative,
\begin{equation}\label{eq:q01}
             0\le q_n\le r_n\le1
\end{equation}
for every $n$.  In particular, $r_n=0$ forces $q_n=0$.

\begin{lemma}[Initial forcing]\label{lem:initial}
For $0\le j\le(k-1)/2$,
\[
 r_{2j}=1,\qquad
 q_{2j}=1-a+a^2-\cdots+(-a)^j>0,
\]
while $r_{2j+1}=q_{2j+1}=0$ for $2j+1<k$.  Moreover
\[
                   r_k=1,\qquad q_k=0.
\]
\end{lemma}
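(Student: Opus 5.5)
The plan is a direct induction on $n$ from $0$ up to $k$, using only the recurrence \eqref{eq:rec}, the bounds \eqref{eq:q01}, and the normalization $q_0=r_0=1$, $q_n=0$ for $n<0$.

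\textbf{Range $n<k$.} Here $q_{n-k}=0$, so \eqref{eq:rec} reads $r_n=q_n+aq_{n-2}$, and I treat the two parities separately.

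For odd $n<k$, I induct on $n$. For $n=1$ we have $r_1=q_1$, since $q_{-1}=0$. The key observation is that $r_1=1$ is impossible. Indeed, $q_1=1$ would give $r_3=q_3+a\ge a>0$, hence $r_3=1$ and $q_3=1-a\in(0,1)$. Continuing, $q_{2j+1}$ would be the same alternating sum as the even sequence, and $r_{2j+1}=1$ would be forced for all $2j+1<k$. That is consistent, so this route does not immediately give a contradiction, and I must use the normalization more carefully.

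The correct argument should instead start from the lowest-degree convention: we divided by the \emph{lowest common} power of $x$, so $q_0=1$ but $q_1$ is a priori free. To close this, I use degree $k+1$ later. If the odd chain were nonzero, with $q_1=b>0$, then either $r_1=1$ and the odd chain mirrors the even one up to degree $k$, or $r_1=0$ and $q_1=0$. I will show $q_{2j+1}=0$ by comparing with $r_k$. At degree $k$ (odd), $r_k=q_k+aq_{k-2}+q_0=q_k+aq_{k-2}+1$. Since $r_k\le1$ and all terms are nonnegative, this forces $r_k=1$, $q_k=0$ and $aq_{k-2}=0$, so $q_{k-2}=0$. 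Then $r_{k-2}=q_{k-2}+aq_{k-4}\in\{0,1\}$ equals $aq_{k-4}$. Because $q_{k-4}\le 1$ and $a<1$, we get $aq_{k-4}<1$, hence $r_{k-2}=0$ and $q_{k-4}=0$. Descending inductively, $r_{2j+1}=aq_{2j-1}<1$ forces $q_{2j-1}=0$, all the way down to $q_1=0$. So all odd $q$ and $r$ below $k$ vanish, and $q_k=0$, $r_k=1$. This downward induction is the crux, and it is where $a<1$ is used.

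For even $n=2j\le k-1$, I induct upward: $r_0=1$ and $q_0=1$. Given $q_{2j-2}\in(0,1]$, we have $r_{2j}=q_{2j}+aq_{2j-2}\ge aq_{2j-2}>0$, so $r_{2j}=1$ and $q_{2j}=1-aq_{2j-2}$. This equals the stated alternating sum, which lies in $(0,1]$ because $0<a<1$.

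The only subtle point is the odd class. Forward induction alone cannot exclude $q_1>0$; the downward pass from the constraint at degree $k$ is what forces it.
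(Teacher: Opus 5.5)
Your proof is correct and follows the paper's argument: degree $k$ forces $r_k=1$ and $q_k=q_{k-2}=0$, a downward pass using $aq_{j-2}<1$ eliminates the odd class, and forward induction on $q_{2j}=1-aq_{2j-2}$ handles the even class. You should delete the abandoned forward attempt on the odd class (the unproved claim that $r_1=1$ is impossible), since the downward pass already establishes it.
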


\begin{proof}
At degree $k$, equation~\eqref{eq:rec} reads
\[
 r_k=q_k+a q_{k-2}+q_0.
\]
All three terms are nonnegative and $q_0=1$, so $r_k=1$ and
$q_k=q_{k-2}=0$.  If an odd $j<k$ satisfies $q_j=0$, then
\eqref{eq:rec} gives $r_j=a q_{j-2}$.  By \eqref{eq:q01},
$0\le q_{j-2}\le1$, so $0\le a q_{j-2}<1$; since $r_j$ is a
$0$--$1$ digit, it follows that $r_j=q_{j-2}=0$.  Starting from
$j=k-2$ and iterating gives $q_j=r_j=0$ for all odd $j<k$.  On the even indices the recurrence is
$q_{2j}=1-aq_{2j-2}$, which gives the displayed alternating sum.  Its
closed form is
\[
        \frac{1+(-1)^j a^{j+1}}{1+a}>0.
\]
\end{proof}

Define the forced sequence $u_n=u_n^{(k)}(a)$ by
\begin{equation}\label{eq:forced}
 u_n=\beta_n-a u_{n-2}-u_{n-k},\qquad u_n=0\quad(n<0),
\end{equation}
where
\[
 \beta_n=\begin{cases}
 1,& n<k\text{ even, or }n\ge k,\\
 0,& n<k\text{ odd}.
 \end{cases}
\]

\begin{lemma}[First-zero reduction]\label{lem:firstzero}
Let
\[
       N=\min\{n>k:r_n=0\}.
\]
Then $N$ exists, $q_n=u_n$ for $n<N$, and
\begin{equation}\label{eq:anchors}
       q_N=q_{N-2}=q_{N-k}=0,
       \qquad u_{N-2}=u_{N-k}=0.
\end{equation}
\end{lemma}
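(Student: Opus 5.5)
Three things must be shown: (i) $N$ exists; (ii) $q_n=u_n$ for every $n<N$; (iii) the anchor equations \eqref{eq:anchors} hold at $n=N$. I will prove (ii) first, under the assumption that $N$ is finite, then use it to get (iii), and settle existence last.

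\textbf{Step 1: agreement before $N$.} I argue by strong induction on $n<N$. For $n\le k$ Lemma~\ref{lem:initial} already gives $r_n=\beta_n$: indeed $r_n=1$ for even $n<k$, $r_n=0$ for odd $n<k$, and $r_k=1$. For $k<n<N$ the definition of $N$ gives $r_n=1=\beta_n$. Hence $r_n=\beta_n$ for all $n<N$. Subtracting the recurrence \eqref{eq:rec} from \eqref{eq:forced}, the difference $d_n=q_n-u_n$ satisfies
\[
d_n=-a\,d_{n-2}-d_{n-k},\qquad d_n=0\ (n<0),
\]
for all $n<N$. Induction then gives $d_n=0$ for every $n<N$.

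\textbf{Step 2: the anchors.} At $n=N$ we have $0=r_N=q_N+aq_{N-2}+q_{N-k}$. All three terms are nonnegative and $a>0$, so each vanishes. This gives $q_N=q_{N-2}=q_{N-k}=0$. Since $N-2$ and $N-k$ are both less than $N$, Step~1 converts these into $u_{N-2}=u_{N-k}=0$.

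\textbf{Step 3: existence of $N$.} This is the only point needing an idea beyond bookkeeping. Suppose instead that $r_n=1$ for every $n>k$. Then, by Step~1 (now valid for all $n$), $q_n=u_n$ for all $n$. But $Q$ is a polynomial, so $q_n=0$ for $n\ge n_0$, and then \eqref{eq:rec} gives $r_n=0$ for $n\ge n_0+k$. This is a contradiction, so some $r_n$ with $n>k$ vanishes and the minimum $N$ exists.

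I expect no genuine obstacle. The one point requiring care is that the case split in $\beta_n$ exactly matches the digits forced by Lemma~\ref{lem:initial}, including the degree $k$ itself.
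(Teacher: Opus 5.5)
Your proof is correct and follows the paper's argument. Finiteness of $R$ gives existence of $N$; the digits agree with $\beta_n$ below $N$, so \eqref{eq:rec} and \eqref{eq:forced} coincide there; and nonnegativity at $N$ yields the anchors. In Step~3 the detour through $q_n=u_n$ is unnecessary, since $R$ is itself a polynomial with $r_k=1$, so $r_{\deg R+1}=0$ is a zero beyond $k$.
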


\begin{proof}
The polynomial $R$ is finite, so a zero occurs after its last nonzero
coefficient.  Before the first zero after $k$, every product digit is
$1$, hence \eqref{eq:rec} is exactly \eqref{eq:forced}.  At $N$ the
three nonnegative terms on the right of \eqref{eq:rec} sum to zero,
which gives \eqref{eq:anchors}.
\end{proof}

The indices $N-2$ and $N-k$, together with the two equations
$u_{N-2}=u_{N-k}=0$, will be called the \emph{anchors}.  Thus an anchor is
always one of these two forced-zero sample positions, not an arbitrary
reference index.

\begin{lemma}[Uniform reduction in $a$]\label{lem:ahalve}
Under the hypotheses of Lemma~\ref{lem:firstzero}, every odd $k\ge7$
satisfies
\[
                         a\le\frac12.
\]
\end{lemma}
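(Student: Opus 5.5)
The plan is to compute the first few terms of the forced sequence $u_n$ just after degree $k$ explicitly, using Lemma~\ref{lem:initial}. Then I will use the anchor equations of Lemma~\ref{lem:firstzero} to show that the first zero cannot occur too early. Once $N\ge k+5$ is established, the nonnegativity $q_{k+4}=u_{k+4}\ge0$ will force $a\le\frac12$.

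\emph{Step 1: the early terms.} Since $k$ is odd and $k\ge7$, Lemma~\ref{lem:initial} gives
\[
u_1=u_3=0,\qquad u_2=1-a,\qquad u_4=1-a+a^2,\qquad u_k=0,
\]
and $0<u_{k-1}=q_{k-1}\le1$. For $n\ge k$ we have $\beta_n=1$, so the recurrence~\eqref{eq:forced} yields
\[
u_{k+1}=1-a u_{k-1}-u_1=1-a u_{k-1},\qquad
u_{k+2}=1-a u_k-u_2=a,
\]
\[
u_{k+3}=1-a u_{k+1}-u_3=1-a(1-a u_{k-1}),\qquad
u_{k+4}=1-a u_{k+2}-u_4=1-a^2-(1-a+a^2)=a(1-2a).
\]
Because $0<a<1$ and $0<u_{k-1}\le1$, both $u_{k+1}$ and $u_{k+3}$ are strictly positive, and so is $u_{k+2}=a$.

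\emph{Step 2: excluding $N\le k+4$.} By Lemma~\ref{lem:firstzero}, the first zero $N>k$ must satisfy $u_{N-2}=u_{N-k}=0$. I check each small case against the values above.
\begin{itemize}
\item $N=k+1$ would require $u_{k-1}=0$, which is impossible.
\item $N=k+2$ would require $u_2=1-a=0$, which is impossible.
\item $N=k+3$ would require $u_{k+1}=0$, which is impossible.
\item $N=k+4$ would require $u_{k+2}=a=0$, and also $u_4=0$; both are impossible.
\end{itemize}
Hence $N\ge k+5$.

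\emph{Step 3: conclusion.} Since $N\ge k+5$, Lemma~\ref{lem:firstzero} gives $q_{k+4}=u_{k+4}$. By \eqref{eq:q01}, $q_{k+4}\ge0$, so $a(1-2a)\ge0$, and therefore $a\le\frac12$.

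The only point requiring care is bookkeeping. One must check that each index used ($2$, $4$, $k-1$) lies in the range covered by Lemma~\ref{lem:initial}; this holds since $k\ge7$. One must also confirm that the identification $q_n=u_n$ is legitimate up to $n=k+4$, which is exactly what Step~2 provides. I do not expect a genuine obstacle here.
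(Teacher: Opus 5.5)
Your proposal is correct and is essentially the paper's proof. Both compute $u_{k+1}=1-au_{k-1}>0$, $u_{k+2}=a$ and $u_{k+4}=a(1-2a)$, use the anchor conditions $u_{N-2}=u_{N-k}=0$ to exclude $N\in\{k+1,\dots,k+4\}$, and conclude from $q_{k+4}=u_{k+4}\ge0$; your case-by-case check only spells out what the paper states in one line, and the value of $u_{k+3}$ you compute is never used.
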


\begin{proof}
The forced recurrence gives
\[
 u_{k+1}=1-a u_{k-1}>1-a>0,
 \qquad
 u_{k+2}=a,
 \qquad
 u_{k+4}=1-a u_{k+2}-u_4=a(1-2a).
\]
A first zero cannot occur at $k+1,\ldots,k+4$: the anchor conditions and
Lemma~\ref{lem:initial}, together with the displayed positive values, exclude
these four indices.  Thus $a>1/2$ would force a negative cofactor coefficient
before the first zero.
\end{proof}

For later use we write the first anchor in canonical block coordinates
\begin{equation}\label{eq:canonical}
                N-2=kJ+2c,\qquad 0\le c<k,
\end{equation}
and set
\begin{equation}\label{eq:theta-def}
                \theta=\frac{aJ}{1-a}.
\end{equation}
Put
\begin{equation}\label{eq:sigma-kappa-early}
 V=\frac{(J+1)a}{(1-a)^2},\qquad
 \sigma^2=V-\frac14.
\end{equation}
Here $V$ is the variance of the normalized negative-binomial envelope, while
$\sigma^2$ is the effective variance appearing in the centered Fourier
multiplier.  Since $V>\theta$, one has $\sigma^2>0$ whenever $\theta>10$;
in those ranges we write
\[
                    \sigma=\sqrt{\sigma^2},\qquad \kappa=\frac{k}{\sigma}.
\]
Thus $\theta$ is the canonical packet parameter and $\kappa$ is the spacing
between packet images in units of this effective standard deviation.
The exact parameter partition is
\begin{center}
\begin{tabular}{@{}ll@{}}
\toprule
Range & Closing result\\
\midrule
$\theta\le10$ & Proposition~\ref{prop:compact}\\
$10<\theta<1600$ & Proposition~\ref{prop:single-interior}\\
$\theta\ge1600$, $\kappa<2$ & Proposition~\ref{prop:harmonic-interior}\\
$\theta\ge1600$, $2\le\kappa\le\sqrt\theta/4$ & Proposition~\ref{prop:theta-interior}\\
$\theta\ge1600$, $\kappa>\sqrt\theta/4$ & Proposition~\ref{prop:single-interior}\\
\bottomrule
\end{tabular}
\end{center}
When $J\ge1$ and $c\le k-2$, the second anchor has the neighboring
canonical coordinates $(J-1,c+1)$.  The boundary cases $J=0$ and
$c=k-1$ are excluded explicitly in Section~\ref{sec:compact}; in
particular neither can occur when the hypotheses of
Lemma~\ref{lem:firstzero} and $\theta\le10$ hold.

\section{The exact negative-binomial packet}\label{sec:packet}

The generating function of the forced sequence is
\begin{equation}\label{eq:gf}
 U_{k,a}(z)=\sum_{n\ge0}u_nz^n
 =\frac{1+z^k}{(1-z^2)(1+a z^2+z^k)}.
\end{equation}
Put
\[
                     s=\frac1{2+a},\qquad v_n=u_n-s.
\]
For
\[
 \mathcal J_n=\{J\ge1:J\equiv n\pmod2,\ kJ\le n\},
 \qquad c_J=\frac{n-kJ}{2},
\]
define
\[
 A_{J,c}=\sum_{r=0}^{c-1}(-1)^r\binom{J+r}{r}a^r,
 \qquad A_{J,0}=0,
\]
\[
 W_J=(1+a)^{-J-1},
 \qquad D_{J,c}=A_{J,c}-W_J.
\]

\begin{lemma}[Exact coefficient decomposition]\label{lem:decomp}
One has
\begin{equation}\label{eq:u-decomp}
 u_n=\1_{2\mid n}\sum_{r=0}^{n/2}(-a)^r
      +(-1)^{n+1}a\sum_{J\in\mathcal J_n}A_{J,c_J}.
\end{equation}
After subtracting $s$,
\begin{equation}\label{eq:v-decomp}
 v_n=(-1)^{n+1}a\sum_{J\in\mathcal J_n}D_{J,c_J}+\eta_n,
\end{equation}
where the stationary parity remainder satisfies
\begin{equation}\label{eq:eta}
 |\eta_n|\le
 \frac{(1+a)^{1-J_{\rm next}}}{2+a}
 +\1_{2\mid n}\frac{a^{n/2+1}}{1+a}.
\end{equation}
Here $J_{\rm next}$ is the first positive integer of the required parity
larger than the largest member of $\mathcal J_n$.
\end{lemma}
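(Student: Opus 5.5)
The plan is to read off both identities from a single expansion of the generating function \eqref{eq:gf}, in powers of $z^k$ after factoring out $1+az^2$. Formally,
\[
 \frac{1}{1+az^2+z^k}=\frac{1}{(1+az^2)\bigl(1+z^k(1+az^2)^{-1}\bigr)}
 =\sum_{J\ge0}(-1)^J z^{kJ}(1+az^2)^{-J-1}.
\]
This is a valid identity of formal power series, because the $J$th term is $O(z^{kJ})$. Multiplying by $1+z^k$ and regrouping adjacent $J$, I would obtain
\[
 \frac{1+z^k}{1+az^2+z^k}=\frac1{1+az^2}+\sum_{J\ge1}(-1)^Jz^{kJ}\bigl[(1+az^2)^{-J-1}-(1+az^2)^{-J}\bigr].
\]
The bracket equals $-az^2(1+az^2)^{-J-1}$. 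Dividing by $1-z^2$ turns each series in $z^2$ into its partial sums. From $(1+az^2)^{-J-1}=\sum_r\binom{J+r}{r}(-a)^rz^{2r}$, the coefficient of $z^{2c}$ in $z^2(1+az^2)^{-J-1}/(1-z^2)$ is exactly $\sum_{r=0}^{c-1}\binom{J+r}{r}(-a)^r=A_{J,c}$, with $A_{J,0}=0$.

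Next I would extract the coefficient of $z^n$. The term $J$ contributes only when $n-kJ=2c_J\ge0$. Since $k$ is odd, this forces $J\equiv n\pmod 2$, so $J\in\mathcal J_n$. Its sign is $(-1)^J\cdot(-a)=(-1)^{J+1}a=(-1)^{n+1}a$. The $J=0$ piece $1/((1+az^2)(1-z^2))$ contributes $\1_{2\mid n}\sum_{r=0}^{n/2}(-a)^r$. Together these give \eqref{eq:u-decomp}.

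For \eqref{eq:v-decomp}, I would write $A_{J,c_J}=D_{J,c_J}+W_J$ and show that the $W_J$ terms, summed over all $J$ of the right parity, together with the constant $1/(1+a)$ from the even geometric sum, reproduce exactly $s$. I would use
\[
 a\sum_{J\ge J_0,\ J\equiv J_0\,(2)}(1+a)^{-J-1}=\frac{(1+a)^{1-J_0}}{2+a}.
\]
The cases are as follows.
\begin{itemize}
\item For $n$ odd, take $J_0=1$: the full sum is $1/(2+a)=s$, entering with sign $+$.
\item For $n$ even, take $J_0=2$: the full sum is $1/((1+a)(2+a))$, entering with sign $-$. Then $\tfrac1{1+a}-\tfrac1{(1+a)(2+a)}=s$.
\end{itemize}
The even partial sum equals $\tfrac{1-(-a)^{n/2+1}}{1+a}$, so the leftover $\tfrac{(-a)^{n/2+1}}{1+a}$ is one piece of $\eta_n$. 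The other piece is the tail of the $W_J$ series beyond $\max\mathcal J_n$, i.e.\ over $J\ge J_{\rm next}$. By the displayed formula its absolute value is $(1+a)^{1-J_{\rm next}}/(2+a)$.

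Thus $\eta_n$ is the (signed) tail minus the parity term, and the triangle inequality gives \eqref{eq:eta}. The only point needing care is this sign and parity bookkeeping, in particular confirming that $J\ge1$ with $J\equiv n$ starts at $1$ or $2$. I expect this bookkeeping, rather than any analytic estimate, to be the main (mild) obstacle.
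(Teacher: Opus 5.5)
Your proposal is correct and follows essentially the same route as the paper: expand in powers of $z^k$, pair the numerator terms $1$ and $z^k$ across adjacent $J$ (you collapse the bracket to $-az^2(1+az^2)^{-J-1}$ at the generating-function level, where the paper applies Pascal's identity to the coefficients), and identify $s$ as the full parity sum of the $W_J$. The remainder $\eta_n$ then consists of the $W_J$ tail beyond $\max\mathcal J_n$ together with the truncation of the even geometric sum. In $\eta_n$ that truncation term actually enters as $-(-a)^{n/2+1}/(1+a)$, but this sign is irrelevant for the absolute bound.
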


\begin{proof}
Expand
\[
 \frac1{1+a z^2+z^k}
 =\sum_{J\ge0}(-1)^J z^{kJ}(1+a z^2)^{-J-1}
\]
formally in the range relevant to a fixed coefficient, and then expand
\[
 (1+a z^2)^{-J-1}
 =\sum_{r\ge0}(-1)^r\binom{J+r}{r}a^r z^{2r}.
\]
The $J=0$ term contributes
\[
 \1_{2\mid n}\sum_{r=0}^{n/2}(-a)^r.
\]
For $J\ge1$, the contributions from the numerator terms $1$ and $z^k$
combine, by Pascal's identity, as
\begin{align*}
 &(-1)^J\sum_{r=0}^{c_J}(-1)^r
 \left[\binom{J+r}{r}-\binom{J-1+r}{r}\right]a^r\\
 &\qquad=(-1)^{J+1}a
 \sum_{r=0}^{c_J-1}(-1)^r\binom{J+r}{r}a^r
 =(-1)^{J+1}aA_{J,c_J}.
\end{align*}
Because $k$ is odd, every $J\in\mathcal J_n$ has the same parity as
$n$.  This proves \eqref{eq:u-decomp}.

The full negative-binomial sum is $W_J=(1+a)^{-J-1}$.  Thus
\eqref{eq:v-decomp} holds with the exact remainder
\[
 \eta_n=
 \1_{2\mid n}\sum_{r=0}^{n/2}(-a)^r-s
 +(-1)^{n+1}a\sum_{J\in\mathcal J_n}W_J .
\]
The corresponding infinite parity sums equal $s$: for odd $n$,
\[
 a\sum_{m\ge0}(1+a)^{-2m-2}=\frac1{2+a},
\]
and for even $n$,
\[
 \frac1{1+a}-a\sum_{m\ge1}(1+a)^{-2m-1}
 =\frac1{2+a}.
\]
The omitted $W_J$ tail is therefore at most
$(1+a)^{1-J_{\rm next}}/(2+a)$, while truncating the even geometric
sum contributes $a^{n/2+1}/(1+a)$.  This proves \eqref{eq:eta}.
\end{proof}

The following identity gives the exact sign of $D_{J,c}$.

\begin{lemma}[Packet remainder]\label{lem:remainder}
For $c\ge1$,
\begin{equation}\label{eq:D-int}
 D_{J,c}=(-1)^{c+1}c\binom{J+c}{c}a^c
 \int_0^1(1-x)^{c-1}(1+ax)^{-J-c-1}\,dx.
\end{equation}
Consequently, if
\[
 E_{J,c}=|D_{J,c}|,
 \qquad b_{J,c}=\binom{J+c}{c}a^c,
\]
and
\[
 H_{J,c}=c\int_0^1(1-x)^{c-1}(1+ax)^{-J-c-1}\,dx,
\]
then
\begin{equation}\label{eq:E-bH}
 E_{J,c}=b_{J,c}H_{J,c},\qquad 0<H_{J,c}<1,
\end{equation}
and
\begin{equation}\label{eq:E-adj}
                    E_{J,c}+E_{J,c+1}=b_{J,c}.
\end{equation}
\end{lemma}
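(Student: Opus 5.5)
The identity \eqref{eq:D-int} is Taylor's theorem with integral remainder applied to a single function of one real variable; the remaining claims then follow by short manipulations. Fix $J\ge1$, $0<a<1$, and set
\[
 f(x)=(1+ax)^{-J-1},\qquad x\in[0,1].
\]
Then $f(1)=(1+a)^{-J-1}=W_J$, and
\[
 \frac{f^{(r)}(0)}{r!}=(-1)^r\binom{J+r}{r}a^r .
\]
Hence $A_{J,c}$ is exactly the Taylor polynomial of order $c-1$ of $f$ at $0$, evaluated at $x=1$.

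The first step is the Taylor expansion with integral remainder:
\[
 W_J=f(1)=A_{J,c}+\frac1{(c-1)!}\int_0^1(1-x)^{c-1}f^{(c)}(x)\,dx .
\]
The $c$-th derivative is
\[
 f^{(c)}(x)=(-1)^c\,(J+1)(J+2)\cdots(J+c)\,a^c\,(1+ax)^{-J-c-1}.
\]
Moreover,
\[
 \frac{(J+1)\cdots(J+c)}{(c-1)!}=c\binom{J+c}{c}.
\]
Therefore
\[
 D_{J,c}=A_{J,c}-W_J=-(-1)^c c\binom{J+c}{c}a^c\int_0^1(1-x)^{c-1}(1+ax)^{-J-c-1}\,dx,
\]
which is \eqref{eq:D-int}. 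The convention $A_{J,0}=0$ is consistent with this at $c=0$, since then $D_{J,0}=-W_J$. That value is also what \eqref{eq:E-adj} needs at $c=0$, with $b_{J,0}=1$: it reads $W_J+E_{J,1}=1$.

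The second step reads off \eqref{eq:E-bH}. The integrand is positive on $(0,1)$, so $E_{J,c}=b_{J,c}H_{J,c}$ with $H_{J,c}>0$. For the upper bound, note that $0<(1+ax)^{-J-c-1}<1$ for $x\in(0,1]$. Hence
\[
 H_{J,c}<c\int_0^1(1-x)^{c-1}\,dx=1 .
\]

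The third step proves \eqref{eq:E-adj} without the integral. By definition,
\[
 D_{J,c+1}-D_{J,c}=A_{J,c+1}-A_{J,c}=(-1)^c b_{J,c}.
\]
Since $D_{J,c}$ has sign $(-1)^{c+1}$ and $D_{J,c+1}$ has sign $(-1)^c$, we have
\[
 D_{J,c+1}=(-1)^cE_{J,c+1},\qquad D_{J,c}=(-1)^{c+1}E_{J,c}.
\]
Substituting into the difference gives $(-1)^c(E_{J,c+1}+E_{J,c})=(-1)^c b_{J,c}$, which is \eqref{eq:E-adj}.

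There is no real obstacle in this argument. The only points needing care are the combinatorial normalization $(J+1)\cdots(J+c)/(c-1)!=c\binom{J+c}{c}$ and tracking the sign $(-1)^{c+1}$ correctly.
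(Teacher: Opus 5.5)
Your proof is correct and follows the paper's own argument. Taylor's theorem with integral remainder for $x\mapsto(1+ax)^{-J-1}$ at $x=1$ gives \eqref{eq:D-int}, and \eqref{eq:E-adj} is the absolute-value form of $D_{J,c+1}=D_{J,c}+(-1)^c b_{J,c}$. You additionally spell out details the paper leaves implicit: the normalization $(J+1)\cdots(J+c)/(c-1)!=c\binom{J+c}{c}$, the strict bounds $0<H_{J,c}<1$, and the boundary case $c=0$ of \eqref{eq:E-adj}.
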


We call the signed remainder $D_{J,c}$, or its magnitude $E_{J,c}$ when the
sign has already been separated, a \emph{negative-binomial packet}.  A packet
translate in \eqref{eq:v-decomp} means one individual summand indexed by
$J\in\mathcal J_n$.  This terminology will not be used for the full cyclic
sum of these translates.

\begin{proof}
Apply the integral form of Taylor's theorem (see, e.g., \cite{Rudin}) to
$(1+a)^{-J-1}$ after truncation at order $c-1$.  The alternating sign is
fixed by the next Taylor term.  The identity \eqref{eq:E-adj} is the
absolute-value form of
\[
 D_{J,c+1}=D_{J,c}+(-1)^c b_{J,c}.
\]
\end{proof}

After normalization, the envelope is a classical probability law:
\[
 (1-a)^{J+1}b_{J,c}
 =\binom{J+c}{c}(1-a)^{J+1}a^c
\]
is exactly the mass at $c$ of a negative-binomial distribution with
shape $J+1$ and parameter $a$.  Refined local Gaussian expansions for
this distribution are standard; a convenient precise reference is
Ouimet \cite{Ouimet}.  We shall nevertheless keep the Fourier estimates
used later explicit, because in the multiple-translate range $a$ is not held
fixed: the spectral estimate implies $a\to0$ while $J\to\infty$ with
$aJ/(1-a)=\theta$.  The fixed-parameter asymptotic in \cite{Ouimet} does not
directly cover this joint limit.

The envelope ratio is
\begin{equation}\label{eq:b-ratio}
 \frac{b_{J,c+1}}{b_{J,c}}
 =a\frac{J+c+1}{c+1}
 =a+(1-a)\frac{\theta_J}{c+1},
 \qquad \theta_J=\frac{aJ}{1-a}.
\end{equation}
Thus $b_{J,c}$ is unimodal, with mode $\lfloor\theta_J\rfloor$.
For fixed $n$, replacing $J$ by $J-2$ shifts the packet offset
$c_J-\theta_J$ by the exact amount
\begin{equation}\label{eq:spacing}
                       K_a=k+\frac{2a}{1-a}.
\end{equation}

\subsection{Exact log-concavity}

The factor $H_{J,c}$ has a useful probabilistic form.

\begin{lemma}[Binomial expectation]\label{lem:Hprob}
Let
\[
 X\sim\operatorname{Bin}\!\left(J,\frac{a}{1+a}\right).
\]
Then
\begin{equation}\label{eq:Hprob}
 H_{J,c}=\frac1{1+a}\E\frac{c}{c+X},
\end{equation}
and for $c\ge2$,
\begin{equation}\label{eq:Hdiff}
 H_{J,c}-H_{J,c-1}
 =\frac1{1+a}\E\frac{X}{(c+X)(c-1+X)}>0.
\end{equation}
\end{lemma}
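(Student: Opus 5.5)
The plan is to turn the Taylor-remainder integral $H_{J,c}$ into a Beta-type integral whose integrand is a probability generating function. Once that is done, \eqref{eq:Hprob} is an exchange of expectation and integral, and \eqref{eq:Hdiff} is a pointwise identity between rational functions of $X$.

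\emph{Step 1 (change of variables).} In
\[
H_{J,c}=c\int_0^1(1-x)^{c-1}(1+ax)^{-J-c-1}\,dx,
\]
substitute $t=(1-x)/(1+ax)$, which is a decreasing bijection of $[0,1]$ onto itself. Solving gives $x=(1-t)/(1+at)$. A direct computation then yields
\[
1-x=\frac{(1+a)t}{1+at},\qquad
1+ax=\frac{1+a}{1+at},\qquad
dx=-\frac{(1+a)\,dt}{(1+at)^2}.
\]
Collecting powers, the factors of $1+at$ combine to $(1+at)^{-(c-1)+(J+c+1)-2}=(1+at)^J$. The powers of $1+a$ combine to $(1+a)^{(c-1)-(J+c+1)+1}=(1+a)^{-J-1}$. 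Hence
\[
H_{J,c}=\frac{c}{1+a}\int_0^1 t^{c-1}\Bigl(\frac{1+at}{1+a}\Bigr)^{J}dt .
\]

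\emph{Step 2 (probabilistic reading).} Let $p=a/(1+a)$. Then $(1+at)/(1+a)=1-p+pt$, so $\bigl((1+at)/(1+a)\bigr)^J=\E\, t^X$ for $X\sim\operatorname{Bin}(J,p)$. Since $X$ takes finitely many values, expectation and integral may be exchanged. Using $c\int_0^1 t^{c+X-1}\,dt=c/(c+X)$, valid because $c\ge1$ and $X\ge0$, we obtain \eqref{eq:Hprob}.

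\emph{Step 3 (the difference).} For $c\ge2$ both $H_{J,c}$ and $H_{J,c-1}$ are given by \eqref{eq:Hprob}, since $c-1\ge1$. By linearity it suffices to verify the pointwise identity
\[
\frac{c}{c+X}-\frac{c-1}{c-1+X}=\frac{X}{(c+X)(c-1+X)},
\]
whose numerator after cross-multiplication is $c(c-1+X)-(c-1)(c+X)=X$. The integrand is nonnegative, and it is positive on the event $\{X\ge1\}$. That event has probability $1-(1+a)^{-J}>0$ because $J\ge1$ and $a>0$, which gives the strict inequality in \eqref{eq:Hdiff}. This holds for $J\ge1$, which is the range used in Lemma~\ref{lem:decomp}; for $J=0$ the difference is $0$.

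The only step requiring care is choosing the substitution in Step 1. Everything after it is mechanical.
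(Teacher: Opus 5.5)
Your proposal is correct and follows the paper's proof essentially step for step. The paper makes the same substitution $t=(1-x)/(1+ax)$ to get $H_{J,c}=c(1+a)^{-J-1}\int_0^1 t^{c-1}(1+at)^J\,dt$, expands $\bigl((1+at)/(1+a)\bigr)^J$ as the $\operatorname{Bin}(J,a/(1+a))$ generating function and integrates term by term to obtain \eqref{eq:Hprob}, and gets \eqref{eq:Hdiff} by the same subtraction; your strictness argument via $\Pr\{X\ge1\}=1-(1+a)^{-J}>0$ for $J\ge1$ just spells out a detail the paper leaves implicit.
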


\begin{proof}
In \eqref{eq:D-int} make the substitution
$t=(1-x)/(1+ax)$, so
\[
 1-x=\frac{(1+a)t}{1+at},\qquad
 1+ax=\frac{1+a}{1+at},\qquad
 dx=-\frac{1+a}{(1+at)^2}\,dt.
\]
It follows that
\begin{align*}
 H_{J,c}
 &=\frac{c}{(1+a)^{J+1}}\int_0^1t^{c-1}(1+at)^J\,dt\\
 &=\frac1{1+a}\sum_{r=0}^J
 \binom Jr\left(\frac{a}{1+a}\right)^r
 \left(\frac1{1+a}\right)^{J-r}\frac{c}{c+r},
\end{align*}
which is \eqref{eq:Hprob}.  Subtracting the same formula with $c-1$
from the formula with $c$ gives \eqref{eq:Hdiff}.
\end{proof}

\begin{theorem}[Exact strict log-concavity]\label{thm:single-LC}
For every $J\ge1$, $0<a<1$, and every integer $c\ge1$,
\begin{equation}\label{eq:strictLC}
                 E_{J,c}^2>E_{J,c-1}E_{J,c+1}.
\end{equation}
Equivalently, the ratios
\[
                 \rho_c=\frac{E_{J,c+1}}{E_{J,c}}
\]
strictly decrease with $c$.
\end{theorem}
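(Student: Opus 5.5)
The plan is to treat $c\ge2$ through the factorization $E_{J,c}=b_{J,c}H_{J,c}$ of \eqref{eq:E-bH}, and to settle the boundary case $c=1$ by an explicit computation. For $c=1$ the neighbouring term $E_{J,0}$ falls outside that factorization: $E_{J,0}=|A_{J,0}-W_J|=W_J$, whereas the formula \eqref{eq:Hprob} would give $H_{J,0}=0$. First note that all $E_{J,c}>0$. For $c\ge1$ this follows from $b_{J,c}>0$ and $0<H_{J,c}<1$, and for $c=0$ from $E_{J,0}=W_J>0$. Hence strict log-concavity is equivalent to the ratios $\rho_c$ being strictly decreasing, which gives the stated equivalence.

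\emph{Case $c\ge2$.} I would prove separately that both factors are log-concave in $c$, with strictness coming from $b$.

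\emph{Envelope.} By \eqref{eq:b-ratio}, $b_{J,c+1}/b_{J,c}=a+(1-a)\theta_J/(c+1)$. Since $J\ge1$ we have $\theta_J>0$, so this ratio is strictly decreasing in $c$. Therefore $b_{J,c}^2>b_{J,c-1}b_{J,c+1}$ for all $c\ge1$.

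\emph{Correction factor.} Extend $f(t)=\E\,[t/(t+X)]$ to real $t>0$, so that $H_{J,c}=f(c)/(1+a)$ by Lemma~\ref{lem:Hprob}. Each map $t\mapsto t/(t+x)$ with $x\ge0$ is concave, since its second derivative is $-2x/(t+x)^3\le0$. Hence $f$ is a positive concave function on $(0,\infty)$. For $c\ge2$ all of $c-1,c,c+1$ are at least $1$, so the arithmetic--geometric mean inequality gives
\[
f(c)\ge\tfrac12\bigl(f(c-1)+f(c+1)\bigr)\ge\sqrt{f(c-1)f(c+1)}.
\]
This yields $H_{J,c}^2\ge H_{J,c-1}H_{J,c+1}$. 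Multiplying by the strict inequality for $b$ gives \eqref{eq:strictLC} for $c\ge2$.

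\emph{Case $c=1$.} Here I would use the exact recursion $D_{J,c+1}=D_{J,c}+(-1)^cb_{J,c}$ from the proof of Lemma~\ref{lem:remainder}, started at $D_{J,0}=-W_J$. It gives
\[
E_{J,1}=1-W_J,\qquad E_{J,2}=b_{J,1}-E_{J,1}=(J+1)a-1+W_J.
\]
The inequality $E_{J,1}^2>E_{J,0}E_{J,2}$ then reads $(1-W_J)^2>W_J\bigl((J+1)a-1+W_J\bigr)$. Expanding and cancelling the $W_J^2$ terms, this reduces to
\[
1-W_J>(J+1)aW_J .
\]
Multiplying by $(1+a)^{J+1}$ turns it into $(1+a)^{J+1}>1+(J+1)a$, which is Bernoulli's inequality. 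It is strict because $J+1\ge2$ and $a>0$.

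The only real subtlety, and the main obstacle, is this boundary case. The probabilistic representation of $H$ does not extend to $c=0$, so one must notice that $E_{J,0}=W_J$ has to be handled through the exact Pascal/Taylor recursion rather than through \eqref{eq:E-bH}. The rest is a routine product of two log-concave sequences, one of them strict.
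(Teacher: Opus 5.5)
Your argument is correct, but for $c\ge2$ it takes a different route from the paper.

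The paper never needs log-concavity of $H_{J,c}$. Combining $E_{J,c}=b_{J,c}H_{J,c}$ with the adjacent identity \eqref{eq:E-adj} gives the exact formula $\rho_c=(1-H_{J,c})/H_{J,c}$ for $c\ge1$. Hence $\rho_1>\rho_2>\cdots$ is equivalent to strict increase of $H_{J,c}$ in $c$, which is \eqref{eq:Hdiff}. Strictness therefore comes from $H$, and only its monotonicity is used.

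You instead treat $E_{J,c}$ as a product of two log-concave sequences. Strictness comes from the envelope ratio \eqref{eq:b-ratio}, and you need only weak log-concavity of $H$, which you get from concavity of $t\mapsto\E[t/(t+X)]$. The adjacent identity enters only at $c=1$, where your Bernoulli computation is the same as the paper's.

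The paper's route is shorter. It also produces the identity $\rho_c=(1-H_{J,c})/H_{J,c}$, which is reused in Lemma~\ref{lem:single-curv} to turn the anchor ratio into a value of $H_{j,d}$.

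Your route is quantitative at no extra cost. It gives $\log(\rho_{c-1}/\rho_c)\ge\log\bigl(b_{J,c}^2/(b_{J,c-1}b_{J,c+1})\bigr)$, an explicit quantity that is about $1/c$ near the mode when $a$ is small. So curvature bounds of the type \eqref{eq:single-curvature} could be read directly off the envelope ratios, instead of going through the Chernoff estimate used there.
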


\begin{proof}
By \eqref{eq:E-adj},
\[
 \rho_c=\frac{1-H_{J,c}}{H_{J,c}}.
\]
Lemma~\ref{lem:Hprob} says that $H_{J,c}$ strictly increases for
$c\ge2$.  For the boundary $c=1$, put $W=(1+a)^{-J-1}$.  Directly from
the definitions,
\[
 E_{J,0}=W,\qquad E_{J,1}=1-W,\qquad
 E_{J,2}=W+(J+1)a-1.
\]
Hence
\[
 E_{J,1}^2-E_{J,0}E_{J,2}
 =1-W\bigl(1+(J+1)a\bigr)>0
\]
by $(1+a)^{J+1}>1+(J+1)a$.
\end{proof}

This exact log-concavity is the basis of the estimates in the
single-translate range.

\section{The compact packet range}\label{sec:compact}

When $\theta$ is bounded, the stationary term in
\eqref{eq:v-decomp} cannot be discarded.  For $J\ge0$ and $c\ge0$ put
\begin{equation}\label{eq:blockU}
 \widetilde U_{J,c}(a)=\frac1{2+a}
 +(-1)^{J+1}\left[
 aA_{J,c}(a)-\frac{(1+a)^{1-J}}{2+a}
 \right].
\end{equation}
For $J\ge1$ this is the limiting block profile obtained by omitting the
earlier packet translates in the infinite-separation limit.  At $J=0$ the
same formula simplifies to
\begin{equation}\label{eq:U0}
 \widetilde U_{0,c}(a)=1-aA_{0,c}(a)
 =\sum_{r=0}^{c}(-a)^r>0,
\end{equation}
and agrees exactly with the forced even coefficient $u_{2c}$ as long
as $c<k$.

\begin{lemma}[Uniform compact approximation]\label{lem:compact-approx}
Let $k\ge341$, $0<a\le1/2$, and let $J\ge1$, $0\le c<k$.  If
\[
                    \theta_J=\frac{aJ}{1-a}\le20,
\]
then
\begin{equation}\label{eq:compact-error}
 |u^{(k)}_{kJ+2c}(a)-\widetilde U_{J,c}(a)|<\varepsilon,
 \qquad \varepsilon:=1.4\times10^{-49}.
\end{equation}
For $J=0$ one has the exact identity
$u^{(k)}_{2c}(a)=\widetilde U_{0,c}(a)$ for $0\le c<k$.
\end{lemma}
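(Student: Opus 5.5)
The plan is to compare the exact decomposition \eqref{eq:u-decomp} at $n=kJ+2c$ with the block profile \eqref{eq:blockU} term by term, and to show that every discrepancy is a packet remainder far out in its tail or a truncated geometric tail. Since $k$ is odd, $n\equiv J\pmod 2$. Hence $\mathcal J_n=\{J,J-2,J-4,\dots\}\cap[1,\infty)$, the offset of the top translate is $c_J=c$, and the sign is $(-1)^{n+1}=(-1)^{J+1}$. The translate $J'=J-2m$ has offset $c_{J'}=c+mk\ge mk\ge 341m$.

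\textbf{Step 1 (exact bookkeeping).} For $J=0$ the identity $u_{2c}=\widetilde U_{0,c}$ is Lemma~\ref{lem:initial} together with \eqref{eq:U0}. For $J\ge1$, replace $A_{J',c_{J'}}$ by $W_{J'}-D_{J',c_{J'}}$ in every translate with $J'<J$, but keep the top translate $aA_{J,c}$ exact. As in the proof of Lemma~\ref{lem:decomp}, the infinite parity sums of the stationary term plus $a\sum W_{J'}$ equal $s=1/(2+a)$. Therefore the stationary term plus $(-1)^{J+1}a\sum_{J'<J}W_{J'}$ equals $s$ minus the signed parity tail $(-1)^{J+1}a\sum_{J''\ge J,\,J''\equiv J}W_{J''}$. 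Up to the truncation of the even geometric sum, this tail is
\[
 \frac{a(1+a)^{-J-1}}{1-(1+a)^{-2}}=\frac{(1+a)^{1-J}}{2+a}.
\]
This reproduces \eqref{eq:blockU} exactly. We obtain
\[
 u_n-\widetilde U_{J,c}=(-1)^{J}a\sum_{m\ge1,\ J-2m\ge1}D_{J-2m,\,c+mk}\;+\;\tau_n,
\]
where $|\tau_n|\le a^{n/2+1}/(1+a)$ if $n$ is even and $\tau_n=0$ otherwise. Since $n/2\ge k/2>170$ and $a\le1/2$, we get $|\tau_n|\le 2^{-171}<10^{-51}$.

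\textbf{Step 2 (tail of each far translate).} By \eqref{eq:E-bH}, $|D_{J',c'}|\le b_{J',c'}$. Because $J'<J$, we have $\theta_{J'}\le\theta_J\le20$. The ratio \eqref{eq:b-ratio} satisfies, for $c'\ge C:=341$,
\[
 \frac{b_{J',c'+1}}{b_{J',c'}}\le a+(1-a)\frac{20}{c'+1}\le \tfrac12+\tfrac{20}{342}<0.56 .
\]
So $b_{J',c+mk}\le b_{J',C}\,(0.56)^{c+mk-C}$, and summing over $m\ge1$ costs at most a factor $1/(1-0.56^{341})\approx1$. It remains to bound $b_{J',C}$ uniformly. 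Multiplying the ratios from $0$ to $C$ gives
\[
 b_{J',C}=\prod_{r<C}\Bigl(a+(1-a)\frac{\theta_{J'}}{r+1}\Bigr)\le\prod_{r=0}^{C-1}\Bigl(\tfrac12+\tfrac{20}{r+1}\Bigr),
\]
using $a\le\tfrac12$ and the convexity in $a$, which makes the worst case an endpoint $a\in\{0,\tfrac12\}$; the case $a\to0$ gives the even smaller bound $20^C/C!$. This product is an explicit number, of order $2^{-C}$ times a polynomially large factor (roughly $C^{40}$ times a constant). Multiplying by $a\le1/2$ and by the geometric factor should give a bound of order $10^{-49}$.

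\textbf{Main obstacle.} The difficulty is entirely in making this last numerical bound sharp enough to land below $1.4\times10^{-49}$. The crude product above may be too lossy in its early factors, where $20/(r+1)$ dominates. I would first evaluate the product exactly, with outward rounding, as a finite certificate of the kind collected in the appendices. If that falls short, I would split at $r\approx40$: the early factors are bounded by the negative-binomial normalization $b_{J',c}\le(1-a)^{-J'-1}\le e^{\theta_{J'}+\theta_{J'}a/(1-a)}$, which is at most about $e^{40}$, and the product of ratios is used only beyond the mode. Positivity of $a$ and the number of translates, which may be as large as $J/2\approx 20/a$, cause no trouble. The translates are summed geometrically in $m$ with ratio at most $0.56^{k}$, so their number never enters the bound.
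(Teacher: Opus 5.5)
Your Step~1 is correct. It spells out the bookkeeping that the paper leaves implicit when it calls $\widetilde U_{J,c}$ the profile obtained by omitting the earlier translates: $u_{kJ+2c}-\widetilde U_{J,c}$ consists exactly of the translates $J-2m$ ($m\ge1$) at offsets $c+mk$, plus the truncated even geometric tail. Two small repairs are needed. First, since $D=A-W$, you substitute $A=W+D$, so the far sum carries the sign $(-1)^{J+1}$; this is irrelevant for absolute values. Second, for $J=0$, Lemma~\ref{lem:initial} only reaches $2c\le k-1$. For $k<2c<2k$ you need the extra remark that $u_{2c-k}$ is an odd-index forced coefficient below $k$, hence zero, so the even recurrence persists.

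The genuine gap is in Step~2, which is the whole quantitative content of the lemma. Your displayed majorant decouples $a\le\frac12$ from $1-a\le1$, and it is too weak:
\[
 \prod_{q=1}^{341}\Bigl(\tfrac12+\tfrac{20}{q}\Bigr)=2^{-341}\binom{381}{40}\approx5.7\times10^{-49}.
\]
Even after the factor $a\le\frac12$ this gives about $2.8\times10^{-49}>\varepsilon$. Your fallback, as described, does not rescue it: with the same factors beyond $q=40$, one gets $e^{40}\prod_{q=41}^{341}(\frac12+\frac{20}{q})\approx10^{-42}$. Your appeal to convexity in $a$ is also unsupported. In any case the displayed product is not the value at either endpoint, since at $a=\frac12$ the factors are $\frac12+\frac{10}{q}$.

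The fix is to bound each factor by its true maximum over $0<a\le\frac12$. Since $a+(1-a)\frac{20}{q}$ is linear in $a$, it is at most $\frac{20}{q}$ for $q\le20$ and at most $\frac12+\frac{10}{q}$ for $q\ge20$. Hence
\[
 b_{J',341}\le\frac{20^{19}}{19!}\,2^{-321}\prod_{q=21}^{341}\frac{q+20}{q}\approx2.5\times10^{-68}.
\]
Equivalently, in your split version, use the honest mass bound $(1-a)^{-J'-1}\le e^{\theta_{J'}+\log2}\le e^{20.7}$ together with these factors. Your total error is then below $10^{-51}$. It is dominated by your crude truncation estimate, which itself improves to $2^{-342}$ because $n$ even forces $J\ge2$.

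With this repair your route is valid but differs from the paper's. The paper bounds the same far translates by the exponential-moment tail
\[
 \sum_{r\ge R}b_{j,r}\le z^{-R}(1-az)^{-j-1},\qquad z=9/5.
\]
It uses $(j+1)\log\frac1{1-9a/5}<80$ uniformly for $a\le\frac12$, $\theta_j\le20$, and sums geometrically in $m$ to get $3^{80}(5/9)^{341}/(1-(5/9)^{341})<1.324\times10^{-49}$; that is where $\varepsilon$ comes from. The paper's route needs one uniform scalar check and no factor-by-factor analysis in $a$. Your ratio-product route yields a far smaller error, but only once the $a$-dependence of each factor is handled correctly, which is exactly where your draft slipped.
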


\begin{proof}
The first assertion is the generating-function tail estimate recorded
in Appendix~\ref{app:compact}.  For $J=0$, one has $2c<2k$; whenever
$2c\ge k$, the delayed term $u_{2c-k}$ has odd index below $k$ and is
zero by Lemma~\ref{lem:initial}.  Hence the even recurrence remains
$u_{2c}=1-au_{2c-2}$ throughout this range, which gives
\eqref{eq:U0}.
\end{proof}

\begin{lemma}[Compact coordinate boundaries]\label{lem:compact-boundary}
Under the hypotheses of Lemma~\ref{lem:firstzero}, if $k\ge341$ and
$\theta\le10$, the
canonical coordinates \eqref{eq:canonical} satisfy
\[
                         J\ge1,\qquad c\le k-2.
\]
Consequently the second anchor has canonical coordinates
$(J-1,c+1)$.  If $J=1$, the resulting $J-1=0$ block is interpreted by
the exact formula \eqref{eq:U0}.
\end{lemma}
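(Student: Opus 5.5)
The plan is to treat the two boundary cases separately, showing in each that one of the anchor equations $u_{N-2}=u_{N-k}=0$ of Lemma~\ref{lem:firstzero} fails.

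\emph{The case $J=0$.} Here $N-2=2c$ with $0\le c<k$, so the first anchor is an even index below $2k$. The exact identity in Lemma~\ref{lem:compact-approx} gives $u_{2c}=\widetilde U_{0,c}(a)=\sum_{r=0}^c(-a)^r$. This alternating sum is strictly positive for $0<a<1$ by \eqref{eq:U0}. That contradicts $u_{N-2}=0$, so $J\ge1$, with no approximation needed.

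\emph{The case $c=k-1$.} Now $N-2=kJ+2k-2$, hence $N-k=k(J+1)$. The second anchor is therefore the canonical point $(J+1,0)$, and the first is $(J,k-1)$. Since $a\le\frac12$ by Lemma~\ref{lem:ahalve}, we have $\theta_{J+1}\le\theta+a/(1-a)\le11\le20$, so Lemma~\ref{lem:compact-approx} applies to both anchors. Using $A_{J+1,0}=0$, one gets
\[
 \widetilde U_{J+1,0}=\frac{1+(-1)^{J}(1+a)^{-J}}{2+a}.
\]
Since $c=k-1\ge340$ lies far beyond the packet mode $\lfloor\theta_J\rfloor\le10$, the packet $E_{J,k-1}=b_{J,k-1}H_{J,k-1}\le b_{J,k-1}$ is astronomically small. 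Using \eqref{eq:b-ratio}, each ratio is at most $a+11(1-a)/(c+1)$, so the tail is far below $10^{-49}$. Replacing $A_{J,k-1}$ by $W_J$ then gives, up to a negligible error,
\[
 \widetilde U_{J,k-1}\approx\frac{1+(-1)^{J}(1+a)^{-J-1}}{2+a},
\]
by the simplification $a(2+a)-(1+a)^2=-1$.

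For even $J$, both values exceed $1/(2+a)\ge 2/5\gg\varepsilon$, which contradicts the anchors. For odd $J$, both values are of order $1-(1+a)^{-J}\asymp \min(aJ,1)$. These exceed $\varepsilon$, and so give a contradiction, unless $aJ\lesssim10^{-49}$.

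\emph{The main obstacle} is this residual regime: $J$ odd and $a$ so tiny that $aJ$ is comparable to $\varepsilon$, where the uniform approximation cannot decide the sign. Here I would abandon approximation and work with the exact decomposition \eqref{eq:u-decomp} at $n=k(J+1)$. The contributing translates are $J'\le J+1$ of the same parity as $J+1$, with offsets $c_{J'}=k(J+1-J')/2$. I would regard $u_{k(J+1)}$ as a polynomial in $a$ and bound it below by its first-order term. The $J'$ sums $aA_{J',c}$ contribute $a\bigl(1-(J'+1)a+\cdots\bigr)$, and the stationary even part contributes $1-a+\cdots$. Collecting terms exactly should show that $u_{k(J+1)}$ equals a positive multiple of $a$ times $(J+1)/2$, plus an $O(a^2J^2)$ remainder controlled by the binomial tails. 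The remainder is dominated by the leading term when $aJ$ is tiny, which forces $u_{N-k}>0$. If the bookkeeping proves awkward, I would instead apply the same first-order analysis to the first anchor $(J,k-1)$, since a contradiction at either anchor suffices.

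The final claim then follows directly from the two cases. Once $J\ge1$ and $c\le k-2$ hold, $N-k=k(J-1)+2(c+1)$ has canonical coordinates $(J-1,c+1)$. When $J=1$, this block is the exact $J=0$ profile \eqref{eq:U0}.
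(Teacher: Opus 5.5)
Your overall structure matches the paper's. You kill $J=0$ with the exact formula \eqref{eq:U0}, and for $c=k-1$ you place both anchor blocks $(J,k-1)$ and $(J+1,0)$ under Lemma~\ref{lem:compact-approx}. The gap is a sign error in your value of the second anchor, and that error is what creates your ``main obstacle.''

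In \eqref{eq:blockU} with $(J,c)\to(J+1,0)$ and $A_{J+1,0}=0$, the prefactor is $(-1)^{J+2}=(-1)^J$. Hence
\[
 \widetilde U_{J+1,0}=s\bigl[1-(-1)^J(1+a)^{-J}\bigr],\qquad s=\frac1{2+a},
\]
and not $s[1+(-1)^J(1+a)^{-J}]$. You can check this at $J=1$. There $u_k=0$ and $u_{2k-2}=\sum_{r=0}^{k-1}(-a)^r$, so $u_{2k}=\sum_{r=0}^{k}(-a)^r=(1-a^{k+1})/(1+a)$, which matches $s[1+(1+a)^{-1}]=(1+a)^{-1}$.

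So for odd $J$ the second anchor has limiting value $s[1+(1+a)^{-J}]>s\ge 2/5$. This contradicts $u_{N-k}=0$ immediately, for every $a$. For even $J$, your correct value $\widetilde U_{J,k-1}\approx s[1+(1+a)^{-J-1}]>s$ settles the matter at the first anchor. This is exactly the paper's two-parity argument, and there is no residual regime with $aJ\lesssim10^{-49}$. Your claim that for even $J$ ``both values exceed'' $s$ is also false, since $\widetilde U_{J+1,0}<s$ then. It is harmless only because one anchor suffices.

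As written, the odd-$J$, tiny-$a$ case is therefore not proved. The fallback is only sketched (``should show''), and its predicted outcome is wrong. At $n=k(J+1)$ the stationary even part contributes about $(1+a)^{-1}$, while the packet sums contribute only $O(aJ)$. So $u_{k(J+1)}$ lies within $\varepsilon$ of a quantity exceeding $s$, and is near $1$ when $aJ$ is tiny. It is not a multiple of $a(J+1)/2$. That small order of magnitude actually describes the \emph{first} anchor $u_{N-2}$ for odd $J$, which is genuinely of size about $a(J+1)/2$.

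A minor difference from the paper concerns the first anchor. Your tail estimate replacing $A_{J,k-1}$ by $W_J$ is valid, but the paper avoids numerics here. Since $k-1$ is even, Lemma~\ref{lem:remainder} gives $D_{J,k-1}<0$ exactly, hence $\widetilde U_{J,k-1}=s+sW_J-aD_{J,k-1}>s$ for even $J$.
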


\begin{proof}
If $J=0$, then the first anchor is $u_{2c}$ with $c<k$, and
Lemma~\ref{lem:compact-approx} gives
$u_{2c}=\widetilde U_{0,c}>0$, contradicting \eqref{eq:anchors}.

It remains to exclude the wrap boundary $c=k-1$.  In that case
\[
 N-k=k(J+1),
\]
so the second anchor has canonical coordinates $(J+1,0)$.  Since
$\theta\le10$ and $a\le1/2$, its packet parameter is at most $11$, so
Lemma~\ref{lem:compact-approx} applies to both anchor blocks.  Write
$s=(2+a)^{-1}$ and $W_J=(1+a)^{-J-1}$.  From \eqref{eq:blockU},
\begin{equation}\label{eq:wrap-U0}
 \widetilde U_{J+1,0}=s\left[1-(-1)^J(1+a)^{-J}\right].
\end{equation}
If $J$ is odd, this is $>s$.  If $J$ is even, then $J\ge2$ and, since
$k-1$ is even, Lemma~\ref{lem:remainder} gives
$D_{J,k-1}<0$.  Using $A_{J,k-1}=W_J+D_{J,k-1}$ in
\eqref{eq:blockU} yields
\begin{equation}\label{eq:wrap-U1}
 \widetilde U_{J,k-1}=s+sW_J-aD_{J,k-1}>s.
\end{equation}
Thus in either parity one of the two limiting anchor values is larger
than $s\ge2/5$, whereas its finite-$k$ coefficient differs from it by
less than $\varepsilon$.  This contradicts the exact anchor zero.
Hence $c\ne k-1$.

Now
\[
 N-k=kJ+2c-k+2=k(J-1)+2(c+1),
\]
and $0\le c+1<k$, proving the final assertion.
\end{proof}

The parity needed by the compact calculation is also a consequence of
the two anchors, rather than an input to the stability identity.

\begin{lemma}[General Pascal relation]\label{lem:pascal-U}
For $J\ge1$ and $c\ge0$, put
\[
 \widetilde U_1=\widetilde U_{J,c},\qquad \widetilde U_2=\widetilde U_{J-1,c+1},
 \qquad C=\binom{J+c}{c}.
\]
Then
\begin{equation}\label{eq:pascal-U}
 (-1)^{J+c}Ca^{c+1}=\widetilde U_2+(1+a)\widetilde U_1-1.
\end{equation}
\end{lemma}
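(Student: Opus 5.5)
The plan is a direct algebraic verification from the definition \eqref{eq:blockU}, using $s(2+a)=1$ with $s=(2+a)^{-1}$ and Pascal's rule for the binomial coefficients. No analytic input is needed, and the relation holds for every $J\ge1$, $c\ge0$. When $J=1$, the block $\widetilde U_{0,c+1}$ is given by the same formula \eqref{eq:blockU}, which is consistent with \eqref{eq:U0}. When $c=0$, the convention $A_{J,0}=0$ is used.

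First, expand both sides of \eqref{eq:pascal-U} using \eqref{eq:blockU}:
\[
 \widetilde U_2=s+(-1)^J\bigl[aA_{J-1,c+1}-(1+a)^{2-J}s\bigr],
\]
\[
 (1+a)\widetilde U_1=(1+a)s+(-1)^{J+1}\bigl[a(1+a)A_{J,c}-(1+a)^{2-J}s\bigr].
\]
Adding these gives three simplifications.
\begin{itemize}
\item The stationary constants combine to $s(2+a)=1$, which cancels the $-1$.
\item The two stationary geometric terms $\mp(1+a)^{2-J}s$ carry opposite signs $(-1)^J$ and $(-1)^{J+1}$, so they cancel.
\item What remains is $(-1)^J a\bigl[A_{J-1,c+1}-(1+a)A_{J,c}\bigr]$.
\end{itemize}
The lemma is therefore equivalent to the polynomial identity
\[
 A_{J-1,c+1}-(1+a)A_{J,c}=(-1)^c\binom{J+c}{c}a^c .
\]

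To prove this identity, write $(1+a)A_{J,c}=A_{J,c}+aA_{J,c}$ and shift the index in $aA_{J,c}$. This gives
\[
 (1+a)A_{J,c}=\sum_{r=0}^{c-1}(-1)^r\Bigl[\tbinom{J+r}{r}-\tbinom{J+r-1}{r-1}\Bigr]a^r
 -(-1)^{c}\tbinom{J+c-1}{c-1}a^c .
\]
Pascal's rule turns the bracket into $\binom{J-1+r}{r}$, so the sum is exactly $A_{J-1,c}$. Hence
\[
 A_{J-1,c+1}-(1+a)A_{J,c}=(-1)^c a^c\Bigl[\tbinom{J-1+c}{c}+\tbinom{J+c-1}{c-1}\Bigr]=(-1)^cCa^c .
\]
Multiplying by $(-1)^J a$ gives the claimed sign $(-1)^{J+c}$ and power $a^{c+1}$.

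There is no genuine obstacle here. The only point requiring care is bookkeeping: the endpoint term $r=c$ coming from $aA_{J,c}$, and the degenerate cases $c=0$ (where the binomial $\binom{J+c-1}{c-1}$ is read as $0$) and $J=1$ (where $\binom{c}{r}$-type terms with $J-1=0$ still obey Pascal's rule). I would check these edge conventions explicitly.
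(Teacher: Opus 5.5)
Your proposal is correct and follows essentially the same route as the paper: substitute \eqref{eq:blockU} for both blocks, cancel the stationary terms using $(2+a)s=1$, and reduce to the identity $A_{J-1,c+1}=(1+a)A_{J,c}+(-1)^c\binom{J+c}{c}a^c$. The paper quotes that identity without proof, whereas you verify it by index shifting and Pascal's rule; your handling of $c=0$ and of $J=1$ (where \eqref{eq:blockU} at $J-1=0$ agrees with \eqref{eq:U0}) matches the paper's remark.
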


\begin{proof}
Substitute \eqref{eq:blockU} twice and use
\[
 A_{J-1,c+1}=(1+a)A_{J,c}
              +(-1)^c\binom{J+c}{c}a^c.
\]
The $A_{J,c}$ terms and the stationary powers of $1+a$ cancel, while
$(2+a)s=1$, leaving \eqref{eq:pascal-U}.  The computation remains valid
for $J=1$ because \eqref{eq:blockU} at $J-1=0$ is the exact expression
\eqref{eq:U0}.
\end{proof}

In particular, if two neighboring \emph{limiting} blocks are exact
anchors, then \eqref{eq:pascal-U} gives
\begin{equation}\label{eq:amp-eq}
       \binom{J+c}{c}a^{c+1}=1,
       \qquad J+c\ \hbox{odd}.
\end{equation}
Substituting this into $\widetilde U_{J,c}=0$ gives the second algebraic equation
\begin{equation}\label{eq:second-eq}
 \left[(2+a)aA_{J,c}(a)-(-1)^{c+1}\right](1+a)^{J-1}-1=0.
\end{equation}

For the effective passage from finite $k$ to this limiting system, it
is useful to retain the following exact consequence of
Lemma~\ref{lem:pascal-U}.

\begin{lemma}[Amplitude stability]\label{lem:stability}
If $J+c$ is odd, then
\begin{equation}\label{eq:stability}
                 Ca^{c+1}-1=-\widetilde U_2-(1+a)\widetilde U_1.
\end{equation}
\end{lemma}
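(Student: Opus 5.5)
This is an immediate rearrangement of Lemma~\ref{lem:pascal-U}, and we will obtain it without further computation. For $J+c$ odd we have $(-1)^{J+c}=-1$, so \eqref{eq:pascal-U} specializes to
\[
 -Ca^{c+1}=\widetilde U_2+(1+a)\widetilde U_1-1 .
\]
Multiplying by $-1$ and isolating the constant gives
\[
 Ca^{c+1}-1=-\widetilde U_2-(1+a)\widetilde U_1,
\]
which is \eqref{eq:stability}. The hypotheses $J\ge1$ and $c\ge0$ are those of Lemma~\ref{lem:pascal-U}, and the same notation $\widetilde U_1$, $\widetilde U_2$, $C$ is in force. The case $J=1$ is already covered there through the exact formula \eqref{eq:U0}.

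There is no genuine obstacle; the only point to check is the sign convention. One could instead verify the identity directly: substitute \eqref{eq:blockU} for both blocks and use the recursion $A_{J-1,c+1}=(1+a)A_{J,c}+(-1)^c\binom{J+c}{c}a^c$. The stationary terms then cancel, since $(2+a)s=1$, and the factor $(-1)^{J}$ multiplying $(-1)^c$ combines to $(-1)^{J+c}=-1$. It is worth noting, for later use in the compact argument, why the identity is stated this way. Once Lemma~\ref{lem:compact-approx} gives $|\widetilde U_i|<\varepsilon$ at the two anchors, \eqref{eq:stability} bounds $|Ca^{c+1}-1|<(2+a)\varepsilon$. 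This is the quantitative form of \eqref{eq:amp-eq}.
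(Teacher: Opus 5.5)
Your proof is correct and matches the paper's own one-line argument: \eqref{eq:stability} is \eqref{eq:pascal-U} with $(-1)^{J+c}=-1$, rearranged. The direct check you sketch, substituting \eqref{eq:blockU} and using the recursion for $A_{J-1,c+1}$, simply repeats the proof of Lemma~\ref{lem:pascal-U}, so it is not needed here.
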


\begin{proof}
This is \eqref{eq:pascal-U} with $(-1)^{J+c}=-1$.
\end{proof}

\begin{proposition}[Compact exclusion]\label{prop:compact}
If $k\ge341$ is odd, then under the hypotheses of
Lemma~\ref{lem:firstzero} one has
\[
                         \theta>10.
\]
\end{proposition}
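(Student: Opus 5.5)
Suppose, for contradiction, that the hypotheses of Lemma~\ref{lem:firstzero} hold with $\theta\le10$.  By Lemma~\ref{lem:ahalve} we have $a\le1/2$.  By Lemma~\ref{lem:compact-boundary}, the two anchors sit in the neighbouring canonical blocks $(J,c)$ and $(J-1,c+1)$ with $J\ge1$ and $0\le c\le k-2$.  Both packet parameters are at most $\theta+a/(1-a)\le11\le20$, so Lemma~\ref{lem:compact-approx} applies to both.  Hence
\[
|\widetilde U_{J,c}(a)|<\varepsilon,\qquad |\widetilde U_{J-1,c+1}(a)|<\varepsilon,
\]
where the $J-1=0$ case is exact by \eqref{eq:U0} and is in fact already impossible, since $\widetilde U_{0,c+1}>0$ with a uniform lower bound $\ge 1/(1+a)-a^2\ge$ something positive.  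The rest of the argument works only with the $k$-independent limiting system, using $\varepsilon=1.4\times10^{-49}$.

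\textbf{Parity.}  If $J+c$ were even, \eqref{eq:pascal-U} would give
\[
Ca^{c+1}=\widetilde U_2+(1+a)\widetilde U_1-1<3\varepsilon-1<0,
\]
which is absurd.  So $J+c$ is odd.  Lemma~\ref{lem:stability} then gives $|Ca^{c+1}-1|<(2+a)\varepsilon$, an approximate form of \eqref{eq:amp-eq}.

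\textbf{Finite reduction.}  Set $\theta_J=aJ/(1-a)\le10$.  The amplitude relation $\binom{J+c}{c}a^{c+1}\approx1$ says that the envelope term $b_{J,c}=\binom{J+c}{c}a^c$ is of size about $1/a$.  But $(1-a)^{J+1}b_{J,c}$ is a probability, so $b_{J,c}\le(1-a)^{-J-1}\le e^{2aJ}\cdot O(1)\le e^{20}\cdot O(1)$, using $a\le1/2$.  This forces $a$ to be bounded below by an explicit constant $a_0$, and hence $J\le 10(1-a)/a_0$.  Once $J$ and $a$ are controlled, the bound on $b_{J,c}$ together with the decay of the envelope ratio \eqref{eq:b-ratio} past the mode bounds $c$ as well; the ratio at that point is at most $a+(1-a)\theta_J/(c+1)$.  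So only finitely many pairs $(J,c)$ remain, each with $a$ confined to a compact interval.

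\textbf{Certification.}  For each remaining pair $(J,c)$, I would certify by outward-rounded interval arithmetic (Appendix~\ref{app:compact}) that the two-equation system consisting of $|\widetilde U_{J,c}|<\varepsilon$ and $|Ca^{c+1}-1|<3\varepsilon$ has no solution $a\in(0,1/2]$ with $\theta_J\le10$.  In practice this means:
\begin{itemize}
\item isolating the simple roots of $Ca^{c+1}=1$ in tiny intervals, using the monotonicity of $Ca^{c+1}$ in $a$ to bound how far a $3\varepsilon$-perturbation can move the root;
\item on each such interval, certifying that $|\widetilde U_{J,c}|$ (equivalently the left side of \eqref{eq:second-eq}) is bounded away from $0$ by much more than $\varepsilon$.
\end{itemize}

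\textbf{Main obstacle.}  The hardest step is making the finite reduction fully explicit with modest bounds, and then checking that no genuine near-solution of the limiting system \eqref{eq:amp-eq}--\eqref{eq:second-eq} exists.  A near-solution would defeat any margin of order $\varepsilon$.  The tiny value of $\varepsilon$ suggests that the author expects the limiting system to be separated by a comfortable gap, and the interval computation is designed to verify exactly that gap.
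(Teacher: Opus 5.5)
Your proposal follows the paper's proof step for step: reduction to the limiting blocks $\widetilde U_{J,c}$ and $\widetilde U_{J-1,c+1}$ via the compact boundary and approximation lemmas, then parity recovery from the Pascal relation, then amplitude stability pinning $a$ to the root $a_*=\binom{J+c}{c}^{-1/(c+1)}$, a finite candidate list, and an interval-arithmetic separation of $\widetilde U_{J,c}$ from zero near $a_*$. The differences are minor. You dispose of $J=1$ directly via $\widetilde U_{0,c+1}>0$, whereas the paper covers it inside the general argument. Your negative-binomial mass bound for finiteness is valid but much cruder than the paper's exact integer filter \eqref{eq:compact-integer-filter} combined with the monotonicity of $\theta_*$ in $J$. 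That filter is what keeps the certified list small ($109{,}929$ pairs with $c\le36$, $J<28{,}050$) before the separation $|\widetilde U_{J,c}(a_*)|>4.05\times10^{-7}$ and the derivative bound are checked.
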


\begin{proof}
By Lemma~\ref{lem:compact-boundary}, both anchors are represented by
$\widetilde U_1=\widetilde U_{J,c}$ and $\widetilde U_2=\widetilde U_{J-1,c+1}$, with the $J-1=0$ case exact.
Lemma~\ref{lem:compact-approx} therefore gives
\[
                  |\widetilde U_1|,|\widetilde U_2|<\varepsilon.
\]
Before invoking amplitude stability we first recover the parity.  If
$J+c$ were even, the left side of \eqref{eq:pascal-U} would be positive,
whereas
\[
 \widetilde U_2+(1+a)\widetilde U_1-1
 \le -1+(2+a)\varepsilon<0,
\]
a contradiction.  Hence $J+c$ is odd.  Lemma~\ref{lem:stability} now
gives
\begin{equation}\label{eq:amp-near}
 |Ca^{c+1}-1|<(2+a)\varepsilon<4\times10^{-49}.
\end{equation}
If $c=0$, then $C=1$, and \eqref{eq:amp-near} gives
$|a-1|<4\times10^{-49}$, contradicting $a\le1/2$.  Hence $c\ge1$.
Let
\[
              a_*=C^{-1/(c+1)}
\]
be the unique positive root of $Ca^{c+1}-1$, and put $x=a/a_*$.  Since
$Ca_*^{c+1}=1$, \eqref{eq:amp-near} gives
\[
 |x^{c+1}-1|<4\times10^{-49}.
\]
Moreover,
\[
 |x^{c+1}-1|=|x-1|\sum_{r=0}^{c}x^r\ge |x-1|,
\]
so
\begin{equation}\label{eq:root-displacement}
                  \left|\frac a{a_*}-1\right|<4\times10^{-49}.
\end{equation}
Define
\[
                 \theta_*:=\frac{Ja_*}{1-a_*}.
\]
Since $\theta\le10$ implies $a\le10/(J+10)\le10/11$, the relative
bound \eqref{eq:root-displacement} gives, with $x=a/a_*$,
\[
 \frac{\theta_*}{\theta}
 =\frac{a_*}{a}\frac{1-a}{1-a_*}
 =\frac{1-a}{x-a}
 <\frac{1-a}{1-a-4\times10^{-49}}
 <1+5\times10^{-48}.
\]
Hence $\theta_*<10.001$.

The elementary amplitude and parity filters reduce this enlarged range
to a finite list.  Directed interval evaluation of \eqref{eq:blockU} at
the amplitude root gives the uniform separation
\begin{equation}\label{eq:compact-separation}
            |\widetilde U_{J,c}(a_*)|>4.05\times10^{-7}.
\end{equation}
Appendix~\ref{app:compact} displays the derivative being bounded and the
candidate intervals on which it is evaluated; directed interval arithmetic
gives
\[
 \sup\left|\frac{d}{da}\widetilde U_{J,c}(a)\right|<1.5\times10^{11}.
\]
Hence \eqref{eq:root-displacement} changes the left side of
\eqref{eq:compact-separation} by less than $1.5\times10^{-37}$, contradicting
$|\widetilde U_1|<\varepsilon$.
\end{proof}

The compact calculation is the only place where the proof needs a
finite algebraic search over packet indices.

\section{Single-translate estimates and dominant-image coordinates}\label{sec:single}

The canonical anchor coordinates
\[
        N-2=kJ+2c,\qquad 0\le c<k,
\]
are convenient for defining $\theta=aJ/(1-a)$, but they need not identify
the packet image nearest its mode.  We therefore reindex around the dominant
image before using any single-packet estimate.

The two anchor equations and the exact adjacent-packet identity first determine
the adjacent-coefficient ratio of the dominant packet.  Strict log-concavity
of a single packet then implies a quantitative deficit at every other sample.
The remaining estimates show that the
remote packet images and the stationary term contribute less than this
deficit, and that the deficit exceeds the $a$-sized margin needed for the
cofactor coefficients.

Put
\[
        \delta=\frac{a}{1-a},\qquad K_a=k+2\delta.
\]

\begin{lemma}[Dominant-image reindexing]\label{lem:dominant-reindex}
At the first anchor $n_0=N-2$, the packet images occurring in
\eqref{eq:v-decomp} are exactly
\[
 (J_\ell,c_\ell)=(J-2\ell,c+\ell k),\qquad
 0\le \ell\le\left\lfloor\frac{J-1}{2}\right\rfloor .
\]
Their parameters and continuous modal offsets are
\[
 \theta_\ell=\delta J_\ell=\theta-2\ell\delta,
 \qquad
 \Delta_\ell=c_\ell-\theta_\ell,
\]
and hence
\begin{equation}\label{eq:dominant-spacing}
       \Delta_{\ell+1}-\Delta_\ell=K_a.
\end{equation}
Choose $L$ minimizing $|\Delta_\ell|$, and put
\begin{equation}\label{eq:dominant-coords}
 j=J-2L,\qquad d=c+Lk,\qquad
 \vartheta=\delta j=\theta-2L\delta.
\end{equation}
Then
\begin{equation}\label{eq:dominant-anchor-pair}
 N-2=kj+2d,\qquad N-k=k(j-1)+2(d+1).
\end{equation}
Thus the same image index $L$ pairs the two dominant anchor packets as
$(j,d)$ and $(j-1,d+1)$.  If $J$ is odd, the unpaired terminal image
with packet index $1$ is never dominant when $\theta>10$.

Whenever the anchor localization bound
\[
       |d-\vartheta|<2\sqrt\vartheta+2
\]
holds and $2\sqrt\vartheta+2<K_a/2$, the minimizer $L$ is unique.
\end{lemma}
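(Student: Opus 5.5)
The plan is to read off every assertion directly from the index set $\mathcal J_n$ of Lemma~\ref{lem:decomp} at $n=n_0=N-2=kJ+2c$; no estimates are needed beyond $a\le\frac12$ (Lemma~\ref{lem:ahalve}) and $k\ge341$.

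\textbf{Enumerating the images.} An index $J'\in\mathcal J_{n_0}$ must satisfy $J'\equiv n_0\pmod 2$. Since $k$ is odd and $2c$ is even, this means $J'\equiv J\pmod2$, so $J'=J-2\ell$ with $\ell\in\mathbb Z$.
\begin{itemize}
\item Negative $\ell$ is impossible: $k(J+2)>kJ+2c$ because $c<k$.
\item The condition $J'\ge1$ gives $\ell\le\lfloor (J-1)/2\rfloor$.
\item The offset is $c_{J'}=(n_0-kJ')/2=c+\ell k$.
\end{itemize}
Then $\theta_\ell=\delta J_\ell=\theta-2\ell\delta$, and
\[
\Delta_{\ell+1}-\Delta_\ell=k+2\delta=K_a,
\]
which is \eqref{eq:dominant-spacing}. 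The identity $N-2=kj+2d$ is immediate from $j=J-2L$ and $d=c+Lk$. Subtracting $k-2$ gives $N-k=k(j-1)+2(d+1)$. Running the same enumeration at $n=N-k$ shows that its images are $(J-1-2\ell,\,c+1+\ell k)$, so the same index $L$ produces the pair $(j-1,d+1)$.

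\textbf{The unpaired terminal image.} Suppose $J$ is odd. The last image has $\ell_*=(J-1)/2$, $J_{\ell_*}=1$ and $\theta_{\ell_*}=\delta\le1$.
\begin{itemize}
\item If $J=1$, then $\theta=\delta\le1$, contradicting $\theta>10$. So $J\ge3$ and $\ell_*\ge1$.
\item In that case $\Delta_{\ell_*}=c+\ell_*k-\delta\ge k-\delta$.
\item Since $\Delta_\ell$ increases with $\ell$ and $\Delta_{\ell_*-1}=\Delta_{\ell_*}-K_a$, we get $|\Delta_{\ell_*-1}|<|\Delta_{\ell_*}|$ as soon as $\Delta_{\ell_*}>K_a/2=k/2+\delta$.
\item That inequality follows from $k-\delta>k/2+\delta$, i.e. $k>4\delta$, which holds since $\delta\le1$ and $k\ge341$.
\end{itemize}
Hence the terminal image never minimizes $|\Delta_\ell|$. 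In particular $j\ge2$ whenever $J$ is odd, and $j\ge2$ trivially when $J$ is even. So the partner $(j-1,d+1)$ is a genuine packet with $j-1\ge1$.

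\textbf{Uniqueness.} By construction $d-\vartheta=\Delta_L$. If $|\Delta_L|<2\sqrt\vartheta+2<K_a/2$, then for every $\ell\ne L$ the spacing identity gives
\[
|\Delta_\ell|\ge |\ell-L|K_a-|\Delta_L|\ge K_a-|\Delta_L|>K_a/2>|\Delta_L|.
\]
So $L$ is the unique minimizer.

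\textbf{Main obstacle.} There is no serious analytic difficulty here. The only delicate point is the bookkeeping in the terminal-image step. One must check that the comparison image $\ell_*-1$ actually exists, which is where $J\ge3$ (from $\theta>10$) is used. One must also make the inequality $\Delta_{\ell_*}>K_a/2$ strict, so that ties in the choice of $L$ cannot select the terminal image.
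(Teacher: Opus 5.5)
Your proposal is correct and follows essentially the same route as the paper. You use the same parity and offset enumeration of $\mathcal J_{N-2}$, the algebraic identity $N-k=(N-2)-(k-2)$, and the spacing argument $|\Delta_\ell|\ge K_a-|\Delta_L|>K_a/2$ for uniqueness. The one variation is in the terminal-image step: you compare with the adjacent image $\ell_*-1$, which needs only $J\ge3$ and $k>4\delta$, whereas the paper compares with the image near where the progression crosses zero, using $J\ge11$ and an offset of at least $5k-1$. Both arguments are valid.
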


\begin{proof}
For fixed $n_0$, the admissible packet indices have the same parity as
$n_0$ and are at most the canonical $J$, so they are $J-2\ell$.  Solving
$n_0=k(J-2\ell)+2c_\ell$ gives $c_\ell=c+\ell k$ and then
\eqref{eq:dominant-spacing}.  The second identity in
\eqref{eq:dominant-anchor-pair} is algebraic:
\[
 N-k=(N-2)-(k-2)=k(j-1)+2(d+1).
\]
If the last first-anchor image has $j=1$, then $J\ge11$ (because
$\theta=\delta J>10$ and $\delta\le1$), so its offset satisfies
\[
 c+(J-1)k/2-\delta\ge5k-1>1700.
\]
If $\Delta_0\ge0$, the canonical offset is $<k$, while if $\Delta_0<0$ the
arithmetic progression \eqref{eq:dominant-spacing} crosses zero and has an
image of absolute offset $<K_a$.  In either case the terminal $j=1$ image
cannot minimize $|\Delta_\ell|$.
Finally two distinct minimizers could occur only at distance $K_a/2$
from their two neighboring continuous centers, which is incompatible
with the displayed localization bound.
\end{proof}

The following adjacent-packet identity is invariant under this reindexing.

\begin{lemma}[Adjacent-packet identity]\label{lem:adjacent-packet}
For every $j,d$,
\begin{equation}\label{eq:adjacent-packet}
           E_{j-1,d+1}=E_{j,d+1}-aE_{j,d}.
\end{equation}
Consequently, if the two anchor equations have remote/stationary errors
$\mathfrak e_0,\mathfrak e_1$ so that
\[
 aE_{j,d}=s+\mathfrak e_0,\qquad
 aE_{j-1,d+1}=s+\mathfrak e_1,
\]
then, with $\rho_r=E_{j,r+1}/E_{j,r}$,
\begin{equation}\label{eq:rho-from-errors}
 \rho_d=a+\frac{s+\mathfrak e_1}{s+\mathfrak e_0}.
\end{equation}
\end{lemma}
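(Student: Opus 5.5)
The plan is to prove \eqref{eq:adjacent-packet} first at the level of the signed remainders $D_{J,c}$, where it is a Pascal-type identity, and then pass to absolute values using the exact sign information in Lemma~\ref{lem:remainder}. The ratio formula \eqref{eq:rho-from-errors} then follows by dividing through by $E_{j,d}$.

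\textbf{Step 1 (signed identity).} Show that
\[
 D_{j-1,d+1}=D_{j,d+1}+aD_{j,d}.
\]
Split $A_{j-1,d+1}$ using $\binom{j-1+r}{r}=\binom{j+r}{r}-\binom{j+r-1}{r-1}$:
\[
 A_{j-1,d+1}=\sum_{r=0}^{d}(-1)^r\binom{j+r}{r}a^r-\sum_{r=1}^{d}(-1)^r\binom{j+r-1}{r-1}a^r .
\]
The first sum is $A_{j,d+1}$. After the shift $r=t+1$, the second sum becomes
\[
 -\sum_{t=0}^{d-1}(-1)^{t+1}\binom{j+t}{t}a^{t+1}=aA_{j,d}.
\]
Hence $A_{j-1,d+1}=A_{j,d+1}+aA_{j,d}$. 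Trivially $W_{j-1}=(1+a)W_j=W_j+aW_j$. Subtracting the second identity from the first gives the signed identity.

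\textbf{Step 2 (signs).} By \eqref{eq:D-int}, $D_{J,c}$ has sign $(-1)^{c+1}$ for $c\ge1$. For $c=0$ one has $D_{J,0}=-W_J<0$, which fits the same rule. If $j-1=0$ occurs, the Taylor argument behind \eqref{eq:D-int} still applies. Directly,
\[
 D_{0,c}=\sum_{r<c}(-a)^r-\frac{1}{1+a}=\frac{(-1)^{c+1}a^c}{1+a},
\]
again with sign $(-1)^{c+1}$. Thus $D_{j-1,d+1}$ and $D_{j,d+1}$ both have sign $(-1)^d$, while $D_{j,d}$ has the opposite sign $(-1)^{d+1}$. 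Multiplying the signed identity by $(-1)^d$ gives $E_{j-1,d+1}=E_{j,d+1}-aE_{j,d}$, which is \eqref{eq:adjacent-packet}.

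\textbf{Step 3 (ratio).} Rewrite \eqref{eq:adjacent-packet} as $E_{j,d+1}=E_{j-1,d+1}+aE_{j,d}$. Since $E_{j,d}=b_{j,d}H_{j,d}>0$ by \eqref{eq:E-bH} (or equals $W_j>0$ when $d=0$), we may divide by it:
\[
 \rho_d=a+\frac{E_{j-1,d+1}}{E_{j,d}}=a+\frac{aE_{j-1,d+1}}{aE_{j,d}}=a+\frac{s+\mathfrak e_1}{s+\mathfrak e_0}.
\]
The last equality substitutes the two hypothesized anchor relations.

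\textbf{Main obstacle.} The only delicate point is the sign bookkeeping. In particular, the boundary cases $d=0$ and $j=1$ are not literally covered by the $c\ge1$, $J\ge1$ statement of Lemma~\ref{lem:remainder}, so I check them directly as in Step 2. Everything else is finite algebra.
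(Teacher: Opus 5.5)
Your proposal is correct and follows essentially the same route as the paper: Pascal's identity gives $A_{j-1,d+1}=A_{j,d+1}+aA_{j,d}$, together with $W_{j-1}=(1+a)W_j$ this yields $D_{j-1,d+1}=D_{j,d+1}+aD_{j,d}$, the sign pattern $(-1)^d,(-1)^d,(-1)^{d+1}$ converts this to magnitudes, and dividing by $E_{j,d}>0$ gives the ratio formula. Your direct checks of the boundary cases $d=0$ (where $D_{j,0}=-W_j$) and $j=1$ (where $D_{0,c}=(-1)^{c+1}a^c/(1+a)$) are correct and make explicit what the paper leaves implicit.
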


\begin{proof}
Pascal's identity gives
\[
 A_{j-1,d+1}=A_{j,d+1}+aA_{j,d},
 \qquad W_{j-1}=(1+a)W_j.
\]
Subtracting the second identity from the first yields
\[
 D_{j-1,d+1}=D_{j,d+1}+aD_{j,d}.
\]
Since the signs of these three remainders are respectively
$(-1)^d,(-1)^d,(-1)^{d+1}$, taking magnitudes gives
\eqref{eq:adjacent-packet}.  Substitution in the two displayed anchor
equations gives \eqref{eq:rho-from-errors}.
\end{proof}

There are two domains in which one packet translate dominates.  The first is the
finite parameter strip $10<\theta<1600$.  Here the elementary variance
bound
\[
 \sigma^2<2\theta+\frac74
\]
and $k\ge341$ give $\kappa>6$.  The second occurs after the spectral
bootstrap in the large-parameter range and is defined by
\begin{equation}\label{eq:moving-single}
       \theta\ge1600,\qquad \kappa>\frac{\sqrt\theta}{4}.
\end{equation}
In the second domain the distance to the first remote image grows on the scale
of $\theta$, so its error is uniformly smaller than the $1/\theta$ curvature
bound.

\begin{lemma}[Pre-zero modal center]\label{lem:modal-center}
Write
\[
        N-2=kJ+2c,\qquad 0\le c<k,
        \qquad \delta=\frac{a}{1-a},\qquad \theta=\delta J.
\]
For $\ell\ge0$ with $J-\ell\ge1$, let
\[
        m_\ell=k(J-\ell)+2\lfloor \delta(J-\ell)\rfloor
\]
be a modal-center index for the envelope in block $J-\ell$.  Then for some
integer
\[
        0\le \ell\le \left\lceil\frac{2\theta}{k}\right\rceil
\]
one has $m_\ell\le N-2<N$.  Its packet parameter
\[
        \theta_\ell=\delta(J-\ell)=\theta-\ell\delta
\]
satisfies
\begin{equation}\label{eq:modal-retention}
        \theta_\ell\ge
        \theta-\delta\left\lceil\frac{2\theta}{k}\right\rceil.
\end{equation}
In particular, if $10<\theta<1600$ and $k\ge341$, then
$\theta_\ell>9$; if $\theta\ge1600$ and $k\ge20\sqrt\theta$, then
\begin{equation}\label{eq:modal-retention-large}
        \theta_\ell\ge\theta-\frac{\sqrt\theta}{10}-1.
\end{equation}
\end{lemma}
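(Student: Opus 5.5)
The plan is a direct choice of $\ell$ followed by elementary bookkeeping. Since $N-2=kJ+2c$ with $c\ge0$, we have $N-2\ge kJ$. Since $\lfloor\delta(J-\ell)\rfloor\le\delta J=\theta$, the modal-center index satisfies
\[
 m_\ell\le k(J-\ell)+2\theta=kJ-(k\ell-2\theta).
\]
So I will take $\ell=\lceil 2\theta/k\rceil$ outright. Then $k\ell\ge2\theta$, hence $m_\ell\le kJ\le N-2<N$, and $\ell$ is within the allowed range. One could instead take the least admissible $\ell$; the bound \eqref{eq:modal-retention} only uses $\ell\le\lceil2\theta/k\rceil$, so either choice works.

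Next I check admissibility, $J-\ell\ge1$. By Lemma~\ref{lem:ahalve}, $a\le1/2$, so $\delta\le1$ and $\theta\le J$. If $\theta>10$ this forces $J\ge11$. Then $2\theta/k\le 2J/341$, so $\lceil2\theta/k\rceil\le 2J/341+1<J$, and hence $J-\ell\ge1$. In the general statement, where $\theta$ may be small, the same estimate works whenever $J\ge2$. The case $J=1$ needs $\ell=0$, i.e.\ $2\theta<k$, which holds because $\theta\le1$. The retention bound \eqref{eq:modal-retention} is then immediate from $\theta_\ell=\theta-\ell\delta$ and $\ell\le\lceil2\theta/k\rceil$.

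For the two numerical consequences I split the bounded strip $10<\theta<1600$ at $\theta=k/2$.
\begin{itemize}[leftmargin=2em]
\item If $\theta\le k/2$, then $\lceil2\theta/k\rceil=1$, and $\theta_\ell\ge\theta-\delta\ge\theta-1>9$.
\item If $\theta>k/2\ge170.5$, then $2\theta/k<3200/341<10$, so $\ell\le10$. Hence $\theta_\ell\ge\theta-10>160>9$.
\end{itemize}
In the large range $\theta\ge1600$ with $k\ge20\sqrt\theta$, we have $2\theta/k\le\sqrt\theta/10$, so $\lceil2\theta/k\rceil\le\sqrt\theta/10+1$. Using $\delta\le1$ gives \eqref{eq:modal-retention-large}.

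The only real obstacle is the bounded-strip numerics. The crude bound $\theta-10$ alone does not give $\theta_\ell>9$ near $\theta=10$. That is why the split at $2\theta=k$ is needed: it shows that only a single block is lost exactly when $\theta$ is small.
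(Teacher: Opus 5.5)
Your proof is correct and is essentially the paper's argument: choosing $\ell=\lceil 2\theta/k\rceil$ at once and using $\lfloor\delta(J-\ell)\rfloor\le\theta$ and $N-2\ge kJ$ is the same inequality the paper reaches by stepping down from $m_0$ one block at a time. Your split of the strip at $\theta=k/2$ plays the role of the paper's split at $\theta=170.5$; the paper also extracts $\theta_\ell>168$ for $\theta\ge170.5$ and uses it later, and your general bound $\theta_\ell\ge\theta-\delta\lceil 2\theta/k\rceil$ gives that as well. The one slip is your aside on $J=1$: with $\ell=0$ the condition you actually need is $\lfloor\theta\rfloor\le c$, not $2\theta<k$, but this case never matters because the lemma is only applied with $\theta>10$, where $J\ge11$.
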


\begin{proof}
The modal center of the canonical block is
\[
        m_0=kJ+2\lfloor\theta\rfloor.
\]
If $m_0\le N-2$ there is nothing to prove.  Otherwise
\[
  0<m_0-(N-2)=2(\lfloor\theta\rfloor-c)<2\theta.
\]
When $\ell$ is increased by one, the term $k(J-\ell)$ drops by $k$ and the
floor term cannot increase, so
\[
        m_{\ell+1}\le m_\ell-k.
\]
After at most $\lceil2\theta/k\rceil$ steps the center is therefore at or
before $N-2$.  Since $\delta\le1$ by Lemma~\ref{lem:ahalve}, this number
of steps is $<J$ and all blocks used have positive index.  Formula
\eqref{eq:modal-retention} is immediate.

For $10<\theta<170.5$ one has $\ell\le1$, hence
$\theta_\ell>10-\delta\ge9$.  For $170.5\le\theta<1600$ the coarser bound
\[
 \theta_\ell\ge\theta-\left(\frac{2\theta}{341}+1\right)>168
\]
is $>168$.  Finally, if $k\ge20\sqrt\theta$, then
$\ell\le\sqrt\theta/10+1$, and \eqref{eq:modal-retention-large} follows
again from $\delta\le1$.
\end{proof}

\begin{lemma}[Coarse modal-scale bound]\label{lem:coarse-modal-scale}
Let a modal center $m=kJ_0+2\lfloor\theta_0\rfloor<N$ have
$\theta_0>9$ and packet standard deviation $\sigma_0$ satisfying
$k/\sigma_0>6$.  If $B$ is the modal envelope value, then
\begin{equation}\label{eq:coarse-modal-upper}
                         aB<2.2.
\end{equation}
\end{lemma}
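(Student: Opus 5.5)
The bound will come from comparing the forced coefficient at the modal center with the a priori bound $0\le u_n\le1$ for $n<N$. That bound holds because $q_n=u_n$ there and \eqref{eq:q01} applies.

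\textbf{Step 1: isolate the modal packet.} Apply the decomposition \eqref{eq:v-decomp} at $n=m$. The modal block $J_0$ contributes $\pm aD_{J_0,c_0}$ with $c_0=\lfloor\theta_0\rfloor$. The other translates in $\mathcal J_m$ are offset from their modes by multiples of $K_a\ge k$, i.e.\ by at least $6\sigma_0$ in standard-deviation units. The stationary remainder $\eta_m$ is at most $(1+a)^{1-J_{\rm next}}/(2+a)+a/(1+a)\le 1$ by \eqref{eq:eta}. This gives
\[
 aE_{J_0,c_0}\le |v_m|+a\sum_{J\ne J_0}E_{J,c_J}+|\eta_m|
 \le 1+s+a\sum_{J\ne J_0}E_{J,c_J}+1 .
\]

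\textbf{Step 2: bound the remote translates.} Since $E_{J,c}=b_{J,c}H_{J,c}<b_{J,c}$, it suffices to bound $a\,b_{J,c_J}$. The sequence $b$ is log-concave with the ratio \eqref{eq:b-ratio}. A Chernoff-type bound for the normalized negative binomial then shows that moving $t$ standard deviations from the mode costs a factor at least of order $e^{-t^2/2}$ near the mode, and geometric decay farther out. Summing over images at distances $\ge6\sigma_0,12\sigma_0,\dots$ gives a remote total at most $10^{-6}$ times the largest envelope value in those blocks. Lemma~\ref{lem:modal-center} shows that all these blocks have comparable parameters, so a crude bound $a b_{J,\lfloor\theta_J\rfloor}\le C$ uniformly suffices.

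\textbf{Step 3: pass from $E$ to $B$.} The statement concerns $B=b_{J_0,c_0}$, not $E$. Identity \eqref{eq:E-adj} gives $B=E_{J_0,c_0}+E_{J_0,c_0+1}$. The neighbor index $m+2$ is also $<N$ as long as $m+2\le N-1$; if instead $m=N-2$, that index is an anchor and $E$ there is $\approx s/a$ anyway. So the same estimate applies at $m+2$, giving $aB\le 2(1+s+\text{small})$. Since $s\le1/2$, this is about $3$, which is too weak.

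\textbf{Step 4: sharpen the constant (main obstacle).} To reach $2.2$ I would use signs. Since $0\le u_m\le1$ with $s\ge2/5$, the signed packet term satisfies $|(-1)^{m+1}aD|\le\max(s,1-s)+|\eta|+\text{remote}\le 0.6+0.1+\ldots$. This uses $\theta_0>9$ and $J_0\ge 10$, which make $(1+a)^{1-J_{\rm next}}$ small relative to the rest. Then $aE\le 0.72$ at each of $m$ and $m+2$. Log-concavity near the mode (Theorem~\ref{thm:single-LC}) with $H\approx1/2$ then gives $aB\lesssim 1.5<2.2$.

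The delicate part is making the stationary remainder bound uniform when $a$ is not small. I expect the most care is needed for moderate $\theta_0$ near $9$, where $J_0$ is only about $10$ and $(1+a)^{-J_0}$ is not negligible; that corner may require a separate crude numerical check.
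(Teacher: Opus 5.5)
Your framework matches the paper's. Since $m<N$ one has $u_m=q_m\in[0,1]$, hence $|v_m|\le\max(s,1-s)\le0.6$. It then remains to show that the stationary remainder and the remote translates are small, and to convert the modal packet magnitude into $B$.

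\textbf{The gap is in Step~2.} You bound the remote translates by a small multiple of ``the largest envelope value in those blocks'' and then invoke a crude uniform bound $a\,b_{J,\lfloor\theta_J\rfloor}\le C$. No such bound is available.
\begin{itemize}
\item It is essentially the statement being proved.
\item For the left images (blocks $J_0+2,J_0+4,\dots$ at coordinates $c_0-k,c_0-2k,\dots$, which occur whenever $c_0\ge k$), the modal centers may lie beyond $N$. So not even the present lemma applies to them.
\item Comparability of the parameters $\theta_J$, which is all Lemma~\ref{lem:modal-center} provides, gives no a priori bound on the modal heights.
\item The hypothesis $k/\sigma_0>6$ concerns block $J_0$ alone. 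Consecutive blocks differ in normalization by a factor up to $(1-a)^{-2}\le4$, so the ``six standard deviations'' heuristic does not transfer by itself.
\end{itemize}
The missing idea is to measure every remote envelope against $B$ directly. Multiply the exact ratios \eqref{eq:b-ratio}, together with the exact cross-image ratio $b_{J_0-2,c+k}/b_{J_0,c}$ (inverted for left images), outward from the mode. Use \eqref{eq:coarse-remote-bound}, $a\le1/2$, and $\theta_0<k^2/36+1/4$, which follows from $k/\sigma_0>6$. This gives geometric tails with ratios $q_R=\exp(-k(k-1)/(4(\theta_0+k)))$ and $4q_L$, where $q_L=\exp(-k(k-1)/(4\theta_0))$. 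The remote total is then $<0.0087B$, and the resulting term $0.0087\,aB$ is absorbed into the left-hand side rather than bounded a priori.

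\textbf{The rest of your argument.} With that repair, your two-sample conversion is a legitimate variant of the paper's single-sample one. The paper applies Jensen's inequality in Lemma~\ref{lem:Hprob} to get $H_{J_0,\lfloor\theta_0\rfloor}>0.30$. Then $0.30\,aB<0.6+0.0087\,aB+0.018$, which gives $aB<2.2$.

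Your identity $B=E_{J_0,c_0}+E_{J_0,c_0+1}$ needs no lower bound on $H$. The appeal to log-concavity and to ``$H\approx1/2$'' in Step~4 is therefore superfluous. However, it uses the second index $m+2$, and you have not handled $m=N-1$, where $m+2=N+1$ lies past the first zero. There one still has $u_{N+1}=1-aq_{N-1}-q_{N+1-k}\in[-a,1]$, so $|v_{N+1}|\le a+s\le0.9$. Summing the two packet bounds then yields $aB<1.6$ after absorption. For $m=N-2$ the extra index is $N$ itself, where the anchors give $u_N=1$.

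Finally, your worry about the stationary remainder is unfounded. From $J_{\rm next}\ge J_0+2$, $aJ_0=(1-a)\theta_0$ and $\log(1+a)\ge a(1-a/2)$ one gets
\[
(1+a)^{1-J_{\rm next}}\le e^{-\theta_0(1-a)(1-a/2)}\le e^{-3.375}
\]
for all $a\le1/2$ and $\theta_0>9$. Hence $|\eta_m|<0.018$ uniformly, with no separate corner check.
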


\begin{proof}
At $r=\lfloor\theta_0\rfloor$, Jensen's inequality in
Lemma~\ref{lem:Hprob} gives $H_{J_0,r}>0.30$.  The exact envelope-ratio
bound
\begin{equation}\label{eq:coarse-remote-bound}
 \frac{b_{J_0,r+t}}{B}
 \le
 \exp\!\left(-\frac{(1-a)t(t-1)}{2(\theta_0+t)}\right)
 \qquad(t\ge1),
\end{equation}
together with its left-tail analogue shows, at image spacing at least
$k$, that the sum of all remote envelopes is $<0.0087B$ whenever
$k/\sigma_0>6$ and $k\ge341$.  Indeed, with
\[
 q_R=\exp\!\left(-\frac{k(k-1)}{4(\theta_0+k)}\right),\qquad
 q_L=\exp\!\left(-\frac{k(k-1)}{4\theta_0}\right),
\]
the two geometric tails are bounded by
\[
 \frac{q_R}{1-q_R}+\frac{4q_L}{1-4q_L}
 <0.000807<0.0087.
\]
The endpoint reductions for these scalar inequalities are recorded in
Appendix~\ref{app:numerics}.  The stationary term in
\eqref{eq:v-decomp} satisfies
\[
 |\eta_m|\le\frac12e^{-9(1-a)(1-a/2)}+2^{-1700}<0.018.
\]
Since
$|u_m-s|<0.6$ for $m<N$, we obtain
\[
  0.30aB<0.6+0.0087aB+0.018,
\]
which implies \eqref{eq:coarse-modal-upper}.
\end{proof}

\begin{lemma}[Sharp modal estimates]\label{lem:sharp-modal}
Suppose $a<0.01$, a packet modal center with parameter $\theta_0\ge9$
lies before $N$, and $k/\sigma_0>6$.  If $B$ is its modal envelope value,
then
\begin{equation}\label{eq:sharp-modal-basic}
        H_{J_0,\lfloor\theta_0\rfloor}>0.46,
        \qquad aB<1.25.
\end{equation}
If moreover $\theta_0\ge1595$, then
\begin{equation}\label{eq:sharp-modal-large}
        H_{J_0,\lfloor\theta_0\rfloor}>0.49,
        \qquad B>e^{0.98\theta_0},
        \qquad aB<1.3.
\end{equation}
\end{lemma}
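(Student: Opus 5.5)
We will follow the template of Lemma~\ref{lem:coarse-modal-scale}, sharpening each of its three inputs now that $a<0.01$.

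\textbf{Lower bound on $H$.} Apply Lemma~\ref{lem:Hprob} at $r=\lfloor\theta_0\rfloor$:
\[
H_{J_0,r}=\frac1{1+a}\,\E\frac{r}{r+X},\qquad X\sim\operatorname{Bin}\!\bigl(J_0,a/(1+a)\bigr),\qquad \E X=\frac{aJ_0}{1+a}\le\theta_0 .
\]
Since $x\mapsto r/(r+x)$ is convex, Jensen gives
\[
H_{J_0,r}\ge\frac1{1+a}\cdot\frac{r}{r+\E X}\ge\frac1{1+a}\cdot\frac{\lfloor\theta_0\rfloor}{\lfloor\theta_0\rfloor+\theta_0}.
\]
For $\theta_0\ge9$ and $a<0.01$ this is at least $\frac{9}{18}\cdot\frac1{1.01}\approx0.495>0.46$. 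Two details need care. First, the floor is handled by monotonicity in $\theta_0$ at the worst case $\lfloor\theta_0\rfloor=9$, $\theta_0<10$, where the ratio is $9/19$. That case gives only about $0.469>0.46$, so it still passes. Second, a more precise use of $\E X=\theta_0(1-a)/(1+a)$ helps slightly. For $\theta_0\ge1595$ the floor loss is $O(1/\theta_0)$ and the bound is about $\frac{1}{2}\cdot\frac{1-a}{1+a}$-ish. If this falls short of $0.49$, we will replace Jensen by a second-order expansion: $X$ concentrates with variance about $\theta_0$, so the correction is $O(1/\theta_0)$, and
\[
\E\frac{r}{r+X}=\frac{r}{r+\E X}\bigl(1+O(1/\theta_0)\bigr)\approx\frac{1}{2-2a}.
\]
This gives $H\approx\frac{1}{2(1+a)(1-a)}>0.49$.

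\textbf{Upper bound on $aB$.} Repeat the inequality chain of Lemma~\ref{lem:coarse-modal-scale} with the improved constants. Because $a<0.01$, the stationary term is now tiny:
\[
|\eta_m|\le\tfrac12 e^{-9(0.99)(0.995)}+\dots<0.0001 .
\]
The remote sum is still $<0.0087B$, by the same geometric tails and $k/\sigma_0>6$. We also refine $|u_m-s|<0.6$: with $s=1/(2+a)\approx0.4975$ and $0\le u_m\le1$ (from \eqref{eq:q01}, since $m<N$), we get $|u_m-s|\le1-s<0.5025$. Hence
\[
0.46\,aB<0.5025+0.0087\,aB+0.0001,
\]
which gives $aB<0.5026/0.4513<1.12<1.25$. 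In the large case the same computation with $0.49$ yields $aB<1.05<1.3$.

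\textbf{Lower bound $B>e^{0.98\theta_0}$.} This is the part that genuinely uses $\theta_0\ge1595$. The plan is to use
\[
B=\binom{J_0+r}{r}a^r,\qquad J_0=\theta_0(1-a)/a\ \text{large},
\]
and Stirling-type bounds. Using $\binom{J_0+r}{r}\ge(J_0+r)^r/r!$ and $r!\le e\sqrt r\,(r/e)^r$, we get
\[
B\ge\frac{\bigl(a(J_0+r)e/r\bigr)^r}{e\sqrt r}.
\]
Since $a(J_0+r)/r\approx(\theta_0(1-a)+a r)/r\ge1-a-O(1/\theta_0)$, this yields
\[
\log B\ge r\bigl(1+\log(1-a)-O(1/\theta_0)\bigr)-\tfrac12\log r-1\ge\theta_0(1-0.0101)-O(\log\theta_0).
\]
That exceeds $0.98\theta_0$ once $0.0099\,\theta_0$ beats the logarithmic loss, which holds comfortably for $\theta_0\ge1595$.

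The main obstacle is bookkeeping the floor and the factor $(1-a)/(1+a)$ in the $H>0.49$ bound, where the margin is thinnest. If pure Jensen is insufficient there, we will use the variance correction to Jensen, with endpoint checks deferred to Appendix~\ref{app:numerics}.
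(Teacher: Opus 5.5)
Your route is the paper's: Jensen in Lemma~\ref{lem:Hprob} for the $H$ bounds, the inequality chain of Lemma~\ref{lem:coarse-modal-scale} with $|u_m-s|<0.503$ for $aB$, and a Stirling lower bound for $B$. The constants are split differently. The paper sharpens the remote-image sum to $2.1\times10^{-5}aB$ using $1-a>0.99$ and keeps the stationary bound $0.018$. You keep $0.0087aB$ and instead sharpen the stationary term to $<10^{-4}$. Either way the chain closes.

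\textbf{The binomial bound is reversed.} You wrote $\binom{J_0+r}{r}\ge (J_0+r)^r/r!$, but $\binom{J_0+r}{r}=\prod_{m=1}^r (J_0+m)/m$ and every factor satisfies $J_0+m\le J_0+r$, so this goes the wrong way. Replace it by $\binom{J_0+r}{r}\ge J_0^r/r!$, as the paper does. Then $aJ_0/r=(1-a)\theta_0/r\ge1-a$, because $r=\lfloor\theta_0\rfloor\le\theta_0$. Combine this with $r!\le e\sqrt r\,(r/e)^r$, with $\theta_0-1<r\le\theta_0$, and with $a<0.01$. This gives $\log B\ge(\theta_0-1)(1+\log0.99)-1-\tfrac12\log\theta_0$. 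The right side exceeds $0.98\theta_0$ by more than $10$ at $\theta_0=1595$, and the excess increases thereafter. So the estimate you use downstream survives, but its justification must be changed.

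\textbf{The hedging about $H>0.49$ is unnecessary.} The heuristic values you quote there ($\tfrac12\cdot\tfrac{1-a}{1+a}$, $\tfrac1{2-2a}$, $\tfrac1{2(1+a)(1-a)}$) are also miscomputed; the leading value is $\tfrac12$. Your own displayed Jensen bound already gives, for $\theta_0\ge1595$, $H\ge\tfrac1{1.01}\cdot\tfrac{1594}{3189}>0.494$. Drop the ``second-order expansion'' fallback. As stated, with a two-sided $O(1/\theta_0)$ error, it would not give a rigorous lower bound anyway.
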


\begin{proof}
The lower bounds for $H$ follow directly from
Lemma~\ref{lem:Hprob}, using Jensen and
$\theta_0-1<\lfloor\theta_0\rfloor\le\theta_0$.  With $a<0.01$ the
same geometric-tail reduction, now using $1-a>0.99$ and
\[
 q_R=\exp\!\left(-\frac{0.99k(k-1)}{2(\theta_0+k)}\right),\qquad
 q_L=\exp\!\left(-\frac{0.99k(k-1)}{2\theta_0}\right),
\]
gives
\[
 \frac{q_R}{1-q_R}+\frac{4q_L}{1-4q_L}
 <1.9\times10^{-7}<2.1\times10^{-5}.
\]
At the modal coefficient the dominant packet has magnitude
$aE=aBH>0.46aB$.  Since this coefficient lies before the first zero,
$0\le u_m\le1$; moreover $a<0.01$ gives $s=(2+a)^{-1}>1/2.01$, and hence
$|u_m-s|<0.503$.  The remote packet magnitudes contribute at most
$2.1\times10^{-5}aB$, while the stationary remainder is $<0.018$ by the
same estimate as in Lemma~\ref{lem:coarse-modal-scale}.  Therefore
\[
 0.46aB
 <0.503+2.1\times10^{-5}aB+0.018,
\]
so
\[
              aB<1.14<1.25.
\]
The same bound is stronger than the stated $aB<1.3$ in the
large-parameter form.  Finally, the Stirling estimate detailed in
Appendix~\ref{app:numerics} gives
$B>e^{0.98\theta_0}$ for $\theta_0\ge1595$.  The remaining scalar
envelope bounds are recorded there as well.
\end{proof}

\begin{lemma}[Coarse finite-strip bootstrap]\label{lem:finite-single-bootstrap}
Let $k\ge341$ and $10<\theta<1600$.  Under the hypotheses of
Lemma~\ref{lem:firstzero},
\begin{equation}\label{eq:finite-a-small}
                         a<10^{-2}.
\end{equation}
\end{lemma}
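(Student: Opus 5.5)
Fix $k\ge341$ and $10<\theta<1600$, and suppose toward a contradiction that $a\ge10^{-2}$. By Lemma~\ref{lem:ahalve} we also have $a\le1/2$, so $a$ lies in a compact interval bounded away from $0$. The plan is to combine two facts. Lemma~\ref{lem:coarse-modal-scale} gives an upper bound $aB<2.2$ on the modal envelope. A lower bound on $B$ grows exponentially in the packet parameter. Together these force a bounded packet parameter, which contradicts $\theta_\ell>9$ unless $a$ is small.

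\textbf{Step 1: a modal center before $N$.} Lemma~\ref{lem:modal-center} supplies an index $\ell\le\lceil2\theta/k\rceil$ with $m_\ell\le N-2$ and $\theta_0:=\theta_\ell>9$. We need the hypothesis $k/\sigma_0>6$ for this block. The variance bound $\sigma_0^2<2\theta_0+7/4<3202$ gives $\sigma_0<57$, so $k/\sigma_0>341/57>6$. Lemma~\ref{lem:coarse-modal-scale} therefore applies and yields $aB<2.2$, where
\[
B=\binom{J_0+r}{r}a^r,\qquad r=\lfloor\theta_0\rfloor .
\]

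\textbf{Step 2: a lower bound for $B$.} Normalizing, $(1-a)^{J_0+1}B$ is the modal mass of a negative binomial law with shape $J_0+1$. Its variance is $V_0=(J_0+1)a/(1-a)^2$, and the modal mass is at least of order $c_0/\sqrt{V_0}$. Hence
\[
B\gtrsim (1-a)^{-J_0-1}\,\frac{c_0}{\sqrt{V_0}}.
\]
Since $J_0=\theta_0(1-a)/a$, we get $(1-a)^{-J_0}=\exp\bigl(\theta_0\,\tfrac{(1-a)}{a}\log\tfrac1{1-a}\bigr)\ge e^{\theta_0(1-a)}$. So $aB$ grows like $e^{\theta_0(1-a)}$ divided by a polynomial factor in $\theta_0$. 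For $a\ge10^{-2}$ this seems to come out the wrong way, so we should use it more directly. The cleaner route is simply to write out $\binom{J_0+r}{r}a^r\ge (J_0a/r)^r\ge(\theta_0(1-a))^r/r^r$ via $\binom{J_0+r}{r}\ge (J_0+r)^r/r!$ and Stirling. This gives $B\ge e^{\theta_0(1-a)}/(e\sqrt{2\pi\theta_0})$ up to harmless constants. With $\theta_0>9$ and $a\le1/2$, the exponent $\ge4.5$ alone is not large enough, so the bound must be made sharper in $a$. We have $aB\ge a\,e^{\theta_0(1-a)}/\bigl(C\sqrt{\theta_0}\bigr)$, and the function $\theta_0\mapsto e^{\theta_0/2}/\sqrt{\theta_0}$ increases for $\theta_0>1$. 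It therefore suffices to check the endpoint $\theta_0=9$ on the compact range $a\in[10^{-2},1/2]$, where we need $aB\ge2.2$ to reach the contradiction.

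\textbf{Main obstacle.} Near $a=10^{-2}$ the factor $a$ kills the bound at $\theta_0\approx9$: $e^{9}/\sqrt{9}\cdot10^{-2}\approx27$ is fine, but for $a$ near $1/2$ the exponent $\theta_0(1-a)\approx4.5$ is marginal. The step I expect to be hardest is therefore a certified scalar inequality
\[
a\binom{J_0+r}{r}a^r\ge2.2\quad\text{for all }a\in[10^{-2},\tfrac12],\ \theta_0\ge9,
\]
together with monotonicity in $\theta_0$ so that only the endpoint need be checked. I would first try the exact ratio formula \eqref{eq:b-ratio} to show that $B$ is increasing in $J_0$ at fixed $a$. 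That reduces the problem to the smallest admissible $J_0$ for each $a$, a finite one-parameter family. I would then verify it by outward-rounded interval evaluation on a grid in $a$, in the same manner as the scalar inequalities in Appendix~\ref{app:numerics}. If the margin fails for $a$ close to $1/2$, I would sharpen Lemma~\ref{lem:coarse-modal-scale} there. For such $a$ the constant $H_{J_0,r}$ is larger, and the stationary term $\eta$ is bounded explicitly by \eqref{eq:eta}, which improves the constant $2.2$.
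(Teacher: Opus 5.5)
Your skeleton matches the paper's: Lemma~\ref{lem:modal-center} supplies a pre-zero modal center with $\theta_0>9$, Lemma~\ref{lem:coarse-modal-scale} gives $aB<2.2$, and a lower bound for $aB$ when $a\ge10^{-2}$ gives the contradiction. There is a slip in Step~1: $341/57\approx5.98<6$. Instead use $\sigma_0^2<3201.75$, so $\sigma_0<56.6$ and $k/\sigma_0>6.02$.

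The genuine gap is Step~2. It is the whole content of the lemma, and you leave it as an unresolved ``main obstacle.'' The bounds you try there are wrong or too weak:
\begin{enumerate}
\item The inequality $\binom{J_0+r}{r}\ge(J_0+r)^r/r!$ is reversed, since $(J_0+1)\cdots(J_0+r)\le(J_0+r)^r$.
\item The bound $(J_0a/r)^r=((1-a)\theta_0/r)^r$ is of order $(1-a)^r<1$ and carries no information.
\item Passing to $J_0^r/r!$ before Stirling discards the factors $(J_0+q)/J_0$. These are large exactly when $a$ is not small (at $a=1/2$, $J_0\approx\theta_0$). This loss is the only reason $a$ near $1/2$ looked marginal to you.
\item The local-limit heuristic with an unspecified constant $c_0$ is not usable at $\theta_0\approx9$.
\end{enumerate}

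The missing idea is to read \eqref{eq:b-ratio} as an exact product. Since $J_0a=(1-a)\theta_0$,
\[
 B=\binom{J_0+r}{r}a^r=\prod_{q=1}^{r}\Bigl(a+\frac{(1-a)\theta_0}{q}\Bigr),
 \qquad r=\lfloor\theta_0\rfloor\ge9 .
\]
Every factor is at least $1$ because $q\le r\le\theta_0$, and every factor increases with $\theta_0$. Keep only $q\le9$ and insert $\theta_0>9$. This gives $aB>a\prod_{q=1}^{9}\bigl(a+9(1-a)/q\bigr)$, an explicit polynomial in $a$. On $[10^{-2},1/2]$ it is smallest at $a=10^{-2}$, where it is about $10.3$; at $a=1/2$ it is about $47$.

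The paper does the same with slightly sharper input: $\theta_0>10-\delta$ when $\theta<170.5$, and $\theta_0>168$ otherwise, which yields $aB>20$. Either way the contradiction with $aB<2.2$ has a wide margin. No reduction in $J_0$, no interval grid, and no sharpening of Lemma~\ref{lem:coarse-modal-scale} is needed. Your fallback plan would eventually succeed because the inequality is true, but as written the proposal does not prove it.
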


\begin{proof}
By Lemma~\ref{lem:ahalve}, $a\le1/2$.  Apply
Lemma~\ref{lem:modal-center} and denote its pre-zero packet parameter by
$\theta_0$.  Since
\[
  \sigma_0^2<2\theta_0+\frac74\le2\theta+\frac74,
\]
we have $k/\sigma_0>6$.  Lemma~\ref{lem:coarse-modal-scale} therefore
gives $aB<2.2$ at this center.

Suppose $a\ge0.01$.  At the mode $r=\lfloor\theta_0\rfloor$,
\begin{equation}\label{eq:modal-product}
 aB
 =a\prod_{q=1}^{r}
 \left(a+\frac{\theta_0(1-a)}{q}\right).
\end{equation}
If $10<\theta<170.5$, Lemma~\ref{lem:modal-center} gives $\ell\le1$ and
hence
\[
       \theta_0\ge\theta-\delta>10-\delta.
\]
Using only the first nine factors in \eqref{eq:modal-product} therefore
gives
\[
 aB>
 a\prod_{q=1}^{9}
 \left(\frac{10}{q}+a\left(1-\frac{11}{q}\right)\right)>20
 \qquad(0.01\le a\le1/2).
\]
The last one-variable inequality follows from the seven elementary
subinterval bounds in Appendix~\ref{app:numerics}; its
smallest value occurs at the lower endpoint $a=0.01$ and exceeds $24$ in
that enclosure.  If $\theta\ge170.5$, then $\theta_0>168$, and the same
nine-factor lower bound also exceeds $20$.  Thus $aB>20$, contradicting
\eqref{eq:coarse-modal-upper}.  Hence $a<0.01$.
\end{proof}

\begin{lemma}[Uniform anchor accuracy in the single-translate ranges]\label{lem:single-error}
In either single-translate domain, use the dominant coordinates
$(j,d,\vartheta)$ from Lemma~\ref{lem:dominant-reindex}.  Then
\begin{equation}\label{eq:single-small-a}
 a<10^{-2},
\end{equation}
the dominant anchor is localized by
\begin{equation}\label{eq:dominant-localization}
       |d-\vartheta|<2\sqrt\vartheta+2,
\end{equation}
and the two anchor equations satisfy
\begin{equation}\label{eq:rho-anchor}
 \left|\frac{E_{j,d+1}}{E_{j,d}}-(1+a)\right|
       <\frac{0.0012}{\vartheta}.
\end{equation}
\end{lemma}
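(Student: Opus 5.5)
We prove the three assertions in the order stated. Each later one uses the earlier ones.

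\textbf{Smallness of $a$.} In the finite strip $10<\theta<1600$, \eqref{eq:single-small-a} is exactly Lemma~\ref{lem:finite-single-bootstrap}. In the large domain \eqref{eq:moving-single} we repeat that argument with Lemma~\ref{lem:modal-center} in its large form. From $\kappa>\sqrt\theta/4$ and $\sigma^2<2\theta+7/4$ we get $k\ge\sqrt\theta\,\sigma/4$. For $\theta\ge1600$ this should comfortably give $k\ge20\sqrt\theta$ after one rearrangement, or else a slightly weaker retention bound that still gives $\theta_0\ge1595$. Lemma~\ref{lem:coarse-modal-scale} then gives $aB<2.2$. If $a\ge0.01$, the product \eqref{eq:modal-product} with $\theta_0>1595$ is astronomically large, which is a contradiction.

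\textbf{Localization.} Once $a<0.01$, Lemma~\ref{lem:sharp-modal} gives $aB<1.25$ (resp.\ $1.3$) at a pre-zero modal center. At the first anchor, $u_{N-2}=0$ and \eqref{eq:v-decomp} give
\[
aE_{j,d}=s+\mathfrak e_0,
\]
where $\mathfrak e_0$ collects the remote translates and $\eta$. Here $s\approx1/2$, and the remote and stationary parts are bounded by the geometric-tail bound used in Lemma~\ref{lem:sharp-modal}, namely $\lesssim2\times10^{-5}aB+0.018$. So $aE_{j,d}\ge0.47$ while $aB\le1.3$, giving $b_{j,d}/B\ge E_{j,d}/B>0.36$. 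The Gaussian-type envelope bound \eqref{eq:coarse-remote-bound} and its left analogue then force $|d-\lfloor\vartheta\rfloor|(|d-\lfloor\vartheta\rfloor|-1)/(2(\vartheta+t))\le\log(1/0.36)+o(1)$. This yields $|d-\vartheta|<2\sqrt\vartheta+2$ with room to spare.

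The same bound applies to the second anchor $(j-1,d+1)$. We also need that the dominant block's modal center is comparable to $B$, since $\vartheta$ differs from $\theta_0$ by at most a few units of $\delta<0.011$. Uniqueness of $L$ in Lemma~\ref{lem:dominant-reindex} then follows because $2\sqrt\vartheta+2<K_a/2$. This holds since $k>6\sigma$ in the strip and $k>\sqrt\theta\sigma/4\gg\sqrt\theta$ in the large domain.

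\textbf{Ratio estimate \eqref{eq:rho-anchor}.} By \eqref{eq:rho-from-errors},
\[
\rho_d-(1+a)=\frac{\mathfrak e_1-\mathfrak e_0}{s+\mathfrak e_0},
\]
so it suffices to show $|\mathfrak e_0|,|\mathfrak e_1|<0.00025/\vartheta$. This is the main obstacle. The crude constant $0.018$ for $\eta$ is far too weak, so we sharpen it. By \eqref{eq:eta}, $\eta$ is bounded by $(1+a)^{1-J_{\rm next}}$ plus $a^{n/2+1}$. Since every index $J\in\mathcal J_n$ down to the smallest appears in the sum, $J_{\rm next}$ is beyond the canonical $J$, so the stationary error is $\le(1+a)^{-J}\approx e^{-\theta(1-a)}$. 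This is superexponentially smaller than $1/\vartheta$ once $\vartheta>9$. The residual $a^{n/2+1}$ is negligible.

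For the remote translates, the nearest image sits at offset at least $K_a-|d-\vartheta|\ge K_a/2$. By \eqref{eq:coarse-remote-bound} its envelope relative to $B$ is at most $\exp(-0.99(K_a/2)^2/(2(\vartheta+K_a)))$. Multiplying by $aB<1.3$ and summing geometrically, we must show this is $<10^{-4}/\vartheta$. In the strip, $K_a\ge341$ and $\vartheta<1600$ give an exponent of at least about $14$; we would check the endpoint numerically, as in Appendix~\ref{app:numerics}. In the large domain, $K_a^2/\vartheta\gtrsim\kappa^2\gtrsim\theta/16$, so the bound is $e^{-c\theta}\ll1/\theta$. The delicate point is the strip endpoint $\vartheta\approx1600$, $k=341$, where the exponent is smallest. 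There we would use the exact ratio product rather than the Gaussian bound, and certify the single scalar inequality by interval arithmetic.
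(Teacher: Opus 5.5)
\textbf{Verdict: genuine gap in the third step.} Your architecture matches the paper's: first $a<10^{-2}$, then localization by contradiction from $u_{N-2}=0$ and envelope decay, then \eqref{eq:rho-from-errors} with bounds on $\mathfrak e_0,\mathfrak e_1$. The first two steps are essentially sound. (One correction in the large domain: $\kappa>\sqrt\theta/4$ only gives $k>\sqrt{\theta(\theta-1/4)}/4$, which is below $20\sqrt\theta$ for $\theta<6400$. However, $\lceil2\theta/k\rceil\le9$ still leaves $\theta_0>1590$, which suffices; alternatively, cite Proposition~\ref{prop:bootstrap}.) The third step is the actual content of the constant $0.0012$, and neither of your two error estimates there works as written.

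\emph{Remote images.} You weaken the separation to $K_a/2$. At $\vartheta\approx1600$, $k=341$ your exponent is $0.99(170.5)^2/(2\cdot1941)\approx7.4$, not about $14$; the factor $2$ in the denominator was dropped. One image then contributes nearly $10^{-3}$, against your budget $10^{-4}/\vartheta\approx6\times10^{-8}$. The exact ratio product cannot rescue this, because $170$ coordinates is only about $4.3$ standard deviations at $\vartheta=1600$. The large domain fails the same way near $\theta=1600$: separation $\approx200$ gives a factor of only $\approx e^{-10}$. You must use the separation your localization provides, $K_a-(2\sqrt\vartheta+2)$. That is at least $341-(2\sqrt\vartheta+2)$ in the strip and $\sqrt\theta\sqrt{\theta-1/4}/4-(2\sqrt\theta+2)$ in the large domain. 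These give exponents of about $17.8$ and $26$ at the worst endpoints, and are exactly the paper's $t(v)$ and $t_\infty(v)$.

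\emph{Stationary remainder.} It is not negligible; it is the binding term. By the computation in Lemma~\ref{lem:decomp}, up to the even geometric tail, $|\eta|$ equals $s(1+a)^{-J-1}$ at $N-2$ and $s(1+a)^{-J}$ at $N-k$, with $J$ canonical. The two values have opposite signs because the anchors have opposite parity, so they add in $\mathfrak e_1-\mathfrak e_0$. Since $(1+a)^{-J}=\exp(-\theta(1-a)\log(1+a)/a)>e^{-\theta}$, for $\theta$ just above $10$ one gets $\vartheta|\rho_d-(1+a)|\approx2\theta(1+a)^{-J}\gtrsim9\times10^{-4}$. So the constant $0.0012$ is nearly sharp there; at $\theta=9$ the same quantity would already exceed $2\times10^{-3}$.

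With only $a<10^{-2}$ available at this point, your bound for the stationary part is $s\,e^{-0.985\theta}\approx2.62\times10^{-5}$ at $\theta\approx10$. That already exceeds your own target $0.00025/\vartheta=2.5\times10^{-5}$. The smallness $a<0.011/\vartheta$ that would help is derived later from this very lemma.

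\emph{Fix.} Keep the exact form $\vartheta|\rho_d-(1+a)|\le2\vartheta\delta_{\rm anc}/(1-\delta_{\rm anc})$ with $\delta_{\rm anc}=\max_i|\mathfrak e_i|/s$, and prove $\vartheta\delta_{\rm anc}<0.000533$. This uses $\vartheta\le\theta$ and $\theta e^{-0.98505\theta}\le10e^{-9.8505}<5.28\times10^{-4}$ for $\theta\ge10$. Alternatively, first extract more smallness of $a$ from $aB<1.25$. This is the paper's scalar majorant $vM_{\rm anc}(v)$, whose worst cell is $[10,10.1]$. So the delicate endpoint is $\vartheta\approx10$, where you expected nothing, not $\vartheta\approx1600$.
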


\begin{proof}
In the finite strip, \eqref{eq:single-small-a} is
Lemma~\ref{lem:finite-single-bootstrap}.  In the large-parameter domain it
follows from Proposition~\ref{prop:bootstrap}.

Now apply Lemma~\ref{lem:modal-center} to a pre-zero modal center in the
same packet family.  In the finite strip the estimate
$k/\sigma_0>6$ was already established in the proof of
Lemma~\ref{lem:finite-single-bootstrap}.  In the large-parameter domain the
retained modal block has packet index at most the canonical one, hence
$\sigma_0\le\sigma$ and
\[
       \frac{k}{\sigma_0}\ge\kappa>\frac{\sqrt\theta}{4}\ge10.
\]
Thus the hypotheses of Lemma~\ref{lem:sharp-modal} hold, and it gives
$H>0.46$ and $aB<1.25$.  If the nearest dominant image were at distance at
least $2\sqrt\vartheta+2$ from its continuous center, the exact product of
the ratios \eqref{eq:b-ratio}, evaluated in
Appendix~\ref{app:numerics}, would give
\[
       \frac{E_{j,d}}{B}<0.192613.
\]
The same remote-envelope and stationary estimates used in
Lemma~\ref{lem:sharp-modal} give a remote sum below $0.0087B$ and a stationary
term below $0.018$.  The total possible cancellation at an anchor would
therefore be less than
\[
  1.25(0.192613+0.0087)+0.018<0.270<s,
\]
contradicting $u_{N-2}=0$.  Hence
\eqref{eq:dominant-localization}.

For $10<\theta<1600$, the nearest-image bounds give $L\le5$ and therefore
$0\le\theta-\vartheta<0.102$.  Moreover, $L\ge1$ would force
$\theta-c>K_a/2>170$, so near the lower endpoint one has $L=0$; in
particular $\vartheta>10$ throughout the finite strip.  Once
\eqref{eq:dominant-localization} is known, every other image is separated
from the dominant center by at least $K_a-(2\sqrt\vartheta+2)$.  The exact
envelope ratios and the stationary bound can be collected into the scalar
majorant
\[
 t(v)=341-(2\sqrt v+2),\quad
 R_{\rm anc}(v)=\exp\!\left(-\frac{0.99t(v)(t(v)-1)}{2(v+t(v))}\right),
\]
\[
 M_{\rm anc}(v)=e^{-0.98505v}+2.5125\,\frac{2.1R_{\rm anc}(v)}{1-1.05R_{\rm anc}(v)}.
\]
On $10\le v\le1600$, Appendix~\ref{app:numerics} verifies
\[
 vM_{\rm anc}(v)<0.000532481<0.000533,
 \qquad \frac{|\mathfrak e_i|}{s}\le M_{\rm anc}(\vartheta).
\]
Consequently \eqref{eq:rho-from-errors} gives
\[
 \vartheta\,|\rho_d-(1+a)|
 \le\frac{2(0.000533)}{1-0.000533/10}
 <0.001067<0.0012.
\]
For the large-parameter domain, Proposition~\ref{prop:bootstrap} implies that
$a/(1-a)$ is exponentially small and
$k>\frac{\sqrt\theta}{4}\sqrt{\theta-1/4}$.  The exponentially small
difference between $\theta$ and $\vartheta$ is absorbed in the displayed
rounding below.  For $v\ge1600$ put
\[
 t_\infty(v)=\frac{\sqrt v\sqrt{v-1/4}}4-(2\sqrt v+2),\qquad
 R_\infty(v)=
 \exp\!\left(-\frac{0.99t_\infty(v)(t_\infty(v)-1)}
                    {2(v+t_\infty(v))}\right).
\]
The exact envelope ratios give
\[
 v\left[e^{-0.98505v}
 +2.5125\frac{2.1R_\infty(v)}{1-1.05R_\infty(v)}\right]
 <4.27\times10^{-8}<0.000533;
\]
the left side is largest at $v=1600$, as recorded in
Appendix~\ref{app:numerics}.  Thus the preceding estimate for
$|\mathfrak e_i|/s$ also holds in the large-parameter domain.  Substitution
in \eqref{eq:rho-from-errors} proves \eqref{eq:rho-anchor}.
\end{proof}

The natural anchor level for $H_{j,d}$ is therefore $1/(2+a)$, not $1/2$.

\begin{lemma}[Strong curvature at the anchor indices]\label{lem:single-curv}
Under the hypotheses of Lemma~\ref{lem:single-error},
\begin{equation}\label{eq:d-central}
             \vartheta-1.02<d<\vartheta+0.002,
\end{equation}
\begin{equation}\label{eq:a-over-vartheta}
             a<\frac{0.011}{\vartheta},
\end{equation}
and, for $r=d,d+1$,
\begin{equation}\label{eq:single-curvature}
  \log\frac{\rho_{r-1}}{\rho_r}>\frac{0.13}{\vartheta}.
\end{equation}
Moreover,
\begin{equation}\label{eq:left-neighbor-ratio}
        \frac{E_{j,d-1}}{E_{j,d}}>0.88.
\end{equation}
\end{lemma}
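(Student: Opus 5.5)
The plan is to work entirely with the ratio representation $\rho_r=(1-H_{j,r})/H_{j,r}$ from the proof of Theorem~\ref{thm:single-LC}, together with the binomial-expectation formula \eqref{eq:Hprob}. The anchor estimate \eqref{eq:rho-anchor} says that $\rho_d$ is within $0.0012/\vartheta$ of $1+a$. Equivalently, $H_{j,d}$ is within $O(10^{-3}/\vartheta)$ of $1/(2+a)$. All four conclusions will be read off from quantitative information on $r\mapsto H_{j,r}$ near $r=\vartheta$.

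\textbf{Step 1: localizing $d$, i.e.\ \eqref{eq:d-central}.} I will estimate $H_{j,r}=\frac1{1+a}\E\frac{r}{r+X}$ with $X\sim\operatorname{Bin}(j,a/(1+a))$. Here $\E X=ja/(1+a)=\vartheta(1-a)/(1+a)$ and $\operatorname{Var}X\le\E X$. A second-order expansion of $r/(r+X)$ about $X=\E X$, with the Jensen lower bound and an explicit convexity upper bound, gives
\[
H_{j,r}=\frac{1}{1+a}\cdot\frac{r}{r+\vartheta}\bigl(1+O(1/\vartheta)\bigr)
\]
for $|r-\vartheta|\le2\sqrt\vartheta+2$; the latter range is supplied by \eqref{eq:dominant-localization}. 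Setting this equal to $1/(2+a)$ up to the error from \eqref{eq:rho-anchor} forces $r$ to lie within about one unit of $\vartheta$. Both constants $-1.02$ and $+0.002$ should come out of the explicit $1/\vartheta$ correction terms, namely the variance term $\operatorname{Var}X/(r+\E X)^2$ and the shift $a\vartheta$. The asymmetry of the window reflects that Jensen pushes $H$ upward, so $d$ is slightly left of $\vartheta$.

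\textbf{Step 2: smallness of $a$, i.e.\ \eqref{eq:a-over-vartheta}.} I expect this to come out of the same computation. The anchor value of $H$ is $1/(2+a)$ rather than $1/2$, and the central value computed in Step 1 equals $\tfrac12(1+O(a)+O(1/\vartheta))$. Step 1 pins $d$ only to an interval of length about $1$, and shifting $r$ by $1$ changes $H$ by roughly $1/(4\vartheta)$. Compatibility of the two anchors should therefore bound $a\vartheta$; in $\theta$-language this says $a^2J$ is small. If that is not sharp enough, I would instead use $aB<1.25$ from Lemma~\ref{lem:sharp-modal}, where $B$ grows like $e^{c\vartheta}$ when $a\vartheta$ is bounded below. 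Combined with $a<10^{-2}$, this gives $a<0.011/\vartheta$.

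\textbf{Step 3: curvature, i.e.\ \eqref{eq:single-curvature}.} I will use
\[
\log\frac{\rho_{r-1}}{\rho_r}=\log\frac{1-H_{r-1}}{1-H_r}+\log\frac{H_r}{H_{r-1}}\ \ge\ \frac{H_r-H_{r-1}}{H_r(1-H_r)}\cdot(1-o(1)),
\]
with $H_r-H_{r-1}$ given exactly by \eqref{eq:Hdiff}. At $r=d,d+1$ the expectation $\E\frac{X}{(r+X)(r-1+X)}$ is about $\vartheta/(2\vartheta)^2=1/(4\vartheta)$. I will bound it below by restricting to the event $|X-\E X|\le C\sqrt\vartheta$ with Chebyshev, or more simply by using convexity in $X$ after truncation. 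Since $H(1-H)\le1/4$, the result is a lower bound of roughly $1/\vartheta$ times a constant comfortably above $0.13$. The slack allows crude constants and the small-$\vartheta$ end $\vartheta>10$, which is where I expect the main numerical pressure.

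\textbf{Step 4: the left-neighbor ratio \eqref{eq:left-neighbor-ratio}.} We have $E_{j,d-1}/E_{j,d}=1/\rho_{d-1}$, and $\rho_{d-1}=\rho_d\cdot(\rho_{d-1}/\rho_d)$. By \eqref{eq:rho-anchor}, $\rho_d\le1+a+0.0012/\vartheta$. For the upper bound on $\rho_{d-1}/\rho_d$ I will use the analogue of Step 3 in the other direction. The increment in \eqref{eq:Hdiff} is at most $\E\frac{1}{r-1+X}$, which is of order $1/\vartheta$ with an explicit constant (Jensen fails in this direction, so a Chernoff lower tail for $X$ is needed). This gives $\log(\rho_{d-1}/\rho_d)\le C'/\vartheta$. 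With $\vartheta>10$ and $a<0.011/\vartheta$, one then needs $\exp(C'/10)(1+0.0012)<1/0.88\approx1.136$, i.e.\ $C'\lesssim1.2$.

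The main obstacle is obtaining explicit, uniform constants in Steps 1 and 4 down to $\vartheta\approx10$, where the binomial $X$ is far from Gaussian. If the analytic bounds are too weak there, I would verify the range $10<\vartheta\le V_0$ by interval evaluation of the exact finite sum \eqref{eq:Hprob}, in the manner of Appendix~\ref{app:numerics}, and use the expansions only for $\vartheta>V_0$.
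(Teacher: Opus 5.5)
Your Step~4 fails as proposed. Replacing the integrand of \eqref{eq:Hdiff} by $\frac1{d-1+X}$ throws away the factor $X/(d+X)\approx\frac12$, and no Chernoff estimate can recover it. By Jensen, $\E\frac1{d-1+X}\ge\frac1{d-1+\mu}>\frac1{2\vartheta}$, since $d<\vartheta+0.002$ and $\mu\le\vartheta$. So this route gives at best $C'\approx2$, not $C'\lesssim1.2$. Near $\vartheta=10$ it cannot do better than $H_{j,d}-H_{j,d-1}\le0.05$, which gives only $H_{j,d-1}\gtrsim0.45$, $\rho_{d-1}\lesssim1.22$ and $E_{j,d-1}/E_{j,d}\gtrsim0.82$. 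But \eqref{eq:left-neighbor-ratio} requires $H_{j,d}-H_{j,d-1}<0.031$, and your argument only starts to work around $\vartheta\approx16$. Your interval-arithmetic fallback does not cover $10<\vartheta<16$: for fixed $\vartheta$ the index $j=\vartheta(1-a)/a$ is unbounded as $a\to0$, so \eqref{eq:Hprob} is not a finite family of finite sums. The missing ingredient is a deterministic bound on the integrand, $\max_{x\ge0}\frac{x}{(d+x)(d-1+x)}=(\sqrt d+\sqrt{d-1})^{-2}<0.03$ for $d\ge9$. With $H_{j,d}>0.499$ this gives $H_{j,d-1}>0.469$ and hence $\rho_{d-1}<1/0.88$ at once.

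Steps~1--2 are circular as ordered, and the primary mechanism in Step~2 cannot work. Estimate \eqref{eq:rho-anchor} already combines both anchor equations, and by itself it locates $d$ only relative to $(1+a)\mu=(1-a)\vartheta$, not relative to $\vartheta$. For fixed $j$ and $d$, the intermediate value theorem in $a$ gives exact solutions of $\rho_d=1+a$ with $\vartheta\approx d$ and $a\vartheta\approx ad$, which need not be small. So ``compatibility of the two anchors'' cannot bound $a\vartheta$, and Step~1 cannot place $d$ within one unit of $\vartheta$ until \eqref{eq:a-over-vartheta} is known. The bound on $a$ must come from the amplitude; this is your fallback, and it has to be the main argument and come first. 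The paper uses the anchor itself: $aE_{j,d}=s+\mathfrak e_0<0.501$ and $H_{j,d}>0.497$. If $a\ge0.011/\vartheta$, the two expectation bounds give $9\le d<\vartheta$, and the first nine factors of $b_{j,d}=\prod_{q=1}^{d}\bigl(a+(1-a)\vartheta/q\bigr)$ already give $ab_{j,d}>3$, hence $aE_{j,d}>1.49$. If you use $aB<1.25$ instead, note that Lemma~\ref{lem:sharp-modal} provides $B>e^{0.98\theta_0}$ only for $\theta_0\ge1595$, so in the finite strip you must supply the lower bound on $B$ yourself.

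The correction you name for Step~1, $\operatorname{Var}X/(r+\E X)^2$, is about twice the true second-order term. Already as $a\to0$ it yields only $d>\mu\sqrt{1-2/\mu}\approx\vartheta-1-\frac1{2\vartheta}$, which misses the constant $1.02$ for moderate $\vartheta$. The one-unit window comes from the sharper bound $\E\frac d{d+X}=d\int_0^1x^{d-1}\E x^X\,dx\le d\int_0^1e^{-(d-1+\mu)(1-x)}\,dx<\frac d{d-1+\mu}$, paired with Jensen's $\frac d{d+\mu}$. Step~3 is fine: $\log(\rho_{r-1}/\rho_r)\ge4(H_{j,r}-H_{j,r-1})$ holds exactly because $h(1-h)\le\frac14$, and Chebyshev leaves ample room.
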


\begin{proof}
Equation~\eqref{eq:rho-anchor} and
$\rho_d=(1-H_{j,d})/H_{j,d}$ give
\[
 (1+a)H_{j,d}=\E\frac{d}{d+X}=\frac{1+a}{1+\rho_d},
 \qquad X\sim\operatorname{Bin}\!\left(j,\frac a{1+a}\right).
\]
Using
\[
 \frac{d}{d+r}=d\int_0^1x^{d+r-1}\,dx,
 \qquad
 (1-p+px)^j\le e^{-\mu(1-x)},
 \quad \mu=\frac{ja}{1+a},
\]
gives
\[
 \E\frac{d}{d+X}<\frac{d}{d-1+\mu},
\]
while Jensen gives $\E[d/(d+X)]\ge d/(d+\mu)$.  Together with
\eqref{eq:rho-anchor} these first imply $d>0.979\vartheta-1$, hence
$d\ge9$.  Also $H_{j,d}>0.497$, and the anchor-error bound in
Lemma~\ref{lem:single-error} gives $aE_{j,d}<0.501$.

Suppose temporarily that $a\ge0.011/\vartheta$.  The Jensen lower bound
above and \eqref{eq:rho-anchor} already give
\[
 d\le\frac{(1-a)\vartheta}{1-0.0012/\vartheta}<\vartheta.
\]
Thus every omitted factor
$a+(1-a)\vartheta/q$ with $10\le q\le d$ is at least $1$.
Consequently the first nine exact envelope factors give $a b_{j,d}>3$;
Appendix~\ref{app:numerics} records the displayed one-variable product and
its endpoint bound $3.0163\ldots$.  Since $E_{j,d}=b_{j,d}H_{j,d}$ this would
give $aE_{j,d}>1.49$, a contradiction.  Hence \eqref{eq:a-over-vartheta} holds.
Returning to the two expectation inequalities now sharpens the location to
\[
 \frac{(1-a)\vartheta-(1+a)}{1+0.0012/\vartheta}<d
 \le\frac{(1-a)\vartheta}{1-0.0012/\vartheta},
\]
which implies \eqref{eq:d-central} for $\vartheta>10$.

Finally $\mu=\vartheta(1-a)/(1+a)>9.97$, and for $r=d,d+1$ the preceding
bounds give $r<1.104\mu$.  The multiplicative Chernoff bounds, at deviations
$1/2$ and $1$, give
\[
 \Pr\{X<\mu/2\}
 \le \left(\frac{e^{-1/2}}{(1/2)^{1/2}}\right)^\mu,
 \qquad
 \Pr\{X>2\mu\}\le\left(\frac e4\right)^\mu.
\]
At $\mu>9.97$ their sum is less than $0.24$, and in particular
\[
 \Pr\{\mu/2\le X\le2\mu\}>0.69.
\]
On this event
\[
 \frac{X}{(r+X)(r-1+X)}>\frac1{19.3\mu}.
\]
Equation~\eqref{eq:Hdiff} therefore gives
$H_{j,r}-H_{j,r-1}>0.035/\vartheta>0.0337/\vartheta$ for $r=d,d+1$.
Since $d\log((1-y)/y)/dy\le-4$, \eqref{eq:single-curvature} follows.

To obtain the explicit left-neighbor bound, note that $d\ge9$ and
\[
 H_{j,d}-H_{j,d-1}
 \le \frac1{1+a}\max_{x\ge0}
       \frac{x}{(d+x)(d-1+x)}
 =\frac1{1+a}\frac1{(\sqrt d+\sqrt{d-1})^2}<0.03.
\]
Equations \eqref{eq:rho-anchor} and \eqref{eq:a-over-vartheta} give
$H_{j,d}>0.499$, hence $H_{j,d-1}>0.469$.  Therefore
$\rho_{d-1}=(1-H_{j,d-1})/H_{j,d-1}<1/0.88$, which is equivalent to
\eqref{eq:left-neighbor-ratio}.
\end{proof}

\begin{lemma}[Near-center and complementary-range control]\label{lem:local-middle}
In either single-translate domain, use the following two arithmetic
progressions, whose indices lie in $N-k\le n\le N-1$:
\[
 N-2-2r=kj+2(d-r),
\]
\[
 N-k+2r=k(j-1)+2(d+1+r).
\]
These two progressions contain all coefficients $q_{N-h}$ with
$1\le h\le k-1$ (the second reaches $N-1$ at $r=(k-1)/2$).  On each
progression, samples within $2\sqrt\vartheta+3$ packet coordinates of the
dominant center are controlled by Lemma~\ref{lem:single-curv}.  Every
remaining sample satisfies
\begin{equation}\label{eq:middle-band}
                 0.35<q_n<0.65.
\end{equation}
\end{lemma}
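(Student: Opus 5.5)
The proof splits into two parts: the covering claim, which is pure index arithmetic, and the band estimate \eqref{eq:middle-band}, which comes from the decomposition \eqref{eq:v-decomp}.

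\textbf{Covering.} By \eqref{eq:dominant-anchor-pair}, the first progression starts at $N-2$ and runs downward in steps of $2$. The second starts at $N-k$ and runs upward. Since $k$ is odd, $N-2$ and $N-k$ have opposite parity, so the two progressions occupy the two parity classes of the window $N-k\le n\le N-1$. Hence the first progression gives every $N-h$ with $h$ even, $2\le h\le k-1$, taking $r=(h-2)/2$. The second gives every $N-h$ with $h$ odd, $1\le h\le k$, taking $r=(k-h)/2$. Since $n<N$, Lemma~\ref{lem:firstzero} gives $q_n=u_n$ at all these indices. So it remains to bound $u_n$ at the samples whose packet coordinate lies at distance at least $2\sqrt\vartheta+3$ from the dominant center $\vartheta$.

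\textbf{Band estimate.} I write $u_n=s+v_n$ with $s=1/(2+a)\in(0.497,0.5]$, using $a<0.01$ from \eqref{eq:single-small-a}. In the decomposition \eqref{eq:v-decomp} I split $v_n$ into three pieces:
\begin{itemize}
\item the dominant packet term $aE_{j',d'}$, where $(j',d')$ is $(j,d-r)$ or $(j-1,d+1+r)$;
\item the remote images;
\item the stationary remainder $\eta_n$.
\end{itemize}
It then suffices to show $|v_n|<0.147$.

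For the dominant term, Lemma~\ref{lem:sharp-modal} gives $aB<1.25$. Outside the central window the exact envelope ratios \eqref{eq:b-ratio} bound the envelope relative to its mode. On the right tail I use \eqref{eq:coarse-remote-bound}. On the left tail I use the symmetric product, which gives roughly $\exp(-t^2/(2\vartheta))$ with $t\ge2\sqrt\vartheta+2$. Together these give $E/B\le b/B<0.1926$, which is the same numerical reduction quoted in Lemma~\ref{lem:single-error}. Hence the dominant term is at most $1.25\times0.1926<0.241$.

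That alone is too weak, so I sharpen it. The left-neighbor bound \eqref{eq:left-neighbor-ratio} says the decay only starts near the anchor. The fix is to use $aE_{j,d}=s+\mathfrak e_0\approx0.5$ as the reference level, not $aB$. Strict log-concavity (Theorem~\ref{thm:single-LC}) together with the curvature \eqref{eq:single-curvature} then gives, after $t$ steps away from the anchor, a factor at most $\exp(-0.13\,t(t-1)/(2\vartheta))$. At $t\ge2\sqrt\vartheta+3$ this is at most $e^{-0.13\cdot2}\cdot(\text{further})$. Since that is still not small enough, I would combine it with the Gaussian envelope tail, which has exponent about $t^2/(2\vartheta)\ge2$. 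This gives $aE\lesssim0.5\,e^{-2}\cdot1.1<0.09$. The same bound holds on the $(j-1)$ progression via the adjacent identity \eqref{eq:adjacent-packet}, which gives $E_{j-1,\cdot}\le E_{j,\cdot}$.

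The remote images and $\eta_n$ are bounded exactly as in Lemma~\ref{lem:single-error}: the remote images by $1.25\times0.0087$, and $\eta_n$ by $0.018$. The total is below $0.147$, which proves \eqref{eq:middle-band}.

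\textbf{Main obstacle.} The hard step is getting a clean uniform tail bound, in both single-translate domains, at the distance threshold $2\sqrt\vartheta+3$. This needs an explicit Gaussian-type inequality for $b_{j,d\pm t}/b_{j,d}$, with $1-a$ corrections, valid for all $\vartheta>10$. It would likely be certified as a one-variable scalar majorant in Appendix~\ref{app:numerics}, in the same way as $M_{\rm anc}$.
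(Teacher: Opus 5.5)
Your covering argument is correct. Your handling of the second progression is also correct: you use $E_{j-1,c+1}=E_{j,c+1}-aE_{j,c}\le E_{j,c+1}$ with the single reference level $aE_{j,d}=s+\mathfrak e_0$, which is slightly simpler than the paper's comparison via $\rho_{r'}\le\rho_d$.

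The gap is in the dominant-term bound, which is the substance of the lemma. Your estimate $aE\lesssim0.5e^{-2}\cdot1.1<0.09$ does not follow from anything you have established:
\begin{itemize}
\item \eqref{eq:single-curvature} is proved only for $r=d,d+1$. It gives no factor $\exp(-0.13t(t-1)/(2\vartheta))$ over $t$ steps.
\item Multiplying a curvature factor for $E$ by the envelope's Gaussian factor double counts, because the log-curvature of $E=bH$ already contains that of $b$.
\item The factor $1.1$ is unexplained.
\end{itemize}
The correct bookkeeping is $E_{j,r}/E_{j,d}=(b_{j,r}/b_{j,d})(H_{j,r}/H_{j,d})$. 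To the right of the mode, where your second progression lies, $H_{j,r}$ increases from about $1/(2+a)$ toward $1/(1+a)$. So $H_{j,r}/H_{j,d}$ is not $\le1.1$ near the threshold for small $\vartheta$; it is about $1.3$ at $\vartheta\approx10$. The only clean uniform bound from the available lemmas is $1/H_{j,d}<2+a+0.0012/\vartheta$, from \eqref{eq:rho-anchor}. This is what the paper uses: explicit envelope products on both sides give \eqref{eq:middle-envelope-ratio}, $b_{j,r}/b_{j,d}<0.137$, and $H_{j,r}<1$ then gives $E_{j,r}/E_{j,d}<0.275$. The provable dominant contribution is therefore only $<0.275(s+\mathfrak e_0)\approx0.1375$.

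With that bound your error budget does not close. You get $0.1375+1.25(0.0087)+0.018\approx0.166>0.147$, which would allow $q_n<0.35$. What closes the argument is that the non-dominant terms are much smaller than the coarse constants you import, and they must be estimated afresh.
\begin{itemize}
\item With $a<0.01$ and $\vartheta>10$, the stationary remainder is $O(e^{-0.98\vartheta})$, not $0.018$.
\item The remote images are not bounded ``exactly as in Lemma~\ref{lem:single-error}'', nor by the $0.0087B$ of Lemma~\ref{lem:coarse-modal-scale}, which assumes image spacing at least $k$. At an anchor the nearest other image center is about $K_a-(2\sqrt\vartheta+2)$ away. At a complementary sample it can be only about $k/2-2$ away.
\end{itemize}
The paper handles this with a separate tail bound at that distance, $R_{\rm mid}<3.4\times10^{-4}$ in Appendix~\ref{app:numerics}. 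This gives remote plus stationary contributions below $10^{-3}$. Then $0.275(s+3\times10^{-5})+10^{-3}<0.139$ together with $s>0.4997$ yields \eqref{eq:middle-band}.
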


\begin{proof}
The displayed coordinate identities are exact.  Each progression has packet
length less than $k/2$, whereas neighboring image centers are separated
by $K_a>k$, so no second packet center can occur on either progression.
For samples within $2\sqrt\vartheta+3$ coordinates of the dominant center,
all remote images satisfy the same uniform error bound as at the anchors.

For a remaining sample on the first progression, write $r$ for its packet
coordinate.  Equation~\eqref{eq:d-central} and the exact envelope ratios give
\begin{equation}\label{eq:middle-envelope-ratio}
             \frac{b_{j,r}}{b_{j,d}}<0.137.
\end{equation}
Appendix~\ref{app:numerics} records the corresponding product bounds on both
sides of the mode.  Since
$H_{j,d}>1/(2+a+0.0012/\vartheta)$ and $H_{j,r}<1$,
\eqref{eq:middle-envelope-ratio} gives
\[
             \frac{E_{j,r}}{E_{j,d}}<0.275.
\]
For the second progression the relevant packet is $E_{j-1,r'+1}$ with
$r'\ge d$.  By Lemma~\ref{lem:adjacent-packet},
\[
 \frac{E_{j-1,r'+1}}{E_{j-1,d+1}}
 =\frac{E_{j,r'}}{E_{j,d}}\frac{\rho_{r'}-a}{\rho_d-a}
 \le\frac{E_{j,r'}}{E_{j,d}}<0.275,
\]
because strict log-concavity makes $\rho_{r'}\le\rho_d$, while
$\rho_{r'}-a>0$ follows from the same adjacent-packet identity.
The same remote/stationary bound applies to both parity families and is less
than $10^{-3}$ in absolute value.  Using the two anchor normalizations
\[
 aE_{j,d}=s+\mathfrak e_0,\qquad
 aE_{j-1,d+1}=s+\mathfrak e_1,
 \qquad |\mathfrak e_i|<3\times10^{-5},
\]
the total oscillatory contribution at every complementary sample is $<0.139$.
Here $a<0.0011$ and hence $s>0.4997$, so \eqref{eq:middle-band} follows on
both progressions.
\end{proof}

\begin{proposition}[Coefficient bounds in the single-translate domains]\label{prop:single-interior}
Assume either $10<\theta<1600$, or
$\theta\ge1600$ and $\kappa>\sqrt\theta/4$.  Put
\[
                    x_h=q_{N-h}.
\]
Then
\begin{equation}\label{eq:prezero-strip}
 x_1>0,\qquad x_2=0,\qquad
 0<x_h<1-a\quad(3\le h\le k-1),
\end{equation}
and
\begin{equation}\label{eq:x4-single}
                    x_4>\frac{1+a}{2}.
\end{equation}
\end{proposition}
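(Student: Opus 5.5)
We combine Lemma~\ref{lem:firstzero} with Lemmas~\ref{lem:single-error}, \ref{lem:single-curv} and~\ref{lem:local-middle}, treating the indices $N-h$, $1\le h\le k-1$, as samples on the two arithmetic progressions of Lemma~\ref{lem:local-middle}.

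\textbf{Easy entries.} Since $N-h<N$, we have $x_h=u_{N-h}$, and Lemma~\ref{lem:firstzero} gives $x_2=u_{N-2}=0$. Next, $x_1=q_{N-1}$ lies on the second progression at its far end $r=(k-1)/2$. For $3\le h\le k-1$, every sample more than $2\sqrt\vartheta+3$ packet coordinates from the dominant center satisfies $0.35<x_h<0.65$ by \eqref{eq:middle-band}. This lies inside $(0,1-a)$ because $a<10^{-2}$, and the case $h=1$ is handled the same way. Only the near-center samples remain: $N-2-2r$ with $1\le r\le 2\sqrt\vartheta+3$ on the first progression, and $N-k+2r$ with $0\le r\le 2\sqrt\vartheta+2$ on the second.

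\textbf{Near-center samples.} On the first progression,
\[
u_{N-2-2r}=s+(-1)^{n+1}aD_{j,d-r}+\mathfrak e,
\]
with the remote and stationary error $\mathfrak e$ bounded as at the anchors. The anchor equation forces the dominant term to cancel $s$ exactly at $r=0$, so
\[
u_{N-2-2r}\approx s\Bigl(1-(-1)^{r}\frac{E_{j,d-r}}{E_{j,d}}\Bigr)
\]
up to relative errors of order $10^{-4}/\vartheta$. Strict log-concavity (Theorem~\ref{thm:single-LC}), together with \eqref{eq:single-curvature} and the anchor ratio \eqref{eq:rho-anchor}, gives
\[
\frac{E_{j,d-r}}{E_{j,d}}\le\exp\Bigl(-r\log\rho_{d-1}^{-1}\Bigr)
\]
with $\rho_{d-1}>\rho_d\,e^{0.13/\vartheta}$. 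Hence for odd $r$ the value is positive, and for even $r\ge2$ the deficit $1-E_{j,d-r}/E_{j,d}$ is at least of order $1/\vartheta$, which dominates the errors. The upper bound $x_h<1-a$ follows for odd $r$ because the ratio is at most $1/0.88$ by \eqref{eq:left-neighbor-ratio} and $s\approx1/2$, giving at most about $0.5\,(1+1.14)\cdot$(actual sign-adjusted) $<1-a$. The one case needing care is the crude bound $s(1+E_{j,d-1}/E_{j,d})$. This requires $E_{j,d-1}/E_{j,d}<1-2a+\dots$, which holds since $\rho_{d-1}>\rho_d\approx1+a$ gives the ratio $<1/(1+a)$.

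The second progression is handled the same way. Lemma~\ref{lem:adjacent-packet} converts $E_{j-1,d+1+r}/E_{j-1,d+1}$ into $E_{j,d+r}/E_{j,d}$ times the ratio $(\rho_{d+r}-a)/(\rho_d-a)$, and the curvature at $r=d+1$ from \eqref{eq:single-curvature} supplies the deficit. The $r=0$ term is the anchor $h=k$, which falls outside the range considered.

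\textbf{The main obstacle is \eqref{eq:x4-single}.} Here $x_4=u_{N-4}$ is the $r=1$ sample, with opposite sign, so
\[
x_4\approx s\Bigl(1+\frac{E_{j,d-1}}{E_{j,d}}\Bigr).
\]
By \eqref{eq:left-neighbor-ratio} this is at least $s(1.88)-O(10^{-4})>0.93>(1+a)/2$, since $s>0.4997$ and $a<0.011$. The real difficulty is making the error budget explicit enough that the remote terms at $r=1,\dots,2\sqrt\vartheta+3$ stay below the $0.13/\vartheta$-scale deficits on the even samples near the anchor. I would reuse the scalar majorant $M_{\rm anc}$ from Lemma~\ref{lem:single-error}, which satisfies $vM_{\rm anc}<5.4\times10^{-4}\ll0.13$.
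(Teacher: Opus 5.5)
Your skeleton matches the paper's. You normalize by the anchor ($aE_{j,d}=s+\mathfrak e_0$), take a log-concave deficit at the near-center samples, use the middle band \eqref{eq:middle-band} far from the center, and use \eqref{eq:left-neighbor-ratio} for $x_4$. Splitting by the parity of $r$, instead of bounding the packet term in absolute value as the paper does, is harmless.

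The gap is at the one place where the proposition is tight. This is the upper bound $x_h<1-a$ at near-center samples whose dominant packet has the sign opposite to the anchors, i.e.\ odd $r$ on either progression. Since $1-a-s=s(1-a-a^2)$, the ratio to the anchor packet must be below $1-a(1+a)-2\epsilon/s$, where $\epsilon/s\approx0.000533/\vartheta$. Equivalently, you need a deficit exceeding $a(1+a)+O(10^{-3}/\vartheta)$; the target ``$1-2a$'' is not the right one.

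Your justifications do not give this:
\begin{enumerate}
\item \eqref{eq:left-neighbor-ratio} is a lower bound $E_{j,d-1}/E_{j,d}>0.88$, not an upper bound. Even used as an upper bound it would give $\tfrac12(1+1/0.88)>1$.
\item The step ``$\rho_{d-1}>\rho_d\approx1+a$, so the ratio is $<1/(1+a)$'' fails for two reasons. First, $1/(1+a)>1-a-a^2$. Second, $\rho_d$ is only pinned to within $0.0012/\vartheta$ of $1+a$ by \eqref{eq:rho-anchor}, so even $<1/(1+a)$ does not follow.
\end{enumerate}
What you need at this step is the curvature gain $\rho_{d-1}>\rho_d e^{0.13/\vartheta}$ from \eqref{eq:single-curvature}. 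You state it earlier but do not use it here.

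On the second progression the first right-hand ratio is $\rho_d(\rho_{d+1}-a)/(\rho_d-a)\approx(1+a)(1-0.13/\vartheta)$. The factor $\rho_d\approx1+a$ eats part of the curvature, so the deficit is only about $0.13/\vartheta-a$. It must still beat $a(1+a)+2\epsilon/s$, which requires roughly $a\lesssim0.06/\vartheta$.

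This is exactly where \eqref{eq:a-over-vartheta}, $a<0.011/\vartheta$, is indispensable. With only $a<10^{-2}$ the comparison cannot be guaranteed anywhere in $\vartheta>10$. On the first progression a finer bound on $a$ is also needed to absorb $a^2$ when $\vartheta$ is large.

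The paper closes the step by taking initial deficits $>0.12/\vartheta$ and $>0.11/\vartheta$ on the two progressions, with monotone decay beyond by log-concavity. It compares these with $a(1+a)<0.0112/\vartheta$ and $2\epsilon/s<0.00107/\vartheta$. Your proposal never invokes \eqref{eq:a-over-vartheta}. It also locates the ``real difficulty'' at the same-sign (even $r$) samples, where only the $O(10^{-4}/\vartheta)$ errors need to be beaten and the argument is easy.

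Separately, your displayed bound should read $E_{j,d-r}/E_{j,d}\le\rho_{d-1}^{-r}$; as written, $\exp(-r\log\rho_{d-1}^{-1})=\rho_{d-1}^{r}$, which has the wrong sign in the exponent.
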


\begin{proof}
Lemma~\ref{lem:single-error} determines the dominant-pair level.  On the first
progression, \eqref{eq:single-curvature} gives an initial relative deficit
greater than $0.12/\vartheta$.  On the second progression put
$\widetilde E_r=E_{j-1,r+1}$.  Lemma~\ref{lem:adjacent-packet} gives
\begin{equation}\label{eq:second-progression-ratio}
 \frac{\widetilde E_{r+1}}{\widetilde E_r}
 =\rho_r\frac{\rho_{r+1}-a}{\rho_r-a}.
\end{equation}
Together with \eqref{eq:rho-anchor}, \eqref{eq:a-over-vartheta}, and
\eqref{eq:single-curvature}, this gives an initial right-hand deficit greater
than $0.11/\vartheta$; by strict log-concavity, the subsequent local
samples are smaller.  By \eqref{eq:a-over-vartheta},
$a(1+a)<0.0112/\vartheta$.  For a near-center non-anchor sample let
$\mathcal G\ge0.11/\vartheta$ be its relative deficit from the relevant
anchor packet and put $\epsilon=s(0.000533/\vartheta)$.  The remote and
stationary error at that sample has absolute value at most $\epsilon$, and
the anchor equation gives a packet amplitude at most $s+\epsilon$.  Hence
\[
 q_n\ge s-(s+\epsilon)(1-\mathcal G)-\epsilon>0,
\]
while
\[
 q_n\le s+(s+\epsilon)(1-\mathcal G)+\epsilon<1-a,
\]
because
\[
 \mathcal G>\frac{0.11}{\vartheta}
 >\frac{0.0112+2(0.000533)}{\vartheta}
 >a(1+a)+\frac{2\epsilon}{s}.
\]
Thus $0<q_n<1-a$ throughout the near-center ranges, including $x_1$ when it
lies there.  All remaining samples satisfy the stronger fixed
bounds \eqref{eq:middle-band} by
Lemma~\ref{lem:local-middle}.

Finally $x_4=q_{N-4}$ is the sample immediately preceding the first anchor
on the first progression.  By \eqref{eq:left-neighbor-ratio}, its dominant
packet contribution is more than $0.88$ of the anchor contribution.  Using
$aE_{j,d}=s+\mathfrak e_0$, the remote/stationary bound from
Lemma~\ref{lem:single-error}, and $s>0.4975$, we obtain
\[
 x_4> s+0.88(s-|\mathfrak e_0|)-0.001s
      >0.93>\frac{1+a}{2}.
\]
\end{proof}

\section{A sparse spectral bootstrap}\label{sec:spectral}

When several packet translates contribute appreciably, the single-packet
argument no longer applies.
Before using a theta approximation we first show, by an independent
spectral argument, that $a$ is exponentially small.  This prevents any
circularity in freezing the slowly varying packet parameter.

For $n\ge k$, the centered sequence $v_n=u_n-s$ satisfies the
homogeneous recurrence
\begin{equation}\label{eq:homrec}
             v_n+a v_{n-2}+v_{n-k}=0,
\end{equation}
with characteristic polynomial
\begin{equation}\label{eq:charpoly}
             \chi_k(\lambda)=\lambda^k+a\lambda^{k-2}+1.
\end{equation}

\begin{lemma}[A growing root near $i$]\label{lem:growing-root}
Let $k\ge341$ be odd and $0<a\le1/2$.  There is a root $\lambda$ of
\eqref{eq:charpoly} such that, if $M=N-k$ and $\theta$ is defined by
\eqref{eq:theta-def},
\begin{equation}\label{eq:root-growth}
 |\lambda|^M\ge
 \exp\left[
 \left(\log2-\frac{2.024}{k}-\frac{\pi^2}{k^2}\right)\theta
 -\log2\right].
\end{equation}
\end{lemma}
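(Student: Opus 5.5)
The plan is to locate, by a fixed-point (or Rouché) argument, a root of $\chi_k$ whose argument is the odd multiple of $\pi/k$ closest to $\pi/2$. For that root we bound $|\lambda|^k$ from below by roughly $1/(1-a)$. Then we convert $1/(1-a)$ into $e^{\delta\log 2}$ with $\delta=a/(1-a)$, and use $M\ge k(J-1)$.

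\textbf{Step 1: the root.} Write the equation as $\lambda^k=-1/(1+a\lambda^{-2})$. Since $k$ is odd, some $\psi_0=(2m+1)\pi/k$ satisfies $|\psi_0-\pi/2|\le\pi/(2k)$. Seek $\lambda=\rho e^{i\psi}$ near $e^{i\psi_0}$ via the map
\[
 \lambda\mapsto e^{i\psi_0}\,(1+a\lambda^{-2})^{-1/k}
\]
(principal branch). On the disk $|\lambda-e^{i\psi_0}|\le C/k$ we have $\lambda^{-2}=-1+O(1/k)$, so $1+a\lambda^{-2}$ stays in a compact subset of the right half-plane, since $a\le 1/2$ by Lemma~\ref{lem:ahalve}. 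The map is then a contraction of modulus $O(1/k)$ on this disk. Hence it has a fixed point $\lambda$, which is a root of $\chi_k$, with $\lambda=e^{i\psi}$-close to $i$ and $|\psi-\pi/2|\le\pi/(2k)+O(a/k)$.

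\textbf{Step 2: modulus.} From the equation, $|\lambda|^k=|1+a\lambda^{-2}|^{-1}$. Write $\lambda^{-2}=\rho^{-2}e^{-2i\psi}$ with $-2\psi=-\pi+\epsilon$ and $|\epsilon|\le\pi/k+O(a/k)$. This gives
\[
 |1+a\lambda^{-2}|\le 1-a\rho^{-2}\cos\epsilon+O(a^2\epsilon^2).
\]
Using $1-\cos\epsilon\le\epsilon^2/2$ produces the $\pi^2/k^2$ term. Using $\rho^{-2}\ge 1-O(a/k)$, where $\rho=|\lambda|=(1-a)^{-1/k}(1+O(\cdot))$, produces the $2.024/k$ term. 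The constant $2.024$ is to be obtained by bookkeeping $2\log\rho\le 2\cdot\tfrac{1}{k}\log 2\cdot(1+\text{small})$ together with the $a\le1/2$ normalization. The outcome is
\[
 k\log|\lambda|\ge-\log(1-a)-\delta\Bigl(\tfrac{2.024}{k}+\tfrac{\pi^2}{k^2}\Bigr).
\]

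\textbf{Step 3: converting.} The function $f(a)=-\log(1-a)-\log2\cdot a/(1-a)$ vanishes at $a=0$ and at $a=1/2$. Checking its shape shows it is nonnegative on $[0,1/2]$, so $-\log(1-a)\ge\delta\log 2$. Then $M=N-k=k(J-1)+2(c+1)\ge k(J-1)$, and $|\lambda|>1$, give
\[
 \log|\lambda|^M\ge (J-1)\,\delta\Bigl(\log2-\tfrac{2.024}{k}-\tfrac{\pi^2}{k^2}\Bigr).
\]
Since $J\delta=\theta$ and $\delta\le1$, we get $(J-1)\delta\ge\theta-1$. The subtracted amount $\delta(\log 2-\dots)$ is at most $\log2$, which yields \eqref{eq:root-growth}.

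\textbf{Main obstacle.} The hard part is Step 2: controlling the modulus carefully enough to land the explicit constant $2.024$ rather than just $O(1/k)$. This requires tracking both the angular displacement and the self-consistent factor $\rho^{-2}$ in $|1+a\lambda^{-2}|$. I would first try a clean inequality $\rho^{-2}\ge 1-2.024\,\delta/k$, valid for $a\le1/2$ and $k\ge341$, and then verify its endpoint numerically as in Appendix~\ref{app:numerics}.
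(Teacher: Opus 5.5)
Your proposal is correct and follows essentially the paper's route. Both arguments use a fixed point to find a root near the $k$th root $\zeta$ of $-1$ closest to $i$, prove $k\log|\lambda|\ge-\log(1-a)-\delta\,(2.024/k+\pi^2/k^2)$ with the $\pi^2/k^2$ term coming from $|\arg(-\zeta^{-2})|\le\pi/k$, and finish with $\frac{1-a}{a}\log\frac{1}{1-a}\ge\log2$, $M\ge k(J-1)$ and $\delta\le1$. The only difference is bookkeeping: the paper gets the $2.024/k$ term as the contraction displacement $|w-w_0|$ in the coordinate $\lambda=\zeta e^{w/k}$, whereas your direct estimate of $|1+a\lambda^{-2}|$ at the root, using $|\lambda|^k\le1/(1-a)$, gives the smaller correction $2\log2\,\delta/k$, which leaves ample room for the extra $O(a/k)$ angular slack you carry.
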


\begin{proof}
Choose a $k$th root $\zeta$ of $-1$ nearest $i$ and write
$\lambda=\zeta e^{w/k}$.  Then $w$ is a fixed point of
\[
 T(w)=-\operatorname{Log}\left(1+a\zeta^{-2}e^{-2w/k}\right).
\]
Here $\operatorname{Log}$ is the principal logarithm.  For $|w|\le1$ its
argument lies in the disk centered at $1$ of radius $0.503$, which is
disjoint from the nonpositive real axis, so this branch is analytic on the
whole fixed-point disk.
On $|w|\le1$ one has
$|a\zeta^{-2}e^{-2w/k}|<0.503$, and hence
$|T(w)|\le-\log(1-0.503)<0.70<1$.  Thus $T$ maps the unit disk into itself;
its derivative there has modulus $<0.006$.  Using the contraction principle
(cf. \cite{Rudin}), if $w_0=-\operatorname{Log}(1+a\zeta^{-2})$, the
fixed-point estimate gives
\[
 |w-w_0|\le\frac{2.024a}{k(1-a)}.
\]
Writing $|\arg(-\zeta^{-2})|\le\pi/k$ gives
\[
 \Re w_0\ge-\log(1-a)
 -\frac{a\pi^2}{2(1-a)^2k^2}.
\]
Since $M\ge k(J-1)$ and
$J=(1-a)\theta/a$, the monotonicity
\[
 \frac{1-a}{a}\log\frac1{1-a}\ge\log2
\]
for $0<a\le1/2$ yields \eqref{eq:root-growth}.
\end{proof}

The sparsity of $\chi_k$ gives a uniform coefficient bound for the
interpolation polynomial associated with this root.

\begin{lemma}[Root-mode extraction by interpolation]\label{lem:root-mode}
All roots of $\chi_k$ are simple.  For the root in
Lemma~\ref{lem:growing-root}, let
\[
 \mathcal L_\lambda(x)=\frac{\chi_k(x)}{(x-\lambda)\chi_k'(\lambda)}
              =\sum_{t=0}^{k-1}\alpha_t x^t.
\]
Then
\begin{equation}\label{eq:l1}
                    \sum_{t=0}^{k-1}|\alpha_t|<3.04.
\end{equation}
If $v_n=\sum_{\lambda'} C_{\lambda'}(\lambda')^n$ is the spectral expansion of
\eqref{eq:homrec}, then
\begin{equation}\label{eq:projection}
       C_\lambda\lambda^M=\sum_{t=0}^{k-1}\alpha_t v_{M+t},
\end{equation}
and the residue coefficient satisfies
\begin{equation}\label{eq:residue}
                   |C_\lambda|>\frac{a}{2.01(k+2)}.
\end{equation}
\end{lemma}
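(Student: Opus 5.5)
We prove the four assertions of Lemma~\ref{lem:root-mode} in turn: simplicity of the roots, the explicit interpolation coefficients and their $\ell^1$ bound, the projection formula, and finally the residue bound. Throughout, $k\ge341$ is odd and $0<a\le1/2$.

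\textbf{Simplicity of the roots.} Suppose $\chi_k(\mu)=0$ and $\chi_k'(\mu)=0$. Then
\[
\chi_k'(\mu)=\mu^{k-3}\bigl(k\mu^2+(k-2)a\bigr),
\]
and $\mu\neq0$ since $\chi_k(0)=1$. Hence $\mu^2=-(k-2)a/k$. Substituting into $\chi_k$ gives
\[
0=\mu^{k-2}(\mu^2+a)+1=\mu^{k-2}\frac{2a}{k}+1,
\]
so $|\mu|^{k-2}=k/(2a)\ge k$. But $|\mu|^2<a\le1/2$, so $|\mu|<1$, a contradiction.

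\textbf{The interpolation coefficients.} Write $\chi_k(x)=\sum_m e_m x^m$, so $e_k=1$, $e_{k-2}=a$, $e_0=1$, and all other $e_m$ vanish. Synthetic division gives
\[
\frac{\chi_k(x)}{x-\lambda}=\sum_{t=0}^{k-1}\gamma_t x^t,\qquad \gamma_t=\sum_{m>t}e_m\lambda^{m-t-1}.
\]
Hence $\gamma_t=\lambda^{k-1-t}+a\lambda^{k-3-t}\mathbf 1_{t\le k-3}$. Since $\chi_k(\lambda)=0$, the same quotient can be written with the downward sum $\gamma_t=-\lambda^{-t-1}$ for $t\le k-3$.

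Let $\lambda=\zeta e^{w/k}$ be the root from Lemma~\ref{lem:growing-root}. Its proof gives $|\Re w|\le0.70$, so both $|\lambda|$ and $|\lambda|^{-1}$ lie within $e^{0.70/k}$ of $1$. Consequently $|\lambda|^{\pm j}\le e^{0.7}$ for $0\le j\le k$. This crude bound is too weak, so I would split the sum at $t\approx k/2$. For large $t$ I use the upward form, and for small $t$ the downward form, choosing whichever side has $|\lambda|^{\text{exponent}}\le1$. More simply, one of the two forms always gives $|\gamma_t|\le(1+a)\max(1,|\lambda|^{-1})$ uniformly. Therefore
\[
\sum_t|\gamma_t|\le k(1+a)\bigl(1+O(1/k)\bigr).
\]

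For the denominator,
\[
\chi_k'(\lambda)=\lambda^{-1}\bigl(k\lambda^k+(k-2)a\lambda^{k-2}\bigr)=\lambda^{-1}\bigl(-k-2a\lambda^{k-2}\bigr),
\]
using $\lambda^k+a\lambda^{k-2}=-1$. Hence
\[
|\chi_k'(\lambda)|\ge|\lambda|^{-1}\bigl(k-2a|\lambda|^{k-2}\bigr)\ge |\lambda|^{-1}(k-2.1).
\]
Dividing, $\sum|\alpha_t|\le(1+a)|\lambda|\,k(1+O(1/k))/(k-2.1)$. With $a\le1/2$ this is at most about $1.5\cdot1.01<3.04$. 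The generous constant suggests the author used a cruder split, and I expect the bookkeeping of $e^{0.7/k}$ factors to fit easily.

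\textbf{The projection formula.} Since the roots are simple, $v_n=\sum_{\lambda'}C_{\lambda'}\lambda'^n$ for $n\ge0$; this is valid from $M=N-k\ge0$ onward because \eqref{eq:homrec} holds for $n\ge k$. Then
\[
\sum_t\alpha_t v_{M+t}=\sum_{\lambda'}C_{\lambda'}\lambda'^M\mathcal L_\lambda(\lambda').
\]
By construction $\mathcal L_\lambda(\lambda')=\delta_{\lambda\lambda'}$, which gives \eqref{eq:projection}.

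\textbf{The residue bound.} This is the main obstacle. I would compute $C_\lambda$ as a residue of the generating function \eqref{eq:gf} minus $s/(1-z)$. Since $1+az^2+z^k=z^k\chi_k(1/z)$, the pole is at $z_0=1/\lambda$. The residue of $U$ at $z_0$ is $-C_\lambda z_0$. Evaluating it gives
\[
C_\lambda=\frac{1+z_0^k}{(1-z_0^2)\,z_0\bigl(2az_0+kz_0^{k-1}\bigr)}\cdot(-1)\cdot(\text{sign}).
\]
Using $z_0^k=-1-az_0^2$, the numerator becomes $1+z_0^k=-az_0^2$, and the factor $a$ emerges. For the other factors: $|1-z_0^2|\le2.01$, since $z_0$ is near $-i$ with $|z_0^2+1|$ small, so $1-z_0^2\approx2$. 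The remaining denominator is at most about $k+2$ after substituting $kz_0^{k}=-k(1+az_0^2)$. Hence $|C_\lambda|\ge a/(2.01(k+2))$.

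The delicate point is tracking the moduli $|z_0|^{\pm1}$ and $|1-z_0^2|$ precisely enough to reach the constant $2.01$. Here I would use $\zeta$ within $\pi/k$ of $i$ and $|w|\le0.7$, which give $|z_0^2+1|\le(\pi+1.4)/k$.
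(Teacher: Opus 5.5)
Your simplicity argument and your projection identity are the paper's own arguments. For the $\ell^1$ bound you take a different and sharper route. The paper uses only the upward quotient coefficients, together with $|\lambda|\ge1$ and $|\lambda|^{k-2}\le2$, and gets $1+\frac{2k}{k-2}|\lambda|^2\approx3.02$. Your downward form $\gamma_t=-\lambda^{-t-1}$ for $t\le k-3$ gives $|\gamma_t|\le1+a$ for every $t$, hence a bound near $1.52$. It needs only $|\lambda|^{\pm1}\le e^{0.7/k}$, plus $2a|\lambda|^{k-2}\le e^{0.7}<2.1$ for $\chi_k'$. That part is correct.

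The residue bound, however, is not established as written, and the estimates you list do not combine to give $2.01$.
(i) From $|z_0^2+1|\le(\pi+1.4)/k$ you only get $|1-z_0^2|\le 2+0.0134$ at $k=341$, which exceeds $2.01$.
(ii) The numerator is $a|z_0|^2$. For the growing root $|z_0|=|\lambda|^{-1}<1$, so this factor works against you and cannot be dropped.
(iii) The triangle inequality gives $|k+(k-2)az_0^2|\le k+(k-2)a|z_0|^2$, which can be close to $1.5k$. To get ``about $k+2$'' you must also use the phase $z_0^2\approx-1$ in this factor, not only in $1-z_0^2$.

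The repair along your route is short. Since $|1-z_0^2|/|z_0|^2=|\lambda^2-1|$, your formula reads exactly
\[
 |C_\lambda|=\frac{a}{|\lambda^2-1|\,\bigl|k+(k-2)a\lambda^{-2}\bigr|}.
\]
First, $|\lambda^2-1|\le1+e^{1.4/k}<2.005$. Second, $|\lambda^{-2}+1|\le\pi/k+e^{1.4/k}-1<0.014$, so $|k+(k-2)a\lambda^{-2}|\le k-0.986(k-2)a\le k$. Together these give $|C_\lambda|>a/(2.005k)>a/(2.01(k+2))$.

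The paper instead uses the growing property $|\lambda|\ge1$ together with $|\lambda|^{k-2}=1/|\lambda^2+a|\le2$. Then in $C_\lambda=-a\lambda^{k-1}/((\lambda^2-1)\chi_k'(\lambda))$ one has $|\lambda|^{k-1}\ge1$, $|\chi_k'(\lambda)|\le(k+2)/|\lambda|$, and $|\lambda|+|\lambda|^{-1}<2.01$. With only your two-sided information $|\lambda|^{\pm1}\le e^{0.7/k}$, that grouping would lose a factor of about $e^{-0.7}$ in $|\lambda|^{k-1}$. So either the regrouping above or the growing property is needed to reach the stated constant.
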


\begin{proof}
A common root of $\chi_k$ and $\chi_k'$ would satisfy
$\lambda^2=-a(k-2)/k$; substitution in $\chi_k$ gives a contradiction in
modulus, so the roots are simple.  The interpolation identity
\eqref{eq:projection} follows because
$\mathcal L_\lambda(\lambda')=\mathbf 1_{\{\lambda'=\lambda\}}$ for every root
$\lambda'$ of $\chi_k$: applying the polynomial $\mathcal L_\lambda$ to the shift
operator on the spectral expansion projects onto the $\lambda$-mode.

The growing root satisfies $|\lambda|\ge1$, and the root equation gives
$|\lambda|^{k-2}\le2$.  Since
\[
 \frac{\chi_k(x)}{x-\lambda}
 =\frac{x^k-\lambda^k}{x-\lambda}
 +a\frac{x^{k-2}-\lambda^{k-2}}{x-\lambda},
\]
and
\[
 \chi_k'(\lambda)=-\lambda^{-1}(k+2a\lambda^{k-2}),
\]
the root equation and $|\lambda|\ge1$ imply
\[
 |\chi_k'(\lambda)|
 =|\lambda|^{-1}|k+2a\lambda^{k-2}|
 \ge \frac{k-2}{|\lambda|}.
\]
The coefficient $\ell^1$ norm of the numerator is at most
\[
 k|\lambda|^{k-1}+a(k-2)|\lambda|^{k-3}
 \le 2k|\lambda|+\frac{k-2}{|\lambda|},
\]
where the second term also uses $a\le1/2$.  Dividing by the preceding
lower bound for $|\chi_k'(\lambda)|$ gives
\[
 \sum_{t=0}^{k-1}|\alpha_t|
 \le 1+\frac{2k}{k-2}|\lambda|^2
 \le 1+\frac{2k}{k-2}2^{2/(k-2)}.
\]
For $k\ge341$ we have
\[
 \frac{2k}{k-2}\le\frac{682}{339},\qquad
 2^{2/(k-2)}\le2^{2/339},
\]
so the last expression is $<3.020044<3.04$.  This proves \eqref{eq:l1}.

Partial fractions of the exact generating function give
\[
 C_\lambda=-\frac{a\lambda^{k-1}}
                  {(\lambda^2-1)\chi_k'(\lambda)}.
\]
Also
\[
 |\chi_k'(\lambda)|\le\frac{k+2}{|\lambda|},
 \qquad
 |\lambda^2-1|\le |\lambda|^2+1,
\]
while $|\lambda|^{k-2}\le2$ gives
$|\lambda|+|\lambda|^{-1}<2.01$ for $k\ge341$.  Since
$|\lambda|^{k-1}\ge1$, we conclude
\[
 |C_\lambda|>
 \frac{a}{2.01(k+2)},
\]
which is \eqref{eq:residue}.
\end{proof}

\begin{proposition}[Exponential bootstrap]\label{prop:bootstrap}
Assume $k\ge341$ and $\theta\ge1600$.  Then
\begin{equation}\label{eq:aexp}
                         a<e^{-0.68\theta}.
\end{equation}
\end{proposition}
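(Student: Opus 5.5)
The plan is to combine the spectral projection of Lemma~\ref{lem:root-mode} with the growth bound of Lemma~\ref{lem:growing-root}. The window of coefficients used is $n=M+t$ with $M=N-k$ and $0\le t\le k-1$. All these indices lie in $[N-k,N-1]$, which is strictly before the first zero, so Lemma~\ref{lem:firstzero} gives $v_n=u_n-s=q_n-s$ there. Since $0\le q_n\le 1$ and $s=1/(2+a)\in[2/5,1/2]$, every such $v_n$ satisfies $|v_n|\le 1-s<0.6$.

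The first task is to check that the homogeneous recurrence \eqref{eq:homrec} governs the whole window, so that the spectral expansion used in \eqref{eq:projection} is valid. This needs $M\ge k$, or more precisely that the spectral representation $v_n=\sum C_{\lambda'}(\lambda')^n$ holds for all $n\ge M$. I would argue this from the generating function \eqref{eq:gf}: $U_{k,a}(z)-s/(1-z)$ is a rational function whose denominator is $(1+z)(1+az^2+z^k)$ up to the factor $1-z$ cancelled by $s$. Hence its coefficients beyond a bounded degree are exact sums over the roots of $\chi_k$ together with the $z=-1$ pole. The $(-1)^n$ mode is annihilated by the projection $\mathcal L_\lambda$ anyway, since $\mathcal L_\lambda$ kills every root of $\chi_k$ other than $\lambda$. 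It is cleaner, though, to note that \eqref{eq:homrec} holds exactly for $n\ge k$, so the window is safe once $M\ge k$. That holds because $\theta\ge1600$ and $a\le1/2$ give $J\ge1600$.

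The core estimate is a chain of three inequalities. Equation \eqref{eq:projection} together with \eqref{eq:l1} gives
\[
|C_\lambda|\,|\lambda|^M\le 3.04\cdot 0.6<1.83 .
\]
By \eqref{eq:residue}, $|C_\lambda|>a/(2.01(k+2))$. Combining these with \eqref{eq:root-growth} yields
\[
a< 2.01\cdot1.83\,(k+2)\cdot 2\exp\!\Bigl[-\Bigl(\log2-\frac{2.024}{k}-\frac{\pi^2}{k^2}\Bigr)\theta\Bigr].
\]
For $k\ge341$ the bracketed rate exceeds $0.6871$, so I obtain $a<7.4(k+2)e^{-0.687\theta}$.

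The main obstacle is that this bound carries the factor $k$, and $k$ is not bounded in terms of $\theta$. I would handle it in two cases.

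\textbf{Case $k+2\le e^{0.006\theta}$.} Then $7.4(k+2)<e^{0.0013\theta+0.006\theta}$, since $\log 7.4<2<0.0013\cdot1600$. This leaves a rate above $0.68$ with room to spare, giving \eqref{eq:aexp}.

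\textbf{Case $k$ very large.} Here I would use the definition $\theta=aJ/(1-a)$ together with $M\ge k(J-1)$. Instead of the crude $k$-uniform rate, I would redo the growth bound keeping $|\lambda|^M\ge(1-a)^{-M}e^{-O(aM/k)}$. Since $M\approx kJ$ and $J=(1-a)\theta/a$, the exponent $M\log\frac1{1-a}\ge kJa\ge k\theta(1-a)$ grows like $k\theta$. The polynomial factor $k+2$ is then absorbed trivially once $k\ge 341$, giving $a<e^{-c\,k\theta}$, far stronger than needed. In effect, the loss $\log 2$ versus $\log\frac1{1-a}$ in Lemma~\ref{lem:growing-root} only matters when $a$ is not small. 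If instead $a$ is already tiny, then $(1-a)^{-M}$ is of order $e^{aM}=e^{\theta k(1-a)}$, which dominates any polynomial in $k$.

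To make the two cases uniform, I would bootstrap once. The first crude bound gives $a\le 1/2$, and in fact $a$ small whenever $\log k\ll\theta$. Reinserting smaller $a$ into $\frac{1-a}{a}\log\frac1{1-a}$ and into the $M$-exponent improves the rate, and the factor $k$ is beaten by the $k$ that multiplies $\theta$ in the exponent.
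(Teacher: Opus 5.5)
There is a genuine gap in your Case~2 (large $k$). Your setup and the spectral chain are correct and match the first half of the paper's proof: $M=N-k\ge k$ because $J\ge1600$, $|v_n|\le0.6$ on $[N-k,N-1]$, and Lemmas~\ref{lem:root-mode} and \ref{lem:growing-root} give $a<7.34(k+2)e^{-\gamma_k\theta}$ with $\gamma_k\ge\gamma_{341}=0.68712\ldots$. Your Case~1 threshold is slightly off, because $\gamma_{341}-0.0013-0.006<0.68$. At $\theta=1600$ the leftover factor $7.34\,e^{-(\gamma_{341}-0.686)\theta}$ exceeds $1$, since $(\gamma_{341}-0.686)\cdot1600<1.81<\log 7.34$. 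This is easily repaired: $k+2\le e^{0.005\theta}$ works, and so does the paper's split $k<20\sqrt\theta$.

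Case~2 rests on a miscalculation of $|\lambda|$. A root with $|\lambda|\ge1$ satisfies $|\lambda|^k=|1+a\lambda^{-2}|^{-1}\le(1-a)^{-1}$, so $|\lambda|\le(1-a)^{-1/k}$, not $(1-a)^{-1}$. Since $M\le k(J+1)$ and $\frac{1-a}{a}\log\frac1{1-a}<1$, you get $|\lambda|^M\le(1-a)^{-(J+1)}<2e^{\theta}$. There is no exponent of size $k\theta$.

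Meanwhile $|C_\lambda|$ really is of order $a/k$, with the $k$ coming from $\chi_k'(\lambda)$. So the single-root projection can never give better than $a\lesssim(k+2)e^{-\theta}$. Reinserting smaller values of $a$ only moves the rate from $\log2$ toward $1$, so your bootstrap cannot help. The proposition places no bound on $k$ in terms of $\theta$, and for $k\ge e^{\theta/3}$ this bound no longer implies $a<e^{-0.68\theta}$. Heuristically, when $k\gg\sqrt\theta$ a single mode has amplitude about $(a/k)e^{\theta}$, while the packet has height about $ae^{\theta}/\sqrt\theta$. Projecting onto one mode therefore throws away a factor of order $k/\sqrt\theta$.

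The missing idea is to abandon the projection in that regime and bound the packet amplitude directly, as the paper does for $k\ge20\sqrt\theta$:
\begin{enumerate}
\item Lemma~\ref{lem:modal-center} gives a modal center before $N$ with $\theta_\ell\ge\theta-\sqrt\theta/10-1>1595$.
\item Since $k/\sigma_\ell>6$, the remote images and the stationary term are negligible, and Lemma~\ref{lem:coarse-modal-scale} gives $aB<2.2$.
\item The nine-factor lower bound from \eqref{eq:modal-product} then forces $a<0.01$.
\item Lemma~\ref{lem:sharp-modal} gives $aB<1.3$ and $B>e^{0.98\theta_\ell}$, hence $a<1.3e^{-0.98\theta_\ell}<e^{-0.68\theta}$.
\end{enumerate}
No factor of $k$ enters this bound, which is exactly what the spectral route cannot supply.
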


\begin{proof}
All values $v_M,\ldots,v_{M+k-1}$ occur before the first zero: here
$M=N-k$, so the last index is $N-1$.  Since $\delta=a/(1-a)\le1$, the hypothesis $\theta=\delta J\ge1600$ gives $J\ge1600$; hence in the present large-$\theta$ range $M=N-k\ge k$, and the homogeneous recurrence applies throughout
this spectral window.  Pre-zero boundedness gives $|v_n|<0.6$ there.
Lemma~\ref{lem:root-mode} therefore yields
\[
 \frac{a}{2.01(k+2)}|\lambda|^M<3.04(0.6).
\]
With Lemma~\ref{lem:growing-root},
\begin{equation}\label{eq:a-spectral}
 a<7.34(k+2)e^{-\gamma_k\theta},
 \qquad
 \gamma_k=\log2-\frac{2.024}{k}-\frac{\pi^2}{k^2}.
\end{equation}
If $k<20\sqrt\theta$, then $k+2<20.05\sqrt\theta$ and
$\gamma_k\ge\gamma_{341}=0.6871268\ldots$; at $\theta=1600$ the
remaining prefactor after extracting $e^{-0.68\theta}$ is $<0.066$ and
decreases thereafter.

If $k\ge20\sqrt\theta$, Lemma~\ref{lem:modal-center} gives a pre-zero
modal center with parameter
\[
 \theta_\ell\ge\theta-\frac{\sqrt\theta}{10}-1.
\]
Its standard deviation $\sigma_\ell$ is $<\sqrt{2\theta+7/4}$, so
$k/\sigma_\ell>6$ and
Lemma~\ref{lem:coarse-modal-scale} gives $aB<2.2$.  If $a\ge0.01$, even
the first nine factors of \eqref{eq:modal-product} at
$\theta_\ell>1595$ give $aB>20$, a contradiction.  Thus $a<0.01$.  Since $\theta_\ell>1595$,
Lemma~\ref{lem:sharp-modal} gives $H>0.49$, $aB<1.3$, and
$B>e^{0.98\theta_\ell}$.  Consequently
\[
 a<1.3e^{-0.98\theta_\ell}<e^{-0.68\theta}.
\]
Thus \eqref{eq:aexp} holds in both cases.
\end{proof}

The importance of Proposition~\ref{prop:bootstrap} is logical as well as
quantitative.  It is proved before any theta approximation.  After it,
changes from $J$ to $J-2,J-4,\ldots$ alter the normalized central packet
by an exponentially small amount.

\section{Multiple packet translates and the theta kernel}\label{sec:theta}

We now assume
\begin{equation}\label{eq:multi-translate}
        \theta\ge1600,\qquad
        2\le\kappa\le\frac{\sqrt\theta}{4}.
\end{equation}
Proposition~\ref{prop:bootstrap} is invoked \emph{before} this split, so
\eqref{eq:aexp} is already available.  The upper boundary is chosen so that
the complementary range has a uniform one-translate error bound; it is not
needed by the fixed-$J$ local limit itself.

The argument has six successive stages.  We first write the exact
antiperiodic periodization, then compare its frozen-$J$ interpolation to the
theta kernel in $C^2$.  Proposition~\ref{prop:bootstrap} allows the actual
variable-$J$ packet sum to be frozen with exponentially small error.  The two
anchor equations are then transferred to adjacent high samples of the common
frozen profile; their heights are matched, and strong logarithmic curvature
produces the required deficit at every other sampled point.

\medskip
\noindent\textbf{Notation in this section.}
The canonical packet parameter is $\theta=aJ/(1-a)$.  The symbol
$\vartheta$ is reserved for its dominant-image version in the single-translate
section.  For every positive packet index $J'$ define
\begin{equation}\label{eq:indexed-mu-sigma}
 \bar\mu_{J'}=\frac{(J'+1)a}{1-a},\qquad
 V_{J'}=\frac{(J'+1)a}{(1-a)^2},\qquad
 \sigma_{J'}^2=V_{J'}-\frac14,\qquad
 \mu_{J'}=\bar\mu_{J'}+\frac12.
\end{equation}
When the packet index is the current $J$, subscripts are omitted.  Thus
$\bar\mu$ and $V$ are respectively the mean and variance of the normalized
negative-binomial envelope, $\sigma^2=V-1/4$ is the effective Fourier
variance, and $\mu$ is the centered Fourier coordinate.

\subsection{Exact antiperiodic periodization}

Let
\[
             \cE_J(z)=\sum_{c\ge0}E_{J,c}z^c.
\]
From \eqref{eq:E-adj},
\begin{equation}\label{eq:Egf}
 \cE_J(z)=\frac{z(1-az)^{-J-1}+(1+a)^{-J-1}}{1+z}.
\end{equation}
For odd $k$ define
\[
 \widetilde P_{J,k}(r)=\sum_{\ell\in\mathbb Z}(-1)^\ell E_{J,r+\ell k},
\]
with negative-index terms omitted.  We use principal representatives for
the $k$ antiperiodic frequencies throughout Section~\ref{sec:theta}:
\begin{equation}\label{eq:principal-frequency-set}
 \mathcal I_k=\left\{-\frac{k-1}{2},\ldots,\frac{k-1}{2}\right\},
 \qquad
 \omega_\nu=\frac{(2\nu+1)\pi}{k}\quad(\nu\in\mathcal I_k),
 \qquad z_\nu=e^{i\omega_\nu}.
\end{equation}
Thus $|\omega_\nu|\le\pi$ and the $z_\nu$ are exactly the roots of
$z^k=-1$.  The root-of-unity filter is
\begin{equation}\label{eq:rootfilter}
 \widetilde P_{J,k}(r)=\frac1k\sum_{\nu\in\mathcal I_k}
 z_\nu^{-r}\cE_J(z_\nu).
\end{equation}
Recall \eqref{eq:sigma-kappa-early}, put $J^\sharp=J+1$, and define
\[
 S_J=\sum_{c\ge0}E_{J,c}
 =\frac{(1-a)^{-J^\sharp}+(1+a)^{-J^\sharp}}2,
\]
with $\bar\mu=\bar\mu_J$, $V=V_J$, $\sigma=\sigma_J$, and $\mu=\mu_J$ as
in \eqref{eq:indexed-mu-sigma}.
After normalization, the exact low-frequency multiplier is
\begin{equation}\label{eq:multiplier}
 G(\omega)=\frac{2}{1+\mathfrak q^{J^\sharp}}
 \frac{e^{i\omega}\left(\frac{1-a}{1-ae^{i\omega}}\right)^{J^\sharp}+\mathfrak q^{J^\sharp}}
      {1+e^{i\omega}},
 \qquad \mathfrak q=\frac{1-a}{1+a}.
\end{equation}

The continuum antiperiodic periodization is
\begin{equation}\label{eq:Phi}
 \Phi_\kappa(y)=\sum_{\ell\in\mathbb Z}(-1)^\ell
                     e^{-(y+\ell\kappa)^2/2}.
\end{equation}
Poisson summation, in the Fourier normalization used here (see
\cite{SteinShakarchi}), gives
\begin{equation}\label{eq:PhiFourier}
 \Phi_\kappa(y)=\frac{\sqrt{2\pi}}{\kappa}
 \sum_{m\in\mathbb Z}
 e^{-(2m+1)^2\pi^2/(2\kappa^2)}
 e^{i(2m+1)\pi y/\kappa}.
\end{equation}

These two formulas are also the standard image and spectral forms of a
Dirichlet heat kernel.  Splitting the sum in \eqref{eq:Phi} into even and
odd $\ell$ shows that $\Phi_\kappa(y)/\sqrt{2\pi}$ is the time-one
transition density of standard Brownian motion started at $0$ and killed
on exiting $(-\kappa/2,\kappa/2)$.  Formula \eqref{eq:Phi} is the
reflection-principle image expansion, while \eqref{eq:PhiFourier} is the
Dirichlet eigenfunction expansion.  This classical Brownian/theta duality
is developed explicitly by Salminen--Vignat \cite{SalminenVignat}.

\subsection{Analytic properties of the limiting kernel}

\begin{theorem}[Theta-kernel curvature]\label{thm:thetaLC}
For every $\kappa>0$,
\[
 \Phi_\kappa(y)>0\quad (|y|<\kappa/2),
 \qquad
 \Phi_\kappa(\pm\kappa/2)=0,
\]
and
\begin{equation}\label{eq:thetaLC}
                  (\log\Phi_\kappa)''(y)<-1
                  \qquad(|y|<\kappa/2).
\end{equation}
\end{theorem}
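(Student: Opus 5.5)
The plan is to treat the three assertions separately. The boundary zeros follow from symmetry alone, and positivity and the curvature bound follow from the Brownian interpretation recorded after \eqref{eq:PhiFourier}.

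\emph{Boundary zeros.} Reindexing $\ell\mapsto\ell+1$ and $\ell\mapsto-\ell$ in \eqref{eq:Phi} gives antiperiodicity and evenness:
\[
\Phi_\kappa(y+\kappa)=-\Phi_\kappa(y),\qquad \Phi_\kappa(-y)=\Phi_\kappa(y).
\]
Combining the two,
\[
\Phi_\kappa(\kappa/2)=-\Phi_\kappa(-\kappa/2)=-\Phi_\kappa(\kappa/2),
\]
so $\Phi_\kappa(\pm\kappa/2)=0$.

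\emph{Positivity.} I would use the identification of $\Phi_\kappa(y)/\sqrt{2\pi}$ with the time-one density $p^D(1,0,y)$ of Brownian motion started at $0$ and killed on leaving $I=(-\kappa/2,\kappa/2)$. Factor out the free Gaussian density:
\[
\Phi_\kappa(y)=e^{-y^2/2}\,g(y),\qquad g(y)=\Pr\bigl\{ys+b_s\in I\ \text{for all } s\in[0,1]\bigr\}.
\]
Here $b$ is a standard Brownian bridge on $[0,1]$. For $|y|<\kappa/2$ the straight path $s\mapsto ys$ lies in a compact subset of $I$. Hence the event that $\sup_s|b_s|$ is small enough has positive probability and forces the bridged path to stay in $I$, so $g(y)>0$.

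\emph{Curvature bound.} Since $(\log e^{-y^2/2})''=-1$, the inequality \eqref{eq:thetaLC} is equivalent to $(\log g)''<0$ on $I$. I would first prove $(\log g)''\le0$ by Pr\'ekopa's theorem. The set $\{(y,b):\ ys+b_s\in I\ \forall s\}$ is convex in the pair $(y,b)$, and the law of $b$ is Gaussian, hence log-concave. To make this rigorous I would discretize the bridge at finitely many times, where the law is a finite-dimensional Gaussian and the event is a convex polytope condition, and then pass to the limit. Pr\'ekopa's theorem gives log-concavity of each discretized marginal in $y$, and log-concavity survives pointwise limits. Smoothness of $g$ is immediate from \eqref{eq:PhiFourier}, since the Fourier series converges with all derivatives and $g=e^{y^2/2}\Phi_\kappa$.

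\emph{Strictness (main obstacle).} Pr\'ekopa's theorem gives only $(\log g)''\le0$, and analyticity of $\log g$ rules out vanishing on an interval but not at isolated points. I would first try a Brascamp--Lieb argument. For a marginal of a measure $e^{-W(y,x)}\,dx$ with $W$ convex, one has
\[
-(\log g)''(y)\ \ge\ \E\bigl[\partial_y^2W\bigr]-\operatorname{Var}\text{-type correction},
\]
with equality only under a degenerate translation structure. I would apply this to a version in which the hard constraint $\1_I$ is replaced by a strictly log-concave smooth penalty converging to it, keeping a uniform negative bound on $(\log g)''$ on compacts of $I$.

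If that route becomes too delicate, the fallback is a direct computation from \eqref{eq:PhiFourier}. When $\kappa$ is small, the first harmonic gives
\[
\Phi_\kappa\approx\tfrac{2\sqrt{2\pi}}{\kappa}e^{-\pi^2/(2\kappa^2)}\cos(\pi y/\kappa),
\]
so $(\log\Phi_\kappa)''\le-\pi^2/\kappa^2\,(1+o(1))\ll-1$ uniformly up to the boundary, because $-\sec^2$ only helps there. When $\kappa$ is large, the image sum \eqref{eq:Phi} is dominated by $e^{-y^2/2}$, with the exponentially small corrections $e^{-(\kappa\mp y)^2/2}$ entering $\log\Phi_\kappa$ as $\log\bigl(1-e^{-\kappa^2/2\mp\kappa y}\cdots\bigr)$. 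Each such term is strictly concave in $y$, being $\log(1-e^{\text{affine}})$. The difficulty in this fallback is checking that the two regimes overlap, which requires explicit tail bounds near $\kappa\approx\pi$.
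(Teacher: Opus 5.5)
Your boundary-zero argument (antiperiodicity plus evenness) and your positivity argument (the killed-Brownian identification plus a small-ball estimate for the bridge) are correct. The discretized Pr\'ekopa argument does give $(\log g)''\le0$, i.e.\ $(\log\Phi_\kappa)''\le-1$.

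The gap is the one you flag yourself: the theorem asserts the \emph{strict} inequality, and neither proposed route proves it. The Brascamp--Lieb bound is an expected Schur complement $\E[W_{yy}-W_{yb}W_{bb}^{-1}W_{by}]$. In your bridge formulation the Gaussian weight does not depend on $y$, so this quantity comes entirely from $\psi''$ of the smooth penalty. That second derivative vanishes for paths inside $I$ and blows up on a set of vanishing conditional mass as the penalty hardens. Keeping the expectation uniformly positive in that $0\cdot\infty$ limit is the whole difficulty, not a routine step.

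The fallback is not a proof either. The overlap near $\kappa\approx\pi$ is left unchecked. Moreover, the large-$\kappa$ picture of ``$e^{-y^2/2}$ plus small corrections'' fails near $y=\pm\kappa/2$, where the first image $e^{-(\kappa\mp y)^2/2}$ is as large as the main term; that is exactly why $\Phi_\kappa$ vanishes there. Your non-strict bound would suffice for the later perturbative uses in the paper, but it does not prove the statement as written.

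The missing idea is that your large-$\kappa$ heuristic is exact for every $\kappa>0$, by the Jacobi triple product. With $q=e^{-\kappa^2/2}$ and $\upsilon_m=q^{2m-1}$,
\[
\Phi_\kappa(y)=e^{-y^2/2}\sum_{\ell\in\mathbb Z}(-1)^\ell q^{\ell^2}e^{-\ell\kappa y}
=e^{-y^2/2}\prod_{m\ge1}(1-q^{2m})\bigl(1-\upsilon_m e^{\kappa y}\bigr)\bigl(1-\upsilon_m e^{-\kappa y}\bigr).
\]
So, up to an additive constant, $\log\Phi_\kappa$ is $-y^2/2$ plus a locally uniformly convergent sum of terms $\log(1-e^{u})$. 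Here $u=-(2m-1)\kappa^2/2\pm\kappa y$ is affine in $y$ with slope $\pm\kappa$, and $u<0$ on $|y|<\kappa/2$. Since $\frac{d^2}{du^2}\log(1-e^u)=-e^u/(1-e^u)^2<0$, every term is strictly concave in $y$.

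This single formula gives positivity, the zeros at $\pm\kappa/2$ (from the two $m=1$ factors), and $(\log\Phi_\kappa)''<-1$ all at once. It needs no probabilistic input and no splitting into $\kappa$-regimes. This is the paper's proof: it writes each pair of factors as $f=1-2\upsilon_m\cosh(\kappa y)+\upsilon_m^2$ and checks $ff''-(f')^2<0$ directly.
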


\begin{proof}
The Jacobi theta product formula (see \cite{NISTHandbook}) gives, up to a
positive $y$-independent factor,
\[
 \Phi_\kappa(y)=e^{-y^2/2}
 \prod_{n\ge1}\left(1-2\upsilon_n\cosh(\kappa y)+\upsilon_n^2\right),
 \qquad \upsilon_n=e^{-(2n-1)\kappa^2/2}.
\]
Because $\upsilon_n$ decays geometrically, the product and its first two
logarithmic derivatives converge locally uniformly on $|y|<\kappa/2$; hence
the differentiations below may be performed termwise.  Every factor is
positive for $|y|<\kappa/2$ and the first factor vanishes at the endpoints.  If
$f(y)=1-2\upsilon\cosh(\kappa y)+\upsilon^2$, then
\[
 f f''-(f')^2
 =-2\upsilon\kappa^2\bigl((1+\upsilon^2)\cosh(\kappa y)-2\upsilon\bigr)<0,
\]
so $(\log f)''<0$.  The Gaussian factor contributes exactly $-1$.
\end{proof}

The classical relation between diffusion and log-concavity goes back to
Brascamp--Lieb \cite{BrascampLieb}.  We retain
the elementary theta-product proof because it yields the sharper uniform
curvature bound \eqref{eq:thetaLC} needed below, rather than only
qualitative log-concavity.

\subsection{Uniform \texorpdfstring{$C^2$}{C2} transfer}

For fixed $a$, the underlying envelope belongs to the usual
negative-binomial local-limit/Edgeworth theory; Ouimet's logarithmic
expansion includes the same cubic and quartic correction scales
\cite{Ouimet}.  The regime needed here has $a\to0$ and $J\to\infty$ with
$aJ/(1-a)=\theta$ and is related to Poisson-limit expansions such as
Barbour's \cite{Barbour}.  Moreover, local Edgeworth
expansions for integer-valued sums can retain trigonometric factors
reflecting residue-class obstructions; see Dolgopyat--Hafouta
\cite{DolgopyatHafouta}.  Those results provide useful context but do not
supply the root-of-unity-filtered, parameter-uniform $C^2$ estimate
required here, so we prove the exact multiplier bound directly.

Recall the principal frequency set \eqref{eq:principal-frequency-set}.
All frequencies except $\omega_{(k-1)/2}=\pi$ occur in conjugate pairs.
At $\omega=\pi$ the multiplier \eqref{eq:multiplier} is understood by its
removable continuous extension.

Put
\[
             t_\nu=\sigma\omega_\nu
                    =\frac{(2\nu+1)\pi}{\kappa}.
\]
The frozen profile is interpolated by the real principal-frequency
trigonometric polynomial
\begin{equation}\label{eq:frozen-interpolation}
 F(y)=\Re\left\{
 \frac{\sqrt{2\pi}\sigma}{S_Jk}
 \sum_{\nu\in\mathcal I_k}
 e^{-i\omega_\nu(\mu+\sigma y)}\cE_J(e^{i\omega_\nu})
 \right\}.
\end{equation}
If $y=(r-\mu)/\sigma$ with $r\in\mathbb Z$, changing a root-of-unity
frequency by an integer multiple of $2\pi$ does not change its exponential.
Hence \eqref{eq:rootfilter} gives exactly
\begin{equation}\label{eq:frozen-interpolation-samples}
 F\!\left(\frac{r-\mu}{\sigma}\right)
 =\frac{\sqrt{2\pi}\sigma}{S_J}\widetilde P_{J,k}(r).
\end{equation}
The complex sum in \eqref{eq:frozen-interpolation} is already real at these
lattice points.  Away from the lattice, taking the real part only replaces
the self-conjugate $\omega=\pi$ mode by its real cosine interpolation; it
does not alter the sampled profile.

In Theorem~\ref{thm:C2}, the symbols $J,\theta,\sigma,\kappa$ refer to
the fixed packet being interpolated.  Later the theorem will be applied
with $J=J_{\rm fr}$ and the corresponding frozen parameters.

\begin{theorem}[Periodic local limit]\label{thm:C2}
Assume $\theta\ge1599$, $0<a\le1/2$, and $\kappa\ge2$.  Let $F$ be the
interpolation \eqref{eq:frozen-interpolation}.  Then
\begin{align}
 \|F-\Phi_\kappa\|_\infty
 &\le \frac{1.55}{\sqrt\theta}+\frac{1.87}{\theta}+0.0004,
 \label{eq:C0}\\
 \|F'-\Phi_\kappa'\|_\infty
 &\le \frac{2.44}{\sqrt\theta}+\frac{2.95}{\theta}+0.0018,
 \label{eq:C1}\\
 \|F''-\Phi_\kappa''\|_\infty
 &\le \frac{3.85}{\sqrt\theta}+\frac{6.58}{\theta}+0.0088.
 \label{eq:C2}
\end{align}
\end{theorem}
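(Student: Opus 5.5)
The plan is to compare the two objects frequency by frequency, using the Fourier form \eqref{eq:PhiFourier} of $\Phi_\kappa$ and the root-of-unity form \eqref{eq:frozen-interpolation} of $F$.

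\textbf{Step 1: put $F$ in the same shape as $\Phi_\kappa$.} Using \eqref{eq:Egf} and the definition of $S_J$, we get
\[
\frac{\sigma}{S_Jk}\,e^{-i\omega_\nu\mu}\cE_J(e^{i\omega_\nu})=\frac{1}{\kappa}\,e^{-i\omega_\nu\mu}\,G(\omega_\nu)\cdot\frac{\ldots}{\ldots}.
\]
Here $G$ is the multiplier \eqref{eq:multiplier}, and the displayed factor collects the normalization $(1-a)^{J^\sharp}\cdot$ const. Writing $t_\nu=\sigma\omega_\nu$, the function $F(y)$ becomes $\Re\,\frac{\sqrt{2\pi}}{\kappa}\sum_{\nu}\widehat G(t_\nu)e^{-it_\nu y}$, with
\[
\widehat G(t)=e^{-i t\mu/\sigma}\,G(t/\sigma).
\]
The theta series \eqref{eq:PhiFourier} is the same sum with $\widehat G$ replaced by $e^{-t^2/2}$, taken over all odd multiples $t_m=(2m+1)\pi/\kappa$. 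So
\[
F^{(j)}-\Phi_\kappa^{(j)}=\frac{\sqrt{2\pi}}{\kappa}\Bigl[\sum_{\nu\in\mathcal I_k}t_\nu^j\bigl(\widehat G(t_\nu)-e^{-t_\nu^2/2}\bigr)-\sum_{|2m+1|>k}t_m^je^{-t_m^2/2}\Bigr]\quad (j=0,1,2),
\]
taking real parts, and the $C^j$ bounds follow from weighted $\ell^1$ bounds over the lattice $t_\nu$ with spacing $2\pi/\kappa$. The aliasing tail (frequencies beyond $\pi$) has $|t|\ge\sigma\pi\gtrsim\pi\sqrt\theta$ and is negligible. The sum also converts to an integral with lattice spacing $2\pi/\kappa\le\pi$. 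For the constant terms I will bound it by $\sum_\nu$ directly, which is where the $\kappa\ge2$ hypothesis enters.

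\textbf{Step 2: the main multiplier estimate (the expected obstacle).} Expand
\[
\log\Bigl(\tfrac{1-a}{1-ae^{i\omega}}\Bigr)^{J^\sharp}=J^\sharp\,\mathrm{Log}\,\frac{1-a}{1-ae^{i\omega}}=i\bar\mu\omega-\tfrac12V\omega^2+R_3(\omega),
\]
using the cumulants of the negative binomial. The factor $e^{i\omega}/(1+e^{i\omega})=e^{i\omega/2}/(2\cos(\omega/2))$ contributes the shift $1/2$ (hence $\mu=\bar\mu+1/2$). Its logarithm $-\log\cos(\omega/2)=\omega^2/8+O(\omega^4)$ contributes the variance correction $-1/4$ (hence $\sigma^2=V-1/4$). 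The $\mathfrak q^{J^\sharp}$ terms are exponentially small, since $\mathfrak q^{J^\sharp}\le e^{-2\theta}$. After centering we get
\[
\widehat G(t)=e^{-t^2/2}\exp(\varepsilon(t)),\qquad |\varepsilon(t)|\le C\,|t|^3/\sqrt\theta+C'|t|^4/\theta
\]
for $|t|\le c\sqrt\theta$. These constants yield the $1.55/\sqrt\theta$ and $1.87/\theta$ terms, weighted by $t^j$ moments of the Gaussian, which explains the growth of the constants in \eqref{eq:C1}--\eqref{eq:C2}. Two places need care. Near $\omega=\pi$, where $\cos(\omega/2)\to0$, I would not expand and would instead use the exact identity $\cE_J(-1)$ finite via \eqref{eq:Egf}, together with the decay
\[
|(1-a)/(1-ae^{i\omega})|^{J^\sharp}\le\exp\bigl(-c\,V\sin^2(\omega/2)\bigr),
\]
which makes every frequency with $|t|\ge c\sqrt\theta$ contribute $O(e^{-c'\theta})$. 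The constants $0.0004,0.0018,0.0088$ are residual numerical slack from this tail and from the cross terms at moderate $t$. I would certify them by interval evaluation of the scalar majorants in Appendix~\ref{app:C2}. The hard part is making the cubic/quartic remainder bound uniform in $a\in(0,1/2]$ and $J$ jointly, as opposed to the fixed-$a$ Edgeworth theory. I would handle this by bounding the Taylor remainder of $J^\sharp\log(1-a e^{i\omega})$ through its third derivative, namely $J^\sharp a(1+a)/(1-a)^3\le C\theta$ times $|\omega|^3$, expressed in $t=\sigma\omega$ with $\sigma^2\ge\theta$.

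\textbf{Step 3: assembly.} I would use $|e^{\varepsilon}-1|\le|\varepsilon|e^{|\varepsilon|}$ on the central range $|t|\le\theta^{1/6}$ or a fixed multiple of $\sqrt\theta$. Outside it, the Gaussian bound on $|\widehat G|$ from Step 2 controls the error. Summing $t^j|t|^3e^{-t^2/4}$ over the lattice, with spacing at most $\pi$, gives explicit numerical constants. Adding the aliasing tail and the $\omega=\pi$ real-part correction, which is only present off-lattice and is exponentially small, yields \eqref{eq:C0}--\eqref{eq:C2}.
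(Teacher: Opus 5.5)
Your outline has the same architecture as the paper: the exact Fourier bridge between the principal-frequency sum and the theta series, with the omitted Gaussian modes as an aliasing tail; the centered cumulant expansion with $\mu=\bar\mu+\tfrac12$ and $\sigma^2=V-\tfrac14$; a Gaussian-type bound on $|\widehat G|$ for $|\omega|\le1/2$; and exponential smallness beyond that. The gap is in the quantitative core, which is the actual content of the statement. You assert that your estimates yield $1.55/\sqrt\theta$, $1.87/\theta$, and so on, but the devices you name cannot produce these constants.

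\begin{itemize}
\item Suppose you absorb $e^{|\varepsilon|}$ by summing $|t|^{j+3}e^{-t^2/4}$. Each cubic moment is then inflated by about $2^{(j+4)/2}$ relative to the weight $e^{-t^2/2}$. Since $\sqrt\theta\,|\widehat K_3|\approx1.5\sqrt2$ at $a=1/2$, for large $\kappa$ the $\theta^{-1/2}$ coefficient becomes about $2.26$ in \eqref{eq:C0} and about $18$ in \eqref{eq:C2}. The statement requires $1.55$ and $3.85$.
\item Suppose instead you cut at $|t|=\theta^{1/6}$ (about $3.42$ at $\theta=1599$) and use $|\widehat G|+e^{-t^2/2}$ outside. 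Take $\kappa\approx2.76$, so that a conjugate pair sits just beyond the cutoff. That pair alone contributes $\frac{\sqrt{2\pi}}{\kappa}\,2t^2\bigl(|\widehat G|+e^{-t^2/2}\bigr)\approx0.12$ to the second-derivative bound, even if $|\widehat G|\le e^{-t^2/2}$. The entire right side of \eqref{eq:C2} at $\theta=1599$ is about $0.109$.
\end{itemize}

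What is needed is a \emph{fixed} window ($|t|\le5$ in the paper) with two properties. On it, the Taylor error must cost essentially none of the Gaussian weight. Beyond it, the crude tail $e^{-0.36t^2}+e^{-t^2/2}$ must already lie below the fixed allowances $0.0004,0.0018,0.0088$; those allowances are essentially that tail.

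The paper makes the window estimate clean by observing that the cubic term is purely imaginary, because the cumulants are real and $-\log\cos(\omega/2)$ is even. Thus $\log\widehat G_0=-t^2/2-i\widehat K_3t^3/6+R_4$, with $R_4$ bounded through the fourth derivative. This gives $|e^{-i\beta+R_4}-1|\le|\beta|+e^{|R_4|}|R_4|$ with $|R_4|<0.1062$, so there is no exponential loss from $\beta$. The coefficients $1.009,1.589,2.517$ and $1.352,2.142,4.781$ then come from the moments $\mathcal M_{j+3},\mathcal M_{j+4}$ with the full weight $e^{-t^2/2}$. Your plan to bound the remainder through the third derivative treats the cubic term as a generic complex error and gives up this advantage.

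Second, the theorem puts no upper bound on $\kappa$, so $k$ is not controlled by $\theta$. The high-frequency part therefore needs $\sup_{|\omega|\ge1/2}|G(\omega)|$ bounded independently of $k$; the prefactor $\sqrt{2\pi}\sigma/k$ then cancels the number of modes. Knowing that $\cE_J(-1)$ is finite handles only the single mode $\omega=\pi$. Dividing your circle decay estimate by $|1+e^{i\omega_\nu}|\ge2\sin(\pi/k)$ mode by mode costs a factor of order $k$ next to $\pi$, and $\log k$ after summation.

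The paper writes the quotient in \eqref{eq:multiplier} as a divided difference of $g_{\rm dd}(z)=z\bigl((1-a)/(1-az)\bigr)^{J^\sharp}$ between $-1$ and $e^{i\omega}$. It bounds $g_{\rm dd}'$ on the chord, where $\Re w\le\cos(1/2)$, and obtains $|G(\omega)|\le4(1+\theta)e^{-0.09\theta}$ uniformly in $k$. Integrating $g_{\rm dd}'$ along the arc with your decay estimate would also work, but some such mean-value step has to be supplied.

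Two smaller slips:
\begin{itemize}
\item $\mathfrak q^{J^\sharp}\le e^{-2\theta}$ fails near $a=1/2$, where it is only about $e^{-1.1\theta}$; the paper uses $e^{-\theta}$.
\item $\sigma^2\ge\theta$ fails for small $a$; only $\sigma^2>\theta-\tfrac14$ holds.
\end{itemize}
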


\begin{proof}
Define the centered exact multiplier
\[
             \widehat G(\omega)=e^{-i\mu\omega}G(\omega).
\]
Since $\cE_J(e^{i\omega})/S_J=G(\omega)$ at every root of $z^k=-1$,
\eqref{eq:frozen-interpolation} is exactly
\begin{equation}\label{eq:exact-centered-finite-sum}
 F(y)=\frac{\sqrt{2\pi}\sigma}{k}\,
 \Re\sum_{\nu\in\mathcal I_k}
       \widehat G(\omega_\nu)e^{-it_\nu y}.
\end{equation}
Reindexing $m\mapsto-m-1$ in \eqref{eq:PhiFourier} puts the theta series in
the same sign convention:
\begin{equation}\label{eq:PhiFourier-minus}
 \Phi_\kappa(y)=\frac{\sqrt{2\pi}}{\kappa}
 \sum_{m\in\mathbb Z}e^{-t_m^2/2}e^{-it_my},
 \qquad t_m=\frac{(2m+1)\pi}{\kappa}.
\end{equation}
The prefactors in \eqref{eq:exact-centered-finite-sum} and
\eqref{eq:PhiFourier-minus} agree because $\kappa=k/\sigma$.  Let
\[
 \mathcal O_k=\{2\nu+1:\nu\in\mathcal I_k\}
             =\{2-k,4-k,\ldots,k\}.
\]
Subtracting the two exact Fourier representations gives
\begin{align}
 F(y)-\Phi_\kappa(y)
 =\frac{\sqrt{2\pi}\sigma}{k}\Re\Biggl\{
 &\sum_{\nu\in\mathcal I_k}
 \left[\widehat G(\omega_\nu)-e^{-t_\nu^2/2}\right]e^{-it_\nu y}
 \nonumber\\
 &-\sum_{\substack{m\in\mathbb Z\\2m+1\notin\mathcal O_k}}
 e^{-t_m^2/2}e^{-it_my}\Biggr\}.
 \label{eq:exact-fourier-bridge}
\end{align}
Consequently, after $j=0,1,2$ derivatives,
\begin{align}
 \|(F-\Phi_\kappa)^{(j)}\|_\infty
 \le\frac{\sqrt{2\pi}\sigma}{k}\Biggl[
 &\sum_{\nu\in\mathcal I_k}|t_\nu|^j
 \left|\widehat G(\omega_\nu)-e^{-t_\nu^2/2}\right|
 \nonumber\\
 &+\sum_{\substack{m\in\mathbb Z\\2m+1\notin\mathcal O_k}}
 |t_m|^j e^{-t_m^2/2}\Biggr].
 \label{eq:fourier-C2-master}
\end{align}
This is the exact finite/infinite Fourier comparison used below.  We split
the finite frequencies into
\[
 \mathcal C=\{|t_\nu|\le5\},\qquad
 \mathcal T=\{|t_\nu|>5,\ |\omega_\nu|\le1/2\},\qquad
 \mathcal H=\{|\omega_\nu|>1/2\}.
\]

Separate from \eqref{eq:multiplier} the principal negative-binomial part
\[
 G_0(\omega)=
 \frac{2e^{i\omega}}{1+e^{i\omega}}
 \left(\frac{1-a}{1-ae^{i\omega}}\right)^{J^\sharp}.
\]
The centered principal multiplier is
\[
 \widehat G_0(\omega)=e^{-i\mu\omega}G_0(\omega),
\]
and, for $|\omega|<\pi$,
\begin{equation}\label{eq:centered-symbol-log}
 \log \widehat G_0(\omega)
 =J^\sharp\!\left[
 \log(1-a)-\log(1-ae^{i\omega})
 -\frac{ia}{1-a}\omega\right]
 -\log\cos\frac\omega2.
\end{equation}
Thus the linear term vanishes and the quadratic term is
$-\sigma^2\omega^2/2$.  If
\[
 K_3=\frac{J^\sharp a(1+a)}{(1-a)^3},
 \qquad \widehat K_3=K_3/\sigma^3,
\]
then, with $t=\sigma\omega$,
\begin{equation}\label{eq:centered-symbol-expansion}
 \log \widehat G_0(\omega)
 =-\frac{t^2}{2}-i\widehat K_3\frac{t^3}{6}+R_4(t).
\end{equation}
The exact formulas give
\begin{equation}\label{eq:cumulants}
 |\widehat K_3|<\frac{2.13}{\sqrt\theta},
 \qquad
 |R_4(t)|\le\frac{6.52}{24\theta}t^4
 \quad(|\omega|\le1/2).
\end{equation}
Indeed, on $|\omega|\le1/2$,
\[
 \left|\frac{d^4}{d\omega^4}\log\widehat G_0(\omega)\right|
 \le
 \frac{J^\sharp a(1+4a+a^2)}{(1-a)^4}+0.16.
\]
Substituting
$\sigma^2=J^\sharp a/(1-a)^2-1/4$ and
$\theta=aJ/(1-a)$ reduces the constants $2.13$ and $6.52$ to the two
uniform elementary inequalities recorded in Appendix~\ref{app:C2}.

On $|t|\le5$ write $\beta=\widehat K_3t^3/6$.  Since
\[
 |e^{-i\beta+R_4}-1|\le |\beta|+e^{|R_4|}|R_4|
\]
and $|R_4|<0.1062$ at the worst endpoint, differentiation in $y$ gives,
for $j=0,1,2$, the central-frequency error
\begin{equation}\label{eq:C2-central-bound}
 \frac{2.13}{6\sqrt\theta}\mathcal M_{j+3}
 +\frac{6.52e^{0.1062}}{24\theta}\mathcal M_{j+4},
\end{equation}
where
\[
 \mathcal M_m=\frac{\sqrt{2\pi}}\kappa
 \sum_{\nu\in\mathbb Z}|t_\nu|^m e^{-t_\nu^2/2},
 \qquad t_\nu=\frac{(2\nu+1)\pi}{\kappa}.
\]
The moment bounds in Appendix~\ref{app:C2} make the coefficients of
$\theta^{-1/2}$ in \eqref{eq:C2-central-bound} at most
$1.009,1.589,2.517$, and those of $\theta^{-1}$ at most
$1.352,2.142,4.781$.
On the whole low-frequency range $|\omega|\le1/2$, the
$\mathfrak q^{J^\sharp}$-part of the exact multiplier contributes
$<4e^{-\theta}$ per frequency: here $|1+e^{i\omega}|$ is bounded away from
zero and $\mathfrak q^{J^\sharp}\le e^{-\theta}$.  Since there are at most
$k$ such frequencies and $|t_\nu|\le\sigma/2$, its aggregate contribution
after $j=0,1,2$ derivatives is at most
\[
 4\sqrt{2\pi}\,\sigma\left(\frac{\sigma}{2}\right)^j e^{-\theta}.
\]
Using $\sigma^2<2\theta+7/4$ and $\theta\ge1599$, this is $<10^{-600}$
for all three values of $j$, well inside the fixed tail allowances below.

The exact modulus of the centered principal symbol is
\begin{equation}\label{eq:principal-modulus}
 |\widehat G_0(\omega)|
 =\frac1{|\cos(\omega/2)|}
 \left(1+\frac{4a\sin^2(\omega/2)}{(1-a)^2}\right)^{-J^\sharp/2}.
\end{equation}
The Gaussian tail estimate used on $\mathcal T$ follows from a direct
scalar inequality.  Put
\[
 D=\frac{4a\sin^2(\omega/2)}{(1-a)^2},
 \qquad V=\frac{J^\sharp a}{(1-a)^2}=\sigma^2+\frac14.
\]
For $0<a\le1/2$ and $|\omega|\le1/2$,
\[
 D\le8\sin^2(1/4)<0.49,\qquad
 \left(\frac{\sin(\omega/2)}{\omega/2}\right)^2>0.979,
 \qquad -\log\cos(\omega/2)<0.127\omega^2.
\]
Using $\log(1+D)\ge D-D^2/2$ therefore gives
\[
 \frac{J^\sharp}{2}\log(1+D)
 \ge\frac{V\omega^2}{2}
 \left(\frac{\sin(\omega/2)}{\omega/2}\right)^2
 \left(1-\frac D2\right)
 >0.3695V\omega^2.
\]
Since $V\ge\theta\ge1599$,
\[
 \log|\widehat G_0(\omega)|
 <0.127\omega^2-0.3695V\omega^2
 \le-0.36(V-1/4)\omega^2=-0.36t^2.
\]
Hence
\begin{equation}\label{eq:symbol-gaussian-tail}
 |\widehat G_0(\omega)|\le e^{-0.36t^2}
 \qquad(|\omega|\le1/2).
\end{equation}

In \eqref{eq:fourier-C2-master}, every Gaussian mode not treated in
$\mathcal C$ has $|t|>5$: in particular, an omitted mode beyond the finite
principal set has $|t|\ge\pi\sigma>5$.  Thus the exact contribution from
$\mathcal T$ together with the entire Gaussian tail is bounded, after
$j=0,1,2$ derivatives, by
\[
 \frac{\sqrt{2\pi}}\kappa
 \sum_{|t_m|>5}|t_m|^j
 \left(e^{-0.36t_m^2}+e^{-t_m^2/2}\right).
\]
Appendix~\ref{app:C2} bounds these three quantities by
\[
 0.000347,\qquad0.001734,\qquad0.008705,
\]
respectively.

Near $\omega=\pi$, the $\mathfrak q^{J^\sharp}$-term must be estimated together
with the principal term.  Put
\[
 g_{\rm dd}(z)=z\left(\frac{1-a}{1-az}\right)^{J^\sharp}.
\]
Since $g_{\rm dd}(-1)=-\mathfrak q^{J^\sharp}$, the quotient in
\eqref{eq:multiplier} is the divided
difference
\[
 \frac{g_{\rm dd}(z)-g_{\rm dd}(-1)}{z+1},\qquad z=e^{i\omega}.
\]
The derivative
\[
 g_{\rm dd}'(z)=\left(\frac{1-a}{1-az}\right)^{J^\sharp}
 \left(1+\frac{J^\sharp az}{1-az}\right)
\]
gives a uniform exponentially small bound without any loss from the
near-$\pi$ denominator.  Indeed, if $|\omega|\ge1/2$ and $w$ lies on the
chord joining $-1$ to $e^{i\omega}$, then $|w|\le1$ and
$\Re w\le\cos(1/2)$.  Hence
\[
 \left|\frac{1-a}{1-aw}\right|
 \le\frac{1-a}{1-a\cos(1/2)},
\]
If
\[
 h(a)=\log\frac{1-a\cos(1/2)}{1-a}-0.09\frac{a}{1-a},
\]
then $h(0)=0$ and
\[
 h'(a)=\frac1{1-a}-\frac{\cos(1/2)}{1-a\cos(1/2)}
       -\frac{0.09}{(1-a)^2}>0
       \qquad(0\le a\le1/2).
\]
Thus
\[
 \left|\frac{1-a}{1-aw}\right|^{J^\sharp}\le e^{-0.09\theta},
 \qquad
 \left|1+\frac{J^\sharp aw}{1-aw}\right|
 \le1+\frac{J^\sharp a}{1-a}\le2(1+\theta).
\]
The mean-value formula for the divided difference and
$2/(1+\mathfrak q^{J^\sharp})\le2$ thus give the stronger estimate
\[
      |G(\omega)|\le4(1+\theta)e^{-0.09\theta}
      <20(1+\theta)e^{-0.09\theta}
      \qquad(|\omega|\ge1/2).
\]
On $\mathcal H$ the factor $1/k$ in
\eqref{eq:exact-centered-finite-sum} cancels the number of finite
frequencies, and $j$ derivatives in $y$ cost at most $(\pi\sigma)^j$.
Thus the exact high-frequency contribution is bounded, for $j\le2$, by
\[
 20\sqrt{2\pi}\,\sigma(\pi\sigma)^j(1+\theta)e^{-0.09\theta}.
\]
Using $\sigma^2<2\theta+7/4$, the right side is $<5\times10^{-52}$ already
at $\theta=1599$ and decreases thereafter; in particular it is $<10^{-11}$.
The $\mathfrak q^{J^\sharp}\le e^{-\theta}$ contribution is smaller still.

The midpoint/variation estimates for the six $\mathcal M_m$ and the three
tail sums are recorded in Appendix~\ref{app:C2}.  Combining the central
estimate \eqref{eq:C2-central-bound}, the exact/Gaussian tail estimate above,
and the high-frequency bound with the master inequality
\eqref{eq:fourier-C2-master} gives \eqref{eq:C0}--\eqref{eq:C2}.  In
particular the fixed tail allowances $0.0004,0.0018,0.0088$ dominate all
noncentral contributions.  This proves all three inequalities.
\end{proof}

\begin{lemma}[Freezing the packet parameter]
\label{lem:freeze-J}
Let $\widetilde J\ge1$ be a packet index and define its own packet scales by
\[
 \widetilde\theta=\frac{a\widetilde J}{1-a},\qquad
 \bar\mu=\bar\mu_{\widetilde J},\qquad
 V=V_{\widetilde J},\qquad
 \sigma=\sigma_{\widetilde J},\qquad
 \mu=\mu_{\widetilde J},\qquad
 \widetilde\kappa=\frac{k}{\sigma}.
\]
Assume
\[
        k\ge341,\qquad \widetilde\theta\ge1599,\qquad
        \widetilde\kappa\le0.2501\sqrt{\widetilde\theta},
        \qquad a\le e^{-0.68\widetilde\theta}.
\]
Put
\[
 \mathcal B_{\widetilde J}=\sum_{c\ge0}b_{\widetilde J,c}
 =(1-a)^{-\widetilde J-1},\qquad
 S_{\widetilde J}=\sum_{c\ge0}E_{\widetilde J,c}
 =\frac{(1-a)^{-\widetilde J-1}+(1+a)^{-\widetilde J-1}}2.
\]
Choose a set $\widetilde{\mathcal R}$ of $k$ consecutive integers so that
$|r-\mu|\le k+1$ for every $r\in\widetilde{\mathcal R}$.  For
$\iota=0,1$, define on $r\in\widetilde{\mathcal R}$
\begin{equation}\label{eq:actual-profile-samples}
 \mathcal A_\iota(r)
 =\sum_m(-1)^m
 E_{\widetilde J-\iota-2m,r+mk},
\end{equation}
where terms with a negative packet coordinate or a nonpositive packet
index are omitted.  Define the frozen profile
\begin{equation}\label{eq:frozen-profile-samples}
 \mathcal P(r)=\sum_m(-1)^mE_{\widetilde J,r+mk}
\end{equation}
and the normalization
\begin{equation}\label{eq:profile-normalization}
        \mathcal N_{\widetilde J}
        =\frac{S_{\widetilde J}}{\sqrt{2\pi}\sigma}.
\end{equation}
For a real sample function $X$ on $\widetilde{\mathcal R}$ use the
principal frequency set \eqref{eq:principal-frequency-set} and define
\begin{equation}\label{eq:principal-DFT}
 \widehat X_\nu=\frac1k\sum_{r\in\widetilde{\mathcal R}}
 X(r)e^{i\omega_\nu r},\qquad
 \operatorname{Interp}_X(y)=\frac1{\mathcal N_{\widetilde J}}
 \Re\sum_{\nu\in\mathcal I_k}
 \widehat X_\nu e^{-i\omega_\nu(\mu+\sigma y)}.
\end{equation}
Then
\[
 \operatorname{Interp}_X\!\left(\frac{r-\mu}{\sigma}\right)
 =\frac{X(r)}{\mathcal N_{\widetilde J}}
 \qquad(r\in\widetilde{\mathcal R}).
\]
Define
$F_\iota^{\rm var}=\operatorname{Interp}_{\mathcal A_\iota}$ and
$F^{\rm fr}=\operatorname{Interp}_{\mathcal P}$.  Unfolding the frozen cyclic sum and
using $e^{-i\omega_\nu mk}=(-1)^m$ gives the exact Fourier coefficient
identity
\begin{equation}\label{eq:frozen-DFT-rootfilter}
 \widehat{\mathcal P}_\nu
 =\frac1k\sum_{c\ge0}E_{\widetilde J,c}e^{i\omega_\nu c}
 =\frac1k\cE_{\widetilde J}(e^{i\omega_\nu}).
\end{equation}
Thus $F^{\rm fr}$ is literally the frozen interpolation
\eqref{eq:frozen-interpolation} with $J=\widetilde J$, in the same Fourier
sign convention.  For any twice differentiable function $H$ below, set
\[
        \|H\|_{C^2}:=\max_{0\le j\le2}\|H^{(j)}\|_\infty.
\]
Then, for $\iota=0,1$,
\begin{equation}\label{eq:freeze-C2}
 \boxed{
 \left\|F_\iota^{\rm var}-F^{\rm fr}\right\|_{C^2}
 \le10^4\widetilde\theta^2e^{-\widetilde\theta/11}.
 }
\end{equation}
In particular, if $\widetilde\theta\ge1600$, the right-hand side is
\begin{equation}\label{eq:freeze-endpoint}
        <2\times10^{-53}.
\end{equation}
\end{lemma}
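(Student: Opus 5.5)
The bound \eqref{eq:freeze-C2} reduces to a sample-wise estimate, because the map $X\mapsto\operatorname{Interp}_X$ is linear.

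\textbf{Interpolation and $C^2$ transfer.} Write $X=\mathcal A_\iota-\mathcal P$. The interpolation identity holds because the $k$ frequencies $\omega_\nu$ give an invertible DFT on any $k$ consecutive integers. The unfolding identity \eqref{eq:frozen-DFT-rootfilter} follows by substituting $c=r+mk$ and using $z_\nu^k=-1$. For the $C^2$ bound, each of the $k$ Fourier coefficients satisfies $|\widehat X_\nu|\le\max_r|X(r)|$. Each $y$-derivative costs a factor $\sigma|\omega_\nu|\le\pi\sigma$. Hence
\[
\|\operatorname{Interp}_X\|_{C^2}\le \frac{k(\pi\sigma)^2}{\mathcal N_{\widetilde J}}\max_r|X(r)|
=\frac{\sqrt{2\pi}\,\pi^2k\sigma^3}{S_{\widetilde J}}\max_r|X(r)|.
\]
Using $k\le0.2501\sqrt{\widetilde\theta}\,\sigma$ and $\sigma^2<2\widetilde\theta+7/4$, the prefactor is $O(\widetilde\theta^{2})/S_{\widetilde J}$. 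So it suffices to show
\[
\max_r|X(r)|\lesssim S_{\widetilde J}\,e^{-\widetilde\theta/11}.
\]

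\textbf{Sample-wise comparison.} Split the $m$-sums into near terms ($|m|\le m_0$) and far terms. Choose $m_0$ of order $\sqrt{\widetilde\theta}$, say $m_0\approx 0.6\sqrt{\widetilde\theta}$, so that $m_0k$ is several multiples of $\widetilde\theta$.

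\emph{Far terms.} Since $|r-\mu|\le k+1$, the packet coordinate $r+mk$ is at distance at least $(|m|-1)k$ from the mean. Use the exact envelope ratio bounds \eqref{eq:b-ratio}, or simply $E_{J',c}\le b_{J',c}$ together with Chernoff for the negative binomial. This bounds the far-term sums, for both $\mathcal A_\iota$ and $\mathcal P$, by $\mathcal B_{\widetilde J}e^{-c\widetilde\theta}$ with some $c\ge1/10$. Omitted terms (negative packet coordinate or nonpositive index) lie in this far regime and are handled the same way.

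\emph{Near terms.} Compare $E_{\widetilde J-\iota-2m,c}$ with $E_{\widetilde J,c}$ for $|m|\le m_0$. Changing $J'$ by one multiplies $b_{J',c}$ by $(J'+c)/J'$, or by its reciprocal, which is $1+O((c+1)/J')$. Since $a\le e^{-0.68\widetilde\theta}$, we have $J'\approx\widetilde\theta/a$, which is huge, while $c=O(m_0k)=O(\widetilde\theta^{3/2}\sigma)$. Hence
\[
\frac{b_{J',c}}{b_{\widetilde J,c}}-1=O\!\left(a\,\widetilde\theta^{5/2}\right).
\]
The factor $H_{J',c}$ changes similarly by $O(a)$, through \eqref{eq:Hprob}. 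So each near term differs by $O(a\widetilde\theta^{3})E_{\widetilde J,c}$. Summing over at most $2m_0+1$ terms bounded by $\mathcal B_{\widetilde J}$ gives an error $\ll\mathcal B_{\widetilde J}e^{-0.6\widetilde\theta}$. Finally $\mathcal B_{\widetilde J}\le2S_{\widetilde J}$.

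\textbf{Assembly and main obstacle.} Together these give $\max_r|X(r)|\le C\,S_{\widetilde J}e^{-\widetilde\theta/11}$, and the prefactor from the first paragraph yields \eqref{eq:freeze-C2}. The bound \eqref{eq:freeze-endpoint} is then arithmetic at $\widetilde\theta=1600$ (where $e^{-1600/11}\approx10^{-63}$), since the right side is decreasing for $\widetilde\theta\ge22$.

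I expect the main obstacle to be the far-term tail on the left of the mode. The Chernoff exponent must be uniform in $a$, and the image spacing can be as small as $2\sigma$ when $\widetilde\kappa\ge2$ is not assumed here. So the decay must come from the number of images $|m|$, not from the spacing. I would handle this using the $\widetilde\theta$-scale Poisson-type bound $b_{J',c}/\mathcal B\le\exp(-(c-\bar\mu)^2/(2(\bar\mu+|c-\bar\mu|)))$, which already appears in \eqref{eq:coarse-remote-bound}. The cutoff $m_0$ should be tuned so that both the near-term count and the far tail stay below $e^{-\widetilde\theta/11}$; the generous constant $1/11$ suggests this balance has room.
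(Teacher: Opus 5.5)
Your strategy matches the paper's. You reduce by linearity to the samples $X=\mathcal A_\iota-\mathcal P$, bound each DFT coefficient at a cost of $\pi\sigma$ per derivative, and split into a central part, where changing the packet index costs only a factor involving $a\le e^{-0.68\widetilde\theta}$, and negative-binomial tails.

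\textbf{The cutoff fails.} The hypotheses bound $k$ only from above ($k\le0.2501\sqrt{\widetilde\theta}\,\sigma$), so $k=341$ with $\widetilde\theta=10^6$ is admissible. Then $(m_0-1)k\approx0.2\widetilde\theta$, not ``several multiples of $\widetilde\theta$''. At distance about $0.2\bar\mu$ from its mean, the essentially Poisson envelope is still of size roughly $e^{-0.02\widetilde\theta}\mathcal B_{\widetilde J}$ up to a polynomial factor. That is far above $e^{-\widetilde\theta/11}\mathcal B_{\widetilde J}$, so your far-term bound with exponent $\ge1/10$ is false for this $m_0$. Your own remark that the decay must come from the number of images already shows the cutoff must scale like $\widetilde\theta/k$, not $\sqrt{\widetilde\theta}$.

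What your ``main obstacle'' paragraph misses is that no balancing is needed. The near-term error is $a$ times a polynomial in $\widetilde\theta$, so the near window can be as large as you like. The paper cuts in the packet coordinate, $\bar\mu/2\le c\le2\bar\mu$, which admits up to $|c-\bar\mu|/k+2$ images. There $|\partial_x\log E_{x,c}|\le c/x+a$ gives $|\log(E_{\widetilde J-\iota-2m,c}/E_{\widetilde J,c})|\le6a(2|m|+1)$. The complement is then a pure negative-binomial tail:
\begin{itemize}
\item on the upper tail $m>0$, so $b_{\widetilde J-\iota-2m,c}\le b_{\widetilde J,c}$;
\item on the lower tail the constraint $c\ge0$ leaves only $O(\bar\mu/k)$ images, and these change the envelope by a factor $<2$.
\end{itemize}
So the left-tail difficulty you anticipate does not arise, and uniformity in $a$ is moot.

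\textbf{The prefactor is miscounted.} With $k\le0.2501\sqrt{\widetilde\theta}\,\sigma$ and $\sigma^2\approx\widetilde\theta$, the quantity $k\sigma^3$ is of order $\widetilde\theta^{5/2}$, not $\widetilde\theta^2$. So $\max_r|X(r)|\lesssim S_{\widetilde J}e^{-\widetilde\theta/11}$ alone does not give the boxed constant; you would have to spend slack from a better tail exponent. The paper avoids this loss by using $|\widehat X_\nu|\le k^{-1}\sum_r|X(r)|$. Since each $c\ge0$ equals $r+mk$ for exactly one pair $(r,m)$, $\sum_r|X(r)|$ is controlled by a single sum over $c$. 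Using $\sigma^2<1.001\widetilde\theta$, this gives the prefactor $\sqrt{2\pi}\,\sigma(\pi\sigma)^j/S_{\widetilde J}$, which is of order $\widetilde\theta^{3/2}/S_{\widetilde J}$ at $j=2$.
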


\begin{proof}
We use two elementary facts about the exact packet.

First, extend the packet index temporarily to a real parameter $x>0$,
using the standard Gamma-function continuation (cf. \cite{NISTHandbook}).
From
\[
 b_{x,c}=\frac{\Gamma(x+c+1)}{\Gamma(x+1)\Gamma(c+1)}a^c
\]
and the integral formula for $H_{x,c}$,
\[
 \partial_x\log b_{x,c}=\sum_{r=1}^c\frac1{x+r}\le\frac cx,
 \qquad
 \left|\partial_x\log H_{x,c}\right|\le\log(1+a)<a.
\]
Consequently
\begin{equation}\label{eq:packet-J-derivative}
 \boxed{
 \left|\partial_x\log E_{x,c}\right|\le\frac cx+a.
 }
\end{equation}

Second, after normalization by $\mathcal B_{\widetilde J}$, the envelope
$b_{\widetilde J,c}$ is a negative-binomial probability mass with mean
$\bar\mu$ and variance $V$.  The hypotheses also imply
\begin{equation}\label{eq:freeze-local-scales}
 V=\frac{\widetilde\theta}{1-a}+\frac{a}{(1-a)^2}
 <1.001\widetilde\theta,
 \qquad
 \bar\mu=\widetilde\theta+\frac{a}{1-a}>\widetilde\theta.
\end{equation}

For each $c\ge0$, write uniquely
\[
        c=r+mk,\qquad r\in\widetilde{\mathcal R}.
\]
Then
\begin{equation}\label{eq:m-index-bound}
        |m|\le\frac{|c-\bar\mu|}{k}+2.
\end{equation}
On the central event
\[
        \frac{\bar\mu}{2}\le c\le2\bar\mu,
\]
\eqref{eq:m-index-bound}, $k\ge341$, and
$\widetilde J=(1-a)\widetilde\theta/a$ show that every parameter between
$\widetilde J$ and $\widetilde J-\iota-2m$ is $>\widetilde J/2$.
Moreover, for such an intermediate parameter $x$,
\[
        \frac cx<5a.
\]
Hence \eqref{eq:packet-J-derivative} gives
\[
 \left|
 \log\frac{E_{\widetilde J-\iota-2m,c}}
          {E_{\widetilde J,c}}
 \right|
 \le6a(2|m|+1).
\]
The right-hand side is $<10^{-100}$ in the present parameter range;
therefore
\begin{equation}\label{eq:central-J-change}
 \left|E_{\widetilde J-\iota-2m,c}-E_{\widetilde J,c}\right|
 \le40a(|m|+1)E_{\widetilde J,c}.
\end{equation}
Summing \eqref{eq:central-J-change}, using $E_{\widetilde J,c}\le
b_{\widetilde J,c}$, \eqref{eq:m-index-bound}, and Cauchy--Schwarz, gives
\begin{equation}\label{eq:central-freeze-L1}
 \frac1{\mathcal B_{\widetilde J}}
 \sum_{\bar\mu/2\le c\le2\bar\mu}
 \left|E_{\widetilde J-\iota-2m,c}-E_{\widetilde J,c}\right|
 \le40a\left(\frac{\sqrt V}{k}+3\right).
\end{equation}

For the complement let $Y$ have probability mass
$b_{\widetilde J,c}/\mathcal B_{\widetilde J}$.  Its generating function is
\[
 \mathbb E z^Y
 =\left(\frac{1-a}{1-az}\right)^{\widetilde J+1}.
\]
For the upper tail take $z=3/2$.  Since $a<10^{-2}$,
\[
 (\widetilde J+1)\log\frac{1-a}{1-3a/2}\le0.51\bar\mu,
\]
and therefore
\[
 \Pr(Y\ge2\bar\mu)
 <e^{-0.30\bar\mu}.
\]
For the lower tail apply Markov's inequality to $2^{-Y}$:
\[
 \Pr(Y\le\bar\mu/2)<e^{-0.15\bar\mu}.
\]
Thus
\begin{equation}\label{eq:NB-tail}
 \frac1{\mathcal B_{\widetilde J}}
 \sum_{c\notin[\bar\mu/2,\,2\bar\mu]}b_{\widetilde J,c}
 <2e^{-\bar\mu/10}.
\end{equation}
By \eqref{eq:freeze-local-scales} and
$\widetilde\kappa\le0.2501\sqrt{\widetilde\theta}$,
\[
 \frac{\bar\mu}{k}
 >\frac1{0.2501\sqrt{1.001}}>3.99.
\]
Hence $m<0$ throughout the lower tail and $m>0$ throughout the upper
tail.  On the upper tail the packet index decreases, so
$b_{\widetilde J-\iota-2m,c}\le b_{\widetilde J,c}$.  On the lower tail
\eqref{eq:packet-J-derivative} and $|m|\le\bar\mu/k+2$ show that increasing
the packet index changes the envelope by a factor $<2$.  Since $E\le b$,
the same tail estimate holds for the packet magnitudes, with $1/10$
weakened to $1/11$.  Thus
\begin{equation}\label{eq:tail-freeze-L1}
 \frac1{\mathcal B_{\widetilde J}}
 \sum_{\rm tails}
 \left|E_{\widetilde J-\iota-2m,c}-E_{\widetilde J,c}\right|
 <6e^{-\widetilde\theta/11}.
\end{equation}

Let $d_r=\mathcal A_\iota(r)-\mathcal P(r)$.  Equations
\eqref{eq:central-freeze-L1}--\eqref{eq:tail-freeze-L1}, together with
$S_{\widetilde J}\ge\mathcal B_{\widetilde J}/2$, yield
\begin{equation}\label{eq:freeze-sample-L1}
 \frac{\sum_{r\in\widetilde{\mathcal R}}|d_r|}{S_{\widetilde J}}
 \le80a\left(\frac{\sqrt V}{k}+3\right)
 +12e^{-\widetilde\theta/11}.
\end{equation}

It remains to pass from the samples to their principal-frequency
interpolants.  By \eqref{eq:principal-DFT},
\[
 |\widehat d_\nu|\le\frac1k\sum_r|d_r|,
 \qquad |\omega_\nu|\le\pi.
\]
Differentiation in $y$ multiplies a Fourier coefficient by
$\sigma\omega_\nu$; taking the real part can only decrease the norm.
Thus, for $j=0,1,2$,
\begin{equation}\label{eq:DFT-C2}
 \left\|\frac{d^j}{dy^j}
 (F_\iota^{\rm var}-F^{\rm fr})\right\|_\infty
 \le\sqrt{2\pi}\,\sigma(\pi\sigma)^j
 \frac{\sum_r|d_r|}{S_{\widetilde J}}.
\end{equation}
Substituting \eqref{eq:freeze-sample-L1} into \eqref{eq:DFT-C2}, using
\eqref{eq:freeze-local-scales}, $k\ge341$, and
$a\le e^{-0.68\widetilde\theta}$, gives at the worst case $j=2$
\[
 \left\|F_\iota^{\rm var}-F^{\rm fr}\right\|_{C^2}
 <10^4\widetilde\theta^2e^{-\widetilde\theta/11},
\]
which is \eqref{eq:freeze-C2}.  The right-hand side decreases for
$\widetilde\theta\ge1599$ and is $<2\times10^{-53}$ once
$\widetilde\theta\ge1600$.

\end{proof}

\begin{lemma}[Exact cyclic representation at the adjacent anchors]
\label{lem:cyclic-representation}
Assume
\[
        \theta\ge1600,\qquad \kappa\le\frac{\sqrt\theta}{4},
\]
so Proposition~\ref{prop:bootstrap} gives $a\le e^{-0.68\theta}$.  Write the
canonical first anchor as
$N-2=kJ_{\rm can}+2c$, $0\le c<k$.  For $1\le h\le k$ define
\[
 c_h^\circ=
 \begin{cases}
 c-(h-2)/2,&h\text{ even},\\
 c+1+(k-h)/2,&h\text{ odd},
 \end{cases}
 \qquad
 \iota_h=\begin{cases}0,&h\text{ even},\\1,&h\text{ odd}.
 \end{cases}
\]
Then the $c_h^\circ$ form $k$ consecutive integers, and the two anchors
$h=2,k$ are represented by the adjacent coordinates $c,c+1$.  There is a
nonnegative integer $\ell_{\rm fr}$ for which the common cyclic reindexing
\[
       (J_{\rm can},r)\longmapsto(J_{\rm fr},r+\ell_{\rm fr}k),
       \qquad J_{\rm fr}=J_{\rm can}-2\ell_{\rm fr},
\]
places all these coordinates in the fundamental interval used in
Lemma~\ref{lem:freeze-J}.  Define
\begin{equation}\label{eq:fr-parameters}
 \theta_{\rm fr}=\frac{aJ_{\rm fr}}{1-a},\qquad
 \sigma_{\rm fr}=\sigma_{J_{\rm fr}},\qquad
 \kappa_{\rm fr}=\frac{k}{\sigma_{\rm fr}}.
\end{equation}
Then
\begin{equation}\label{eq:fr-canonical-closeness}
 0\le\theta-\theta_{\rm fr}<10^{-400},\qquad
 1\le\frac{\kappa_{\rm fr}}{\kappa}<1+10^{-400},
\end{equation}
and in particular
\begin{equation}\label{eq:fr-variance-comparison}
 \theta_{\rm fr}>1599,\qquad
 \theta-\frac14<\sigma_{\rm fr}^2<1.001\theta,
 \qquad
 \kappa_{\rm fr}<0.2501\sqrt{\theta_{\rm fr}}.
\end{equation}
Put
\[
 \widetilde c_h=c_h^\circ+\ell_{\rm fr}k,\qquad
 \varsigma_h=(-1)^{J_{\rm fr}-\iota_h+\widetilde c_h},\qquad
 y_h=\frac{\widetilde c_h-\mu_{J_{\rm fr}}}{\sigma_{\rm fr}}.
\]
Choose the common antiperiodic representative so that
$\varsigma_2=\varsigma_k=-1$.  Applying Lemma~\ref{lem:freeze-J} with
$\widetilde J=J_{\rm fr}$ gives
\begin{equation}\label{eq:exact-cyclic-representation}
 q_{N-h}=s+\varsigma_h\Lambda
 F^{\rm var}_{\iota_h}(y_h)+\eta_h^{\rm cyc},
 \qquad \Lambda=a\mathcal N_{J_{\rm fr}}>0,
\end{equation}
where $\varsigma_h\in\{\pm1\}$ and
\begin{equation}\label{eq:stationary-theta-bound}
      |\eta_h^{\rm cyc}|\le\tau_\theta:=2e^{-0.98\theta}
      \qquad(1\le h\le k).
\end{equation}
Thus the two parity families have one common amplitude $\Lambda$, with the
two anchor signs fixed explicitly by the displayed formula for
$\varsigma_h$.
\end{lemma}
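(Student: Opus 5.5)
The plan is to start from the exact decomposition \eqref{eq:v-decomp} of Lemma~\ref{lem:decomp}, evaluated at every index $N-h$, $1\le h\le k$. All these indices lie before the first zero, so $q_{N-h}=u_{N-h}$ by Lemma~\ref{lem:firstzero}.

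\textbf{Coordinates.} For even $h$, $N-h=kJ_{\rm can}+2(c-(h-2)/2)$. For odd $h$, $N-h=k(J_{\rm can}-1)+2(c+1+(k-h)/2)$. This is the same computation as in \eqref{eq:dominant-anchor-pair}, so the packet images occurring at $N-h$ are $E_{J_{\rm can}-\iota_h-2m,\,c_h^\circ+mk}$ with alternating signs $(-1)^m$. The even values of $h$ give coordinates $c-(k-3)/2,\dots,c$, and the odd values give $c+1,\dots,c+(k-1)/2$. Together these are $k$ consecutive integers, with $h=2$ at $c$ and $h=k$ at $c+1$. I would fix the factor $(-1)^{n+1}$ together with the sign of $D$ from Lemma~\ref{lem:remainder}, which is $(-1)^{c+1}$. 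This yields the global sign $\varsigma_h=(-1)^{J_{\rm fr}-\iota_h+\widetilde c_h}$ after the shift. Shifting by $\ell_{\rm fr}k$ changes $J$ by $2\ell_{\rm fr}$ and $r$ by $\ell_{\rm fr}k$, which changes the parity of $\widetilde c_h$ by $\ell_{\rm fr}$, since $k$ is odd. Because \eqref{eq:frozen-profile-samples} is antiperiodic, that parity change is exactly absorbed into the representative. I would therefore choose $\ell_{\rm fr}$, adjusting by one further step if necessary, so that $\varsigma_2=\varsigma_k=-1$. This works because $\varsigma_2$ and $\varsigma_k$ agree automatically: $\iota$ and $\widetilde c$ both change by one between the two anchors.

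\textbf{Choice of $\ell_{\rm fr}$.} Next I would choose $\ell_{\rm fr}\ge0$ minimal with all $\widetilde c_h$ within $k+1$ of $\mu_{J_{\rm fr}}$. Since consecutive images move by $K_a\approx k$ relative to the mode, such a shift exists, and $\ell_{\rm fr}\lesssim\theta/k+2$. Then $\theta-\theta_{\rm fr}=2\ell_{\rm fr}a/(1-a)\le C\theta e^{-0.68\theta}<10^{-400}$ by Proposition~\ref{prop:bootstrap}. The same bound holds for $\sigma^2-\sigma_{\rm fr}^2$, which gives \eqref{eq:fr-canonical-closeness}. The estimates \eqref{eq:fr-variance-comparison} then follow from $V=\theta/(1-a)+a/(1-a)^2$ with $a$ exponentially small, together with $\kappa\le\sqrt\theta/4$ and the $10^{-400}$ perturbation. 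These are exactly the hypotheses of Lemma~\ref{lem:freeze-J} with $\widetilde J=J_{\rm fr}$.

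\textbf{Assembling the representation.} With the sample interpolation identity in Lemma~\ref{lem:freeze-J}, the oscillatory part $(-1)^{n+1}a\sum D$ equals $\varsigma_h a\mathcal A_{\iota_h}(\widetilde c_h)=\varsigma_h\Lambda F^{\rm var}_{\iota_h}(y_h)$. Both parity families share the single normalization $\mathcal N_{J_{\rm fr}}$, which gives one amplitude $\Lambda$. The remaining term is $\eta_h^{\rm cyc}=\eta_{N-h}$.

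\textbf{Main obstacle: the stationary bound.} The expected difficulty is \eqref{eq:stationary-theta-bound}, because \eqref{eq:eta} involves $J_{\rm next}$ only through the omitted tail $W_J$ beyond the largest retained packet index. That index is at least $J_{\rm can}-1\ge J$, so $(1+a)^{1-J_{\rm next}}\le(1+a)^{-J}\approx e^{-aJ}=e^{-(1-a)\theta}$. This is comfortably below $e^{-0.98\theta}$ because $a$ is exponentially small. The even geometric term $a^{n/2+1}$ is negligible, since $a<e^{-0.68\theta}$ and $n\ge k$. One point needs care here. Packets with nonpositive index or negative coordinate are omitted in $\mathcal A_\iota$, and these must match exactly the indices $\mathcal J_n$ present in \eqref{eq:v-decomp}. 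So I would check that the convention $J\ge1$ and $c_J\ge0$ coincides with the omission rule, so that no stray terms leak into $\eta^{\rm cyc}$.
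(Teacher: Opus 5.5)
\textbf{The gap: the sign normalization.} You fix $\varsigma_2=\varsigma_k=-1$ through the parity of $\ell_{\rm fr}$ (``adjusting by one further step if necessary''). You also need $\ell_{\rm fr}$ to put every $\widetilde c_h$ within $k+1$ of $\mu_{J_{\rm fr}}$, and these two demands are generally incompatible.

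Raising $\ell_{\rm fr}$ by one moves the window by $k$ and moves $\mu_{J_{\rm fr}}$ by $-2\delta$, with $\delta=a/(1-a)$. So the window moves by $K_a=k+2\delta$ relative to the mode. On the other hand, a block of $k$ consecutive integers with left end $A$ satisfies $|r-\mu|\le k+1$ throughout exactly when $A-\mu\in[-k-1,2]$, an interval of length $k+3$. Hence, outside a set of configurations of relative size about $3/k$, exactly one $\ell_{\rm fr}$ is admissible. Its parity is therefore forced, and so is $\varsigma_2=(-1)^{J_{\rm can}+c+\ell_{\rm fr}}$.

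The extra step moves part of the window beyond distance $k+1$ (up to about $2k$) from $\mu_{J_{\rm fr}}$. That violates the hypothesis of Lemma~\ref{lem:freeze-J}, which is used there for \eqref{eq:m-index-bound} and for the signs of $m$ on the two tails. It also violates the statement's own requirement on $\ell_{\rm fr}$.

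The missing idea is to flip the sign without changing $\ell_{\rm fr}$ or $J_{\rm fr}$. The profiles $F^{\rm var}_\iota$, $F^{\rm fr}$ and $\Phi_{\kappa_{\rm fr}}$ contain only odd frequencies, so each satisfies $F(y+\kappa_{\rm fr})=-F(y)$. Translating all of them by one antiperiod gives $q_{N-h}=s+(-\varsigma_h)\Lambda F_{\rm tr}(y_h)+\eta_h^{\rm cyc}$ with $F_{\rm tr}(y)=F(y+\kappa_{\rm fr})$. This leaves the Fourier data, every $C^2$ distance, and the window unchanged. That is what the paper does. It takes $\ell_{\rm fr}$ to be the nearest integer to $(\theta+\delta-c-\tfrac12)/K_a$, so that the window center $c+1+\ell_{\rm fr}k$ lies within $K_a/2$ of $\mu_{J_{\rm fr}}$.

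\textbf{Smaller points.}
\begin{enumerate}
\item The strict bound $\sigma_{\rm fr}^2>\theta-\tfrac14$ does not follow from the $10^{-400}$ perturbation. The margin $V_{J_{\rm fr}}-\theta$ is itself of order $a\theta$, far below $10^{-400}$, so that argument only gives $\sigma_{\rm fr}^2\ge\theta-\tfrac14-10^{-400}$. You need the exact identity $V_{J_{\rm fr}}-\theta=\delta(\theta_{\rm fr}-2\ell_{\rm fr})+a/(1-a)^2$ together with $\ell_{\rm fr}\le\theta/k+2$.
\item The existence of an admissible $\ell_{\rm fr}\ge0$ needs a justification. From $a\le e^{-0.68\theta}$ one gets $\sigma^2<1.001\theta$, and then $\kappa\le\sqrt\theta/4$ gives $k<0.251\theta$. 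So the canonical window lies to the left of the mode.
\item The odd-$h$ coordinates run from $c+1$ to $c+(k+1)/2$; your stated ranges contain only $k-1$ integers in total.
\item When some even-$h$ coordinate $c_h^\circ$ is negative, the largest retained packet index is $J_{\rm can}-2$, so $J_{\rm next}=J_{\rm can}$. This costs only a factor $1+a$, and the bound $2e^{-0.98\theta}$ still holds.
\end{enumerate}

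The remaining bookkeeping agrees with the paper. This covers the sign $(-1)^{J_{\rm fr}-\iota+r}(-1)^m$, the common amplitude $a\mathcal N_{J_{\rm fr}}$, and the match between the omitted terms and $\mathcal J_n$.
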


\begin{proof}
For $n=k(J_{\rm fr}-\iota)+2r$, the admissible packet indices in
\eqref{eq:v-decomp} are $J_{\rm fr}-\iota-2m$ and their packet coordinates
are $r+mk$.  Since $D_{J',c'}=(-1)^{c'+1}E_{J',c'}$ and $k$ is odd,
\[
 (-1)^{n+1}D_{J_{\rm fr}-\iota-2m,r+mk}
 =(-1)^{J_{\rm fr}-\iota+r}(-1)^m
   E_{J_{\rm fr}-\iota-2m,r+mk}.
\]
This gives \eqref{eq:exact-cyclic-representation} after normalization by
\eqref{eq:profile-normalization}.  At the two anchors,
\[
 \varsigma_2=(-1)^{J_{\rm fr}+c+\ell_{\rm fr}k},\qquad
 \varsigma_k=(-1)^{J_{\rm fr}-1+c+1+\ell_{\rm fr}k}=\varsigma_2.
\]
If their common value is $+1$, translate both parity profiles, the frozen
profile, and the theta kernel by one antiperiod.  This flips their values
and preserves every norm, curvature estimate, and coefficient identity,
so the common representative may be chosen with both anchor signs $-1$.

The $k$ coordinates $c_h^\circ$ have center $c+1$.  Put
$\delta=a/(1-a)$ and $K_a=k+2\delta$, and choose $\ell_{\rm fr}$ to be the nearest
integer to
\[
        \frac{\theta+\delta-c-1/2}{K_a}.
\]
Because $a\le e^{-0.68\theta}$, one has $\sigma^2<1.001\theta$; hence the
assumption $\kappa\le\sqrt\theta/4$ gives $k<0.251\theta$.  Since $c<k$,
the displayed quotient is $>2$, so its nearest integer satisfies $\ell_{\rm fr}\ge0$.
For the shifted interval the distance between its center and
$\mu_{J_{\rm fr}}$ is at most $K_a/2$; hence every shifted coordinate
satisfies $|r-\mu_{J_{\rm fr}}|<k+1$.

Moreover $\ell_{\rm fr}\le\theta/k+2$, and therefore
\[
 0\le\theta-\theta_{\rm fr}
 =2\ell_{\rm fr}\frac{a}{1-a}
 \le2\left(\frac{\theta}{k}+2\right)\frac{a}{1-a}<10^{-400}.
\]
Since $J_{\rm fr}\le J_{\rm can}$, one has
$\sigma_{\rm fr}\le\sigma$ and hence $\kappa_{\rm fr}\ge\kappa$.
Moreover
\[
 \sigma^2-\sigma_{\rm fr}^2
 =\frac{\theta-\theta_{\rm fr}}{1-a}<2\times10^{-400},
\]
so $\kappa_{\rm fr}/\kappa=\sigma/\sigma_{\rm fr}<1+10^{-400}$.
For the lower variance bound, using $\ell_{\rm fr}\le\theta/k+2$ gives
\[
 V_{J_{\rm fr}}-\theta
 =\frac{a}{1-a}(\theta_{\rm fr}-2\ell_{\rm fr})+\frac{a}{(1-a)^2}>0
\]
because $\theta_{\rm fr}-2\ell_{\rm fr}>0$ for $\theta\ge1600$, $k\ge341$.
The upper bound follows from
$V_{J_{\rm fr}}<1.001\theta_{\rm fr}<1.001\theta$.  This proves
\eqref{eq:fr-canonical-closeness} and \eqref{eq:fr-variance-comparison};
the $0.0001$ buffer then gives
$\kappa_{\rm fr}<0.2501\sqrt{\theta_{\rm fr}}$.  Also
$a\le e^{-0.68\theta}\le e^{-0.68\theta_{\rm fr}}$, so all hypotheses of
Lemma~\ref{lem:freeze-J} hold with $\widetilde J=J_{\rm fr}$.

The formulas for $c_h^\circ$ follow from
$N-2=kJ_{\rm can}+2c$ for even $h$ and
$N-k=k(J_{\rm can}-1)+2(c+1)$ for odd $h$.  For
$N-k\le n\le N-1$, the stationary remainder may be bounded before the
cyclic relabeling: the $J_{\rm next}$ in \eqref{eq:eta} is at least
$J_{\rm can}+1$, so its first term is at most
\[
       \frac{(1+a)^{-J_{\rm can}}}{2+a}.
\]
Since $J_{\rm can}=\theta(1-a)/a$, $a\le e^{-0.68\theta}$, and
$\log(1+a)\ge a-a^2/2$, one has
$J_{\rm can}\log(1+a)>0.99\theta$.  The even-parity term in
\eqref{eq:eta} is smaller than $e^{-\theta}$ because
$n\ge k(J_{\rm can}-1)$.  This proves
\eqref{eq:stationary-theta-bound}.
\end{proof}

\begin{corollary}[Variable-base cyclic $C^2$ transfer]
\label{cor:actual-C2}
Assume
\[
        \theta\ge1600,\qquad 2\le\kappa\le\sqrt\theta/4,
\]
and let $J_{\rm fr},\theta_{\rm fr},\sigma_{\rm fr},\kappa_{\rm fr}$ be
as in Lemma~\ref{lem:cyclic-representation}.  Let
$F_0^{\rm var},F_1^{\rm var}$ be the two variable-base parity profiles
from Lemma~\ref{lem:freeze-J}, let $F^{\rm fr}$ be their common frozen
profile, and put
\[
             \Phi_{\rm fr}:=\Phi_{\kappa_{\rm fr}}.
\]
Define the deliberately enlarged canonical freezing allowance
\begin{equation}\label{eq:epsilon-theta}
 \varepsilon_\theta:=1.001\times10^4\theta^2e^{-\theta/11}.
\end{equation}
Then, for $\iota=0,1$,
\begin{equation}\label{eq:actual-freeze-C2}
 \|F_\iota^{\rm var}-F^{\rm fr}\|_{C^2}\le\varepsilon_\theta,
\end{equation}
and
\begin{align*}
 \|F_\iota^{\rm var}-\Phi_{\rm fr}\|_\infty
 &\le\frac{1.55}{\sqrt\theta}+\frac{1.87}{\theta}+0.0005,\\
 \|(F_\iota^{\rm var})'-\Phi_{\rm fr}'\|_\infty
 &\le\frac{2.44}{\sqrt\theta}+\frac{2.95}{\theta}+0.0019,\\
 \|(F_\iota^{\rm var})''-\Phi_{\rm fr}''\|_\infty
 &\le\frac{3.85}{\sqrt\theta}+\frac{6.58}{\theta}+0.0089.
\end{align*}
Moreover \eqref{eq:exact-cyclic-representation} may be rewritten as
\begin{equation}\label{eq:frozen-coefficient-representation}
 q_{N-h}=s+\varsigma_h\Lambda F^{\rm fr}(y_h)
          +\widetilde\eta_h^{\rm cyc},
 \qquad
 |\widetilde\eta_h^{\rm cyc}|\le\tau_\theta+\Lambda\varepsilon_\theta.
\end{equation}
\end{corollary}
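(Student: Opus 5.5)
The corollary assembles three earlier results, and I would prove it in three steps, one per displayed conclusion.

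\textbf{Freezing bound.} Lemma~\ref{lem:cyclic-representation} already checks every hypothesis of Lemma~\ref{lem:freeze-J} with $\widetilde J=J_{\rm fr}$: namely $\theta_{\rm fr}>1599$, $\kappa_{\rm fr}<0.2501\sqrt{\theta_{\rm fr}}$, and $a\le e^{-0.68\theta_{\rm fr}}$. So \eqref{eq:freeze-C2} gives
\[
\|F_\iota^{\rm var}-F^{\rm fr}\|_{C^2}\le10^4\theta_{\rm fr}^2e^{-\theta_{\rm fr}/11}.
\]
To replace $\theta_{\rm fr}$ by $\theta$, use \eqref{eq:fr-canonical-closeness}, which gives $0\le\theta-\theta_{\rm fr}<10^{-400}$. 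Then $\theta_{\rm fr}^2\le\theta^2$, and $e^{-\theta_{\rm fr}/11}\le e^{-\theta/11}e^{10^{-400}/11}$. The extra factor $e^{10^{-400}}$ is absorbed by the enlargement from $10^4$ to $1.001\times10^4$, which yields \eqref{eq:actual-freeze-C2}.

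\textbf{Three sup-norm comparisons.} By the triangle inequality,
\[
\|(F_\iota^{\rm var})^{(j)}-\Phi_{\rm fr}^{(j)}\|_\infty\le\|(F_\iota^{\rm var}-F^{\rm fr})^{(j)}\|_\infty+\|(F^{\rm fr})^{(j)}-\Phi_{\kappa_{\rm fr}}^{(j)}\|_\infty .
\]
For the second term I would apply Theorem~\ref{thm:C2} to the frozen packet $J=J_{\rm fr}$. Its hypotheses $\theta_{\rm fr}\ge1599$, $a\le1/2$ and $\kappa_{\rm fr}\ge\kappa\ge2$ hold by \eqref{eq:fr-variance-comparison} and \eqref{eq:fr-canonical-closeness}. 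Identity \eqref{eq:frozen-DFT-rootfilter} shows that $F^{\rm fr}$ is exactly the interpolation \eqref{eq:frozen-interpolation} with $J=J_{\rm fr}$, so Theorem~\ref{thm:C2} bounds the $j$th-derivative error by
\[
c_j/\sqrt{\theta_{\rm fr}}+c_j'/\theta_{\rm fr}+t_j .
\]
Converting $\theta_{\rm fr}$ to $\theta$ costs at most a relative factor of $1+10^{-400}$, because $\theta_{\rm fr}\ge\theta-10^{-400}$. The first term is at most $\varepsilon_\theta\le1.001\times10^4\cdot1600^2e^{-1600/11}<10^{-50}$, since $\theta^2e^{-\theta/11}$ decreases for $\theta\ge22$. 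These two tiny contributions, together with the $\theta$-conversion loss (which is far below $10^{-300}$), fit inside the extra $10^{-4}$ added to each fixed allowance.

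\textbf{Coefficient representation.} Start from \eqref{eq:exact-cyclic-representation} and write $F^{\rm var}_{\iota_h}(y_h)=F^{\rm fr}(y_h)+\bigl(F^{\rm var}_{\iota_h}-F^{\rm fr}\bigr)(y_h)$. The bracketed difference is at most $\varepsilon_\theta$ in absolute value by \eqref{eq:actual-freeze-C2} with $j=0$. Multiplying by $|\varsigma_h|\Lambda=\Lambda$ and adding $\eta_h^{\rm cyc}$, bounded by \eqref{eq:stationary-theta-bound}, gives \eqref{eq:frozen-coefficient-representation} with $\widetilde\eta_h^{\rm cyc}=\eta_h^{\rm cyc}+\varsigma_h\Lambda(F^{\rm var}_{\iota_h}-F^{\rm fr})(y_h)$.

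\textbf{Main obstacle.} The one delicate point is confirming that Theorem~\ref{thm:C2} is applied with the frozen normalizations $\mu_{J_{\rm fr}},\sigma_{\rm fr},S_{J_{\rm fr}}$, which are exactly those used in $\mathcal N_{J_{\rm fr}}$ and $y_h$, and at $\kappa_{\rm fr}$ rather than $\kappa$. This is why the comparison kernel is $\Phi_{\kappa_{\rm fr}}$ and not $\Phi_\kappa$. Everything else is bookkeeping of exponentially small quantities.
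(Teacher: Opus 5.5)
Your proposal is correct and follows the paper's proof. It applies Lemma~\ref{lem:freeze-J} and Theorem~\ref{thm:C2} at $J_{\rm fr}$ with the frozen parameters $(\theta_{\rm fr},\kappa_{\rm fr})$, absorbs the replacement $\theta_{\rm fr}\to\theta$ into the $0.1\%$ enlargement and the added $10^{-4}$ allowances, and concludes by the triangle inequality. Your explicit substitution giving \eqref{eq:frozen-coefficient-representation} from \eqref{eq:exact-cyclic-representation} writes out a step the paper leaves implicit.
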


\begin{proof}
Lemma~\ref{lem:cyclic-representation} gives
$0\le\theta-\theta_{\rm fr}<10^{-400}$ and
$\kappa_{\rm fr}\ge\kappa\ge2$.  Hence Theorem~\ref{thm:C2} applies to the
frozen profile with its own parameters $(\theta_{\rm fr},\kappa_{\rm fr})$,
and the principal-frequency DFT in Lemma~\ref{lem:freeze-J} is exactly the
interpolation \eqref{eq:frozen-interpolation}.  The local freezing bound is
$10^4\theta_{\rm fr}^2e^{-\theta_{\rm fr}/11}$; the $0.1\%$ enlargement in
\eqref{eq:epsilon-theta} absorbs the replacement of $\theta_{\rm fr}$ by
canonical $\theta$.  The same replacement in the three local-limit bounds
changes them by less than $10^{-300}$, far inside the added $10^{-4}$
tail allowances.
\end{proof}

The anchor localization needed to use this $C^2$ estimate does not assume
log-concavity of the exact profile.

\begin{lemma}[Lower bound at an anchor]\label{lem:anchor-lower}
Suppose that on one parity family the exact coefficients admit the signed
representation
\[
 q_m=s+\varsigma_m \Lambda F(y_m)+\eta'_m,
 \qquad \varsigma_m\in\{\pm1\},\qquad \Lambda>0,\qquad |\eta'_m|\le\tau,
\]
with $\varsigma_{m_0}=-1$ at an anchor $m_0$ satisfying $q_{m_0}=0$.
Let $m_*$ be any pre-zero sample with $\varsigma_{m_*}=+1$.  Then
\begin{equation}\label{eq:anchor-comparison}
 F(y_{m_0})\ge
 \frac{s-\tau}{(1+a)s+\tau}\,F(y_{m_*}).
\end{equation}
This inequality is independent of the value of $\kappa$.  In the theta
regime $\theta\ge1600$, $2\le\kappa\le\sqrt\theta/4$, taking $m_*$ to be
a lattice sample nearest the maximum gives
\begin{equation}\label{eq:anchor-high}
                 \Phi_{\rm fr}(y_{m_0})>0.6484.
\end{equation}
\end{lemma}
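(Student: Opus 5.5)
The first inequality \eqref{eq:anchor-comparison} uses only the two sample constraints $q_{m_0}=0$ and $0\le q_{m_*}\le1$. At the anchor, $\varsigma_{m_0}=-1$ and $q_{m_0}=0$ give
\[
 \Lambda F(y_{m_0})=s+\eta'_{m_0}\ge s-\tau .
\]
In particular $F(y_{m_0})>0$ once $\tau<s$. At the pre-zero sample $m_*$, \eqref{eq:q01} gives $q_{m_*}\le1$. With $\varsigma_{m_*}=+1$ this yields
\[
 \Lambda F(y_{m_*})\le 1-s+\tau=(1+a)s+\tau,
\]
where we use the exact identity $1-s=(1+a)/(2+a)=(1+a)s$. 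If $F(y_{m_*})\le0$, then \eqref{eq:anchor-comparison} is trivial because its left side is positive. Otherwise, dividing the two displays eliminates the unknown amplitude $\Lambda>0$ and gives \eqref{eq:anchor-comparison}. No property of $\kappa$ or of log-concavity enters this step.

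For \eqref{eq:anchor-high}, work in the theta regime with $F=F^{\rm var}_{\iota}$, using the representation \eqref{eq:exact-cyclic-representation} and $\tau=\tau_\theta=2e^{-0.98\theta}$. Proposition~\ref{prop:bootstrap} gives $a<e^{-0.68\theta}$, so the ratio in \eqref{eq:anchor-comparison} is $1-O(e^{-0.68\theta})$. The main task is to choose $m_*$ with $\varsigma_{m_*}=+1$ and $y_{m_*}$ close to the maximum $y=0$ of $\Phi_{\rm fr}$.

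Along one parity family the signs $\varsigma$ alternate with the packet coordinate. A shift of the coordinate by $k$ (odd) also flips $\varsigma$, and it shifts $y$ by $\kappa_{\rm fr}$. Since $\Phi_{\rm fr}$ is antiperiodic with antiperiod $\kappa_{\rm fr}$, the product $\varsigma\,\Phi_{\rm fr}$ is invariant under this shift. The window $\widetilde{\mathcal R}$ consists of $k$ consecutive integers, one full antiperiod. Hence we can find a sample with $\varsigma=+1$ at which $\varsigma\Phi_{\rm fr}(y)$ equals $\Phi_{\rm fr}$ evaluated within one lattice step $1/\sigma_{\rm fr}$ of $0$, possibly after using the antiperiodic representative. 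The antiperiodic approximation of the actual profile is supplied by Corollary~\ref{cor:actual-C2}. I expect this sign/representative bookkeeping to be the fiddliest logical point. I would handle it through the exact identity $e^{-i\omega_\nu k}=-1$ in the principal-frequency interpolant of Lemma~\ref{lem:freeze-J}, which makes $F^{\rm fr}$ exactly antiperiodic, together with the $\varepsilon_\theta$ freezing bound.

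The numerical core then proceeds in four steps.
\begin{enumerate}[label=\textbf{\arabic*.},leftmargin=2.2em]
\item The value $\Phi_\kappa(0)$ is increasing in $\kappa$. By the killed-Brownian interpretation after \eqref{eq:PhiFourier}, enlarging the interval increases the density. Hence for $\kappa\ge2$,
\[
 \Phi_\kappa(0)\ge\Phi_2(0)=1-2e^{-2}+2e^{-8}-\cdots>0.7299 .
\]
\item Moving by at most $1/\sigma_{\rm fr}<1/39.9$ loses at most $\tfrac12\|\Phi''\|_\infty\sigma_{\rm fr}^{-2}$, which is about $3\times10^{-4}$, since $\Phi'(0)=0$.
\item The sup-norm bound in Corollary~\ref{cor:actual-C2} at $\theta=1600$ gives an error of about $0.0405$. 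This is applied twice: once to pass from $\Phi_{\rm fr}$ to $F$ at $y_{m_*}$, and once to return from $F(y_{m_0})$ to $\Phi_{\rm fr}(y_{m_0})$.
\item Combining these yields $\Phi_{\rm fr}(y_{m_0})\ge0.7299-0.0003-2(0.0405)-10^{-100}$, which should clear $0.6484$. For larger $\theta$ the errors only decrease.
\end{enumerate}

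The margin is thin, so the main obstacle is certifying the constants. This means a rigorous enclosure of $\Phi_2(0)$, a bound on $\|\Phi_\kappa''\|_\infty$ uniform in $\kappa\ge2$, and monotonicity in $\kappa$. I would obtain the latter from the product formula in Theorem~\ref{thm:thetaLC}, or as an interval check recorded in the appendix.
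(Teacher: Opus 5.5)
Your proposal is essentially the paper's proof: the same elimination of $\Lambda$ between $\Lambda F(y_{m_0})\ge s-\tau$ and $\Lambda F(y_{m_*})\le(1+a)s+\tau$, then a $+1$ sample within one mesh step of the maximum, a lower bound for $\Phi_{\rm fr}(0)$ when $\kappa\ge2$, a Taylor loss, and two $C^0$ transfers. The only variation is that you obtain $\Phi_\kappa(0)\ge\Phi_2(0)$ from domain monotonicity of the killed heat kernel, whereas the paper uses the fact that the alternating minorant $1-2e^{-t}+2e^{-4t}-2e^{-9t}$, with $t=\kappa_{\rm fr}^2/2$, is increasing for $t\ge2$.

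There are two cautions. First, near $\kappa=2$ one has $\Phi_2''(0)\approx-1.80$, so the rigorous Taylor loss (from $|\Phi_{\rm fr}''|<1.807$) is about $5.65\times10^{-4}$, not $3\times10^{-4}$. With your rounded values $0.7299$ and $0.0405$ the bound then misses $0.6484$; you need the sharper values $\Phi_2(0)>0.7300003$ and the exact $C^0$ error $0.04042$ at $\theta=1600$, which give about $0.64860$. Second, a single parity family occupies only about half of the window $\widetilde{\mathcal R}$, not a full antiperiod, so the $+1$ sample near the maximum may belong to the other family. This is harmless only because Lemma~\ref{lem:cyclic-representation} provides one common amplitude $\Lambda$ and both $F^{\rm var}_\iota$ are $C^0$-close to $\Phi_{\rm fr}$.
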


\begin{proof}
At a pre-zero sample with $\varsigma_m=+1$, $q_m\le1$ gives
$\Lambda F(y_m)\le(1+a)s+\tau$; when $\varsigma_m=-1$,
$q_m\ge0$ gives the stronger $\Lambda F(y_m)\le s+\tau$.
Thus $\Lambda F(y_m)\le(1+a)s+\tau$ at every pre-zero sample.
At an anchor the sign is negative and
$q_{m_0}=0$, so $\Lambda F(y_{m_0})\ge s-\tau$.  This proves
\eqref{eq:anchor-comparison} without any log-concavity assumption.

For the theta-regime specialization choose $m_*$ to be the nearest lattice
sample with $\varsigma_{m_*}=+1$ to the maximum at $0$.  Samples with a
fixed value of $\varsigma$ are two mesh steps apart, so
$|y_{m_*}|\le\widetilde h=1/\sigma_{\rm fr}$.  With
$t=\kappa_{\rm fr}^2/2\ge2$, the alternating series at the origin gives
\[
 \Phi_{\rm fr}(0)\ge1-2e^{-t}+2e^{-4t}-2e^{-9t}
 \ge1-2e^{-2}+2e^{-8}-2e^{-18}>0.7300003.
\]
The displayed alternating lower bound is increasing for $t\ge2$.  Since
$\Phi_{\rm fr}'(0)=0$, $|\Phi_{\rm fr}''|<1.807$, and
$\sigma_{\rm fr}^2>\theta_{\rm fr}-1/4$, Taylor's theorem gives
\[
 \Phi_{\rm fr}(y_{m_*})
 >0.7300003-\frac{1.807}{2(\theta_{\rm fr}-1/4)}.
\]
Corollary~\ref{cor:actual-C2}, Proposition~\ref{prop:bootstrap}, and
\eqref{eq:stationary-theta-bound} give
$F(y_{m_*})>0.6890$; the comparison factor differs from $1$ by
less than $10^{-100}$.  Hence the actual anchor value is $>0.6890$, and one
more $C^0$ transfer gives
$\Phi_{\rm fr}(y_{m_0})>0.6484$.  The exact anchor equation also gives
\begin{equation}\label{eq:Lambda-upper}
          \Lambda<\frac{s+\tau_\theta}{0.6890}<0.73.
\end{equation}
This proves \eqref{eq:anchor-high}.
\end{proof}

\begin{corollary}[Adjacent anchor matching]\label{cor:anchor-match}
Assume
\[
 \theta\ge1600,\qquad 2\le\kappa\le\sqrt\theta/4.
\]
Let $\xi_0,\xi_1$ be the two adjacent anchor samples after both parity families
have been transferred to the common frozen profile $F^{\rm fr}$.  Then
\begin{equation}\label{eq:anchor-log-match}
 \left|\log F^{\rm fr}(\xi_1)-\log F^{\rm fr}(\xi_0)\right|
 <\frac{10^{-3}}{\theta}.
\end{equation}
\end{corollary}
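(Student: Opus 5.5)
The plan is to transfer both exact anchor equations $q_{N-2}=q_{N-k}=0$ to the common frozen profile via \eqref{eq:frozen-coefficient-representation}, and then compare the two resulting equations. By Lemma~\ref{lem:cyclic-representation}, both anchors have sign $\varsigma_2=\varsigma_k=-1$. They lie at the adjacent coordinates $\widetilde c_2=c+\ell_{\rm fr}k$ and $\widetilde c_k=c+1+\ell_{\rm fr}k$, so $\xi_0=y_2$ and $\xi_1=y_k=\xi_0+1/\sigma_{\rm fr}$. Setting $q=0$ in \eqref{eq:frozen-coefficient-representation} gives
\[
 \Lambda F^{\rm fr}(\xi_i)=s+\widetilde\eta_i,\qquad
 |\widetilde\eta_i|\le\tau_\theta+\Lambda\varepsilon_\theta,\qquad i=0,1.
\]
The key structural point is that the amplitude $\Lambda=a\mathcal N_{J_{\rm fr}}$ is the same for both parity families. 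Dividing the two equations therefore eliminates $\Lambda$ exactly:
\[
 \log F^{\rm fr}(\xi_1)-\log F^{\rm fr}(\xi_0)
 =\log\frac{s+\widetilde\eta_1}{s+\widetilde\eta_0}.
\]

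Next we bound the right-hand side. Lemma~\ref{lem:anchor-lower} gives $\Lambda<0.73$ through \eqref{eq:Lambda-upper}. Since $a\le e^{-0.68\theta}$, we have $s>0.49$. The error $|\widetilde\eta_i|$ is at most $2e^{-0.98\theta}+0.73\cdot1.001\times10^4\theta^2e^{-\theta/11}$, which is super-exponentially small compared with $1/\theta$ once $\theta\ge1600$. A crude bound such as $|\log(1+x)|\le 2|x|$ for $|x|<1/2$ then gives a difference far below $10^{-3}/\theta$; indeed it is below $10^{-50}$. Before taking logarithms we also need positivity of $F^{\rm fr}(\xi_i)$. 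This follows because $s+\widetilde\eta_i>0$ and $\Lambda>0$; alternatively one can use $\Phi_{\rm fr}(\xi_0)>0.6484$ together with the $C^0$ transfer in Corollary~\ref{cor:actual-C2}.

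The main obstacle is bookkeeping rather than analysis. One must check that the passage from the variable-base profiles $F^{\rm var}_{\iota}$ to $F^{\rm fr}$ uses the same normalization $\mathcal N_{J_{\rm fr}}$ for $\iota=0$ and $\iota=1$. One must also check that the odd-$h$ family really lands at coordinate $c+1$ of the same frozen packet, so that no factor such as $(1+a)$ or $\rho_d-a$ (as in Lemma~\ref{lem:adjacent-packet}) survives the ratio. Lemma~\ref{lem:freeze-J} builds both $F^{\rm var}_\iota$ by $\operatorname{Interp}$ with the common $\mathcal N_{\widetilde J}$ and $\mu_{\widetilde J}$, so the $\iota=1$ shift in packet index has already been absorbed into the $C^2$ freezing error $\varepsilon_\theta$. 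We would state this explicitly, since it is the step where the one-packet argument required the nontrivial factor $1+a$, whereas here that factor is only $O(a)=O(e^{-0.68\theta})$ and is hidden in the freezing estimate.
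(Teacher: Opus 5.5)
Your proof is correct and is essentially the paper's argument: you divide the two exact anchor equations to eliminate the common amplitude $\Lambda$, then absorb the exponentially small stationary and freezing errors. The only difference is cosmetic. You start from the already-frozen representation \eqref{eq:frozen-coefficient-representation} and control the error $\Lambda\varepsilon_\theta$ by $\Lambda<0.73$. The paper instead bounds the log-ratio for $F^{\rm var}_\iota$ first and then transfers each anchor value to $F^{\rm fr}$ using the lower bound $0.6890$ from Lemma~\ref{lem:anchor-lower}, which is the same ingredient that yields $\Lambda<0.73$.
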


\begin{proof}
Recall $\varepsilon_\theta$ from \eqref{eq:epsilon-theta} and put
$\tau_\theta=2e^{-0.98\theta}$.
Lemma~\ref{lem:cyclic-representation} gives the two exact anchor equations
with one common amplitude:
\[
 \Lambda F_0^{\rm var}(\xi_0)=s+\eta_0^{\rm cyc},
 \qquad
 \Lambda F_1^{\rm var}(\xi_1)=s+\eta_1^{\rm cyc},
 \qquad |\eta_i^{\rm cyc}|\le\tau_\theta.
\]
Therefore
\[
 \left|\log\frac{F_1^{\rm var}(\xi_1)}{F_0^{\rm var}(\xi_0)}\right|
 \le\frac{2\tau_\theta}{s-\tau_\theta}.
\]
Lemma~\ref{lem:anchor-lower} gives actual anchor values $>0.6890$, and
Corollary~\ref{cor:actual-C2} changes either by at most $\varepsilon_\theta$.
Hence
\begin{align*}
 \left|\log F^{\rm fr}(\xi_1)-\log F^{\rm fr}(\xi_0)\right|
 &\le\frac{2\tau_\theta}{s-\tau_\theta}
 +\frac{2\varepsilon_\theta}{0.6890-\varepsilon_\theta}\\
 &<\frac{10^{-3}}\theta.
\end{align*}
At $\theta=1600$ the left-hand side is below $6\times10^{-53}$, and it
decreases thereafter.
\end{proof}

\begin{lemma}[Discrete extraction from strong log-concavity]
\label{lem:discrete-strong-concavity}
Let $F>0$ be of class $C^2$ on an interval $I$, and suppose
\[
        (\log F)''\le -\gamma
        \qquad\text{on }I
\]
for some $\gamma>0$.  Let $\widetilde h>0$, put $\xi_j=\xi_0+j\widetilde h$, and assume
$\xi_{-m},\ldots,\xi_{n+1}\in I$.  If
\[
        \left|\log F(\xi_1)-\log F(\xi_0)\right|\le\delta_{\log},
\]
then, for every integer $r\ge1$ for which the indicated points belong to
$I$,
\begin{equation}\label{eq:dsc-right}
 \log\frac{F(\xi_{1+r})}{F(\xi_1)}
 \le
 r\delta_{\log}-\frac{\gamma\widetilde h^2r(r+1)}2,
\end{equation}
and
\begin{equation}\label{eq:dsc-left}
 \log\frac{F(\xi_{-r})}{F(\xi_0)}
 \le
 r\delta_{\log}-\frac{\gamma\widetilde h^2r(r+1)}2.
\end{equation}
In particular, if $\delta_{\log}\le\gamma\widetilde h^2/4$, then every lattice point of
$I$ exterior to the adjacent pair $\xi_0,\xi_1$ satisfies
\begin{equation}\label{eq:dsc-nearest}
 F(\xi)\le
 \max\{F(\xi_0),F(\xi_1)\}\exp\!\left(-\frac{3\gamma\widetilde h^2}{4}\right).
\end{equation}
\end{lemma}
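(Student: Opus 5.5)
Set $g=\log F$ and work with the first differences $D_j=g(\xi_{j+1})-g(\xi_j)$ on the lattice. The hypothesis $g''\le-\gamma$ on $I$ gives, via the mean value theorem applied twice (or the integral form $D_{j+1}-D_j=\int_0^{\widetilde h}\!\int_0^{\widetilde h} g''(\xi_j+s+t)\,ds\,dt$), the discrete concavity estimate
\[
 D_{j+1}-D_j\le-\gamma\widetilde h^2
\]
whenever $\xi_j,\xi_{j+1},\xi_{j+2}\in I$; this is valid because $I$ is an interval, so the segments between lattice points stay in $I$. The pair condition gives $|D_0|\le\delta_{\log}$.

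For the right-hand bound \eqref{eq:dsc-right}, iterate: $D_i\le D_0-i\gamma\widetilde h^2\le\delta_{\log}-i\gamma\widetilde h^2$ for $i\ge1$. Then telescope,
\[
 \log\frac{F(\xi_{1+r})}{F(\xi_1)}=\sum_{i=1}^{r}D_i\le r\delta_{\log}-\gamma\widetilde h^2\sum_{i=1}^r i,
\]
which is exactly $r\delta_{\log}-\gamma\widetilde h^2r(r+1)/2$. For the left bound \eqref{eq:dsc-left}, run the same argument backwards. From $D_{-i}\ge D_0+i\gamma\widetilde h^2\ge-\delta_{\log}+i\gamma\widetilde h^2$ we get
\[
 \log\frac{F(\xi_{-r})}{F(\xi_0)}=-\sum_{i=1}^{r}D_{-i},
\]
which is bounded by the same expression. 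The only care needed is to check that each used triple of consecutive lattice points lies in $I$, which follows from $\xi_{-r},\ldots,\xi_{1+r}$ being in $I$ for the indices considered.

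For \eqref{eq:dsc-nearest}, substitute $\delta_{\log}\le\gamma\widetilde h^2/4$. The exponent becomes $\gamma\widetilde h^2\bigl(r/4-r(r+1)/2\bigr)=-\gamma\widetilde h^2 r(2r+1)/4$, which is decreasing in $r$ and equals $-3\gamma\widetilde h^2/4$ at $r=1$. Every exterior lattice point is some $\xi_{1+r}$ or $\xi_{-r}$ with $r\ge1$. Its value is therefore at most $F(\xi_1)$ or $F(\xi_0)$ times $e^{-3\gamma\widetilde h^2/4}$, and both are bounded by the maximum of the pair. There is no real obstacle here; the main point to state carefully is the second-difference bound, which needs the two-variable integral representation rather than a single mean-value step.
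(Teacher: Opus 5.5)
Your proposal is correct and follows essentially the same route as the paper. Both arguments take first differences of $\log F$ on the lattice, derive the second-difference bound $D_{j+1}-D_j\le-\gamma\widetilde h^2$ from an integral representation, iterate from $|D_0|\le\delta_{\log}$, telescope in both directions, and then specialize to $r=1$ using monotonicity in $r$; the only cosmetic difference is that the paper writes the second difference as a single integral $\int_0^{\widetilde h}\bigl(g'(\xi_{j+1}+t)-g'(\xi_j+t)\bigr)\,dt$ rather than your double integral of $g''$.
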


\begin{proof}
Write
\[
        g=\log F,\qquad
        d_j=g(\xi_{j+1})-g(\xi_j).
\]
For every $j$ for which $[\xi_j,\xi_{j+2}]\subset I$,
\begin{align*}
d_{j+1}-d_j
 &=
 \int_0^{\widetilde h}
 \bigl(g'(\xi_{j+1}+t)-g'(\xi_j+t)\bigr)\,dt\\
 &\le -\gamma\widetilde h^2.
\end{align*}
Since $|d_0|\le\delta_{\log}$, it follows for $j\ge1$ that
\[
        d_j\le\delta_{\log}-j\gamma\widetilde h^2.
\]
Summing $d_1,\ldots,d_r$ gives \eqref{eq:dsc-right}.

Going in the other direction,
\[
        d_{-j}\ge d_0+j\gamma\widetilde h^2
        \qquad(j\ge1).
\]
Therefore
\[
 g(\xi_{-r})-g(\xi_0)
   =-\sum_{j=1}^r d_{-j}
   \le r\delta_{\log}-\frac{\gamma\widetilde h^2r(r+1)}2,
\]
which is \eqref{eq:dsc-left}.  Taking $r=1$ and using
$\delta_{\log}\le\gamma\widetilde h^2/4$ yields \eqref{eq:dsc-nearest}; the bounds only improve as
$r$ increases.
\end{proof}

\begin{corollary}[Pre-zero interior from strong log-concavity]
\label{cor:prezero-from-concavity}
Suppose that, after representing the coefficients with
$N-k\le n\le N-1$ on a fundamental interval, the coefficients have the
form
\[
        \widetilde q_j=s+\varsigma_j \Lambda F(\xi_j)+\eta'_j,
        \qquad \varsigma_j\in\{\pm1\},\qquad
        s=\frac1{2+a},
\]
where $\xi_j=\xi_0+j\widetilde h$, $\Lambda>0$,
$|\eta'_j|\le\tau$, and the two anchors satisfy
$\varsigma_0=\varsigma_1=-1$ and $\widetilde q_0=\widetilde q_1=0$.
Suppose further that

\begin{enumerate}
\item the high-level component $I$ containing the anchors satisfies
\[
        (\log F)''\le-\gamma;
\]
\item
\[
 \left|\log F(\xi_1)-\log F(\xi_0)\right|
       \le \frac{\gamma\widetilde h^2}{4};
\]
\item every lattice point outside $I$ has
\[
        |F(\xi_j)|\le (1-\mathcal G_{\rm out})
          \min\{F(\xi_0),F(\xi_1)\}
\]
for some $\mathcal G_{\rm out}>0$;
\item with
\[
 \mathcal G=
 \min\left\{
       1-\exp\!\left(-\frac{3\gamma\widetilde h^2}{4}\right),
       \mathcal G_{\rm out}
     \right\},
\]
one has
\begin{equation}\label{eq:prezero-gap}
        \mathcal G>a(1+a)+\frac{2\tau}{s}.
\end{equation}
\end{enumerate}
Then every non-anchor coefficient in that range satisfies
\[
        0<\widetilde q_j<1-a.
\]

If in addition $\xi_{-1}$ is the immediately adjacent sample with
$\varsigma_{-1}=+1$, $F(\xi_0)\ge\widetilde m>0$, and
$|F'|\le\widetilde L_1$ between
$\xi_{-1}$ and $\xi_0$,
then
\begin{equation}\label{eq:adjacent-lower}
 \widetilde q_{-1}
 \ge
 2s-2\tau-(s+\tau)\frac{\widetilde L_1\widetilde h}{\widetilde m}.
\end{equation}
In particular, whenever the right-hand side of \eqref{eq:adjacent-lower} exceeds
$(1+a)/2$, one has
\[
        \widetilde q_{-1}>\frac{1+a}{2}.
\]
\end{corollary}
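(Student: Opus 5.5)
The plan is to normalize everything by the anchor amplitude and then reduce both conclusions to elementary inequalities. Throughout, put $M=\max\{F(\xi_0),F(\xi_1)\}$ and $m_0=\min\{F(\xi_0),F(\xi_1)\}$.

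\textbf{Normalizing the amplitude.} The anchor equations $\widetilde q_0=\widetilde q_1=0$ with $\varsigma_0=\varsigma_1=-1$ give $\Lambda F(\xi_i)=s+\eta'_i$. Hence
\[
 s-\tau\le\Lambda m_0\le\Lambda M\le s+\tau .
\]
Next, consider any non-anchor lattice point $\xi_j$ in the range. We claim that $|F(\xi_j)|\le(1-\mathcal G)\,X$ for some $X\in\{m_0,M\}$. There are two cases.
\begin{enumerate}
\item If $\xi_j\in I$, then $j\notin\{0,1\}$, so $\xi_j$ is exterior to the adjacent pair. Hypothesis~2 is exactly $\delta_{\log}\le\gamma\widetilde h^2/4$. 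Lemma~\ref{lem:discrete-strong-concavity}, estimate \eqref{eq:dsc-nearest}, then gives $0<F(\xi_j)\le M e^{-3\gamma\widetilde h^2/4}\le(1-\mathcal G)M$. To apply the lemma, the lattice points of $I$ must form a contiguous run containing $\xi_0,\xi_1$. This holds because $I$ is an interval, being the component containing the anchors.
\item If $\xi_j\notin I$, hypothesis~3 gives $|F(\xi_j)|\le(1-\mathcal G_{\rm out})m_0\le(1-\mathcal G)m_0$.
\end{enumerate}
In either case $\Lambda|F(\xi_j)|\le(1-\mathcal G)(s+\tau)$.

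\textbf{The bounds $0<\widetilde q_j<1-a$.} Substitute into $\widetilde q_j=s+\varsigma_j\Lambda F(\xi_j)+\eta'_j$. The lower bound is
\[
 \widetilde q_j\ge s-(1-\mathcal G)(s+\tau)-\tau=\mathcal G(s+\tau)-2\tau ,
\]
which is positive as soon as $\mathcal G>2\tau/s$. The upper bound is
\[
 \widetilde q_j\le s+(1-\mathcal G)(s+\tau)+\tau<2s+2\tau-\mathcal G s .
\]
We need this to be at most $1-a$. Since $1-a-2s=(1-a)-\frac{2}{2+a}=\frac{-a(1+a)}{2+a}=-a(1+a)s$, the requirement becomes $\mathcal G s> a(1+a)s+2\tau$. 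This is exactly \eqref{eq:prezero-gap}, and the same condition also covers the lower bound. This algebraic identity is the point where the anchor level $s=1/(2+a)$ is essential; I expect it to be the only delicate bookkeeping, together with checking that the lattice points in $I$ are contiguous.

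\textbf{The adjacent lower bound.} The mean value theorem gives
\[
 F(\xi_{-1})\ge F(\xi_0)-\widetilde L_1\widetilde h=F(\xi_0)\Bigl(1-\frac{\widetilde L_1\widetilde h}{F(\xi_0)}\Bigr)\ge \Lambda^{-1}(s-\tau)-F(\xi_0)\frac{\widetilde L_1\widetilde h}{\widetilde m}.
\]
Multiplying by $\Lambda$ and using $\Lambda F(\xi_0)\le s+\tau$ yields
\[
 \Lambda F(\xi_{-1})\ge s-\tau-(s+\tau)\frac{\widetilde L_1\widetilde h}{\widetilde m}.
\]
Since $\varsigma_{-1}=+1$, we have $\widetilde q_{-1}\ge s+\Lambda F(\xi_{-1})-\tau$, which gives \eqref{eq:adjacent-lower}. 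The final sentence of the statement is then immediate.
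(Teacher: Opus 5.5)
Your proof is correct and follows the paper's argument essentially step for step. You use the anchor equations to get $\Lambda|F(\xi_j)|\le(1-\mathcal G)(s+\tau)$ at every non-anchor sample (discrete strong concavity inside $I$, hypothesis~3 outside), close with the identity $2s-(1-a)=a(1+a)s$, and derive the adjacent bound from the mean value theorem together with $s-\tau\le\Lambda F(\xi_0)\le s+\tau$ and $F(\xi_0)\ge\widetilde m$. One cosmetic slip: the step $s+(1-\mathcal G)(s+\tau)+\tau<2s+2\tau-\mathcal G s$ should read $\le$, since equality holds when $\tau=0$; the strict conclusion $\widetilde q_j<1-a$ still follows because \eqref{eq:prezero-gap} is strict.
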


\begin{proof}
At either anchor,
\[
        \Lambda F(\xi_i)=s+\eta'_i\le s+\tau.
\]
Lemma~\ref{lem:discrete-strong-concavity}, together with the
outside-$I$ hypothesis, therefore gives at every non-anchor sample
\begin{equation}\label{eq:profile-deficit}
        \Lambda |F(\xi_j)|\le (s+\tau)(1-\mathcal G).
\end{equation}

The absolute estimate \eqref{eq:profile-deficit} is enough; no sign information
about $F$ outside $I$ is needed.  At every non-anchor sample,
\[
 \widetilde q_j\ge s-(s+\tau)(1-\mathcal G)-\tau
     =s\mathcal G-\tau(2-\mathcal G)>0,
\]
while
\[
 \widetilde q_j\le s+(s+\tau)(1-\mathcal G)+\tau
     \le2s-s\mathcal G+2\tau.
\]
Since $2s-(1-a)=a(1+a)s$, the last quantity is strictly smaller than
$1-a$ under condition \eqref{eq:prezero-gap}.

For the final assertion, the mean value theorem gives
\[
        F(\xi_{-1})\ge F(\xi_0)-\widetilde L_1\widetilde h.
\]
Since $\Lambda F(\xi_0)\ge s-\tau$ and
\[
        \Lambda\le\frac{s+\tau}{F(\xi_0)}
         \le\frac{s+\tau}{\widetilde m},
\]
we obtain
\[
 \Lambda F(\xi_{-1})
 \ge
 s-\tau-(s+\tau)\frac{\widetilde L_1\widetilde h}{\widetilde m}.
\]
Adding the temporary remainder and using $|\eta'_{-1}|\le\tau$ proves
\eqref{eq:adjacent-lower}.
\end{proof}

\begin{lemma}[Numerical specialization in the theta regime]
\label{lem:theta-discrete-specialization}
Assume $\theta\ge1600$ and $2\le\kappa\le\sqrt\theta/4$.  Put
$F=F^{\rm fr}$, $\Phi=\Phi_{\rm fr}$ and
$\widetilde h=\sigma_{\rm fr}^{-1}$.  With
\[
 \widetilde\tau_\theta=\tau_\theta+0.73\varepsilon_\theta,
\]
the hypotheses of Corollary~\ref{cor:prezero-from-concavity} hold with
$\gamma=0.03$; the quantity $\mathcal G$ defined there satisfies
\[
 \mathcal G_{\rm out}>0.0427,\qquad
 \mathcal G>\frac{0.014}{\theta},\qquad
 \frac{\widetilde\tau_\theta}{s}<\frac{0.001}{\theta}.
\]
Moreover $|F'|<1.243$, $\widetilde h<0.02501$, and the adjacent-sample bound
in that corollary gives
\[
        q_{N-4}>0.77>\frac{1+a}{2}.
\]
\end{lemma}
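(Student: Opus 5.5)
The plan is to verify the four hypotheses of Corollary~\ref{cor:prezero-from-concavity} for $F=F^{\rm fr}$ on the lattice $y_h$, and then to evaluate the adjacent-sample bound. The representation is \eqref{eq:frozen-coefficient-representation}. There $\Lambda<0.73$ by \eqref{eq:Lambda-upper}, so the error is at most $\tau_\theta+0.73\varepsilon_\theta=\widetilde\tau_\theta$. Since $\tau_\theta=2e^{-0.98\theta}$ and $\varepsilon_\theta\sim10^4\theta^2e^{-\theta/11}$, the bound $\widetilde\tau_\theta/s<0.001/\theta$ is a scalar inequality. It is monotone for $\theta\ge1600$, so it suffices to check it at the endpoint. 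The mesh satisfies $\widetilde h=1/\sigma_{\rm fr}$ with $\sigma_{\rm fr}^2>\theta-1/4\ge1599.75$, which gives $\widetilde h<0.02501$. The two anchors are adjacent lattice points with $\varsigma=-1$ by Lemma~\ref{lem:cyclic-representation}.

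\emph{Curvature (hypothesis 1).} Take $I$ to be the superlevel set $\{y:\Phi_{\rm fr}(y)>0.6\}$. It contains both anchors by \eqref{eq:anchor-high} and the $C^0$ bound of Corollary~\ref{cor:actual-C2}. Because $\Phi_{\rm fr}$ is log-concave on its positive component, this set is an interval. On $I$ write
\[
(\log F)''=\frac{F''}{F}-\Bigl(\frac{F'}{F}\Bigr)^2 .
\]
Compare each term with the corresponding term for $\Phi$ using the $C^0,C^1,C^2$ errors of Corollary~\ref{cor:actual-C2} at $\theta=1600$. These are roughly $0.039$, $0.063$ and $0.105$, with uniform bounds $|\Phi'|\le 1$ and $|\Phi''|\le1.807$. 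Combined with $(\log\Phi)''<-1$ from Theorem~\ref{thm:thetaLC}, this should leave a margin of well over $0.03$ once $F\ge0.6-0.04$. The same comparison gives $|F'|<1.243$. \emph{Matching (hypothesis 2).} Corollary~\ref{cor:anchor-match} gives a log-ratio below $10^{-3}/\theta$. This is less than $\gamma\widetilde h^2/4=0.0075/\sigma_{\rm fr}^2$, because $\sigma_{\rm fr}^2<1.001\theta$.

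\emph{Outside $I$ (hypothesis 3), the main obstacle.} Off $I$ we have $\Phi\le0.6$, and $|\Phi|\le0.6$ on the other antiperiodic translates by symmetry. Hence $|F|\le0.6+0.039$ there. Meanwhile $F$ at each anchor exceeds $0.6484-0.039$, and Lemma~\ref{lem:anchor-lower} directly gives values above $0.689$. The ratio is then below $0.639/0.689<1-0.0427$. The delicate point is the coupling of thresholds: the cutoff $0.6$ must be low enough that the curvature estimate survives near $\partial I$ and high enough that this ratio leaves a positive gap. If the constants turn out tight, I would first sharpen the anchor lower bound directly from \eqref{eq:anchor-comparison}, which gives $\approx0.689$. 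With $\gamma=0.03$ the inner gap is $1-e^{-3\gamma\widetilde h^2/4}\approx 0.0225/\sigma_{\rm fr}^2>0.0224/(1.001\theta)$. Therefore
\[
\mathcal G>\min\{0.0223/\theta,\;0.0427\}>0.014/\theta .
\]
For hypothesis 4 I need $a(1+a)+2\widetilde\tau_\theta/s$ to be smaller than this. This holds because $a<e^{-0.68\theta}$ by Proposition~\ref{prop:bootstrap} and $2\widetilde\tau_\theta/s<0.002/\theta$.

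\emph{Adjacent sample.} The index $N-4$ is the sample $\xi_{-1}$ preceding the first anchor in the same parity family; its sign is $+1$ after the interleaving of Lemma~\ref{lem:cyclic-representation}. Apply \eqref{eq:adjacent-lower} with $\widetilde m=0.689-\varepsilon_\theta$, $\widetilde L_1=1.243$ and $\widetilde h<0.02501$. This gives approximately
\[
2s-(s+\tau)\cdot0.0451>0.99-0.023>0.77>\frac{1+a}{2}.
\]
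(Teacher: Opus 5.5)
This is essentially the paper's argument. You perturb $FF''-(F')^2$ away from the theta kernel on the high component containing the anchors, bound the complement by the $C^0$ error, take the matching from Corollary~\ref{cor:anchor-match}, and finish with \eqref{eq:adjacent-lower}. Cutting $I$ at $\Phi_{\rm fr}=0.6$ instead of the paper's $0.619$ is harmless, and it even raises $\mathcal G_{\rm out}$ to about $1-0.6403/0.6888\approx0.07$.

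\textbf{The slope bound $|\Phi'|\le1$ is false.} At $\kappa=2$ one has $\Phi_2(y)\approx0.730\cos(\pi y/2)$, whose slope reaches about $1.15$, and near $\kappa\approx2.2$ the slope reaches about $1.17$. You need $\|\Phi'\|_\infty\le\mathcal M_1<1.178$ from Appendix~\ref{app:C2}. That bound is exactly what gives the stated $|F'|<1.178+0.0647<1.243$; your $|\Phi'|\le1$ would not account for that constant.

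\textbf{Consequence for the curvature margin.} Use $\|\Phi'\|_\infty<1.178$ together with the actual errors at $\theta=1600$, namely $0.0403$, $0.0646$ and $0.1092$ rather than your rounded values. A purely global perturbation bound then gives only
\[
FF''-(F')^2<-\Phi^2+0.339\approx-0.021
\]
at level $0.6$. Dividing this by a global bound on $F^2$ does not reach $-0.03$. You have two ways out:
\begin{itemize}
\item divide by the local bound $F^2\le(\Phi+0.0403)^2$, which gives $(\log F)''<-0.05$ throughout $I$;
\item or raise the cut to the paper's $0.619$, where even the crude global bound suffices.
\end{itemize}

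\textbf{Hypothesis~3.} Replace the appeal to ``symmetry'' by the paper's choice of the $k$ representatives inside the closed positive lobe containing the anchors. This choice is legitimate because $\varsigma_hF(y_h)$ is unchanged under $\widetilde c_h\mapsto\widetilde c_h+k$. Without it, a sample in the neighbouring negative lobe has $|\Phi|$ equal to $\Phi$ at a translate, and that translate need not lie outside $I$.
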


\begin{proof}
The transferred frozen anchor samples satisfy
\[
        F(\xi_i)>0.6888.
\]
The errors in Theorem~\ref{thm:C2}, at the canonical endpoint
$\theta=1600$, are at most
\[
               0.04032,\qquad0.06465,\qquad0.10917;
\]
by \eqref{eq:fr-canonical-closeness}, replacing $\theta_{\rm fr}$ by this
endpoint changes these bounds by less than $10^{-300}$.  Hence at either
anchor $\Phi>0.6888-0.04032$, and over one lattice mesh
$|\Phi'|<1.178$ and $\widetilde h<1/\sqrt{1599.75}$.  Therefore throughout
that mesh
\[
 \Phi>0.6888-0.04032-\frac{1.178}{\sqrt{1599.75}}>0.6190.
\]
Since
\[
 \Phi\Phi''-(\Phi')^2<-\Phi^2,
\]
and globally $|\Phi'|<1.178$, $|\Phi''|<1.807$, the perturbation identity
for $FF''-(F')^2$ gives
\[
                    (\log F)''<-0.03
\]
throughout the connected component of $\{\Phi\ge0.6190\}$ containing the
two anchors.  Choose the antiperiodic fundamental interval inside the
connected set where $\Phi\ge0$.  Outside the high component,
\[
 |F|\le0.6190+0.04032<0.65932,
\]
whereas the frozen anchor values exceed $0.6888$; hence
\[
        \mathcal G_{\rm out}>0.0428>0.0427.
\]
Corollary~\ref{cor:anchor-match} gives
\[
 \left|\log F(\xi_1)-\log F(\xi_0)\right|<\frac{0.001}{\theta}.
\]
Together with \eqref{eq:Lambda-upper},
\eqref{eq:frozen-coefficient-representation}, and the definition of
$\widetilde\tau_\theta$, the exponential freezing and stationary bounds give
\[
 \frac{\widetilde\tau_\theta}{s}<\frac{0.001}{\theta};
\]
at $\theta=1600$ the left side is $<3\times10^{-53}$ and it decreases
thereafter.  Since $\sigma_{\rm fr}^2<1.001\theta$, one also has
\[
 \frac{\gamma\widetilde h^2}{4}>\frac{0.00749}{\theta}
 >\frac{0.001}{\theta},
 \qquad
 1-\exp\left(-\frac{3\gamma\widetilde h^2}{4}\right)
 >\frac{0.022}{\theta}>\frac{0.014}{\theta}.
\]
Thus Lemma~\ref{lem:discrete-strong-concavity}, together with
$\mathcal G_{\rm out}>0.0427$, yields
\[
        \mathcal G>\frac{0.014}{\theta}.
\]
Since $a(1+a)$ is exponentially small, \eqref{eq:prezero-gap} holds.
Finally the exact variance formula gives
\[
        \theta_{\rm fr}-\frac14<\sigma_{\rm fr}^2
        <1.001\theta_{\rm fr},
\]
and therefore
\[
        |F'|<1.178+0.06464375<1.243,
        \qquad
        \widetilde h<0.02501.
\]
Substitution in \eqref{eq:adjacent-lower} gives the stated
$q_{N-4}>0.77$.
\end{proof}

\begin{proposition}[Coefficient bounds in the theta regime]\label{prop:theta-interior}
Assume $\theta\ge1600$ and $2\le\kappa\le\sqrt\theta/4$.  Then the conclusions
\eqref{eq:prezero-strip} and \eqref{eq:x4-single} of
Proposition~\ref{prop:single-interior} hold, with $x_h=q_{N-h}$.
\end{proposition}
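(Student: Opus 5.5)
The proof is essentially an assembly of Corollary~\ref{cor:prezero-from-concavity} with the numerical inputs of Lemma~\ref{lem:theta-discrete-specialization}. The plan is to place the coefficients $x_h=q_{N-h}$, $1\le h\le k$, on a single lattice and then read off each assertion of \eqref{eq:prezero-strip} and \eqref{eq:x4-single}.

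First, Lemma~\ref{lem:cyclic-representation} supplies the $k$ consecutive coordinates $\widetilde c_h$. Even $h$ descend from the first anchor ($h=2$ at coordinate $c$), and odd $h$ ascend from the second anchor ($h=k$ at coordinate $c+1$). Corollary~\ref{cor:actual-C2}, in the form \eqref{eq:frozen-coefficient-representation}, then writes every such coefficient as $s+\varsigma_h\Lambda F^{\rm fr}(y_h)+\widetilde\eta_h^{\rm cyc}$. Here the $y_h$ form the lattice $\xi_j=\xi_0+j\widetilde h$ with $\widetilde h=\sigma_{\rm fr}^{-1}$, and the two anchors sit at the adjacent points $\xi_0,\xi_1$ with $\varsigma=-1$. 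The error bound is $|\widetilde\eta^{\rm cyc}_h|\le\tau_\theta+\Lambda\varepsilon_\theta\le\widetilde\tau_\theta$ by \eqref{eq:Lambda-upper}. This is exactly the setting of Corollary~\ref{cor:prezero-from-concavity}, and Lemma~\ref{lem:theta-discrete-specialization} verifies its four hypotheses with $\gamma=0.03$, $\mathcal G>0.014/\theta$ and $\widetilde\tau_\theta/s<0.001/\theta$. Condition \eqref{eq:prezero-gap} holds because $a(1+a)<3e^{-0.68\theta}$ by Proposition~\ref{prop:bootstrap}. The corollary therefore gives $0<x_h<1-a$ for every non-anchor $h$ with $1\le h\le k$, $h\ne2,k$. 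This covers $3\le h\le k-1$ and also $h=1$, which gives $x_1>0$. Finally, $x_2=q_{N-2}=0$ is the anchor equation \eqref{eq:anchors}.

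For \eqref{eq:x4-single}, note that $h=4$ has coordinate $c-1$. It is therefore the sample $\xi_{-1}$ immediately adjacent to the first anchor on the same parity family, and its sign is $\varsigma_4=(-1)^{J_{\rm fr}+\widetilde c_2-1}=-\varsigma_2=+1$. The adjacent-sample clause of Corollary~\ref{cor:prezero-from-concavity} then applies. Taking $\widetilde m=0.6888$, $\widetilde L_1=1.243$ and $\widetilde h<0.02501$ from Lemma~\ref{lem:theta-discrete-specialization}, we obtain $x_4>0.77>(1+a)/2$.

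The main point requiring care is the lattice and sign bookkeeping. Consecutive points $\xi_j$ alternate between the two parity families with their respective variable profiles $F^{\rm var}_0,F^{\rm var}_1$, and the signs $\varsigma_h$ alternate along the combined lattice. One must check that replacing both variable profiles by the single frozen profile $F^{\rm fr}$ (at cost $\varepsilon_\theta$) genuinely yields the one-profile form required by the corollary. One must also check that the whole window of $k$ samples lies in the chosen fundamental interval, so that the outside-$I$ bound $\mathcal G_{\rm out}$ applies to every sample not in the high component. Both facts are provided by Lemma~\ref{lem:cyclic-representation}, through the choice of $\ell_{\rm fr}$ and the bound $|r-\mu|<k+1$, and by the choice of representative with $\varsigma_2=\varsigma_k=-1$. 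I would verify these explicitly before invoking the corollary.
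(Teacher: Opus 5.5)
Your proposal is correct and is the paper's own argument: Lemma~\ref{lem:theta-discrete-specialization} verifies the hypotheses of Corollary~\ref{cor:prezero-from-concavity} for the frozen representation \eqref{eq:frozen-coefficient-representation} with $\tau=\widetilde\tau_\theta$, and your handling of $h=1$, of $x_2=0$, and of the sign $\varsigma_4=+1$ spells out what the paper leaves implicit. Two details in your closing paragraph are off, though neither affects the argument. First, the even-$h$ family occupies $\xi_j$ with $j\le0$ and the odd-$h$ family occupies $\xi_j$ with $j\ge1$, with both anchor signs equal to $-1$, so the families do not alternate. Second, fitting all $k$ samples into one fundamental interval comes from the exact antiperiodicity $F^{\rm fr}(y+\kappa_{\rm fr})=-F^{\rm fr}(y)$: each sample is re-represented with flipped $\varsigma$, which the corollary permits, as the proof of Lemma~\ref{lem:theta-discrete-specialization} does when it chooses the fundamental interval. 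The bound $|r-\mu|<k+1$ alone only confines the window to two antiperiods.
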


\begin{proof}
Lemma~\ref{lem:theta-discrete-specialization} supplies the quantitative
hypotheses of Corollary~\ref{cor:prezero-from-concavity} for the frozen
coefficient representation \eqref{eq:frozen-coefficient-representation},
with $\tau=\widetilde\tau_\theta$.  The corollary gives
$0<x_j<1-a$ at every non-anchor sample and
$x_4>0.77>(1+a)/2$.
\end{proof}

\subsection{The first-harmonic regime}

We now release the standing assumption $\kappa\ge2$ and treat
\[
             \theta\ge1600,\qquad \kappa<2.
\]
Since $\kappa<2<\sqrt\theta/4$, Lemma~\ref{lem:cyclic-representation}
applies and supplies a frozen base $J_{\rm fr}$ with parameters
$(\theta_{\rm fr},\sigma_{\rm fr},\kappa_{\rm fr})$.  By
\eqref{eq:fr-canonical-closeness},
\begin{equation}\label{eq:first-harmonic-fr-closeness}
 \kappa_{\rm fr}<2+10^{-300},\qquad
 |\theta_{\rm fr}-\theta|<10^{-400}.
\end{equation}
Thus Lemma~\ref{lem:freeze-J} applies at $J_{\rm fr}$.  In the heat-kernel
interpretation this is the short-interval regime at fixed time, so the
first Dirichlet eigenfunction dominates the spectral series.  The next
lemma gives the quantitative statement for the exact frozen multiplier.

\begin{lemma}[Exact first-harmonic symbol]\label{lem:first-harmonic-symbol}
Let
\[
 t_0=\frac{\pi}{\kappa_{\rm fr}}
     =\frac{\pi\sigma_{\rm fr}}{k},\qquad
 \omega_0=\frac{\pi}{k},
\]
and in this lemma let $G$ denote the exact multiplier
\eqref{eq:multiplier} with packet index $J=J_{\rm fr}$.  The positive
amplitude of the first conjugate Fourier pair is
\begin{equation}\label{eq:first-harmonic-amplitude}
 A_1:=\frac{2\sqrt{2\pi}}{\kappa_{\rm fr}}|G(\omega_0)|.
\end{equation}
Use the phase of that pair and the amplitude $A_1$ to translate and
normalize the frozen profile and the two variable-base profiles.  Denote the resulting profiles by
$\widetilde F^{\rm fr}$ and $\widetilde F_\iota^{\rm var}$,
$\iota=0,1$.  Then
\begin{equation}\label{eq:first-harmonic-form}
       \widetilde F^{\rm fr}(y)=\cos(t_0y)+\mathcal E_{\rm harm}(y),
\end{equation}
where
\begin{equation}\label{eq:first-harmonic-relative-C2}
 \|\mathcal E_{\rm harm}\|_\infty
 +t_0^{-1}\|\mathcal E_{\rm harm}'\|_\infty
 +t_0^{-2}\|\mathcal E_{\rm harm}''\|_\infty<5\times10^{-4}.
\end{equation}
Moreover, if
\begin{equation}\label{eq:harmonic-freeze-error}
 \delta_{\rm harm}:=
 2\times10^4\theta_{\rm fr}^2
 \exp\!\left(-\frac{\theta_{\rm fr}}{11}+0.501t_0^2\right),
\end{equation}
then, for $\iota=0,1$,
\begin{equation}\label{eq:harmonic-var-fr}
 \|\widetilde F_\iota^{\rm var}-\widetilde F^{\rm fr}\|_\infty
 +t_0^{-1}\| (\widetilde F_\iota^{\rm var}-\widetilde F^{\rm fr})'\|_\infty
 +t_0^{-2}\| (\widetilde F_\iota^{\rm var}-\widetilde F^{\rm fr})''\|_\infty
 \le\delta_{\rm harm}<e^{-0.02k^2}.
\end{equation}
In particular each $\widetilde F_\iota^{\rm var}$ has the same cosine main
term as \eqref{eq:first-harmonic-form}, with $5\times10^{-4}$ in
\eqref{eq:first-harmonic-relative-C2} replaceable by $5.1\times10^{-4}$.
\end{lemma}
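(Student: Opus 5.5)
Starting from the exact Fourier representation \eqref{eq:exact-centered-finite-sum} of the frozen profile at $J=J_{\rm fr}$, I will split off the conjugate pair $\nu=0,-1$ (frequencies $\pm\omega_0$). Their sum is $A_1\cos(t_0y-\phi)$, where $\phi$ is the phase of $\widehat G(\omega_0)$; after translating by $\phi/t_0$ and dividing by $A_1$ this becomes the main term $\cos(t_0y)$ in \eqref{eq:first-harmonic-form}. The error $\mathcal E_{\rm harm}$ is then $A_1^{-1}$ times the remaining frequencies. Its $j$-th derivative, for $j=0,1,2$, is bounded by $t_0^{j}$ times the ratio
\[
\frac{\sum_{\nu\ne0,-1}|t_\nu/t_0|^j|\widehat G(\omega_\nu)|}{2|G(\omega_0)|}.
\]
So I need a lower bound for $|G(\omega_0)|$ and upper bounds for $|G(\omega_\nu)|$ at $|t_\nu|\ge3t_0$.

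\textbf{Lower bound for $|G(\omega_0)|$.} Since $\kappa_{\rm fr}<2+10^{-300}$, we have $t_0>\pi/2-\epsilon$. Also $\omega_0=\pi/k$ is tiny because $k\ge341$, and by Proposition~\ref{prop:bootstrap} we have $a<e^{-0.68\theta}$. I will use the exact modulus \eqref{eq:principal-modulus}. For $a$ exponentially small, $(1+D)^{-J^\sharp/2}=\exp(-\tfrac{J^\sharp}{2}D(1+O(D)))$ with $\tfrac{J^\sharp}{2}D=\tfrac{V\omega^2}{2}(\sin(\omega/2)/(\omega/2))^2$. This gives
\[
|G(\omega_0)|\ge e^{-t_0^2/2-(t_0^2/8)\omega_0^2-O(1/k^2)}.
\]
The $\mathfrak q^{J^\sharp}$ part is $\le e^{-\theta}$ and is negligible. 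This handles both the smallness of the relative error and the factor $e^{0.501t_0^2}$ appearing in $\delta_{\rm harm}$: since $A_1\gtrsim e^{-0.501t_0^2}\sqrt{2\pi}/\kappa_{\rm fr}$, dividing the freezing bound \eqref{eq:freeze-C2} by it produces exactly that factor.

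\textbf{Remaining frequencies.} For $|\omega_\nu|\le1/2$ I will use the Gaussian bound \eqref{eq:symbol-gaussian-tail}, which I want to sharpen. Because $a$ is exponentially small, the constant $0.36$ can be pushed to nearly $1/2$, namely $|\widehat G_0|\le e^{-(1/2)(1-O(\omega^2))t^2}\cdot(\cos(\omega/2))^{-1}$. For $|\omega|>1/2$ I will use the divided-difference bound $20(1+\theta)e^{-0.09\theta}$ from the proof of Theorem~\ref{thm:C2}. The worst case is $\kappa_{\rm fr}\approx2$, where $t_0\approx\pi/2$, so the second harmonic $3t_0$ gives relative size about $3^{j}e^{-(9-1)t_0^2/2}=3^je^{-4t_0^2}\approx3^je^{-9.87}$. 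At $j=2$ this is $9\cdot5.2\times10^{-5}\approx4.7\times10^{-4}$, which is uncomfortably close to $5\times10^{-4}$. Since the conjugate term contributes the same magnitude, I must count carefully, using the normalization $A_1=2\cdot(\ldots)|G|$, and take the sum over $|2m+1|\ge3$ with weights $|2m+1|^j$. Adding the three weighted norms in \eqref{eq:first-harmonic-relative-C2} then requires the smaller $t_0$ case to improve geometrically, because $t_0$ only grows as $\kappa$ decreases. I expect this to be the main obstacle. The combined sum $\sum_{m\ge1}(1+(2m+1)+(2m+1)^2)e^{-((2m+1)^2-1)t_0^2/2}$ at $t_0=\pi/2$ gives about $13e^{-9.87}\approx6.7\times10^{-4}$, which exceeds $5\times10^{-4}$. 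So I will have to recheck the sign conventions: possibly $t_0\ge\pi/2$ strictly via $\kappa<2$, or possibly a $\cos(\omega/2)$ correction helps. If not, I will fall back to an interval-arithmetic evaluation in Appendix~\ref{app:C2} of the exact finite sum over $\nu$, monotone in $t_0$, verified at the endpoint $t_0=\pi/(2+10^{-300})$.

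\textbf{Variable-base profiles.} I will apply Lemma~\ref{lem:freeze-J}, valid since \eqref{eq:first-harmonic-fr-closeness} holds, to get \eqref{eq:freeze-C2}. After dividing by $A_1$, the normalized difference is at most $10^4\theta_{\rm fr}^2e^{-\theta_{\rm fr}/11}\cdot\kappa_{\rm fr}/(2\sqrt{2\pi}|G(\omega_0)|)$. Converting the $y$-derivatives to the $t_0^{-j}$ weights costs at most $t_0^{-2}\le1$, and together these give $\delta_{\rm harm}$. Finally, $\kappa<2$ forces $k<2\sigma<3\sqrt\theta$, so $\theta/11>k^2/99$. Since $t_0^2\le\pi^2\sigma^2/k^2$ is at most of order $\theta/k^2$, and is in any case bounded by $0.501t_0^2\ll\theta/11-0.02k^2$, we get $\delta_{\rm harm}<e^{-0.02k^2}$. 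By the triangle inequality the constant for $\widetilde F^{\rm var}_\iota$ becomes $5.1\times10^{-4}$.
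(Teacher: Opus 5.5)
\textbf{The gap is in the statement, not your method.} Your outline is the paper's proof. You isolate the $\pm\omega_0$ pair and bound $|G(\omega_0)|>e^{-0.501t_0^2}$. You treat $n=3$ through the symbol, $n\ge5$ with $|n\omega_0|\le1/2$ by the Gaussian bound, and the rest by the divided-difference estimate. The variable-base statement then comes from dividing \eqref{eq:freeze-C2} by $A_1$.

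The obstacle you flagged is genuine, and your fallback cannot remove it. Read as a \emph{sum} of the three weighted norms, \eqref{eq:first-harmonic-relative-C2} is not true uniformly on the range $\kappa<2$.
\begin{itemize}
\item Here $a$ is exponentially small and $\omega_0=\pi/k$. In the exact modulus \eqref{eq:principal-modulus}, $\tfrac{J^\sharp}{2}\log(1+D)=2V\sin^2(\omega/2)$ up to an exponentially small error, and $V\omega_0^2=t_0^2+\omega_0^2/4$.
\item Hence $\rho_3:=|G(3\omega_0)|/|G(\omega_0)|=e^{-4t_0^2}\bigl(1+O(k^{-2})\bigr)$ for $t_0$ near $\pi/2$. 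All modes with $n\ge5$ together contribute less than about $2\times10^{-8}$ to each weighted norm.
\item After normalization the $n=3$ pair is $\rho_3\cos(3t_0y-\psi)$. So the three weighted norms are at least $\rho_3$, $3\rho_3$, $9\rho_3$ up to negligible terms.
\item Their sum therefore tends to $13e^{-\pi^2}\approx6.7\times10^{-4}$ as $\kappa_{\rm fr}\to2$. It already exceeds $5\times10^{-4}$ for $1.98\le\kappa_{\rm fr}<2$, which the hypothesis allows.
\end{itemize}
None of your three rescue options works. Strictness of $\kappa<2$ gives no uniform margin. The $\cos(\omega/2)$ factors only change things by $1+O(k^{-2})$. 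Interval arithmetic would simply confirm the value $6.7\times10^{-4}$.

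\textbf{What to prove instead.} The provable statement is the componentwise bound, with $\epsilon=5\times10^{-4}$:
\[
\|\mathcal E_{\rm harm}\|_\infty\le\epsilon,\qquad \|\mathcal E_{\rm harm}'\|_\infty\le\epsilon t_0,\qquad \|\mathcal E_{\rm harm}''\|_\infty\le\epsilon t_0^2 .
\]
For the variable-base profiles the constant is $5.1\times10^{-4}$. This is all the paper's proof actually establishes: it bounds the second-derivative term by $9(5.20\times10^{-5})+2\times10^{-8}<4.69\times10^{-4}$ and notes that the lower-order terms have more room. It is also exactly what Proposition~\ref{prop:harmonic-interior} uses to get $(\log\widetilde F^{\rm fr})''<-0.99t_0^2$. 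So aim for the maximum of the three terms, which your estimates already deliver, not the sum.

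\textbf{A smaller slip in $\delta_{\rm harm}<e^{-0.02k^2}$.} From $k<3\sqrt\theta$ you only get $\theta/11>k^2/99\approx0.0101k^2$. That is below $0.02k^2$, so your final comparison does not follow.
\begin{itemize}
\item Use $k<(2+10^{-300})\sigma_{\rm fr}$ together with $\sigma_{\rm fr}^2<1.001\theta$ from \eqref{eq:fr-variance-comparison}. This gives $\theta_{\rm fr}/11>0.0227k^2$.
\item Also $0.501t_0^2<0.501\cdot1.001\pi^2\theta_{\rm fr}/k^2<5\times10^{-5}\theta_{\rm fr}$. So the exponent of $\delta_{\rm harm}$ decreases in $\theta_{\rm fr}$.
\item At the worst endpoint $k=341$ that exponent is about $-0.0224k^2$. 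The margin exists but is thin, so the constant matters.
\end{itemize}
Finally, the three weights cost a factor $1+t_0^{-1}+t_0^{-2}<2.05$, not $t_0^{-2}\le1$. Combined with $A_1>2.5e^{-0.501t_0^2}$, this still fits inside the prefactor $2\times10^4$.
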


\begin{proof}
Use the principal representatives \eqref{eq:principal-frequency-set}.
The nonboundary modes occur in conjugate pairs
$\omega_n=n\pi/k$, $n=\pm1,\pm3,\ldots,\pm(k-2)$; the self-conjugate
boundary mode $\omega=\pi$ is included in the high-frequency remainder.
The $\mathfrak q^{J^\sharp}$-part is exponentially smaller than the first
harmonic, and the divided-difference estimate in the proof of
Theorem~\ref{thm:C2} controls it uniformly through the boundary mode.

For $n=3$, \eqref{eq:centered-symbol-expansion} and
\eqref{eq:cumulants}, now with the local parameter $\theta_{\rm fr}$, give
\begin{align*}
 \log\frac{|G(3\omega_0)|}{|G(\omega_0)|}
 &\le-4t_0^2
 +\frac{6.52(3^4+1)}{24\theta_{\rm fr}}t_0^4+10^{-100}\\
 &\le-(4-0.001893)t_0^2.
\end{align*}
Indeed $\sigma_{\rm fr}^2<1.001\theta_{\rm fr}$ and $k\ge341$ imply
$t_0^2/\theta_{\rm fr}<1.001\pi^2/k^2$.  By
\eqref{eq:first-harmonic-fr-closeness}, $t_0$ differs from the endpoint
$\pi/2$ by less than $10^{-300}$ when the canonical $\kappa$ approaches
$2$.  Therefore
\begin{equation}\label{eq:third-first-exact}
 \frac{|G(3\omega_0)|}{|G(\omega_0)|}<5.20\times10^{-5}.
\end{equation}

We also record explicitly the lower estimate for the first coefficient.
At $\omega_0$, the cubic term in
\eqref{eq:centered-symbol-expansion} is purely imaginary, so
\[
 |G_0(\omega_0)|
 \ge\exp\!\left(-\frac{t_0^2}{2}-|R_4(t_0)|\right).
\]
Moreover
\[
 |R_4(t_0)|
 \le\frac{6.52}{24\theta_{\rm fr}}t_0^4
 <2.31\times10^{-5}t_0^2,
\]
where the last inequality again uses
$t_0^2/\theta_{\rm fr}<1.001\pi^2/341^2$.  Writing
$\mathfrak q_{\rm fr}=\mathfrak q^{J^\sharp}\le e^{-\theta_{\rm fr}}$, the exact formula
\eqref{eq:multiplier} gives
\[
 |G(\omega_0)|
 \ge\frac{|G_0(\omega_0)|-1.001\mathfrak q_{\rm fr}}{1+\mathfrak q_{\rm fr}}.
\]
Here $t_0^2>2.46$ by \eqref{eq:first-harmonic-fr-closeness}, while
$t_0^2<1.001\pi^2\theta_{\rm fr}/341^2$; hence
$\mathfrak q_{\rm fr}<10^{-600}e^{-0.501t_0^2}$.  Consequently
\begin{equation}\label{eq:first-harmonic-lower}
              |G(\omega_0)|>e^{-0.501t_0^2}.
\end{equation}

For odd $n\ge5$ with $|n\omega_0|\le1/2$,
\eqref{eq:symbol-gaussian-tail} and
\eqref{eq:first-harmonic-lower} give
\[
 \frac{|G(n\omega_0)|}{|G(\omega_0)|}
 \le\exp\!\left[-(0.36n^2-0.501)t_0^2\right].
\]
At the worst endpoint $t_0=\pi/2+O(10^{-300})$, the sums of these bounds
weighted by $1,n,n^2$ are respectively less than
$8\times10^{-10},4\times10^{-9},2\times10^{-8}$.  For
$|n\omega_0|\ge1/2$, the high-frequency estimate
$20(1+\theta_{\rm fr})e^{-0.09\theta_{\rm fr}}$, divided by
\eqref{eq:first-harmonic-lower} and using
\[
 \theta_{\rm fr}>\frac{k^2}{1.001\kappa_{\rm fr}^2},
\]
is exponentially small in $k^2$.  There are at most $k$ finite modes and
the $C^2$ weights contribute at most $k^2$; a convenient majorant is
\[
 21k^3(1+\theta_{\rm fr})e^{-0.0899\theta_{\rm fr}}<10^{-100},
\]
already at $k=341$.  Hence the relative second-derivative tail is
\[
 9(5.20\times10^{-5})+2\times10^{-8}<4.69\times10^{-4},
\]
and the lower derivatives have more room.  Pairing the conjugate
principal-frequency coefficients and translating by the phase of the first
pair proves \eqref{eq:first-harmonic-form}--
\eqref{eq:first-harmonic-relative-C2}.

It remains to compare the variable-base profiles with the frozen one in
this normalization.  Before first-harmonic normalization,
Lemma~\ref{lem:freeze-J} gives the $C^2$ error
$10^4\theta_{\rm fr}^2e^{-\theta_{\rm fr}/11}$.  By \eqref{eq:first-harmonic-amplitude},
\eqref{eq:first-harmonic-lower}, and
$\kappa_{\rm fr}<2+10^{-300}$,
\[
 A_1>2.5\,e^{-0.501t_0^2}.
\]
Since $t_0>\pi/2-O(10^{-300})$, the weighted
normalization error is at most
\[
 \frac{1+t_0^{-1}+t_0^{-2}}{A_1}\,
 10^4\theta_{\rm fr}^2e^{-\theta_{\rm fr}/11}
 <\delta_{\rm harm},
\]
which proves \eqref{eq:harmonic-var-fr}.  Finally
\[
 \frac{t_0^2}{\theta_{\rm fr}}
 <\frac{1.001\pi^2}{k^2},\qquad
 \theta_{\rm fr}>\frac{k^2}{1.001\kappa_{\rm fr}^2},
\]
and the logarithm of the right-hand side of
\eqref{eq:harmonic-freeze-error} is decreasing in $\theta_{\rm fr}$ once
$\theta_{\rm fr}>22$.  Its worst endpoint is therefore
$k=341$, $\kappa_{\rm fr}=2+O(10^{-300})$, where it is
$<-0.0224k^2$.  Hence \eqref{eq:harmonic-freeze-error} is
$<e^{-0.02k^2}$.  Adding this to
\eqref{eq:first-harmonic-relative-C2} gives the stated
$5.1\times10^{-4}$ bound for both variable-base profiles.
\end{proof}

\begin{proposition}[Coefficient bounds in the first-harmonic regime]
\label{prop:harmonic-interior}
Assume $\theta\ge1600$ and $\kappa<2$.  Then \eqref{eq:prezero-strip} and
\eqref{eq:x4-single} hold, with $x_h=q_{N-h}$.
\end{proposition}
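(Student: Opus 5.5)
The plan is to run the same scheme as in Proposition~\ref{prop:theta-interior}, with the theta kernel replaced by the cosine main term of Lemma~\ref{lem:first-harmonic-symbol}. I would argue in four steps.

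\textbf{Step 1: representation.} By Lemma~\ref{lem:cyclic-representation} and the first-harmonic normalization, every coefficient in the window $N-k\le n\le N-1$ can be written as
\[
 q_{N-h}=s+\varsigma_h\Lambda' \widetilde F^{\rm var}_{\iota_h}(\widetilde y_h)+\eta_h ,
 \qquad \widetilde F^{\rm var}_\iota(y)=\cos(t_0y)+\mathcal E_\iota(y).
\]
Here $\Lambda'=\Lambda A_1>0$, the $\widetilde y_h$ are the translated lattice points with mesh $\widetilde h=1/\sigma_{\rm fr}$, and the two anchors carry sign $-1$. The error $\mathcal E_\iota$ satisfies the relative $C^2$ bound $5.1\times10^{-4}$, and $|\eta_h|\le\tau_\theta$.

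The $k$ samples span exactly one antiperiod $k\widetilde h=\kappa_{\rm fr}$ of the cosine, which has period $2\pi/t_0=2\kappa_{\rm fr}$. I would choose the fundamental interval of length $\kappa_{\rm fr}$ centered where $\cos(t_0y)\ge0$. Then the cosine plays the role of $\Phi$: it is positive on $|y|<\kappa_{\rm fr}/2$, it vanishes at the endpoints, and
\[
 (\log\cos(t_0y))''=-t_0^2\sec^2(t_0y)\le -t_0^2 ,
\]
with $t_0^2>2.46$. This is stronger than the curvature in the theta case.

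\textbf{Step 2: anchor level.} I would apply Lemma~\ref{lem:anchor-lower} with $m_*$ a sign-$+1$ sample nearest $y=0$. This gives a lower bound for the anchor value, roughly
\[
 \widetilde F(\xi_0)\ge \frac{s-\tau}{(1+a)s+\tau}\bigl(1-O(\widetilde h^2t_0^2)-5.1\times10^{-4}\bigr)>0.99 .
\]
So both anchors sit in the high region, and Corollary~\ref{cor:anchor-match} carries over verbatim. The two anchor equations share the amplitude $\Lambda'$, so
\[
 |\log\widetilde F(\xi_1)-\log\widetilde F(\xi_0)|\le \frac{2\tau_\theta}{s-\tau_\theta}+O(\delta_{\rm harm}),
\]
which is exponentially small.

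\textbf{Step 3: curvature and the complementary set.} On the component $\{\cos(t_0y)\ge c_0\}$ with, say, $c_0=0.9$, I would perturb $FF''-(F')^2$ using the relative $C^2$ bound. This gives $(\log\widetilde F)''\le-\gamma$ with $\gamma\approx t_0^2-O(10^{-3})>2$. Outside that component, $|\widetilde F|\le c_0+5.1\times10^{-4}$, which gives $\mathcal G_{\rm out}>0.08$. I would then feed Lemma~\ref{lem:discrete-strong-concavity} and Corollary~\ref{cor:prezero-from-concavity}, with
\[
 \mathcal G\ge 1-e^{-3\gamma\widetilde h^2/4}\gtrsim \frac{1.5}{\sigma_{\rm fr}^2}\approx\frac{1.5}{\theta}.
\]
This dominates $a(1+a)+2\tau/s$, since $a$ is exponentially small by Proposition~\ref{prop:bootstrap}. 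That yields \eqref{eq:prezero-strip}.

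\textbf{Step 4: the bound on $x_4$.} For \eqref{eq:x4-single}, I would use \eqref{eq:adjacent-lower} with
\[
 \widetilde L_1\le t_0(1+5.1\times10^{-4}),\qquad \widetilde h\,t_0=\frac{\pi}{k}\le\frac{\pi}{341},\qquad \widetilde m>0.99 .
\]
This gives $x_4>2s-0.02>0.97>(1+a)/2$.

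\textbf{Main obstacle.} The hardest part is the bookkeeping in Step 3 near the edge of the high component. One must check that the relative error $5.1\times10^{-4}$, scaled by $t_0^{-1}$ and $t_0^{-2}$ when converting to absolute derivatives, still leaves uniform negative log-curvature there. One must also check that all non-anchor samples are separated from the anchors by at least one mesh step. Since $t_0$ may be large when $\kappa\ll2$, I would keep every estimate relative to $t_0$ throughout. The first quantity to verify is $\gamma\widetilde h^2\asymp t_0^2/\sigma_{\rm fr}^2=\pi^2/k^2$, which is uniform in $\kappa$.
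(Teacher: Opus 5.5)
Your proposal is correct and follows the paper's proof essentially step for step. Like the paper, you normalize by the first-harmonic amplitude $A_1$ and use Lemma~\ref{lem:anchor-lower} with a $+1$ sample nearest the cosine maximum to put both anchors near $1$. You then transfer to the common frozen profile with the exponentially small $\delta_{\rm harm}$, take $I=\{\cos(t_0y)\ge0.9\}$ inside one positive lobe, and close with Corollary~\ref{cor:prezero-from-concavity} and \eqref{eq:adjacent-lower} using $t_0\widetilde h=\pi/k$. The one thing to tidy is the curvature constant: it should be stated relatively, as the paper's $\gamma=0.99t_0^2$, rather than as ``$t_0^2-O(10^{-3})$'' (you yourself note an absolute error is inadequate for large $t_0$), and this gives $\gamma\widetilde h^2=0.99\pi^2/k^2$ uniformly in $\kappa$.
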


\begin{proof}
Let $A_1$ be the frozen first-harmonic amplitude from
\eqref{eq:first-harmonic-amplitude}, and put
\[
        \widetilde\Lambda=\Lambda A_1,
        \qquad \widetilde h=\sigma_{\rm fr}^{-1}.
\]
After the common phase translation, relabel the translated sample
coordinates again by $y_h$.  The exact cyclic representation
\eqref{eq:exact-cyclic-representation} becomes
\begin{equation}\label{eq:harmonic-variable-representation}
 q_{N-h}=s+\varsigma_h\widetilde\Lambda
 \widetilde F^{\rm var}_{\iota_h}(y_h)+\eta_h^{\rm cyc}.
\end{equation}
Let $\xi_0,\xi_1$ denote the translated coordinates of the two adjacent
anchors.  For each parity family choose a lattice sample with
$\varsigma=+1$ nearest the cosine maximum.
Its phase distance is at most
\[
       t_0\widetilde h=\frac\pi k<0.00922.
\]
By the $5.1\times10^{-4}$ estimate in
Lemma~\ref{lem:first-harmonic-symbol}, its variable-profile value is
$>0.9994$.  Lemma~\ref{lem:anchor-lower}, applied separately to the two
parity families in \eqref{eq:harmonic-variable-representation}, therefore
puts both zero anchors above $0.998$.

Let $\delta_{\rm harm}$ be as in \eqref{eq:harmonic-freeze-error}.
Equation \eqref{eq:harmonic-var-fr} transfers both anchors to the single
frozen normalized profile $\widetilde F^{\rm fr}$, with values
$>0.998-\delta_{\rm harm}$.  The anchor equation also gives
\[
        \widetilde\Lambda
        <\frac{s+\tau_\theta}{0.998}<0.51.
\]
Hence \eqref{eq:harmonic-variable-representation} may be rewritten with
the common frozen profile as
\begin{equation}\label{eq:harmonic-frozen-representation}
 q_{N-h}=s+\varsigma_h\widetilde\Lambda
 \widetilde F^{\rm fr}(y_h)+\eta_h^{\rm harm},
 \qquad
 |\eta_h^{\rm harm}|\le
 \tau_{\rm harm}:=\tau_\theta+0.51\delta_{\rm harm}.
\end{equation}
Moreover the two exact anchor equations and
\eqref{eq:harmonic-var-fr} give
\begin{equation}\label{eq:harmonic-anchor-match}
 \delta_{\log}:=
 \left|\log\frac{\widetilde F^{\rm fr}(\xi_1)}
                       {\widetilde F^{\rm fr}(\xi_0)}\right|
 \le\frac{2\tau_\theta}{s-\tau_\theta}
    +\frac{2\delta_{\rm harm}}{0.998-\delta_{\rm harm}}
 <e^{-0.01k^2}.
\end{equation}

Put $\gamma=0.99t_0^2$.  On the positive lobe of the leading cosine,
\eqref{eq:first-harmonic-relative-C2} implies
\[
       (\log\widetilde F^{\rm fr})''<-\gamma
\]
wherever $\widetilde F^{\rm fr}>0$.  Indeed, for
$f_0(y)=\cos(t_0y)$ and $E=\widetilde F^{\rm fr}-f_0$, put
$\epsilon=5\times10^{-4}$.  Then
$|E|\le\epsilon$, $|E'|\le\epsilon t_0$, and
$|E''|\le\epsilon t_0^2$, so
\[
 \widetilde F^{\rm fr}(\widetilde F^{\rm fr})''
 -((\widetilde F^{\rm fr})')^2
 \le-[1-(4\epsilon+2\epsilon^2)]t_0^2.
\]
Since $|\widetilde F^{\rm fr}|\le1+\epsilon$, division by its square gives
$(\log\widetilde F^{\rm fr})''<-0.99t_0^2$.  Since
\[
 e^{-0.01k^2}<\frac{0.1}{k^2}
       <\frac{\gamma\widetilde h^2}{4},
 \qquad
 \gamma\widetilde h^2=0.99\frac{\pi^2}{k^2},
\]
the adjacent anchors satisfy the matching hypothesis of
Corollary~\ref{cor:prezero-from-concavity}.

Take the fundamental interval to be one positive lobe of the leading
cosine, and let $I$ be its subinterval on which the cosine is at least
$0.9$.  Since $|\mathcal E_{\rm harm}|<5\times10^{-4}$, the frozen anchor values
above imply that the anchors lie in $I$.  Outside $I$
\[
       |\widetilde F^{\rm fr}|<0.9006,
\]
so the frozen anchor bound gives $\mathcal G_{\rm out}>0.097$.  The
nearest-sample deficit satisfies
\[
  1-\exp\left(-\frac{3\gamma\widetilde h^2}{4}\right)
  >\frac{0.37\pi^2}{k^2}.
\]
On the other hand $\kappa<2$ and $\sigma^2<1.001\theta$ imply
\[
       a<e^{-0.68\theta}<e^{-0.16k^2}.
\]
Here $s>0.49$, while
$\tau_\theta<2e^{-0.24k^2}$ and
$\delta_{\rm harm}<e^{-0.02k^2}$, so
$\tau_{\rm harm}/s<e^{-0.01k^2}$.  Thus
\eqref{eq:prezero-gap} holds for the frozen coefficient representation
\eqref{eq:harmonic-frozen-representation}, and every non-anchor
coefficient lies in $(0,1-a)$.

Finally $|(\widetilde F^{\rm fr})'|<1.001t_0$, the frozen anchor value is
$>0.998-\delta_{\rm harm}$, and $t_0\widetilde h=\pi/k$.  The
adjacent-sample estimate \eqref{eq:adjacent-lower}, with
$\tau=\tau_{\rm harm}$, gives
$q_{N-4}>0.98>(1+a)/2$.
\end{proof}

\section{Pre-zero coefficient bounds}\label{sec:interior}

We can now summarize all regimes in one statement.  The following theorem is
the analytic engine for the final coefficient propagation.

\begin{theorem}[Pre-zero coefficient bounds]\label{thm:interior}
Let $k\ge341$ be odd and suppose the hypotheses of
Lemma~\ref{lem:firstzero} hold.
Then, with $x_j=q_{N-j}$,
\begin{equation}\label{eq:interior}
 x_1>0,\qquad x_2=0,\qquad
 0<x_j<1-a\quad(3\le j\le k-1),
\end{equation}
and
\begin{equation}\label{eq:x4}
                   x_4>\frac{1+a}{2}.
\end{equation}
\end{theorem}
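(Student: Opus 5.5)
Theorem~\ref{thm:interior} is an assembly statement, so the plan is a case analysis over the five-part parameter partition displayed after \eqref{eq:sigma-kappa-early}. In each part an earlier proposition already gives the conclusion. The only task is to check that the partition is exhaustive under the hypotheses of Lemma~\ref{lem:firstzero}, and that every hypothesis used by the closing proposition is available in its range.

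First, Lemma~\ref{lem:ahalve} gives $a\le1/2$, so that $\delta\le1$ and all later lemmas apply. Proposition~\ref{prop:compact} then excludes $\theta\le10$ outright: in that range no first zero can exist, so the conclusion holds vacuously. For $\theta>10$ the quantity $\sigma^2=V-\tfrac14>0$ is defined, hence so is $\kappa=k/\sigma$, and the remaining ranges are covered as follows.
\begin{itemize}
\item For $10<\theta<1600$, apply Proposition~\ref{prop:single-interior}.
\item For $\theta\ge1600$, first apply Proposition~\ref{prop:bootstrap}, which gives $a<e^{-0.68\theta}$ before any theta estimate is used. Then split on $\kappa$:
\begin{itemize}
\item $\kappa<2$: Proposition~\ref{prop:harmonic-interior};
\item $2\le\kappa\le\sqrt\theta/4$: Proposition~\ref{prop:theta-interior};
\item $\kappa>\sqrt\theta/4$: the second case of Proposition~\ref{prop:single-interior}.
\end{itemize}
\end{itemize}
These ranges are disjoint and together cover all $\theta>10$. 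Each proposition concludes \eqref{eq:prezero-strip} and \eqref{eq:x4-single}, which are exactly \eqref{eq:interior} and \eqref{eq:x4}.

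Some pieces of \eqref{eq:interior} I would verify directly rather than reading them off the propositions. The equality $x_2=q_{N-2}=0$ is the anchor equation \eqref{eq:anchors}. The bound $x_1>0$ must be read from the propositions: $N-1$ lies on the second progression of Lemma~\ref{lem:local-middle} at $r=(k-1)/2$, i.e.\ at the end of the $k$ consecutive coordinates, and it is not an anchor. Hence it is covered by the non-anchor bound $0<q<1-a$, whether through the near-center estimate or through \eqref{eq:middle-band}. Also, $x_{k-1}$ is the largest index in the claimed strip. This is consistent because the anchors sit at $h=2$ and $h=k$, and $h=k$ is excluded from the range $3\le j\le k-1$.

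The main obstacle I expect is the boundary and hypothesis bookkeeping, not any new estimate. Proposition~\ref{prop:single-interior} relies on Lemma~\ref{lem:single-error}, which for $\theta\ge1600$ requires Proposition~\ref{prop:bootstrap}; I would check that this dependency is not circular. The theta and first-harmonic cases need Lemma~\ref{lem:cyclic-representation}, whose hypothesis is only $\kappa\le\sqrt\theta/4$, together with $\kappa\ge2$ for Theorem~\ref{thm:C2}. I would also check that each coordinate indexing, dominant-image or cyclic, covers all of $N-k\le n\le N-1$, so that every $x_j$ with $1\le j\le k$ is accounted for.
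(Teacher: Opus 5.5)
Your proposal is correct and is essentially the paper's own proof. It uses the same exhaustive split: $\theta\le10$ is excluded by Proposition~\ref{prop:compact}, $10<\theta<1600$ is handled by Proposition~\ref{prop:single-interior}, and, after first invoking Proposition~\ref{prop:bootstrap}, the three $\kappa$-subranges are closed by Propositions~\ref{prop:harmonic-interior} and \ref{prop:theta-interior} and by the large-parameter form of Proposition~\ref{prop:single-interior}; your additional bookkeeping checks (that $x_2=0$ is the anchor equation, that $N-1$ is the last non-anchor sample of the second progression, and that the bootstrap is not circular) are consistent with the paper.
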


\begin{proof}
Proposition~\ref{prop:compact} excludes $\theta\le10$.  If
$10<\theta<1600$, Proposition~\ref{prop:single-interior} applies.
Now assume $\theta\ge1600$ and first invoke
Proposition~\ref{prop:bootstrap}.  If $\kappa<2$, use
Proposition~\ref{prop:harmonic-interior}.  If
$2\le\kappa\le\sqrt\theta/4$, use
Proposition~\ref{prop:theta-interior}.  Finally, if
$\kappa>\sqrt\theta/4$, use
Proposition~\ref{prop:single-interior} in its large-parameter form.
\end{proof}

The remainder of the proof uses only the recurrence and nonnegativity.

\section{Post-zero continuation}\label{sec:postzero}

Put
\[
                  p_t=q_{N+t},\qquad x_j=q_{N-j}.
\]
At the first zero,
\[
                    p_0=x_2=x_k=0.
\]
For $t>0$,
\begin{equation}\label{eq:postrec}
 r_{N+t}=p_t+a p_{t-2}+p_{t-k}\in\{0,1\},
\end{equation}
where $p_{t-k}=x_{k-t}$ for $t<k$.

\begin{lemma}[Deterministic all-one continuation]\label{lem:deterministic-continuation}
Assume $k\ge7$ is odd and
\[
 x_1>0,\qquad x_2=0,
 \qquad0<x_j<1-a\quad(3\le j\le k-1).
\]
Then
\begin{equation}\label{eq:allone}
                 r_{N+1}=\cdots=r_{N+2k-2}=1,
\end{equation}
and
\begin{equation}\label{eq:p-positive}
                 p_t>0\qquad(1\le t\le k-2).
\end{equation}
\end{lemma}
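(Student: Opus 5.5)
The plan is a single induction on $t$ built on one observation. By \eqref{eq:postrec} each $r_{N+t}$ is a sum of three nonnegative terms and lies in $\{0,1\}$, so
\[
 r_{N+t}=0\iff p_t=a p_{t-2}=p_{t-k}=0 .
\]
Hence, to prove $r_{N+t}=1$, it suffices to exhibit one strictly positive term among $p_t$, $a p_{t-2}$ and $p_{t-k}$. The cofactor values themselves are controlled by \eqref{eq:q01}: $0\le p_t\le1$ for every $t$. The boundary data are $p_0=x_2=0$, $p_{-1}=x_1>0$, $p_{-2}=0$, and $p_{t-k}=x_{k-t}$ for $t<k$.

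\textbf{Step 1: the range $1\le t\le k-2$.} I would prove $r_{N+t}=1$ and $p_t>0$ together by induction on $t$.
\begin{itemize}
\item For $1\le t\le k-3$, the delayed term is $x_{k-t}$ with $3\le k-t\le k-1$. By hypothesis $x_{k-t}>0$, so $r_{N+t}=1$. The recurrence then gives
\[
 p_t=1-a p_{t-2}-x_{k-t}>1-a-(1-a)=0,
\]
using $p_{t-2}\le1$ and $x_{k-t}<1-a$. The edge cases $t=1,2$ need $p_{-1}=x_1\le1$ and $p_0=0$, both of which hold.
\item At $t=k-2$ the delayed term is $x_2=0$, so the hypothesis on $x$ gives nothing. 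Instead I use the inductive hypothesis $p_{k-4}>0$, which is available because $k\ge7$ gives $k-4\ge1$. Then $a p_{k-4}>0$, so $r_{N+k-2}=1$, and
\[
 p_{k-2}=1-a p_{k-4}\ge1-a>0 .
\]
\end{itemize}
This proves \eqref{eq:p-positive}.

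\textbf{Step 2: the range $k-1\le t\le 2k-2$.} Here the positivity just established supplies the positive term directly.
\begin{itemize}
\item At $t=k-1$, the term $p_{-1}=x_1>0$ gives $r_{N+k-1}=1$.
\item At $t=k$, the term $a p_{k-2}>0$ gives $r_{N+k}=1$.
\item For $k+1\le t\le 2k-2$, the delayed term $p_{t-k}$ has $1\le t-k\le k-2$ and is positive by \eqref{eq:p-positive}, so $r_{N+t}=1$.
\end{itemize}
This gives \eqref{eq:allone}.

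\textbf{Where the argument is delicate.} The only step that needs care is the index $t=k-2$, where the delayed term $x_2$ vanishes. Positivity there must come from the $a p_{t-2}$ term, which is why the induction carries $p_t>0$ alongside $r_{N+t}=1$, and why $k\ge7$ is needed. Elsewhere the strict bound $x_j<1-a$ is exactly what keeps $p_t$ positive against the worst case $a p_{t-2}\le a$.
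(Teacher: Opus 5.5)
Your proof is correct and follows the paper's argument step for step. For $1\le t\le k-3$ you use the positive delayed term $x_{k-t}$ together with $x_{k-t}<1-a$ and $p_{t-2}\le1$. You then use $ap_{k-4}>0$ at $t=k-2$, $x_1$ at $t=k-1$, $ap_{k-2}$ at $t=k$, and $p_{t-k}>0$ for $k+1\le t\le 2k-2$.

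One small overstatement: the $t=k-2$ step only needs $k-4\ge1$, i.e.\ $k\ge5$. So it does not by itself explain the hypothesis $k\ge7$.
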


\begin{proof}
For $1\le t\le k-3$, suppose inductively that the preceding product
digits are $1$.  If $r_{N+t}=0$, nonnegativity in
\[
 p_t+a p_{t-2}+x_{k-t}=0
\]
would force $x_{k-t}=0$, contradicting
$3\le k-t\le k-1$.  Hence $r_{N+t}=1$, and
\[
 p_t=1-a p_{t-2}-x_{k-t}
 \ge1-a-x_{k-t}>0.
\]
At $t=k-2$, a zero would force $p_{k-4}=x_2=0$, impossible because
$p_{k-4}>0$; therefore
\[
 p_{k-2}=1-a p_{k-4}>1-a>0.
\]
At $t=k-1$ the term $x_1>0$ prevents a zero, and at $t=k$ the term
$p_{k-2}>0$ does.  Finally, if $k+1\le t\le2k-2$, then
$1\le t-k\le k-2$, so $p_{t-k}>0$ prevents a zero in
\eqref{eq:postrec}.
\end{proof}

This lemma is the reason no continuation-tree computation appears in the
large-$k$ proof.

\section{The terminal contradiction and proof of the theorem}\label{sec:assembly}

Under the all-one continuation, two recurrence identities imply the
contradiction.

\begin{lemma}[Terminal identity]\label{lem:terminal}
If \eqref{eq:allone} holds, then
\begin{equation}\label{eq:terminal}
       p_{2k-2}=a\bigl(p_{k-4}-p_{2k-4}\bigr).
\end{equation}
\end{lemma}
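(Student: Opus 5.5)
The identity should follow directly from two instances of the post-zero recurrence \eqref{eq:postrec}. Both needed product digits are equal to $1$ by \eqref{eq:allone}, and one anchor zero kills a delayed term. No estimates are required; this is pure bookkeeping with indices.

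First, I would use \eqref{eq:postrec} at $t=2k-2$. Since $1\le 2k-2\le 2k-2$, \eqref{eq:allone} gives $r_{N+2k-2}=1$. The delayed index is $t-k=k-2\ge0$, so the delayed term is the post-zero coefficient $p_{k-2}$ rather than an $x_j$. Hence
\[
 p_{2k-2}=1-a\,p_{2k-4}-p_{k-2}.
\]
Second, I would use \eqref{eq:postrec} at $t=k-2$. Here $1\le k-2\le 2k-2$ because $k\ge7$, so again $r_{N+k-2}=1$. The delayed term is $p_{(k-2)-k}=p_{-2}=q_{N-2}=x_2$. This vanishes by the anchor equations \eqref{eq:anchors} of Lemma~\ref{lem:firstzero}. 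Therefore
\[
 p_{k-2}=1-a\,p_{k-4}.
\]
I would also note that $k-4\ge3>0$, so $p_{k-4}$ is a genuine post-zero coefficient and the term $a\,p_{t-2}$ needs no special interpretation.

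Substituting the second display into the first cancels the constants $1$ and leaves $p_{2k-2}=a\,p_{k-4}-a\,p_{2k-4}$, which is \eqref{eq:terminal}. The only point needing care is index bookkeeping. One must confirm that both values of $t$ lie in the range covered by \eqref{eq:allone}. One must also confirm that the convention $p_{t-k}=x_{k-t}$ identifies $p_{-2}$ with the anchor $x_2=0$, rather than with some other pre-zero coefficient. I expect no genuine obstacle.
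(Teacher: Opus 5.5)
Your proposal is correct and is essentially the paper's own proof: apply \eqref{eq:postrec} at $t=k-2$, where the delayed term $x_2=q_{N-2}=0$ vanishes by the anchor equations to give $p_{k-2}=1-a\,p_{k-4}$, then apply it at $t=2k-2$ and substitute. Your extra index checks (both values of $t$ lie in the range of \eqref{eq:allone}, and $p_{-2}$ is identified with $x_2$) are exactly the bookkeeping the paper leaves implicit.
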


\begin{proof}
At $t=k-2$,
\[
 p_{k-2}=1-a p_{k-4},
\]
because the $x_2$ term is zero.  At $t=2k-2$,
\[
 p_{2k-2}=1-a p_{2k-4}-p_{k-2}.
\]
Substitute the first identity into the second.
\end{proof}

\begin{lemma}[Terminal sign]\label{lem:terminal-sign}
Under the hypotheses of Theorem~\ref{thm:interior},
\[
                       p_{2k-2}<0.
\]
\end{lemma}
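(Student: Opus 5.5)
The plan is to combine the terminal identity of Lemma~\ref{lem:terminal} with the lower bound \eqref{eq:x4} on $x_4$ from Theorem~\ref{thm:interior}. Since $a>0$, identity \eqref{eq:terminal} reduces the claim $p_{2k-2}<0$ to the strict inequality
\[
 p_{2k-4}>p_{k-4}.
\]
Both sides will be computed from the recurrence \eqref{eq:postrec}. Because $k\ge341$, the indices $k-6$, $k-4$, $2k-6$ and $2k-4$ all lie in the range where the all-one continuation holds.

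\emph{Step 1 (upper bound for $p_{k-4}$).} By Theorem~\ref{thm:interior}, the hypotheses of Lemma~\ref{lem:deterministic-continuation} hold. That lemma gives \eqref{eq:allone}, so in particular $r_{N+k-4}=1$. At $t=k-4$ we have $p_{t-k}=x_{k-t}=x_4$, so the recurrence reads
\[
 p_{k-4}=1-a p_{k-6}-x_4.
\]
Here $k-6\ge1$, so \eqref{eq:p-positive} gives $p_{k-6}>0$; mere nonnegativity from \eqref{eq:q01} already suffices. Combining this with \eqref{eq:x4} yields
\[
 p_{k-4}\le 1-x_4<\frac{1-a}{2}.
\]

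\emph{Step 2 (lower bound for $p_{2k-4}$).} At $t=2k-4\le2k-2$, \eqref{eq:allone} gives $r_{N+2k-4}=1$, and $p_{t-k}=p_{k-4}$. Hence
\[
 p_{2k-4}=1-a p_{2k-6}-p_{k-4}\ge 1-a-p_{k-4},
\]
where we used $0\le p_{2k-6}=q_{N+2k-6}\le1$ from \eqref{eq:q01}.

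\emph{Step 3 (conclusion).} Combining the two steps,
\[
 p_{2k-4}-p_{k-4}\ge 1-a-2p_{k-4}>1-a-(1-a)=0.
\]
Substituting into \eqref{eq:terminal} then gives $p_{2k-2}=a(p_{k-4}-p_{2k-4})<0$.

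There is no serious obstacle here. The only care needed is index bookkeeping: each use of \eqref{eq:postrec} must fall inside the window $1\le t\le 2k-2$ covered by \eqref{eq:allone}, and the delayed term must be identified correctly as $x_4$ or as $p_{k-4}$. All of the analytic difficulty sits in \eqref{eq:x4}, which is taken as given.
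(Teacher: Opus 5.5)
Your proposal is correct and follows the paper's proof essentially verbatim. At $t=k-4$ you bound $p_{k-4}\le 1-x_4<(1-a)/2$, at $t=2k-4$ you get $p_{2k-4}\ge 1-a-p_{k-4}>p_{k-4}$, and the terminal identity \eqref{eq:terminal} then gives the sign.
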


\begin{proof}
At $t=k-4$ the all-one recurrence gives
\[
 p_{k-4}=1-a p_{k-6}-x_4
 \le1-x_4<\frac{1-a}{2}.
\]
At $t=2k-4$,
\[
 p_{2k-4}=1-a p_{2k-6}-p_{k-4}
 \ge1-a-p_{k-4}>p_{k-4}.
\]
Now use \eqref{eq:terminal}.
\end{proof}

\begin{proof}[Proof of Theorem~\ref{thm:main}]
The even case is Lemma~\ref{lem:even-k}.  Suppose that $k$ is odd.
Assume a nonzero cofactor exists and take the first zero $N$.
Theorem~\ref{thm:interior} gives the pre-zero estimate.
Lemma~\ref{lem:deterministic-continuation} then implies that the product digits remain
$1$ through degree $N+2k-2$, while Lemma~\ref{lem:terminal-sign} gives
a negative cofactor coefficient at that degree.  This contradicts the
nonnegativity of $Q$.
\end{proof}

\section{Concluding remarks}\label{sec:conclusion}

The companion finite-degree argument retains bounded collections of
characteristic modes and uses finite certified calculations.  The number of
relevant modes and possible continuations increases with $k$.  The
high-degree proof instead uses the exact negative-binomial packet decomposition
and proves the uniform pre-zero estimate \eqref{eq:interior} without enumerating
the post-zero continuations.

The theta kernel is needed when several packet translates contribute at the
precision required by the proof.  If all remote packets are smaller than the
required $1/\theta$ bound because of the image spacing, exact log-concavity of one
packet suffices.  In the complementary large-parameter range, the alternating
periodization of all contributing translates is approximated by a theta
function.  Its image-sum and Fourier-series formulas are the two standard
representations of the midpoint Dirichlet heat kernel.

Although $1+a x^2+x^k$ has only three monomials, its uniform exclusion requires
information about the forced recurrence over a range that grows with the
parameters.  Here that information comes from the exact negative-binomial
representation and a one-dimensional theta limit.  A general factor may give
several interacting packet families, without an exact one-dimensional
log-concavity statement or a scalar theta-kernel approximation.  The present
method therefore does not directly extend to the general conjecture.

\appendix

\section{The compact-range finite calculation}\label{app:compact}

This appendix records the finite numerical information used only in
Proposition~\ref{prop:compact}; the reduction to the quantities below is
exact.

\subsection{Uniform finite-\texorpdfstring{$k$}{k} approximation}

Take $z=9/5$.  For
$b_{j,r}=\binom{j+r}{r}a^r$ and $\theta_j\le20$,
\[
 \sum_{r\ge R}b_{j,r}\le z^{-R}(1-az)^{-j-1}.
\]
For $0<a\le1/2$ one has
\[
 (j+1)\log\frac1{1-9a/5}<80.
\]
Consequently, at $k=341$,
\[
 3^{80}\frac{(5/9)^{341}}{1-(5/9)^{341}}
 <1.324\times10^{-49}.
\]
The finite even-geometric tail is smaller still.  This gives
\eqref{eq:compact-error}.

\subsection{Candidate generation and residual separation}

At the limiting amplitude root
\[
 a_*=\binom{J+c}{c}^{-1/(c+1)},
\]
the parity condition is $J+c$ odd, while
$\theta_*\le10.001=10001/1000$ is equivalent to the exact integer inequality
\begin{equation}\label{eq:compact-integer-filter}
 10001^{c+1}\binom{J+c}{c}\ge(1000J+10001)^{c+1}.
\end{equation}
Indeed $\theta_*\le10001/1000$ is equivalent to
$a_*\le10001/(1000J+10001)$, and raising to the $(c+1)$st power gives
\eqref{eq:compact-integer-filter}.  Completeness uses no search cutoff: for fixed $c\ge1$, writing
$C(J)=\binom{J+c}{c}$ and $\ell_r=\log(1+J/r)$, concavity of $1-e^{-x}$ gives
$J\partial_J\log C=\sum_{r=1}^c(1-e^{-\ell_r})<(c+1)(1-a_*)$, hence
$\theta_*(J)$ is strictly increasing.  At $J=1$ it increases with $c\ge2$.
The exact $c=38$ threshold already fails.  For $c=37$ the $\theta_*$ test
leaves only $J=1$, but $J+c$ is then even, so the parity condition removes
that pair.  Thus the enlarged list contains $109{,}929$ pairs, all with
$c\le36$ and $J<28{,}050$.  Directed interval evaluation of \eqref{eq:blockU} gives
\[
 \min |\widetilde U_{J,c}(a_*)|
 >4.0501007553\times10^{-7}.
\]
The closest pair is $(J,c)=(16760,9)$.  For the transfer from $a_*$ to
the actual $a$, differentiate \eqref{eq:blockU}.  With
\[
 A'_{J,c}(a)=\sum_{r=1}^{c-1}(-1)^r r\binom{J+r}{r}a^{r-1},
\]
one obtains
\begin{equation}\label{eq:compact-derivative}
\begin{aligned}
 \frac{d}{da}\widetilde U_{J,c}(a)
 &=-\frac1{(2+a)^2}+(-1)^{J+1}\Biggl[
 A_{J,c}(a)+aA'_{J,c}(a)\\
 &\qquad\qquad
 +\frac{(J-1)(1+a)^{-J}}{2+a}
 +\frac{(1+a)^{1-J}}{(2+a)^2}\Biggr].
\end{aligned}
\end{equation}
For each candidate let $I_{J,c}=[0.99a_*,1.01a_*]$.  Directed interval
evaluation of the explicit expression \eqref{eq:compact-derivative} on these
intervals gives
\begin{equation}\label{eq:compact-derivative-bound}
 \sup_{a\in I_{J,c}}
 \left|\frac{d}{da}\widetilde U_{J,c}(a)\right|<1.5\times10^{11}.
\end{equation}
The root displacement \eqref{eq:root-displacement} is far smaller than one
percent, so this is the derivative bound used in Proposition~\ref{prop:compact}.

A second, independent exact check reduces the polynomial
\eqref{eq:second-eq} modulo
$\binom{J+c}{c}a^{c+1}-1$ and computes the gcd modulo
$p=1{,}000{,}003$.  Every gcd is constant.  The inequality $J+c<p$ ensures that the binomial
coefficients occurring in the reduction are nonzero modulo $p$, so the check
is not obscured by vanishing combinatorial coefficients.  This modular
calculation is used only as an independent arithmetic consistency check; the
characteristic-zero exclusion in the proof is supplied by the directed
interval separation above, not by an inference from a single prime.

\section{Numerical constants in the single-translate and spectral estimates}
\label{app:numerics}

Only scalar endpoint inequalities are used outside the compact-range
calculation.

\subsection{Coarse single-translate bootstrap}
For a modal packet with parameter $\theta_0$ and effective variance $\sigma_0^2$,
$k/\sigma_0>6$ implies that every remote image begins at least six
standard deviations from the modal center.  For an actual right-hand image the exact cross-image ratio is
\[
 \frac{b_{J-2,c+k}}{b_{J,c}}
 =a^kJ(J-1)
   \frac{(J+c+1)\cdots(J+c+k-2)}{(c+1)\cdots(c+k)},
\]
with the left-hand ratio obtained by inversion after replacing
$(J,c)$ by $(J+2,c-k)$.  Multiplying these exact ratios outward from the
modal image reduces the two terminal tails to
\[
 q_R=\exp\!\left(-\frac{k(k-1)}{4(\theta_0+k)}\right),\qquad
 q_L=\exp\!\left(-\frac{k(k-1)}{4\theta_0}\right).
\]
Since $\sigma_0^2>\theta_0-1/4$, the condition $k/\sigma_0>6$ implies
\[
             \theta_0<\frac{k^2}{36}+\frac14.
\]
The two tail expressions are increasing in $\theta_0$; after substituting
this upper bound, elementary differentiation shows that their resulting
majorants decrease for $k\ge341$.  Thus the maximum occurs at $k=341$, and
direct substitution gives
\[
 \frac{q_R}{1-q_R}+\frac{4q_L}{1-4q_L}<0.000807.
\]
Consequently, uniformly for $a\le1/2$, $k\ge341$, $\theta_0>9$, and
$k/\sigma_0>6$,
\[
 \frac{\text{sum of actual remote envelopes}}{B}<0.0087<0.01.
\]
Also,
\[
 |\eta|\le\frac12e^{-9(1-a)(1-a/2)}+2^{-1700}<0.018<0.02.
\]
These bounds imply $aB<2.2$ as in
Lemma~\ref{lem:coarse-modal-scale}.

For the lower modal-amplitude bound in
Lemma~\ref{lem:finite-single-bootstrap}, when $10<\theta<170.5$ the
modal-center lemma gives $\theta_0>10-a/(1-a)$.  Using the first nine
factors of \eqref{eq:modal-product}, subdivide $0.01\le a\le0.5$ as
follows:
\begin{center}
\begin{tabular}{c@{\qquad}c}
\toprule
$a$ interval & lower bound for $aB$\\
\midrule
$[0.01,0.02]$ & $24.7$\\
$[0.02,0.05]$ & $41.8$\\
$[0.05,0.10]$ & $78.6$\\
$[0.10,0.20]$ & $85.8$\\
$[0.20,0.30]$ & $88.7$\\
$[0.30,0.40]$ & $64.2$\\
$[0.40,0.50]$ & $37.9$\\
\bottomrule
\end{tabular}
\end{center}
Each entry follows by taking the smallest leading $a$ and the largest
$a$ in the nine factors, which are decreasing in $a$ on these intervals.
For $\theta\ge170.5$ the retained parameter exceeds $168$, so the same
nine-factor estimate also exceeds $20$.  This is a direct one-dimensional estimate for $aB>20$; no search over
packet indices is involved.

For the sharp modal estimates with $a<0.01$, put
\[
 q_R=\exp\!\left(-\frac{0.99k(k-1)}{2(\theta_0+k)}\right),\qquad
 q_L=\exp\!\left(-\frac{0.99k(k-1)}{2\theta_0}\right).
\]
The same substitution $\theta_0<k^2/36+1/4$ and monotonicity in $k$ gives
\[
 \frac{q_R}{1-q_R}+\frac{4q_L}{1-4q_L}
 <1.9\times10^{-7}<2.1\times10^{-5}
\]
over $\theta_0\ge9$.  Jensen's expectation bound gives $H>0.46$ there.
For $\theta_0\ge1595$ it gives $H>0.49$.  At
$r=\lfloor\theta_0\rfloor$, the standard Stirling upper bound for $r!$
(cf. \cite{NISTHandbook}) gives the valid lower estimate
\[
 \binom{J+r}{r}
 =\prod_{m=1}^r\frac{J+m}{m}
 \ge\frac{J^r}{r!}
 \ge\frac1{e\sqrt r}\left(\frac{eJ}{r}\right)^r.
\]
Since $aJ=(1-a)\theta_0$, $a<0.01$, and $r\ge\theta_0-1$,
\[
 \log B\ge(\theta_0-1)(1+\log0.99)-1-\tfrac12\log\theta_0
          >0.98\theta_0;
\]
the final difference is increasing and is $>10.19$ at $\theta_0=1595$.
Together with the stationary bound $|\eta_n|<0.018$, these are the
quantities used in the explicit inequality
$0.46aB<0.503+2.1\times10^{-5}aB+0.018$ in the proof of
Lemma~\ref{lem:sharp-modal}.

For the bound $a<0.011/\vartheta$ in Lemma~\ref{lem:single-curv}, if
$a\ge0.011/\vartheta$ then, since $d\ge9$,
\[
 a b_{j,d}\ge
 a\prod_{q=1}^9\left(a+\frac{(1-a)\vartheta}{q}\right)>3.
\]
The product is increasing in $a$ because its logarithmic derivative is
$>1/a-9/(1-a)>0$ on $a\le0.01$; after setting
$a=0.011/\vartheta$ it is increasing in $\vartheta>10$, and its endpoint
value is $3.0163\ldots$.

For the anchor localization in the single-translate ranges, put
\[
 t(v)=341-(2\sqrt v+2),\qquad
 R_{\rm anc}(v)=\exp\!\left(-\frac{0.99t(v)(t(v)-1)}{2(v+t(v))}\right),
\]
\[
 M_{\rm anc}(v)=e^{-0.98505v}+2.5125\,\frac{2.1R_{\rm anc}(v)}{1-1.05R_{\rm anc}(v)}.
\]
For $vM_{\rm anc}(v)$ the outward-rounded scalar check partitions
$[10,1600]$ into the $15{,}900$ cells
\[
 \left[10+\frac{i}{10},\,10+\frac{i+1}{10}\right],
 \qquad 0\le i<15{,}900,
\]
and evaluates the displayed elementary function on each entire interval.
The worst cell is $[10,10.1]$, and it gives
\[
       vM_{\rm anc}(v)<0.000532481<0.000533.
\]

For completeness, the localization reduction itself can also be written
without hiding the product being evaluated.  If a block has parameter
$v$ and $r=\lfloor v\rfloor$, then for $t\ge1$
\[
 \frac{b_{j,r+t}}{b_{j,r}}
 =\prod_{q=1}^{t}\left(a+\frac{(1-a)v}{r+q}\right),
 \qquad
 \frac{b_{j,r-t}}{b_{j,r}}
 =\prod_{q=0}^{t-1}
 \left(a+\frac{(1-a)v}{r-q}\right)^{-1}.
\]
If the retained modal height $B$ lies in the neighboring admissible block,
the exact cross-image ratio displayed at the start of this appendix is
inserted before taking the same product outward.  The envelope bounds
\eqref{eq:coarse-remote-bound} and its left-hand analogue then reduce all
of these cases at a displacement $|d-v|\ge2\sqrt v+2$ to the single scalar
majorant
\begin{equation}\label{eq:localization-scalar}
 L_{\rm loc}(v,a)
 :=\exp\!\left[-\frac{(1-a)t_v(t_v-1)}{2(v+t_v)}\right],
 \qquad t_v=2\sqrt v+2.
\end{equation}
The scalar $L_{\rm loc}$ is increasing in $a$ and decreasing in $v$ on
$0\le a\le0.01$, $v\ge10$.  Hence the outward-rounded endpoint value
\[
 L_{\rm loc}(10,0.01)
 =0.1926119756\ldots<0.192612<0.192613
\]
is the uniform bound $E_{j,d}/B<0.192613$ used in
Lemma~\ref{lem:single-error}.

In the finite strip, after localization the first non-dominant image is at
least $k-(2\sqrt v+2)$ packet coordinates away.  The same envelope
function, with the sharper factor $0.99$, is increasing in $v$ and
decreasing in $k$ on $10\le v\le1600$, $k\ge341$; hence its worst endpoint
is $(k,v)=(341,1600)$, where outward rounding gives
\[
 1.8735603\times10^{-8}<1.87357\times10^{-8}.
\]
Combining the remote images and stationary
tail gives, with
$\delta_{\rm anc}=\max_i|\mathfrak e_i|/s$,
\[
 \delta_{\rm anc}\le M_{\rm anc}(\vartheta),
 \qquad
 \vartheta\,|\rho_d-(1+a)|
 \le\frac{2(0.000533)}{1-0.000533/10}
 <0.001067<0.001118.
\]
The last inequality is stronger than the $0.0012/\vartheta$ bound used in the
body.

For the large-parameter one-translate range, set
\[
 t_\infty(v)=\frac{\sqrt v\sqrt{v-1/4}}4-(2\sqrt v+2),\qquad
 R_\infty(v)=
 \exp\!\left(-\frac{0.99t_\infty(v)(t_\infty(v)-1)}
                    {2(v+t_\infty(v))}\right).
\]
Direct differentiation shows that, for $v\ge1600$,
\[
 v\left[e^{-0.98505v}
 +2.5125\frac{2.1R_\infty(v)}{1-1.05R_\infty(v)}\right]
 \le4.263992\times10^{-8}<4.27\times10^{-8}.
\]
The maximum occurs at $v=1600$.  This is the explicit large-range majorant
used in Lemma~\ref{lem:single-error}.

We also record the complementary-range estimates used in
Lemma~\ref{lem:local-middle}.  From
\eqref{eq:d-central}, $\vartheta-1.02<d<\vartheta+0.002$.  If $r=d+t$ and
$r-\vartheta>2\sqrt\vartheta+3$, then the exact ratios \eqref{eq:b-ratio} give
\[
 \frac{b_{j,d+t}}{b_{j,d}}
 \le1.0021\exp\!\left\{-\frac{(1-a)(t-1)(t-0.04)}{2(\vartheta+t+0.002)}\right\}<0.137.
\]
Indeed $t>2\sqrt\vartheta+2.998$; writing $w=\sqrt\vartheta$, the fraction before the
factor $1-a$ is $>1.998$, because
\[
 \frac{(t-1)(t-0.04)}2-1.998(\vartheta+t+0.002)
 >\frac{1000w^2+480000w-1519479}{500000}>0.
\]
On the left, if $r=d-t$ and $\vartheta-r>2\sqrt\vartheta+3$, then
\[
 \frac{b_{j,d-t}}{b_{j,d}}
 \le1.0003\exp\!\left\{-\frac{(1-a)(t-1)(t-0.004)}{2\vartheta}\right\}<0.136,
\]
because $t>2\sqrt\vartheta+1.98$ and the fraction in the exponent is $>2$.
Thus the common bound $0.137$ in \eqref{eq:middle-envelope-ratio} is
uniform.

For the non-dominant images, every point in the complementary range is at least
$k/2-2$ packet coordinates from the nearest other modal center.  In the
finite single-translate range this is at least $168.5$, while the relevant packet
parameter is at most $1600$; the one-sided envelope bound
\eqref{eq:coarse-remote-bound} is bounded by
\[
 R_{\rm mid}:=
 \exp\!\left(-\frac{0.99(169.5)(168.5)}{2(1600+169.5)}\right)
 <3.4\times10^{-4}.
\]
Packet indices
larger than the dominant one differ by at most $10$; at their modal coordinate
$c$ the exact comparison
\[
 \frac{b_{j+m,c}}{b_{j,c}}=\prod_{u=1}^{m}\frac{j+c+u}{j+u}<1.02
 \qquad(0\le m\le10)
\]
uses $c\le1601$ and $a<0.011/\vartheta$.  Smaller indices have smaller
modal scales.  With $aB<1.3$, successive remote-image bounds form a geometric majorant
with ratio at most $1.05R_{\rm mid}$.  Hence all non-dominant images together
with the stationary term contribute at most
\[
 2(1.02)(1.3)\frac{R_{\rm mid}}{1-1.05R_{\rm mid}}
       +3\times10^{-5}<10^{-3}.
\]
In the large-parameter single-translate domain
$k>\sqrt\vartheta\,\sigma/4$, the image separation is larger and the same
majorant only decreases.  This proves the $10^{-3}$ bound used above.

For the spectral bootstrap,
\[
 \gamma_{341}
 =\log2-\frac{2.024}{341}-\frac{\pi^2}{341^2}
 =0.6871268195\ldots.
\]
At $\theta=1600$ the excess exponent over $0.68$ is
$11.4029\ldots$, while the remaining polynomial prefactor is less than
$0.066$ after extraction of $e^{-0.68\theta}$.

\section{The \texorpdfstring{$C^2$}{C2} local-limit constants}\label{app:C2}

This appendix records the scalar inequalities used in
Theorem~\ref{thm:C2}; the analytic reduction to these quantities is contained
in the proof of that theorem.
For
\[
 \mathcal M_j(\kappa)=\frac{\sqrt{2\pi}}\kappa
 \sum_{m\in\mathbb Z}|t_m|^j e^{-t_m^2/2},
 \qquad t_m=\frac{(2m+1)\pi}{\kappa},
\]
interval subdivision on $2\le\kappa\le20$ and a midpoint/variation estimate
for $\kappa\ge20$ give
\begin{center}
\begin{tabular}{c@{\qquad}cccccc}
\toprule
$j$&1&2&3&4&5&6\\
\midrule
$\mathcal M_j$ upper bound&1.178&1.807&2.841&4.475&7.088&15.824\\
\bottomrule
\end{tabular}
\end{center}
For $\kappa\ge20$, a direct midpoint/variation estimate gives
$\mathcal M_6<15.374<15.824$; the other five bounds have still larger margins.
More explicitly, if $f_j(t)=|t|^je^{-t^2/2}$ and
$\Delta=2\pi/\kappa$, then the points $t_m$ are the midpoints of a grid of
mesh $\Delta$, and
\[
 \left|\Delta\sum_{m\in\mathbb Z}f_j(t_m)
       -\int_{\mathbb R}f_j(t)\,dt\right|
 \le\frac{\Delta}{2}\operatorname{Var}_{\mathbb R}(f_j).
\]
After division by $\sqrt{2\pi}$ this is the midpoint/variation estimate used
for $\mathcal M_j(\kappa)$.

The standardized cubic cumulant is
\[
 \widehat K_3=\frac{(J+1)a(1+a)}{(1-a)^3\sigma^3}.
\]
Put $b=1-a$ and $T=\theta b+a$.  Since
$\sigma^2=(T-b^2/4)/b^2$,
\[
 \sqrt\theta|\widehat K_3|
 =(1+a)\sqrt{\frac\theta T}
  \left(1-\frac{b^2}{4T}\right)^{-3/2}
 <1.5\sqrt2\,(1-1/3198)^{-3/2}<2.123<2.13.
\]
For the fourth derivative,
\[
 \left|\frac{d^4}{d\omega^4}
 J^\sharp[-\log(1-ae^{i\omega})]\right|
 \le\frac{J^\sharp a(1+4a+a^2)}{(1-a)^4},
\]
while
\[
 \frac{d^4}{d\omega^4}\left[-\log\cos\frac\omega2\right]
 =\frac18\sec^2\frac\omega2(1+3\tan^2\frac\omega2)<0.16
 \quad(|\omega|\le1/2).
\]
Using $T\ge\theta/2\ge799.5$ and $1+4a+a^2\le3.25$, division by
$\sigma^4$ and multiplication by $\theta$ gives
\[
 \frac{\theta T(1+4a+a^2)}{(T-b^2/4)^2}<6.5041,
 \qquad
 \frac{0.16\theta b^4}{(T-b^2/4)^2}<0.00041,
\]
and hence the required uniform bound is $<6.505<6.52$.

Using $e^{0.1062}$ in \eqref{eq:C2-central-bound}, the central contributions
are
\begin{center}
\begin{tabular}{c@{\qquad}ccc}
\toprule
&$C^0$&$C^1$&$C^2$\\
\midrule
coefficient of $\theta^{-1/2}$&1.009&1.589&2.517\\
coefficient of $\theta^{-1}$&1.352&2.142&4.781\\
\bottomrule
\end{tabular}
\end{center}
which lie strictly below the coefficients in
\eqref{eq:C0}--\eqref{eq:C2}.
For $5<|t|$ with $|\omega|\le1/2$, monotonicity on the odd frequency
grid and elementary Gaussian-tail integrals give the uniform bounds
\[
 \frac{\sqrt{2\pi}}\kappa\sum_{|t_m|>5}|t_m|^j
 (e^{-0.36t_m^2}+e^{-t_m^2/2})<
 \begin{cases}
 0.000347,&j=0,\\0.001734,&j=1,\\0.008705,&j=2.
 \end{cases}
\]
They are deliberately matched to the fixed tail allowances in
\eqref{eq:C0}--\eqref{eq:C2}.  For $|\omega|\ge1/2$, the explicit chord
estimate in the proof of Theorem~\ref{thm:C2} gives
$|G(\omega)|\le20(1+\theta)e^{-0.09\theta}$ and hence a contribution
$<10^{-11}$ for $j\le2$ already at $\theta=1599$, decreasing thereafter.
Thus the fixed tail allowances in \eqref{eq:C0}--\eqref{eq:C2} exceed all
of the displayed bounds.

\end{document}